\DeclareMathOperator{\NM}{NM}
\newcommand{\vol}{{\cH^{2k+2}}}
\DeclareMathOperator{\diam}{diam}
\DeclareMathOperator{\Lip}{Lip}
\DeclareMathOperator{\Imag}{Im}
\DeclareMathOperator{\length}{length}
\DeclareMathOperator{\nbhd}{nbhd}
\DeclareMathOperator{\supp}{supp}
\DeclareMathOperator{\Per}{Per}
\newcommand{\cc}{{\mathsf{c}}}
\newcommand{\area}{{\cH^{2k+1}}}
\newcommand{\from}{\colon\thinspace}
\newcommand{\N}{\mathbb{N}}
\newcommand{\Bword}{B^{\square}}
\newcommand{\C}{\mathbb{C}}
\newcommand{\R}{\mathbb{R}}
\newcommand{\Z}{\mathbb{Z}}
\renewcommand{\H}{\mathbb{H}}
\newcommand{\cB}{\mathcal{B}}
\newcommand{\cE}{\mathcal{E}}
\newcommand{\cH}{\mathcal{H}}
\newcommand{\cI}{\mathcal{I}}
\newcommand{\cJ}{\mathscr{J}}
\newcommand{\cL}{\mathcal{L}}
\newcommand{\cN}{\mathcal{N}}
\newcommand{\cF}{\mathcal{F}}
\newcommand{\cG}{\mathcal{G}}
\newcommand{\cS}{\mathcal{S}}
\newcommand{\cT}{\mathcal{T}}
\newcommand{\dEuc}{d_{\text{Euc}}}
\newcommand{\one}{\mathbf{1}}
\newcommand{\symdiff}{\mathop{\triangle}}
\newcommand{\vpf}{{\overline{\mathsf{v}}}}
\newcommand{\vpfl}[1]{{\overline{\mathsf{v}}_{\!#1}}}
\newcommand{\vpfprime}{{\overline{\mathsf{v}}'}}
\newcommand{\G}{\mathbb{G}}
\newcommand{\cM}{\mathcal{M}}
\newcommand{\BB}{\boldsymbol{\mathscr{B}}}
\newcommand{\vv}{\mathsf{v}}
\newcommand{\hh}{\mathsf{h}}
\renewcommand{\mid}{:\ }
\renewcommand{\supset}{\supseteq}
\renewcommand{\subset}{\subseteq}
\renewcommand{\gamma}{\upgamma}
\renewcommand{\beta}{\upbeta}
\renewcommand{\alpha}{\upalpha}
\renewcommand{\kappa}{\upkappa}
\renewcommand{\psi}{\uppsi}
\renewcommand{\rho}{\uprho}
\renewcommand{\delta}{\updelta}
\renewcommand{\pi}{\uppi}
\renewcommand{\omega}{\upomega}
\renewcommand{\sigma}{\upsigma}
\renewcommand{\emptyset}{\varnothing}
\newcommand{\e}{\varepsilon}
\renewcommand{\theta}{\uptheta}
\newcommand{\f}{\phi}
\newcommand{\ud}[0]{\,\mathrm{d}}
\renewcommand{\epsilon}{\varepsilon}
\renewcommand{\k}{\{1,\ldots,k\}}
\newcommand{\s}{\mathfrak{s}}
\renewcommand{\xi}{\upxi}
\renewcommand{\tau}{\uptau}
\newtheorem{thm}{Theorem}
\newtheorem{question}[thm]{Question}
\newtheorem{lemma}[thm]{Lemma}
\newtheorem{prop}[thm]{Proposition}
\newtheorem{cor}[thm]{Corollary}
\newtheorem{defn}[thm]{Definition}
\theoremstyle{remark}
\newtheorem{remark}[thm]{Remark}
\renewcommand{\le}{\leqslant}
\renewcommand{\ge}{\geqslant}
\renewcommand{\setminus}{\smallsetminus}
\newcommand{\eqdef}{\stackrel{\mathrm{def}}{=}}
\newcommand{\1}{\mathbf 1}
\newcommand{\cb}{\mathcal{b}}
\newcommand{\ms}{\scriptscriptstyle}
\renewcommand{\k}{\{1,\ldots,k\}}
\newcommand{\n}{\{1,\ldots,n\}}
\newcommand{\m}{\{1,\ldots,m\}}
\renewcommand{\H}{\mathbb{H}}
\newcommand{\GL}{\mathsf{GL}}
\newcommand{\CH}{\mathcal{H}}
\renewcommand{\gamma}{\upgamma}
\renewcommand{\beta}{\upbeta}
\renewcommand{\alpha}{\upalpha}
\renewcommand{\kappa}{\upkappa}
\renewcommand{\psi}{\uppsi}
\renewcommand{\rho}{\uprho}
\renewcommand{\delta}{\updelta}
\renewcommand{\pi}{\uppi}
\renewcommand{\omega}{\upomega}
\renewcommand{\lambda}{\uplambda}
\renewcommand{\eta}{\upeta}
\renewcommand{\chi}{\upchi}
\renewcommand{\nu}{\upnu}
\renewcommand{\mu}{\upmu}
\renewcommand{\phi}{\upphi}
\newcommand{\0}{\mathbf{0}}
\begin{document}

\title{Vertical perimeter versus horizontal perimeter}

\author[Assaf Naor]{Assaf Naor}
\address{Princeton University,
Department of Mathematics,
Fine Hall, Washington Road,
Princeton, NJ 08544-1000, USA}
\email{naor@math.princeton.edu}

\author[Robert Young]{Robert Young}
\address{New York University, Courant Institute of Mathematical Sciences, 251 Mercer Street, New York, NY 10012, USA}
\email{ryoung@cims.nyu.edu}

\thanks{A.N.~was supported by BSF grant 2010021, the Packard Foundation and the Simons Foundation.  R.Y.~was supported by NSF grant 1612061, the Sloan Foundation, and the Fall 2016 program on Geometric Group Theory at the Mathematical Sciences Research Institute. The research that is presented here was conducted under the auspices of the Simons Algorithms and Geometry (A\&G) Think Tank.}

\thanks{An extended abstract~\cite{NY17} announcing parts of this work and titled ``The integrality gap of the Goemans--Linial SDP relaxation for sparsest cut is at least a constant multiple of $\sqrt{\log n}$" appeared in the Proceedings of the 49th Annual ACM SIGACT Symposium on the Theory of Computing (STOC'17).}

\date{\today}
\keywords{Heisenberg group, isoperimetric inequalities, metric embeddings, Sparsest Cut Problem, approximation algorithms, semidefinite programming, metrics of negative type.}

\subjclass[2010]{46B85, 30L05, 68W25, 20F65, 42B99}


\maketitle


\begin{abstract}
Given $k\in \N$, the $k$'th discrete Heisenberg group, denoted $\H_{\ms{\Z}}^{2k+1}$, is the group generated by the elements $a_1,b_1,\ldots,a_k,b_k,c$, subject to the commutator relations  $[a_1,b_1]=\ldots=[a_k,b_k]=c$, while all the other pairs of elements from this generating set are required to commute, i.e., for every distinct $i,j\in \k$ we have $[a_i,a_j]=[b_i,b_j]=[a_i,b_j]=[a_i,c]=[b_i,c]=1$ (in particular, this implies that $c$ is in the center of $\H_{\ms{\Z}}^{2k+1}$). Denote $\mathfrak{S}_k=\{a_1,b_1,a_1^{-1},b_1^{-1},\ldots,a_k,b_k,a_k^{-1},b_k^{-1}\}$. The {\em horizontal boundary} of $\Omega\subset \H_{\ms{\Z}}^{2k+1}$, denoted  $\partial_{\hh}\Omega$, is the set of all those pairs $(x,y)\in \Omega\times (\H_{\ms{\Z}}^{2k+1}\setminus \Omega)$ such that $x^{-1}y\in \mathfrak{S}_k$. The {\em horizontal perimeter} of $\Omega$ is the cardinality $|\partial_{\hh}\Omega|$ of $\partial_{\hh}\Omega$, i.e., it is the total number of edges incident to $\Omega$ in the Cayley graph induced by  $\mathfrak{S}_k$. For $t\in \N$, define $\partial^t_{\vv} \Omega$ to be the set of all those pairs $(x,y)\in \Omega\times (\H_{\ms{\Z}}^{2k+1}\setminus \Omega)$ such that $x^{-1}y\in \{c^t,c^{-t}\}$. Thus, $|\partial^t_{\vv}\Omega|$ is the total number of edges incident to $\Omega$ in the (disconnected) Cayley graph induced by $\{c^t,c^{-t}\}\subset \H_{\ms{\Z}}^{2k+1}$. The {\em vertical perimeter}  of $\Omega$ is defined by $|\partial_{\vv}\Omega|= \sqrt{\sum_{t=1}^\infty |\partial^t_{\vv}\Omega|^2/t^2}$. It is shown here that if $k\ge 2$,  then  $|\partial_{\vv}\Omega|\lesssim \frac{1}{k} |\partial_{\hh}\Omega|$. The proof of this ``vertical versus horizontal isoperimetric inequality" uses a new structural result that decomposes sets of finite perimeter in the Heisenberg group into pieces that admit an ``intrinsic corona decomposition.''  This allows one to deduce an endpoint  $W^{1,1}\to L_2(L_1)$ boundedness of a certain singular integral operator from a corresponding lower-dimensional $W^{1,2}\to L_2(L_2)$ boundedness. Apart from its intrinsic geometric interest, the above (sharp) isoperimetric-type inequality has several (sharp) applications, including that for every $n\in \N$, any embedding into an $L_1(\mu)$ space of a ball of radius $n$ in the word metric on $\H_{\ms{\Z}}^{5}$ that is induced by the generating set $\mathfrak{S}_2$ incurs bi-Lipschitz distortion that is   at least a universal constant multiple of $\sqrt{\log n}$. As an application to approximation algorithms, it follows that for every $n\in \N$  the integrality gap of the Goemans--Linial semidefinite program for the Sparsest Cut Problem on inputs of size $n$ is at least a universal constant multiple of $\sqrt{\log n}$.
\end{abstract}

\section{Introduction}

We shall start by formulating the main result that is obtained here in the discrete setting, since this is the setting which is needed for applications to  approximation algorithms and metric embeddings. However, we shall soon thereafter proceed to describe the corresponding (and equivalent) continuous setting, which is where it is most natural to carry out the bulk of the ensuing proofs.

For every $k\in \N$ the $k$'th discrete Heisenberg group, denoted $\H_{\ms{\Z}}^{2k+1}$, is defined via generators and relations as follows. The $2k+1$ generators are denoted $a_1,b_1,\ldots,a_k,b_k,c$. Recalling that the commutator of $g,h\in \H_{\ms{\Z}}^{2k+1}$ is denoted $[g,h]=ghg^{-1}h^{-1}$, the defining relations of $\H_{\ms{\Z}}^{2k+1}$ are the requirements that $c=[a_1,b_1]=\ldots=[a_k,b_k]$ and  $[a_i,a_j]=[b_i,b_j]=[a_i,b_j]=[a_i,c]=[b_i,c]=1$ for every distinct $i,j\in \k$. In particular, since  $c$ commutes with all of the generators,  it belongs to the center of $\H_{\ms{\Z}}^{2k+1}$.  The set $\mathfrak{S}_k=\{a_1,b_1,a_1^{-1},b_1^{-1},\ldots,a_k,b_k,a_k^{-1},b_k^{-1}\}$ is a symmetric generating subset of $\H_{\ms{\Z}}^{2k+1}$, and it therefore induces a left-invariant word metric on $\H_{\ms{\Z}}^{2k+1}$ which we denote by $d_W:\H_{\ms{\Z}}^{2k+1}\times \H_{\ms{\Z}}^{2k+1}\to [0,\infty)$. A standard concrete realization of $\H_{\ms{\Z}}^{2k+1}$ is the group of all $k+2$ by $k+2$ matrices with integer entries of the following form (and usual matrix multiplication).
\begin{equation}\label{eq:def discrete H}
\begin{pmatrix} 1 & x_1 &x_2 & \dots &x_k& z\\
  0 & 1&0 &\dots& 0 & y_1\\
  \vdots & \ddots & \ddots  & \ddots & \vdots&\vdots\\
            \vdots & \ddots & \ddots& \ddots &0&y_{k-1}\\
              \vdots & \ddots & \ddots &\ddots&1&y_k
              \\
              0 & \dots & \dots &\dots&0&1
                       \end{pmatrix}=I_{k+2}+\sum_{i=1}^kx_iE_{1,i+1}+\sum_{j=1}^ky_jE_{j+1,k+2}+zE_{1,k+2}.
                       \end{equation}
Here $I_{k+2}$ is the $(k+2)\times (k+2)$ identity matrix and $\{E_{i,j}:\ i,j\in \{1,\ldots,k+2\}\}$ is the standard basis of $M_{k+2}(\R)$. While it is beneficial to keep in mind the above concrete realization of $\H_{\ms{\Z}}^{2k+1}$, it will later be convenient to work with a different  realization of $\H_{\ms{\Z}}^{2k+1}$, and moreover the initial part of the ensuing discussion (including all of the main results and applications) can be understood while only referring to the above abstract definition of $\H_{\ms{\Z}}^{2k+1}$ through generators and relations.

\begin{defn}[Discrete boundaries]\label{def:discrete perimeters}
{\em Given $\Omega\subset \H_{\ms{\Z}}^{2k+1}$, the {\em horizontal boundary} of $\Omega$ is defined by
\begin{equation}\label{eq:def horizontal boundary}
\partial_{\hh}\Omega\eqdef \left\{(x,y)\in \Omega\times \big(\H_{\ms{\Z}}^{2k+1}\setminus \Omega\big): x^{-1}y\in \mathfrak{S}_k\right\}.
\end{equation}
Given also  $t\in \N$, the $t${\em-vertical boundary} of $\Omega$ is defined by
\begin{equation}\label{eq:def vertical t boundary}
\partial^t_{\vv} \Omega\eqdef \left\{(x,y)\in \Omega\times \big(\H_{\ms{\Z}}^{2k+1}\setminus \Omega\big): x^{-1}y\in \left\{c^t,c^{-t}\right\}\right\}.
\end{equation}
The {\em horizontal perimeter} of $\Omega$ is defined to be the cardinality $|\partial_{\hh}\Omega|$ of its horizontal boundary. The {\em vertical perimeter} of $\Omega$ is defined to be the quantity
\begin{equation}\label{eq:def discrete vertical perimeter}
|\partial_{\vv}\Omega|\eqdef \bigg(\sum_{t=1}^\infty \frac{|\partial^t_{\vv}\Omega|^2}{t^2}\bigg)^{\frac12}.
\end{equation}
}
\end{defn}

\begin{remark}\label{rem:combinatotrial restate} It may be instructive to restate some of the concepts that were introduced in Definition~\ref{def:discrete perimeters} through the following graph-theoretic descriptions.
The Cayley graph that is induced by a symmetric subset $\Sigma=\Sigma^{-1}$ of a group $G$ will be  denoted below by $\mathcal{X}_{\Sigma}(G)$. Recall that this   is the graph whose vertex set is $G$ and whose edge set is $E_\Sigma(G)=\big\{\{g,g\sigma\}:\ g\in G\ \wedge \ \sigma\in \Sigma\big\}$. In particular, $\mathcal{X}_{\Sigma}(G)$ is connected if and only if $\Sigma$ generates $G$. With this (standard) terminology, the horizontal boundary $\partial_{\hh}\Omega$ is the edge boundary of $\Omega$ in the Cayley graph $\mathcal{X}_{\mathfrak{S}_k}(\H_{\ms{\Z}}^{2k+1})$, i.e., those pairs $(x,y)\in \H_{\ms{\Z}}^{2k+1}\times \H_{\ms{\Z}}^{2k+1}$ such that  $\{x,y\}\in E_{\mathfrak{S}_k}(\H_{\ms{\Z}}^{2k+1})$, $x\in \Omega$ and $y\notin \Omega$. In the same vein, the $t$-vertical boundary  $\partial^t_{\vv} \Omega$ is the edge boundary of $\Omega$ in the Cayley graph $\mathcal{X}_{\{c^t,c^{-t}\}}(\H_{\ms{\Z}}^{2k+1})$.
\end{remark}

The vertical perimeter as defined in~\eqref{eq:def discrete vertical perimeter} is a more subtle concept, and in particular it does not have a combinatorial description that is analogous to those of Remark~\ref{rem:combinatotrial restate}. The definition~\eqref{eq:def discrete vertical perimeter} was first published in~\cite[Section~4]{LafforgueNaor}, where the isoperimetric-type conjecture that we resolve here also appeared for the first time. These were formulated by the first named author and were circulating for several years before~\cite{LafforgueNaor} appeared, intended as a possible route towards the algorithmic application that we indeed succeed to obtain here. The basic idea is that because $c$ is equal to each of the commutators $\{[a_i,b_i]\}_{i=1}^k$, if the horizontal boundary of $\Omega$ is small, i.e., it is  ``difficult" to leave the set $\Omega$ in the ``horizontal directions" $\{a_i^{\pm 1},b_i^{\pm 1}\}_{i=1}^k$, then this should be reflected by the ``smallness" of the sequence $\{|\partial_\vv^t\Omega|\}_{t=1}^\infty$ that measures how  difficult it is to leave $\Omega$ in the ``vertical directions" $\{c^{\pm t}\}_{t=1}^\infty$. That this smallness should be measured through the quantity $|\partial_{\vv}\Omega|$, i.e.,  the $\ell_2$ norm of the sequence $\{|\partial_\vv^t\Omega|/t\}_{t=1}^\infty$, was arrived at through trial and error, inspired by  functional inequalities that were obtained in~\cite{AusNaoTes,LafforgueNaor}, as explained in~\cite[Section~4]{LafforgueNaor}; see also Section~\ref{sec:endpoint}  below.

\begin{thm}
\label{thm:isoperimetric discrete}  For every integer $k\ge 2$ and every finite subset $\Omega\subset  \H_{\ms{\Z}}^{2k+1}$ we have $|\partial_{\vv}\Omega|\lesssim \frac{1}{k}|\partial_{\hh}\Omega|$.
\end{thm}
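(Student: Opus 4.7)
The plan is to transfer the discrete inequality to the continuous Heisenberg Lie group $\H^{2k+1}$ via the standard lattice-to-Lie-group comparison. For measurable $E\subset \H^{2k+1}$, the horizontal perimeter becomes $\sum_{j=1}^k (\|X_j\1_E\|_{L_1}+\|Y_j\1_E\|_{L_1})$, where $X_j,Y_j$ are the left-invariant horizontal vector fields associated to $a_j,b_j$; the vertical perimeter becomes the $L_2(\R_+,\ud t/t)$-norm of the map $t\mapsto \|\1_E(\cdot c^t)-\1_E\|_{L_1}$. Thus the claim is an endpoint $W^{1,1}\to L_2(L_1)$ boundedness, improved by a factor $1/k$; the discrete inequality follows from its continuous counterpart by rescaling and a standard Minkowski-type summation.

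The structural core of the proof is a decomposition of the reduced boundary $\partial^*\! E$. For each Heisenberg ball $B=B(x,r)$ with $x\in \partial^*\! E$, one measures a deviation coefficient $\beta(B)$ quantifying how close $\partial^*\! E\cap B$ is to being a translate of an intrinsic Lipschitz graph over a codimension-one horizontal hyperplane. The structural theorem to be established asserts that $\beta$ is Carleson against the horizontal-perimeter measure of $E$, with total Carleson mass bounded by $|\partial_\hh E|$. A David--Semmes-style stopping-time argument then partitions $\partial^*\! E$, up to an error charged to $|\partial_\hh E|$, into pieces that are each efficiently parameterized by a single intrinsic Lipschitz graph.

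On such a graph $\Gamma$, the action of right-multiplication by $c^t$ admits a clean description through any chosen horizontal direction: since $c=[a_i,b_i]$ for any $i\in\k$, translating $\Gamma$ by $c^t$ amounts, modulo lower-order corrections, to shearing it in the $a_i$-direction by an amount proportional to $t$. The vertical square function thus becomes a Littlewood--Paley-type expression whose $L_2$ control reduces to a $W^{1,2}\to L_2(L_2)$ bound on a codimension-two subgroup, provable by Fourier analysis or a $T(1)$-type argument. The factor $\tfrac1k$ emerges here: because \emph{any} of the $k$ commutator relations $c=[a_i,b_i]$ can be used in this step, one averages the per-graph estimate over $i\in\k$, which replaces a single $\|X_i\1_E\|_{L_1}+\|Y_i\1_E\|_{L_1}$ by $\tfrac1k\sum_{i=1}^k (\|X_i\1_E\|_{L_1}+\|Y_i\1_E\|_{L_1})=\tfrac1k|\partial_\hh E|$.

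The main obstacle, by a wide margin, is the construction of the intrinsic corona decomposition itself. In Euclidean space this is the deep David--Semmes theory of uniformly rectifiable sets; in the Heisenberg setting the linear structure that underlies most of those arguments is gone, intrinsic Lipschitz graphs are curved objects whose parameterizations interact nonlinearly with the group law, and even basic Besicovitch-type covering lemmas are delicate. Establishing Carleson control of $\beta$ against $|\partial_\hh E|$, with constants uniform over \emph{arbitrary} sets of finite horizontal perimeter and strong enough to support the endpoint $L_2$ application, is where the bulk of the technical work must lie; once this structural theorem is in hand, the passage through the intrinsic-graph reduction to the lower-dimensional $L_2$ bound, and the averaging gain $\tfrac1k$, are comparatively routine.
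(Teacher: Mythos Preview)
Your architecture matches the paper's: reduce to a continuous isoperimetric inequality, prove it for intrinsic Lipschitz half-spaces via a lower-dimensional $L_2$ bound, and extend to general finite-perimeter sets through an intrinsic corona decomposition. Two points where your description diverges from the paper's execution are worth correcting.

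The Carleson packing in the corona construction is not governed by a $\beta$-coefficient as you describe; it is governed by \emph{quantitative non-monotonicity} in the sense of Cheeger--Kleiner--Naor. The kinematic formula for $\H^{2k+1}$ bounds the total non-monotonicity summed over all Christ cubes by the horizontal perimeter, so highly non-monotone cubes automatically satisfy a Carleson condition, and a stability lemma (nearly-monotone sets are close to half-spaces, obtained here by a soft compactness argument rather than the quantitative bound of \cite{CKN}) shows that the remaining cubes are well-approximated by planes. This is the device that replaces Euclidean quantitative rectifiability; a direct $\beta$-number argument for arbitrary finite-perimeter sets in $\H^{2k+1}$ is not available and your sketch does not supply one.

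Your mechanism for the factor $1/k$ is also not the paper's. The Lipschitz-graph estimate does not single out one pair $(X_i,Y_i)$; its output is $\lesssim r^{2k+1}/(1-\lambda)$, i.e.\ the full local perimeter scale, so averaging over $i$ at that stage gains nothing. The paper instead proves the case $k=2$ with an absolute constant and then, by an elementary coset-averaging lemma at the discrete level, sums the $\H_{\ms{\Z}}^{5}$ inequality over all $\binom{k}{2}$ copies of $\H_{\ms{\Z}}^{5}$ sitting inside $\H_{\ms{\Z}}^{2k+1}$; this is what produces $1/k$. Finally, the graph step itself is not a shearing computation: one observes that the restriction of the defining function $f\colon V\to\R$ to each coset of $\H^3=\langle X_1,Y_1,Z\rangle$ inside the vertical plane $V$ is genuinely Lipschitz for the Carnot--Carath\'eodory metric on $\H^3$ (this is exactly where $k\ge 2$ enters, since for $k=1$ the slices are one-dimensional), and then applies the $\H^3$ quadratic inequality of Austin--Naor--Tessera on each slice followed by Cauchy--Schwarz across slices.
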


\subsubsection*{Asymptotic notation} In Theorem~\ref{thm:isoperimetric discrete} as well as in what follows  we use the following standard conventions for asymptotic notation. Given $\alpha,\beta\in (0,\infty)$, the notations
$\alpha\lesssim \beta$ and $\beta\gtrsim \alpha$ mean that $\alpha\le \gamma \beta$ for some
universal constant $\gamma\in (0,\infty)$. The notation $\alpha\asymp \beta$
stands for $(\alpha\lesssim \beta) \wedge  (\beta\lesssim \alpha)$. If we need to allow for dependence on parameters, we indicate this by subscripts. For example, in the presence of an auxiliary parameter $\psi$, the notation $\alpha\lesssim_\uppsi \beta$ means that $\alpha\le \gamma(\psi)\beta $, where $\gamma(\psi)\in (0,\infty)$ is allowed to depend only on $\uppsi$, and similarly for the notations $\alpha\gtrsim_\psi \beta$ and $\alpha\asymp_\psi \beta$.

The ``vertical versus horizontal isoperimetric inequality" of Theorem~\ref{thm:isoperimetric discrete} is sharp up to the implicit universal constant, as exhibited by considering the case when $\Omega$ is a singleton (see also Remark~\ref{rem:box} below). In fact, this inequality has  qualitatively different types of sets $\Omega$ that saturate it. Namely, for every $m\in \N$ there is a finite  $\Omega\subset \H^{5}_{\ms{\Z}}$ for which $|\{j\in \N:\ \sum_{t=2^{j-1}}^{2^j-1}|\partial^t_\vv\Omega|^2/t^2\asymp |\partial_\hh\Omega|^2/m\}|\asymp m$; this follows from an examination of the sub-level sets of the embeddings that are discussed in the paragraph following Theorem~\ref{thm:distortion R} below. We shall later see that by a simple argument the case $k=2$ of Theorem~\ref{thm:isoperimetric discrete} implies its statement for general $k\ge 2$. The case $k=2$ is therefore the heart of the matter, and moreover all of our applications of Theorem~\ref{thm:isoperimetric discrete} use only this special case. For these reasons, the discussion in most of the Introduction will focus on $k=2$, but our ensuing proofs will be carried out for general $k$ because they derive structural results that are of interest in any dimension and do not reduce directly  to a fixed dimension.

\begin{remark}\label{eq:finite p}  Note the restriction $k\ge 2$ in Theorem~\ref{thm:isoperimetric discrete}. The case $k=1$ remains an intriguing mystery and a subject of our ongoing research that will be published elsewhere. This ongoing work shows that  Theorem~\ref{thm:isoperimetric discrete} {\em fails} for $k=1$, but that there exists  $q\in (2,\infty)$ such that for every $\Omega\subset \H_{\ms{\Z}}^3$ we have
\begin{equation}\label{eq:p version}
\bigg(\sum_{t=1}^\infty \frac{|\partial^t_{\vv}\Omega|^q}{t^{1+\frac{q}{2}}}\bigg)^{\frac{1}{q}}\lesssim |\partial_{\hh}\Omega|.
\end{equation}
A simple argument shows that $\sup_{s\in \N} |\partial^s_{\vv}\Omega|/\sqrt{s}\le  \gamma |\partial_{\hh}\Omega|$ for some universal constant $\gamma>0$. Hence, for every $t\in \N$ we have 
$$\frac{|\partial^t_{\vv}\Omega|^q}{t^{1+\frac{q}{2}}}\le\biggl(\frac{|\partial^t_{\vv}\Omega|}{t}\biggr)^2 \sup_{s\in \N} \biggl(\frac{|\partial^s_{\vv}\Omega|}{\sqrt{s}}\biggr)^{q-2}\le \biggl(\frac{|\partial^t_{\vv}\Omega|}{t}\biggr)^2 (\gamma|\partial_{\hh}\Omega|)^{q-2}.$$
This implies that the left hand side of~\eqref{eq:p version} is bounded from above by a universal  constant multiple of  $|\partial_{\vv}\Omega|^{2/q}|\partial_{\hh}\Omega|^{1-2/q}$. Therefore~\eqref{eq:p version} is formally weaker than the estimate  $|\partial_{\vv}\Omega|\lesssim |\partial_{\hh}\Omega|$ of Theorem~\ref{thm:isoperimetric discrete}. It would be interesting to determine the infimum over those $q$ for which~\eqref{eq:p version} holds true for every $\Omega\subset \H_{\ms{\Z}}^3$, with ongoing work indicating that it is at least $4$. It should be stressed, however, that all the applications of Theorem~\ref{thm:isoperimetric discrete} that are obtained here (i.e., the algorithmic application in Section~\ref{sec:sparsest} and the geometric applications in Section~\ref{sec:embedding})    use the case $k=2$ of Theorem~\ref{thm:isoperimetric discrete}, and understanding the case $k=1$ would not yield any further improvements. So,  while the case $k=1$ is geometrically  interesting in its own right, it is not needed for the applications that we currently have in mind.
\end{remark}

\subsection{Sparsest Cut}\label{sec:sparsest} The {\em Sparsest Cut Problem} is a central open question in approximation algorithms that has attracted major research efforts over the past decades. A remarkable aspect of this natural and versatile optimization problem is that it admits a simple to describe  algorithm that was proposed in the mid-1990s by Goemans and Linial, yet while  this specific algorithm received close scrutiny by many researchers and to date it   is still the best-known approximation algorithm for the Sparsest Cut Problem, despite major efforts it remained unknown for many years how well this algorithm can perform in general. Here we settle the longstanding open question of determining (up to lower order factors) the approximation ratio of the Goemans--Linial algorithm. Modulo previously published reductions, this is a quick consequence of Theorem~\ref{thm:isoperimetric discrete}. In fact, the statement of Theorem~\ref{thm:isoperimetric discrete}  was initially conjectured as a possible route towards this algorithmic application.

Fix $n\in \N$. The input of the Sparsest Cut Problem consists of two $n$ by $n$ symmetric matrices with nonnegative entries $C=(C_{ij}),D=(D_{ij})\in M_n([0,\infty))$, which are often called capacities and demands, respectively. The goal is to design a polynomial-time algorithm to evaluate the quantity
\begin{equation}\label{eq:def opt}
\mathsf{OPT}(C,D)\eqdef \min_{\emptyset\subsetneq A\subsetneq \n}\frac{\sum_{(i,j)\in A\times (\n\setminus A)}C_{ij}}{\sum_{(i,j)\in A\times (\n\setminus A)}D_{ij}}.
\end{equation}

In view of the extensive literature on the Sparsest Cut Problem, it would be needlessly repetitive to recount here the rich and multifaceted impact of this optimization problem on computer science and mathematics; see instead the articles~\cite{AKRR90,LR99}, the surveys~\cite{Shm95,Lin02,Chawla08,Nao10}, Chapter~10 of the monograph~\cite{DL97}, Chapter~15 of the monograph~\cite{Mat02}, Chapter~1 of the monograph~\cite{Ost13}, and the references therein. It suffices to say  that by tuning the choice of matrices $C,D$ to the  problem at hand, the  minimization in~\eqref{eq:def opt} finds a partition of the ``universe" $\n$ into two parts, namely the sets $A$ and $\n\setminus A$, whose appropriately weighted interface is as small as possible, thus allowing for inductive solutions of various algorithmic tasks, a procedure known as {\em divide and conquer}. (Not all of the uses of the  Sparsest Cut Problem fit into this framework. A recent algorithmic application of a different nature can be found in~\cite{MMV14}.)

It is $\mathsf{NP}$-hard to compute $\mathsf{OPT}(C,D)$~\cite{SM90}. By~\cite{CK09-hardness} there exists $\e_0>0$ such that it is even $\mathsf{NP}$-hard to compute $\mathsf{OPT}(C,D)$ within a multiplicative factor of less than $1+\e_0$. If one assumes Khot's Unique Games Conjecture~\cite{Kho02,Kho10,Tre12} then by~\cite{CKKRS06,KV15} there does not exist a polynomial-time algorithm that can compute $\mathsf{OPT}(C,D)$ within any universal constant factor.

Due to the above hardness results, a much more realistic goal would be to design a polynomial-time algorithm that takes as input the capacity and demand matrices $C,D\in M_n([0,\infty))$ and outputs a number $\mathsf{ALG}(C,D)$ that is guaranteed to satisfy $\mathsf{ALG}(C,D)\le \mathsf{OPT}(C,D)\le \rho(n)\mathsf{ALG}(C,D)$, with (hopefully) the quantity $\rho(n)$ growing to $\infty$ slowly as $n\to \infty$. Determining the best possible asymptotic behaviour of $\rho(n)$ (assuming $\mathsf{P}\neq \mathsf{NP}$) is an  open problem of major importance.

In~\cite{LLR95,AR98}  an algorithm was designed, based on linear programming (through the connection to multicommodity flows) and Bourgain's embedding theorem~\cite{Bou85}, which yields $\rho(n)=O(\log n)$. An algorithm based on semidefinite programming (to be described precisely below) was proposed by Goemans and Linial in the mid-1990s.  To the best of our knowledge this idea first appeared in the literature in~\cite[page~158]{Goe97}, where it was speculated that it might even yield a constant factor approximation for the  Sparsest Cut Problem (see also~\cite{Lin02,Lin-open}).

The Goemans--Linial algorithm is simple to describe. It takes as input the symmetric matrices $C=(C_{ij}),D=(D_{ij})\in M_n([0,\infty))$ and proceeds to compute the following quantity.
\begin{equation}\label{eq:def sdp}
\mathsf{SDP}(C,D)\eqdef \inf_{(v_1,\ldots,v_n)\in \mathsf{NEG}_n} \frac{\sum_{i=1}^n\sum_{j=1}^nC_{ij}\|v_i-v_j\|_{\ell_2^n}^2}{\sum_{i=1}^n\sum_{j=1}^nD_{ij}\|v_i-v_j\|_{\ell_2^n}^2},
\end{equation}
where
$$
\mathsf{NEG}_n\eqdef \Big\{(v_1,\ldots v_n)\in (\R^n)^n:\ \|v_i-v_j\|_{\ell_2^n}^2\le \|v_i-v_k\|_{\ell_2^n}^2+\|v_k-v_j\|_{\ell_2^n}^2\ \mathrm{for\ all\ } i,j,k\in \n\Big\}.
$$
Thus $\mathsf{NEG}_n$  is the set of $n$-tuples $(v_1,\ldots v_n)$ of vectors in $\R^n$ such that $(\{v_1,\ldots,v_n\},\upnu_n)$ is a semi-metric space, where  $\upnu_n:\R^n\times \R^n\to [0,\infty)$ is defined by
$$\forall\, x=(x_1,\ldots,x_n),y=(y_1,\ldots,y_n)\in \R^n,\qquad \upnu_n(x,y)\eqdef \sum_{j=1}^n (x_j-y_j)^2=\|x-y\|_{\ell_2^n}^2.$$
 A semi-metric space $(\cM,d)$ is said (see e.g.~\cite{DL97}) to be of {\em negative type}  if $(\cM,\sqrt{d})$ embeds isometrically into a Hilbert space. So, $\mathsf{NEG}_n$ can be  described as the set of all (ordered) negative type semi-metrics of size $n$. The evaluation of the quantity $\mathsf{SDP}(C,D)$ in~\eqref{eq:def sdp} can be cast as a semidefinite program (SDP), so it can be achieved (up to $o(1)$ precision) in polynomial time~\cite{GLS93}. One has $\mathsf{SDP}(C,D)\le  \mathsf{OPT}(C,D)$ for all symmetric matrices $C,D\in M_n([0,\infty))$. See  e.g.~\cite[Section~15.9]{Mat02-book} or~\cite[Section~4.3]{Nao10} for an explanation of the above assertions about $\mathsf{SDP}(C,D)$, as well as additional background and motivation. The pertinent question is therefore to evaluate the asymptotic behavior as $n\to \infty$ of the following quantity, which is commonly known as the  {\em integrality gap} of the Goemans--Linial semidefinite programming relaxation~\eqref{eq:def sdp} for the Sparsest Cut Problem.
$$
\uprho_{\mathsf{GL}}(n)\eqdef \sup_{\substack{C,D\in M_n([0,\infty))\\ C,D\ \mathrm{symmetric}}} \frac{\mathsf{OPT}(C,D)}{\mathsf{SDP}(C,D)}.
$$

Since $\rho_{\GL}(n)$ can be computed in polynomial time, we have $\rho(n)\le \rho_{\GL}(n)$; the algorithmic output $\mathsf{ALG}(C,D)$ in this case is simply $\mathsf{SDP}(C,D)$. The above mentioned {\em Goemans--Linial conjecture} was that  $\rho_{\GL}(n)=O(1)$. This hope was dashed in the remarkable work~\cite{KV15}, where the lower bound $\rho_{\mathsf{GL}}(n)\gtrsim \sqrt[6]{\log\log n}$ was proven. An improved analysis of the ideas of~\cite{KV15} was conducted in~\cite{KR09}, yielding the estimate $\uprho_{\mathsf{GL}}(n)\gtrsim \log\log n$. An entirely different approach based on the geometry of the Heisenberg group was introduced in~\cite{LN06}.  In combination with the important works~\cite{CK10,CheegerKleinerMetricDiff} it gives a different proof that $\lim_{n\to \infty}\rho_{\GL}(n)=\infty$. In~\cite{CKN09,CKN} the previously best-known lower bound $\rho_{\mathsf{GL}}(n)\gtrsim (\log n)^\delta$ was obtained for an effective (but small) positive universal constant $\delta$.

 Despite these lower bounds, the Goemans--Linial algorithm yields an approximation ratio of $o(\log n)$, so it is asymptotically more accurate than the linear program of~\cite{LLR95,AR98}.  Specifically, in~\cite{CGR08} it was shown that $\rho_{\GL}(n)\lesssim (\log n)^{\frac34}$ and this was improved in~\cite{ALN08} to $\rho_{\GL}(n)\lesssim (\log n)^{\frac12+o(1)}$. See Section~\ref{sec:previous} below for additional background on the  results quoted above. No other polynomial-time algorithm for the Sparsest Cut problem is known (or conjectured) to have an approximation ratio that is asymptotically better than that of the Goemans--Linial algorithm. However, despite major scrutiny by researchers in approximation algorithms, the asymptotic behavior of $\rho_{\GL}(n)$ as $n\to \infty$ remained unknown. Theorem~\ref{thm:main GL lower intro} below resolves this question up to lower-order factors.

 \begin{thm}\label{thm:main GL lower intro} For every integer $n\ge 2$ we have
$\sqrt{\log n}\lesssim \rho_{\mathsf{GL}}(n)\lesssim (\log n)^{\frac12+o(1)}$.
\end{thm}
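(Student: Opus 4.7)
The matching upper bound $\rho_{\mathsf{GL}}(n)\lesssim (\log n)^{1/2+o(1)}$ is the theorem of Arora--Lee--Naor already cited, so the plan addresses only the lower bound $\sqrt{\log n}\lesssim \rho_{\mathsf{GL}}(n)$. I would factor that lower bound through the geometric statement advertised in the abstract: for every $r\in\N$, the ball $B_r\subset\H_{\ms{\Z}}^{5}$ of radius $r$ in the word metric $d_W$ induced by $\mathfrak{S}_2$ has $L_1$-distortion $\gtrsim \sqrt{\log r}$. Because $(\H_{\ms{\Z}}^{5},d_W)$ is doubling, Assouad's theorem embeds $\sqrt{d_W}$ bi-Lipschitzly into Hilbert space, so the restriction of $d_W$ to $B_r$ is a metric of negative type on $|B_r|\asymp r^{6}$ points; the classical equivalence, up to universal constants, between $\rho_{\mathsf{GL}}(n)$ and the worst-case $L_1$-distortion of $n$-point negative-type metrics (Chawla--Krauthgamer--Kumar--Rabani--Sivakumar, Lee--Naor) then converts the distortion bound into $\rho_{\mathsf{GL}}(n)\gtrsim \sqrt{\log r}\asymp\sqrt{\log n}$ with $n\asymp r^{6}$.

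The heart of the matter is thus to lower-bound the $L_1$-distortion of $B_r$. Fix $f\colon B_r\to L_1(\mu)$ satisfying $d_W(x,y)/D\le \|f(x)-f(y)\|_{L_1}\le L\,d_W(x,y)$ on $B_r$; the target is $LD\gtrsim \sqrt{\log r}$. The cut representation of $L_1$-metrics yields a nonnegative measure $\Sigma$ on subsets $E\subset\H_{\ms{\Z}}^{5}$ with $\|f(x)-f(y)\|_{L_1}=\int|\1_E(x)-\1_E(y)|\ud\Sigma(E)$ for all $x,y\in B_r$. Summing the upper Lipschitz bound over the $\asymp|B_r|$ horizontal edges of the bounded-degree Cayley graph $\mathcal{X}_{\mathfrak{S}_2}(\H_{\ms{\Z}}^{5})$ incident to $B_r$ then gives, up to lower-order boundary terms,
\[
\int|\partial_{\hh}E|\ud\Sigma(E)\lesssim L\,|B_r|.
\]

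The key step is to invoke Theorem~\ref{thm:isoperimetric discrete} pointwise in $E\in\supp\Sigma$ and apply Minkowski's integral inequality to the $\ell_2(\N,1/t^{2})$-valued integral, obtaining
\[
\left(\sum_{t=1}^{\infty}\frac{\bigl(\int|\partial_{\vv}^{t}E|\ud\Sigma(E)\bigr)^{2}}{t^{2}}\right)^{\!1/2}\!\!\le \int|\partial_{\vv}E|\ud\Sigma(E)\lesssim \int|\partial_{\hh}E|\ud\Sigma(E)\lesssim L\,|B_r|.
\]
For the matching lower bound on the left hand side, one uses the classical word-metric estimate $d_W(x,c^{t}x)\asymp \sqrt{t}$: for each integer $1\le t\lesssim r^{2}$, a positive fraction of $x\in B_r$ satisfies $c^{t}x\in B_r$, so non-contraction of $f$ produces
\[
\int|\partial_{\vv}^{t}E|\ud\Sigma(E)\ge\!\sum_{x,\,c^{t}x\in B_r}\!\|f(x)-f(c^{t}x)\|_{L_1}\gtrsim \frac{\sqrt{t}}{D}\,|B_r|.
\]
Substituting this into the previous inequality and computing $\sum_{t=1}^{\lfloor cr^{2}\rfloor}1/t\asymp \log r$ collapses everything to $|B_r|\sqrt{\log r}/D\lesssim L\,|B_r|$, i.e., $LD\gtrsim \sqrt{\log r}$, as desired.

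The main obstacle in the plan is of course Theorem~\ref{thm:isoperimetric discrete} itself; once that is granted, every other ingredient — the cut representation of $L_1$-metrics, Assouad's negative-type embedding of the Heisenberg word metric, the identity $d_W(e,c^{t})\asymp \sqrt{t}$, and the Goemans--Linial/Lee--Naor dictionary converting negative-type distortion into integrality gap — is by now routine. A minor technical point, absorbed above into ``lower-order boundary terms,'' is the passage from a cut representation on $B_r$ to an ambient cut measure on $\H_{\ms{\Z}}^{5}$ against which the isoperimetric inequality can be applied; this is handled by a standard Lipschitz-extension or truncation argument together with the fact that $|\partial B_r|\asymp r^{5}$ is genuinely of lower order than $|B_r|\asymp r^{6}$.
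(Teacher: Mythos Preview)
Your proposal is correct and follows essentially the same route as the paper: the paper packages your cut-measure and Minkowski steps as the passage from Theorem~\ref{thm:isoperimetric discrete} to its functional form (Lemma~\ref{lem:functional form discrete}), then to the $L_1$-valued version (Lemma~\ref{lem:convexity to vector valued}), and finally to the local version on balls (Lemma~\ref{lem:localization norm}), which is applied to the embedding exactly as you describe; your ``lower-order boundary terms'' discussion is the informal content of Lemma~\ref{lem:localization norm}. One correction worth noting: Assouad's theorem only embeds $\sqrt{d_W}$ \emph{bi-Lipschitzly} into Hilbert space, and the resulting squared Hilbert distance need not satisfy the triangle inequality, so it is not a priori a negative-type \emph{metric}; for this step the paper instead invokes~\cite{LN06}, which constructs an explicit left-invariant negative-type metric on the continuous Heisenberg group that is bi-Lipschitz equivalent to $d_W$.
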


Since the upper bound on $\uprho_{\mathsf{GL}}(n)$ in Theorem~\ref{thm:main GL lower intro} is due to~\cite{ALN08}, our contribution  is the corresponding lower bound, thus determining the asymptotic behavior of $\uprho_{\mathsf{GL}}(n)$ up to lower order factors.

\subsection{Embeddings}\label{sec:embedding} For $p\in [1,\infty]$, the $L_p$ (bi-Lipschitz) distortion of a separable metric space $(\cM,d)$, denoted $c_p(\cM,d)\in [1,\infty]$, is the infimum over those $D\in [1,\infty]$ for which there exists an embedding $f:\cM\to L_p(\R)$ such that $d(x,y)\le \|f(x)-f(y)\|_{L_p(\R)}\le Dd(x,y)$ for every $x,y\in \cM$.

 Recall that $d_W:\H^5_{\ms{\Z}}\times \H^5_{\ms{\Z}}\to \N\cup\{0\}$  denotes the left-invariant word metric that is induced by the generating set $\mathfrak{S}_2=\{a_1,a_1^{-1},b_1,b_1^{- 1},a_2,a_2^{- 1},b_2,b_2^{- 1}\}$. For every $r\in [0,\infty)$ denote the corresponding (closed) ball of radius $r$ centered at the identity element by $\BB_r=\{h\in \H_{\ms{\Z}}^5:\ d_W(h,1)\le r\}$.  It is well-known (see e.g.~\cite{Bas72}) that $|\BB_r|\asymp r^6$ and $d_W(1,c^r)\asymp\sqrt{r}$ for every $r\in \N$. By~\cite[Theorem~2.2]{LN06}, the metric $d_W$ is bi-Lipschitz equivalent to a metric on $\H^5_{\ms{\Z}}$ that is of negative type. We remark that~\cite{LN06} makes this assertion for a different metric on a larger continuous group that contains $\H_{\ms{\Z}}^5$ as a discrete co-compact subgroup, but by a simple general result (see e.g.~\cite[Theorem~8.3.19]{BBI01}) the word metric $d_W$ is bi-Lipschitz equivalent to the metric considered in~\cite{LN06}. A well-known duality argument (see e.g.~\cite[Lemma~4.5]{Nao10} or~\cite[Section~1]{CKN09}), which to the best of our knowledge was first derived by Rabinovich in the 1990s, establishes that  for every $n\in \N$ the integrality gap $\uprho_{\mathsf{GL}}(n)$ is equal to the supremum of the $L_1$ distortion  $c_1(X,d)$ over all $n$-point metric spaces $(X,d)$ of negative type. Therefore, in order to establish Theorem~\ref{thm:main GL lower intro} it suffices to prove the following theorem.

\begin{thm}\label{thm:distortion R}
For every $r\ge 2$ we have $c_1(\BB_{r},d_W)\asymp \sqrt{\log r}\asymp \sqrt{\log |\BB_r|}$.
\end{thm}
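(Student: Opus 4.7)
The size estimate $|\BB_r|\asymp r^6$ is classical (Bass), so $\sqrt{\log|\BB_r|}\asymp \sqrt{\log r}$ automatically, and only $c_1(\BB_r,d_W)\asymp\sqrt{\log r}$ requires work. The upper bound $c_1(\BB_r,d_W)\lesssim\sqrt{\log r}$ follows from prior constructions: heat-kernel / random-walk embeddings yield $c_2(\BB_r,d_W)\lesssim\sqrt{\log r}$, and since Hilbert space embeds into $L_1$ with universally bounded distortion (via Gaussian linear functionals), one has $c_1\lesssim c_2\lesssim\sqrt{\log r}$. The substantive content is the matching lower bound $c_1(\BB_r,d_W)\gtrsim\sqrt{\log r}$, which is where Theorem~\ref{thm:isoperimetric discrete} enters through a standard cut-cone duality.

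Suppose $f:\BB_r\to L_1(\R)$ satisfies $d_W(x,y)\le \|f(x)-f(y)\|_{L_1}\le D\,d_W(x,y)$ for all $x,y\in\BB_r$; the goal is to deduce $D\gtrsim\sqrt{\log r}$. Applying the sub-level-set representation pointwise to $f$, write
\[
\|f(x)-f(y)\|_{L_1}=\int_{\Sigma}\bigl|\1_A(x)-\1_A(y)\bigr|\ud\nu(A)
\]
for a positive measure $\nu$ on subsets $A\subset \H_{\ms{\Z}}^5$, each supported inside $\BB_r$. Summing the Lipschitz bound over all horizontal edges within $\BB_r$ gives the horizontal estimate
\[
\int|\partial_\hh A|\ud\nu(A)\lesssim Dr^6,
\]
in which the ``external'' horizontal-boundary contribution across $\partial\BB_r$ (at most $\lesssim r^5$ per cut) is a lower-order term absorbed by the standard expedient of enlarging $\BB_r$ to $\BB_{2r}$ at the outset. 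For each $t\in\{1,\ldots,cr^2\}$, with $c>0$ small enough that $xc^t\in\BB_r$ whenever $x\in\BB_{r/2}$, the co-Lipschitz bound together with $d_W(x,xc^t)\asymp\sqrt{t}$ yields
\[
\int|\partial_\vv^t A|\ud\nu(A)\;\gtrsim\; \sum_{x\in\BB_{r/2}}\|f(x)-f(xc^t)\|_{L_1}\;\gtrsim\; r^6\sqrt{t}.
\]

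Now apply Theorem~\ref{thm:isoperimetric discrete} (case $k=2$) pointwise in $A$ and integrate against $\nu$:
\[
\int|\partial_\vv A|\ud\nu(A)\;\lesssim\;\int|\partial_\hh A|\ud\nu(A)\;\lesssim\; Dr^6.
\]
By Minkowski's integral inequality for $\ell_2$ applied to the sequence $t\mapsto|\partial_\vv^t A|/t$,
\[
\left(\sum_{t=1}^\infty\frac{1}{t^2}\left(\int|\partial_\vv^t A|\ud\nu\right)^{\!2}\right)^{\!1/2}\;\le\;\int|\partial_\vv A|\ud\nu\;\lesssim\; Dr^6.
\]
Restricting the left-hand sum to $1\le t\le cr^2$ and inserting the lower bound on $\int|\partial_\vv^t A|\ud\nu$ yields $r^{12}\log r \lesssim D^2 r^{12}$, whence $D\gtrsim\sqrt{\log r}$, as required.

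All the genuine difficulty sits in Theorem~\ref{thm:isoperimetric discrete}, used here as a black box. Given that, the remainder is the now-standard cut-cone / Poincar\'e packaging; the only technical care is in arranging the cut decomposition so that the ambient isoperimetric inequality may be invoked cut-by-cut in $\H_{\ms{\Z}}^5$, which is handled routinely using that the surface area of $\BB_r$ is $\lesssim r^5$ while its volume is $\asymp r^6$, providing the necessary slack to absorb boundary effects.
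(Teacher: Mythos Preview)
Your approach is conceptually the same as the paper's: the paper passes from Theorem~\ref{thm:isoperimetric discrete} to its functional form~\eqref{eq:discrete global intro} by the co-area formula (Lemma~\ref{lem:functional form discrete}), upgrades to $L_1$-valued targets by convexity (Lemma~\ref{lem:convexity to vector valued}), and then localizes to balls (Lemma~\ref{lem:localization norm}) before plugging in the embedding. Your cut-cone representation followed by a per-cut application of Theorem~\ref{thm:isoperimetric discrete} and Minkowski is exactly the same mechanism in different packaging.

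The one genuine soft spot is your treatment of the external horizontal boundary. Saying it is ``at most $\lesssim r^5$ per cut'' and therefore ``lower-order'' is not an argument: you must multiply by the total cut mass $\nu$, and nothing you have written bounds $\nu$. Enlarging $\BB_r$ to $\BB_{2r}$ does not help either---it only moves the problem to $\partial\BB_{2r}$. What actually works is to normalize the cut decomposition so that a fixed basepoint $x_0\in\BB_r$ lies in no cut (replacing $A$ by $A^\cc$ when necessary, which leaves all $|\1_A(x)-\1_A(y)|$ unchanged and keeps each cut finite inside $\BB_r$). Then for any $x\in\BB_r$ one has $\int \1_A(x)\,d\nu(A)=\int|\1_A(x)-\1_A(x_0)|\,d\nu(A)=\|f(x)-f(x_0)\|_{L_1}\le 2Dr$, so the external contribution is $\lesssim r^5\cdot Dr=Dr^6$. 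This is the \emph{same} order as the internal term, not lower order, but it still gives $\int|\partial_\hh A|\,d\nu\lesssim Dr^6$ and your argument goes through. The paper avoids this accounting by instead multiplying the embedding by a Lipschitz cutoff supported on a slightly larger ball (the proof of Lemma~\ref{lem:localization norm}), which is the clean way to localize the global inequality~\eqref{eq:discrete global intro L1 valued}.
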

The new content of Theorem~\ref{thm:distortion R} is the lower bound $c_1(\BB_r,d_W)\gtrsim \sqrt{\log r}$. The matching upper bound $c_1(\BB_r,d_W)\lesssim \sqrt{\log r}$ has several proofs in the literature; see e.g.~the discussion immediately following Corollary~1.3 in~\cite{LafforgueNaor} or Remark~\ref{sec:embed intro}  below.  The previously best known estimate~\cite{CKN} was that there exists a universal constant $\updelta>0$ such that  $c_1(\BB_r,d_W)\ge (\log r)^\updelta$.

\begin{remark}\label{sec:embed intro} Theorem~\ref{thm:distortion R} also yields a sharp result for the general problem of finding the asymptotically largest-possible $L_1$ distortion of a finite doubling metric space with $n$ points. A metric space $(\cM,d)$ is said to be $K$-doubling for some $K\in \N$ if every ball in $\cM$ (centered anywhere and of any radius) can be covered by $K$ balls of half its radius. By~\cite{KLMN05},
\begin{equation}\label{eq:descent}
c_1(\cM,d)\lesssim \sqrt{(\log K)\log |\cM|}.
\end{equation}
As noted in~\cite{GKL03}, the dependence on $|\cM|$ in~\eqref{eq:descent}, but with a worse dependence on $K$, follows by combining~\cite{Ass83} and~\cite{Rao99} (this dependence on $K$  was improved  in~\cite{GKL03}).  The metric space $(\H_{\ms{\Z}}^5,d_W)$ is $O(1)$-doubling because $|\BB_{r}|\asymp r^6$ for every $r\ge 1$. Theorem~\ref{thm:distortion R} shows that~\eqref{eq:descent} is sharp  when $K=O(1)$, thus improving  over the previously best-known construction~\cite{LS11} of arbitrarily large $O(1)$-doubling finite metric spaces $\{(\cM_i,d_i)\}_{i=1}^\infty$ for which $c_1(\cM_i,d_i)\gtrsim \sqrt{(\log |\cM_i|)/\log\log |\cM_i|}$. Probably~\eqref{eq:descent} is sharp for all $K\le |\cM|$; conceivably this could be proven by incorporating Theorem~\ref{thm:distortion R}  into the argument of~\cite{JLM11}, but we shall not pursue this here. Theorem~\ref{thm:distortion R} establishes for the first time the existence of a metric space that simultaneously has several useful geometric properties yet poor (indeed, worst-possible) embeddability properties into $L_1$. By virtue of being $O(1)$-doubling, $(\H_{\ms{\Z}}^5,d_W)$ also has Markov type $2$ due to~\cite{DLP13} (which improves over~\cite{NPSS06}, where the conclusion that it has Markov type $p$ for every $p<2$ was obtained). For more on the bi-Lipschitz invariant Markov type and its applications, see~\cite{Bal92,Nao12}. The property of having Markov type $2$ is  shared by the construction of~\cite{LS11}, which is also $O(1)$-doubling, but $(\H_{\ms{\Z}}^5,d_W)$ has additional features that  the example of~\cite{LS11} fails to have. For one, it is a group; for another, by~\cite{Li14,Li16} we know that $(\H_{\ms{\Z}}^5,d_W)$ has Markov convexity $4$ (and no less). (See~\cite{LNP09,MN13} for background on the bi-Lipschitz invariant Markov convexity and its consequences.) By~\cite[Section~3]{MN13} the example of~\cite{LS11} does not have Markov convexity $p$ for any finite $p$. No examples of arbitrarily large finite metric spaces $\{(\cM_i,d_i)\}_{i=1}^\infty$ with bounded Markov convexity (and with the associated Markov convexity constants uniformly bounded) such that $c_1(\cM_i,d_i)\gtrsim\sqrt{\log |\cM_i|}$ were previously known to exist. Analogous statements are known to be impossible for Banach spaces~\cite{MW78}, so it is natural in the context of the Ribe program (see the surveys~\cite{Nao12,Bal13} for more on this research program) to ask whether there is a potential metric version of~\cite{MW78}; the above discussion shows that there is not.
\end{remark}

\begin{remark}\label{rem:dim reduction} It was proved in~\cite{LN14} that for every $p>2$ there is a doubling subset $D_p$ of $L_p(\R)$ that does not admit a bi-Lipschitz embedding into $L_q(\R)$ for $q\in (1,p)$, and furthermore there is $p_0>2$ such that $D_p$ does not even admit a bi-Lipschitz embedding into $L_1(\R)$ for $p>p_0$. By substituting Theorem~\ref{thm:distortion R} into the proof of this statement in~\cite{LN14}, we see that it actually holds true with $p_0=2$.
\end{remark}

The following precise theorem about $L_1$ embeddings that need not be bi-Lipschitz implies Theorem~\ref{thm:distortion R} by considering the special case of the modulus $\upomega(t)=t/D$ for $D\ge 1$ and $t\in [0,\infty)$.
\begin{thm}\label{thm:integral criterion} Fix $r\ge 2$ and a nondecreasing function $\upomega:[0,\infty)\to [0,\infty)$ such that $\omega(s)\le s$ for all $s\ge 0$. Then there exists a mapping $\upphi:\BB_{r}\to L_1(\R)$ that satisfies
\begin{equation}\label{eq:compression omega on ball}
\forall\,x,y\in \BB_{r},\qquad \upomega\big(d_W(x,y)\big)\lesssim \|\upphi(x)-\upphi(y)\|_{L_1(\R)}\le d_W(x,y),
\end{equation}
{\bf \em if and only if}
\begin{equation}\label{eq:integral criterion}
\int_{1}^{ 2r} \frac{\upomega(s)^2}{s^3}\ud s\lesssim 1.
\end{equation}
\end{thm}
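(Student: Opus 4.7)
The harder direction is ``$\Rightarrow$'', which is an $L_1$-embedding application of Theorem~\ref{thm:isoperimetric discrete}; the converse is a direct construction from standard single-scale cuts. I sketch both.

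\emph{Direction $\Rightarrow$.} Let $\upphi\from \BB_r\to L_1(\R)$ satisfy~\eqref{eq:compression omega on ball}. Write $\upphi$ via its cut-measure representation: there is a nonnegative Borel measure $\upmu$ on subsets of $\H_{\ms{\Z}}^5$ such that
\[
\|\upphi(x)-\upphi(y)\|_{L_1(\R)}=\int |\mathbf{1}_E(x)-\mathbf{1}_E(y)|\ud \upmu(E)
\]
for every $x,y\in \BB_r$. Summing the Lipschitz bound in~\eqref{eq:compression omega on ball} over horizontal edges with both endpoints in $\BB_r$ (of which there are $\asymp |\BB_r|$) yields $\int |\partial_{\hh} E|\ud \upmu(E)\lesssim |\BB_r|$, where the $O(r^5)$ contribution from horizontal edges meeting $\partial \BB_r$ is absorbed since $r^5\ll r^6\asymp|\BB_r|$. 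Since $d_W(1,c^t)\asymp\sqrt{t}$, for each integer $1\le t\le \eta^2 r^2$ a positive fraction of $x\in \BB_r$ satisfies $c^t x\in \BB_r$; summing the compression lower bound from~\eqref{eq:compression omega on ball} over these pairs gives $\int |\partial_{\vv}^t E|\ud \upmu(E)\gtrsim |\BB_r|\upomega(\sqrt{t})$. Minkowski's integral inequality in the $\ell_2(1/t^2)$ norm followed by Theorem~\ref{thm:isoperimetric discrete} (case $k=2$) applied to each $E$ produces
\[
|\BB_r|\biggl(\sum_{t=1}^{\lfloor \eta^2 r^2\rfloor}\frac{\upomega(\sqrt{t})^2}{t^2}\biggr)^{\!1/2}\lesssim \int |\partial_{\vv} E|\ud \upmu(E)\lesssim \int |\partial_{\hh} E|\ud \upmu(E)\lesssim |\BB_r|.
\]
Dividing by $|\BB_r|$ and switching to the integral via $s=\sqrt{t}$ (so that $\ud t=2s\,\ud s$) produces~\eqref{eq:integral criterion}.

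\emph{Direction $\Leftarrow$.} Given $\upomega$ satisfying~\eqref{eq:integral criterion}, construct $\upphi$ as a weighted direct sum $\bigoplus_j\uplambda_j\upphi_j$ of single-scale cut embeddings $\upphi_j\from \BB_r\to L_1(\R)$ at dyadic scales $2^j\in [1,\eta r]$. Each $\upphi_j$ is built from a random tiling of $\BB_r$ by Heisenberg ``boxes'' of diameter $\asymp 2^j$ and satisfies $\|\upphi_j(x)-\upphi_j(y)\|_{L_1(\R)}\asymp \min(d_W(x,y),2^j)$; such cuts are standard, cf.~the discussion after Corollary~1.3 of~\cite{LafforgueNaor}. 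Choosing $\uplambda_j$ so that $\sum_j\uplambda_j\min(d,2^j)\asymp \upomega(d)$ (for instance via $\uplambda_j\asymp \upomega(2^j)^2/2^{2j}$ after a standard Abel-summation argument that uses monotonicity of $\upomega$), the Lipschitz upper bound and the $\upomega$-compression lower bound in~\eqref{eq:compression omega on ball} both reduce to elementary dyadic inequalities on $\sum_j \upomega(2^j)^2/2^{2j}$ and $\sum_j \upomega(2^j)^2/2^j$ that are precisely equivalent to~\eqref{eq:integral criterion}.

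\emph{Main obstacle.} All of the depth is in the forward direction and its invocation of Theorem~\ref{thm:isoperimetric discrete}; the remaining subtlety is the localization to $\BB_r$, i.e.\ making sure that the horizontal edges meeting $\partial \BB_r$ (bounded using $|\partial_{\hh}\BB_r|\asymp r^5\ll |\BB_r|$) and the truncation of the $t$-sum at $\eta^2 r^2$ (from $d_W(1,c^t)\asymp\sqrt t$) each cost only a bounded factor. A secondary, but essentially bookkeeping, obstacle is pinning down the correct weights $\uplambda_j$ in the converse direction so that the Lipschitz and compression inequalities become a tight pair of linear conditions equivalent to~\eqref{eq:integral criterion}.
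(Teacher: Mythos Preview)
Your $\Rightarrow$ direction is essentially the paper's argument, repackaged through the cut-measure representation. The paper instead passes from Theorem~\ref{thm:isoperimetric discrete} to the real-valued functional inequality~\eqref{eq:discrete global intro}, then to the $L_1$-valued version~\eqref{eq:discrete global intro L1 valued} by Minkowski, then localizes via a cutoff function (Lemma~\ref{lem:localization norm}) to obtain~\eqref{eq:discrete local intro}, and finally substitutes $\upphi$ directly. Your cut argument is equivalent, but the boundary accounting is off by a factor of $r$: once you normalize the cut measure so that a fixed basepoint $x_0$ lies outside $E$ for $\upmu$-a.e.\ $E$, each of the $\asymp r^5$ edges crossing $\partial\BB_r$ contributes $\int \mathbf{1}_E(x)\ud\upmu(E)=\|\upphi(x)-\upphi(x_0)\|_{L_1(\R)}\lesssim r$, so the total correction is $O(r^6)\asymp|\BB_r|$, not $O(r^5)$. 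This is still absorbed, but only with ``$\lesssim$'', not ``$\ll$''.

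Your $\Leftarrow$ direction (which the paper does not prove, simply citing \cite[Corollary~5]{Tes08}) has a genuine error: the weights $\uplambda_j\asymp \upomega(2^j)^2/2^{2j}$ do \emph{not} yield the compression bound. Take $\upomega(s)=s/\sqrt{\log r}$, which satisfies~\eqref{eq:integral criterion}; then $\uplambda_j\equiv 1/\log r$, and for $d\asymp r$ one computes
\[
\sum_{j=0}^{\lfloor\log_2(\eta r)\rfloor} \uplambda_j\min(d,2^j)\asymp \frac{1}{\log r}\sum_{j} 2^j\asymp \frac{r}{\log r},
\]
whereas $\upomega(r)\asymp r/\sqrt{\log r}$, so the sum is $\ll \upomega(d)$ and compression fails. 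Your two dyadic sums $\sum_j\upomega(2^j)^2/2^{2j}$ and $\sum_j\upomega(2^j)^2/2^j$ are also not both equivalent to the integral criterion: the first one is, but in this same example the second is $\asymp r/\log r$, which has nothing to do with~\eqref{eq:integral criterion}. The dyadic-cut framework is correct, but the weights have to be chosen to make $\sum_j\uplambda_j\min(d,2^j)$ track the concave majorant of $\upomega$ (so $\uplambda_j$ is essentially a discrete second difference of that majorant), not $\upomega(2^j)^2/2^{2j}$.
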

The fact that the integrability requirement~\eqref{eq:integral criterion} implies the existence of the desired embedding $\upphi$ is
 due to~\cite[Corollary~5]{Tes08}. The new content of Theorem~\ref{thm:integral criterion} is  that the existence of the embedding $\upphi$ implies~\eqref{eq:integral criterion}. By letting $r\to \infty$ in Theorem~\ref{thm:integral criterion} we see that there is  $\upphi:\H_{\ms{\Z}}^5\to L_1(\R)$ that satisfies
 \begin{equation}\label{eq:compression assumption}
\forall\, x,y\in \Z^5,\qquad  \upomega\big(d_W(x,y)\big)\lesssim \|\upphi(x)-\upphi(y)\|_{L_1(\R)}\le d_W(x,y),
\end{equation}
{\bf \em if and only if}
\begin{equation}\label{eq:1 to infty integral}
\int_{1}^\infty \frac{\upomega(s)^2}{s^3}\ud s\lesssim 1.
\end{equation}

In~\cite{CKN} it was shown that if $\upphi:\H_{\ms{\Z}}^5\to L_1(\R)$ satisfies~\eqref{eq:compression assumption},  then there must exist arbitrarily large $t\ge 2$ for which $\upomega(t)\lesssim t/(\log t)^\updelta$, where $\updelta>0$ is a universal constant. This follows from~\eqref{eq:1 to infty integral} with $\updelta=\frac12$, which is the largest possible constant for which this conclusion holds true.  This positively answers a question that was asked in~\cite[Remark~1.7]{CKN}.  In fact, it provides an even better conclusion, because~\eqref{eq:1 to infty integral} implies that, say, there must exist arbitrarily large $t\ge 4$ for which $$\upomega(t)\lesssim \frac{t}{\sqrt{(\log t)\log\log t}}.$$  (The precise criterion is determined by the integrability condition~\eqref{eq:1 to infty integral}.) Finally, by considering $\upomega(t)=t^{1-\e}/D$ for $\e\in (0,1)$ and $D\ge 1$, we obtain the following noteworthy corollary.

\begin{cor}[$L_1$ distortion of snowflakes]\label{coro:snoflake}
For every $\e\in (0,1)$ we have $c_1\big(\H_{\ms{\Z}}^5,d_W^{1-\e}\big)\asymp \frac{1}{\sqrt{\e}}$.
\end{cor}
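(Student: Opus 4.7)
The plan is to extract Corollary~\ref{coro:snoflake} from Theorem~\ref{thm:integral criterion} by passing to the limit $r\to\infty$ and applying the resulting equivalence between~\eqref{eq:compression assumption} and~\eqref{eq:1 to infty integral} to the two-parameter family $\upomega(t)=t^{1-\e}/D$, with the distortion $D$ and the snowflaking exponent $\e$ coupled through the integrability of the relevant Dini-type integral.

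For the lower bound $c_1(\H_{\ms{\Z}}^5,d_W^{1-\e})\gtrsim 1/\sqrt{\e}$, suppose $\uppsi:\H_{\ms{\Z}}^5\to L_1(\R)$ realizes distortion $D$ of the snowflake metric, i.e.\ $d_W(x,y)^{1-\e}\le\|\uppsi(x)-\uppsi(y)\|_{L_1(\R)}\le D\, d_W(x,y)^{1-\e}$ for all $x,y$. Setting $\upphi=\uppsi/D$ and using that distinct elements of $\H_{\ms{\Z}}^5$ satisfy $d_W(x,y)\ge 1$, so $d_W(x,y)^{1-\e}\le d_W(x,y)$, the map $\upphi$ satisfies~\eqref{eq:compression assumption} with $\upomega(t)=t^{1-\e}/D$. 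Plugging this choice of $\upomega$ into~\eqref{eq:1 to infty integral} yields
$$\int_1^\infty\frac{s^{2-2\e}/D^2}{s^3}\,\ud s=\frac{1}{D^2}\int_1^\infty s^{-1-2\e}\,\ud s=\frac{1}{2\e D^2}\lesssim 1,$$
which rearranges to $D\gtrsim 1/\sqrt{\e}$, as claimed.

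For the upper bound $c_1(\H_{\ms{\Z}}^5,d_W^{1-\e})\lesssim 1/\sqrt{\e}$, I would use that $(\H_{\ms{\Z}}^5,d_W)$ is $O(1)$-doubling (since $|\BB_r|\asymp r^6$) and invoke a standard Assouad-type snowflake construction. Concretely, for each $k\in\Z$ pick a maximal $2^k$-separated net $N_k$ and a Lipschitz ``scale-$2^k$'' map $\upphi_k:\H_{\ms{\Z}}^5\to\ell_2(N_k)$ satisfying $\|\upphi_k(x)-\upphi_k(y)\|\lesssim\min\{d_W(x,y),2^k\}$ with $\|\upphi_k(x)-\upphi_k(y)\|\gtrsim d_W(x,y)$ whenever $d_W(x,y)\asymp 2^k$. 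Form $\Psi(x)=\bigoplus_{k\in\Z}2^{-k\e}\upphi_k(x)$ in the Hilbert direct sum. Splitting $\sum_k 2^{-2k\e}\min\{d_W(x,y),2^k\}^2$ at $k\asymp\log_2 d_W(x,y)$ gives the upper bound $\|\Psi(x)-\Psi(y)\|^2\lesssim d_W(x,y)^{2(1-\e)}/\e$ from the large-scale tail, while the single scale $2^k\asymp d_W(x,y)$ contributes the lower bound $\|\Psi(x)-\Psi(y)\|^2\gtrsim d_W(x,y)^{2(1-\e)}$. The resulting Hilbert-space distortion of the snowflake metric is $O(1/\sqrt{\e})$, which transfers to $L_1$ via the standard (constant-distortion) Gaussian embedding $L_2\hookrightarrow L_1$.

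The only nontrivial ingredient here is the lower bound, whose novelty is inherited entirely from Theorem~\ref{thm:integral criterion}; the sharp exponent $1/2$ is then locked in by the coincidence that both the Dini-type integral $\int_1^\infty s^{-1-2\e}\,\ud s$ and the Assouad-type geometric sum $\sum_k 2^{-2k\e}$ diverge precisely like $1/\e$ as $\e\to 0^+$, so that the same factor of $\e$ governs both the obstruction and the construction.
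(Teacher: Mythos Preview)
Your proof is correct and follows essentially the same approach as the paper. The lower bound argument is identical: apply the equivalence of~\eqref{eq:compression assumption} and~\eqref{eq:1 to infty integral} with $\upomega(t)=t^{1-\e}/D$, after normalizing a distortion-$D$ embedding of the snowflake and using $d_W\ge 1$ on distinct points to secure the Lipschitz upper bound. For the upper bound, the paper simply cites the general fact (from \cite{LMN05}, see also \cite[Theorem~5.2]{NS11}) that any $O(1)$-doubling space satisfies $c_1(X,d^{1-\e})\lesssim 1/\sqrt{\e}$; your Assouad-type sketch with weighted scale maps and the geometric sum $\sum_k 2^{-2k\e}\asymp 1/\e$ is precisely the construction underlying that citation, followed by the standard isometric-up-to-constants passage from $L_2$ to $L_1$.
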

The fact that for every $O(1)$-doubling metric space $(X,d)$ we have $c_1(X,d^{1-\e})\lesssim 1/\sqrt{\e}$ follows from an argument of~\cite{LMN05} (see also~\cite[Theorem~5.2]{NS11}). Corollary~\ref{coro:snoflake} shows that this is sharp. More generally, it follows from Theorem~\ref{thm:integral criterion} that for every $r\ge 2$ and $\e\in (0,1)$ we have
$$
c_1\big(\BB_{r},d_W^{1-\e}\big)\asymp\min\left\{\frac{1}{\sqrt{\e}},\sqrt{\log r}\right\}.
$$

\subsection{An endpoint estimate}\label{sec:endpoint}  An equivalent formulation of  the case $k=2$ of Theorem~\ref{thm:isoperimetric discrete} is  that every finitely supported function $\upphi:\H_{\ms{\Z}}^5\to \R$ satisfies the following Poincar\'e-type inequality.
\begin{multline}\label{eq:discrete global intro}
\Bigg(\sum_{t=1}^\infty \frac{1}{t^2}\bigg(\sum_{h\in \H_{\ms{\Z}}^5} \big|\upphi\big(hc^t\big)-\upphi(h)\big|\bigg)^2\Bigg)^{\frac12}\lesssim \sum_{h\in \H_{\ms{\Z}}^5}\sum_{\sigma\in \mathfrak{S}_2}\big|\upphi(h\sigma)-\upphi(h)\big|\\= 2\sum_{h\in \H_{\ms{\Z}}^5} \Big( \big|\upphi(h a_1)-\upphi(h)\big|+\big|\upphi(h b_1)-\upphi(h)\big|+\big|\upphi(ha_2)-\upphi(h)\big|+\big|\upphi(h b_2)-\upphi(h)\big|\Big).
\end{multline}
See Lemma~\ref{lem:functional form discrete} below for a simple proof of this equivalence; One direction is immediate, because Theorem~\ref{thm:isoperimetric discrete} is nothing more than the special case $\upphi=\1_\Omega$ of~\eqref{eq:discrete global intro}. Furthermore, by a straightforward convexity argument that appears in Lemma~\ref{lem:convexity to vector valued}  below, the estimate~\eqref{eq:discrete global intro} has a vector-valued  version which asserts that for every finitely supported function $\phi:\H_\Z^5\to L_1(\R)$ we have
\begin{equation}\label{eq:discrete global intro L1 valued}
\Bigg(\sum_{t=1}^\infty \frac{1}{t^2}\bigg(\sum_{h\in \H_{\ms{\Z}}^5} \big\|\upphi\big(hc^t\big)-\upphi(h)\big\|_{L_1(\R)}\bigg)^2\Bigg)^{\frac12}\lesssim \sum_{h\in \H_{\ms{\Z}}^5}\sum_{\sigma\in \mathfrak{S}_2}\big\|\upphi(h\sigma)-\upphi(h)\big\|_{L_1(\R)}.
\end{equation}
Next, as explained in Lemma~\ref{eq:to loclaize wt}   below (mimicking an argument that appears in Section~3.2 of~\cite{LafforgueNaor}), the vector-valued inequality~\eqref{eq:discrete global intro L1 valued} formally implies its local counterpart, which asserts that there exists a universal constant $\alpha\ge 1$ such that for every $n\in \N$ and every $\upphi:\H_{\ms{\Z}}^5\to L_1(\R)$ we have
\begin{equation}\label{eq:discrete local intro}
\Bigg(\sum_{t=1}^{n^2} \frac{1}{t^2}\bigg(\sum_{h\in \BB_{n}} \big\|\upphi\big(hc^t\big)-\upphi(h)\big\|_{L_1(\R)}\bigg)^2\Bigg)^{\frac12}\lesssim \sum_{h\in \BB_{\alpha n}}\sum_{\sigma\in \mathfrak{S}_2}\big\|\upphi(h\sigma)-\upphi(h)\big\|_{L_1(\R)}.
\end{equation}

To deduce Theorem~\ref{thm:integral criterion} from~\eqref{eq:discrete local intro}, suppose that $r\ge 2$, that $\upomega:[0,\infty)\to [0,\infty)$ is a nondecreasing function satisfying $\omega(s)\le s$ for all $s\ge 0$, and that the mapping $\upphi:\BB_{r}\to L_1(\R)$ satisfies~\eqref{eq:compression omega on ball}. For notational convenience, fix universal constants $\beta\in (0,1)$ and $\gamma\in (1,\infty)$ such that for every $t\in \N$ we have $\beta\sqrt{t}\le d_W(c^t,1)\le \gamma\sqrt{t}$.  Since $\omega(s)\le s$ for all $s\ge 0$, the left hand side of~\eqref{eq:integral criterion} is at most $\log 2r$.  Hence, it suffices to prove Theorem~\ref{thm:integral criterion} for $r\ge 1+\max\{\alpha,\gamma\}$, where $\alpha$ is the universal constant in~\eqref{eq:discrete local intro}. Denote $n=\lfloor \min\{r/(1+\gamma),(r-1)/\alpha\}\rfloor$. If $t\in \{1,\ldots,n^2\}$ and $h\in \BB_n$ then $d_W(hc^t,1)\le n+\gamma \sqrt{t}\le (1+\gamma)n\le r$, and therefore we may apply~\eqref{eq:compression omega on ball} with $x=hc^t$ and $y=h$ to deduce that $\|\upphi(hc^t)-\upphi(h)\|_{L_1(\R)}\gtrsim \omega(d_W(c^t,1))\ge \omega(\beta\sqrt{t})$. Consequently,
\begin{multline}\label{eq:pass to int}
\sum_{t=1}^{n^2} \frac{1}{t^2}\bigg(\sum_{h\in \BB_{n}} \big\|\upphi\big(hc^t\big)-\upphi(h)\big\|_{L_1(\R)}\bigg)^2\gtrsim \sum_{t=1}^{n^2} \frac{|\BB_n|^2\omega\big(\beta\sqrt{t}\big)^2}{t^2}\gtrsim n^{12}\sum_{t=1}^{n^2} \int_t^{t+1}\frac{\omega\big(\beta\sqrt{u/2}\big)^2}{u^2}\ud u\\=\beta^2n^{12}\int_{\frac{\beta}{\sqrt{2}}}^{\frac{\beta\sqrt{n^2+1}}{\sqrt{2}}}\frac{\omega(s)^2}{s^3}\ud s\ge \frac{\beta^2(r/2)^{12}}{\max\{(1+\gamma)^{12},\alpha^{12}\}} \int_1^{\frac{\beta r }{2\max\{1+\gamma,\alpha\}}}\frac{\omega(s)^2}{s^3}\ud s,
\end{multline}
where the second inequality in~\eqref{eq:pass to int} uses the fact that $\omega$ is non-decreasing, the penultimate step of~\eqref{eq:pass to int} uses the change of variable $s=\beta\sqrt{u/2}$, and for the final step of~\eqref{eq:pass to int} recall that $\beta<1$ and the definition of $n$. Recalling that $\beta,\gamma>0$ are universal constants, it follows that
\begin{multline}\label{eq:added to 1}
\int_1^{2r} \frac{\omega(s)^2}{s^3}\ud s=\int_{\frac{\beta r }{2\max\{1+\gamma,\alpha\}}}^{2r}\frac{\omega(s)^2}{s^3}\ud s+ \int_1^{\frac{\beta r }{2\max\{1+\gamma,\alpha\}}}\frac{\omega(s)^2}{s^3}\ud s\\\lesssim  1+\frac{1}{r^{12}} \sum_{t=1}^{n^2} \frac{1}{t^2}\bigg(\sum_{h\in \BB_{n}} \big\|\upphi\big(hc^t\big)-\upphi(h)\big\|_{L_1(\R)}\bigg)^2,
\end{multline}
where the final step of~\eqref{eq:added to 1} uses~\eqref{eq:pass to int} and  that $\omega(s)\le s$ for all $s\ge 0$. By our choice of $n$ we have $h\sigma\in \BB_{\alpha n+1}\subset \BB_r$ for  $h\in \BB_{\alpha n}$ and $\sigma\in S$. So, $\|\upphi(h\sigma)-\upphi(h)\|_{L_1(\R)}\le d_W(h\sigma,h)=1$, by~\eqref{eq:compression omega on ball}. The right hand side of~\eqref{eq:discrete local intro} is therefore at most a universal constant multiple of $|\BB_{\alpha n}|\cdot|\mathfrak{S}_2|\lesssim (\alpha n)^6\lesssim r^6$. Hence, by combining~\eqref{eq:added to 1} with~\eqref{eq:discrete local intro} we obtain that the desired estimate~\eqref{eq:integral criterion} indeed holds true.

 We have thus shown that all of the new  results that were stated above follow from Theorem~\ref{thm:isoperimetric discrete}. The bulk of the ensuing discussion will therefore be devoted to the proof of Theorem~\ref{thm:isoperimetric discrete}. Prior to doing so, we shall now conclude the Introduction by explaining the analytic context of inequality~\eqref{eq:discrete global intro} and describing some interesting (and likely quite challenging) questions that remain open.

 Fix $q\in [2,\infty)$. A Banach space $(X,\|\cdot\|_X)$ is said to be uniformly convex if for every $\e\in (0,1)$ there exists $\delta\in (0,1)$  such that $\|x+y\|_X\le 2(1-\delta)$ for every $x,y\in X$ that satisfy $\|x\|_X=\|y\|_X=1$ and $\|x-y\|_X\ge \e$. If one can take here $\delta\gtrsim_X \e^q$ then  $(X,\|\cdot\|_X)$ is said to have a modulus of uniform convexity of power-type $q$. An important theorem of Pisier~\cite{Pis75} asserts that in the setting of uniform convexity (of Banach spaces), power-type behavior is automatic, i.e., every uniformly convex space admits an equivalent norm whose modulus of uniform convexity is of power-type $q$ for some $q\in [2,\infty)$. By~\cite{LafforgueNaor}, for every $k\in \N$, $p>1$ and $q\ge 2$, if $(X,\|\cdot\|_X)$ has a modulus of uniform convexity of power-type $q$ then every finitely supported mapping $\phi:\H_{\ms{\Z}}^{2k+1}\to X$ satisfies
\begin{equation}\label{eq:discrete global intro X valued}
\Bigg(\sum_{t=1}^\infty \frac{1}{t^{1+\frac{q}{2}}}\bigg(\sum_{h\in \H_{\ms{\Z}}^{2k+1}} \big\|\upphi\big(hc^t\big)-\upphi(h)\big\|_{X}^p\bigg)^{\frac{q}{p}}\Bigg)^{\frac{1}{q}}\lesssim_{X,p,q,k} \bigg(\sum_{h\in \H_{\ms{\Z}}^{2k+1}}\sum_{\sigma\in \mathfrak{S}_k}\big\|\upphi(h\sigma)-\upphi(h)\big\|_{X}^p\bigg)^{\frac{1}{p}}.
\end{equation}

The special case of~\eqref{eq:discrete global intro X valued} when $X=\R$ and $p=q=2$ is due to~\cite{AusNaoTes}, where the quadratic nature of the inequality allows for its proof using representation theory. The general case of~\eqref{eq:discrete global intro X valued} was proved in~\cite{LafforgueNaor} using a  semigroup argument that relies on vector-valued Littlewood--Paley--Stein theory. We remark that both~\cite{AusNaoTes} and~\cite{LafforgueNaor} treat only the case $k=1$ of~\eqref{eq:discrete global intro X valued} but the proofs in~\cite{AusNaoTes,LafforgueNaor}  carry over effortlessly to general $k\in \N$; alternatively, a simple argument shows that one can formally deduce~\eqref{eq:discrete global intro X valued} for any $k\in \N$ from its validity for $k=1$, as explained in   Lemma~\ref{lem:from small dim to bigger} below.

If $X=\R$ and $q=2$, then~\eqref{eq:discrete global intro X valued}  says that every finitely supported $\phi:\H^{2k+1}_{\ms{\Z}}\to \R$ satisfies
\begin{equation}\label{eq:discrete global intro R valued}
\forall\, p>1,\qquad \Bigg(\sum_{t=1}^\infty \frac{1}{t^2}\bigg(\sum_{h\in \H_{\ms{\Z}}^{2k+1}} \big|\upphi\big(hc^t\big)-\upphi(h)\big|^p\bigg)^{\frac{2}{p}}\Bigg)^{\frac{1}{2}}\lesssim_{p,k} \bigg(\sum_{h\in \H_{\ms{\Z}}^{2k+1}}\sum_{\sigma\in \mathfrak{S}_k}\big|\upphi(h\sigma)-\upphi(h)\big|^p\bigg)^{\frac{1}{p}}.
\end{equation}
Thus, the equivalent formulation~\eqref{eq:discrete global intro} of Theorem~\ref{thm:isoperimetric discrete} (when $k=2$) is the endpoint case $p=1$ of~\eqref{eq:discrete global intro R valued}.

\eqref{eq:discrete global intro X valued} asserts that the following operator is bounded from $W^{1,p}(\H_{\ms{\Z}}^{2k+1};X)$ to $\ell_q(\ell_p(\H_{\ms{\Z}}^{2k+1};X))$.
\begin{equation}\label{eq:def T}
\forall\, \phi\in W^{1,p}(\H_{\ms{\Z}}^{2k+1};X),\ \forall (h,t)\in \H_{\ms{\Z}}^{2k+1}\times \N, \qquad T\phi(h,t)\eqdef \frac{1}{t^{\frac12+\frac{1}{q}}} \big(\phi(hc^t)-\phi(h)\big).
\end{equation}
Here, $W^{1,p}(\H_{\ms{\Z}}^{2k+1};X)$ denotes the (discrete) $X$-valued Sobolev space on $\H_{\ms{\Z}}^{2k+1}$ that is induced by the generators $\mathfrak{S}_k$, i.e., the space of all $\phi:\H^{2k+1}_{\ms{\Z}}\to X$ for which the following semi-norm is finite.
$$
\|\phi\|_{W^{1,p}(\H_{\ms{\Z}}^{2k+1};X)}\eqdef \bigg(\sum_{h\in \H_{\ms{\Z}}^{2k+1}}\sum_{\sigma\in \mathfrak{S}_k}\big\|\upphi(h\sigma)-\upphi(h)\big\|_X^p\bigg)^{\frac{1}{p}}.
$$
Also,  $\ell_q(\ell_p(\H_{\ms{\Z}}^{2k+1};X))$ is the space of all $\mathfrak{a}:\H_{\ms{\Z}}^{2k+1}\times \N\to X$ for which the following norm is finite.
$$
\|\mathfrak{a}\|_{\ell_q(\ell_p(\H_{\ms{\Z}}^{2k+1};X))}\eqdef \Bigg(\sum_{t=1}^\infty \bigg(\sum_{h\in \H_{\ms{\Z}}^{2k+1}} \big\|\mathfrak{a}(h,t)\big\|_{X}^p\bigg)^{\frac{q}{p}}\Bigg)^{\frac{1}{q}}.
$$
Our result is that  $T$ is  actually bounded from  $W^{1,1}(\H_{\ms{\Z}}^{2k+1};\R)$ to $\ell_2(\ell_1(\H_{\ms{\Z}}^{2k+1};\R))$ when $k\ge 2$. This assertion suffices for  the geometric and algorithmic applications that are established here, but since our proof relies heavily on the fact that we are dealing with real-valued functions, the availability of such an endpoint estimate in the vector-valued setting  remains an elusive open question that is stated explicitly below. This question is important because its positive solution would probably involve the introduction of a markedly new approach that is likely to be valuable elsewhere.

 \begin{question}\label{Q:endpoint vector valued} Fix $q\ge 2$. Suppose that $X$ is a Banach space  whose modulus of uniform convexity has power-type $q$. Does there exist $k=k(X)\in \N$ for which the operator $T$ that is defined in~\eqref{eq:def T}  is bounded from $W^{1,1}(\H_{\ms{\Z}}^{2k+1};X)$ to $\ell_q(\ell_1(\H_{\ms{\Z}}^{2k+1};X))$? Does $k=2$ suffice here?
 \end{question}

 \begin{remark} In Remark~\ref{eq:finite p}, we described our ongoing work on the case $k=1$ of Theorem~\ref{thm:isoperimetric discrete}, which can be rephrased as follows in terms of the operator $T$  that is defined in~\eqref{eq:def T}. By Lemma~\ref{lem:functional form discrete} below, the isoperimetric-type inequality~\eqref{eq:p version} is equivalent to the assertion that there exists $q>2$ for which  $T$ is bounded from $W^{1,1}(\H_{\ms{\Z}}^{3};\R)$ to $\ell_q(\ell_1(\H_{\ms{\Z}}^{3};\R))$. Also, as we described in Remark~\ref{eq:finite p}, if $T$ is bounded from $W^{1,1}(\H_{\ms{\Z}}^{3};\R)$ to $\ell_q(\ell_1(\H_{\ms{\Z}}^{3};\R))$, then necessarily $q\ge 4$. Thus, even though by~\cite{LafforgueNaor} we know that $T$ is bounded from $W^{1,p}(\H_{\ms{\Z}}^{3};\R)$ to $\ell_2(\ell_p(\H_{\ms{\Z}}^{3};\R))$ for every $p>1$, at the endpoint $p=1$ the operator $T$ is unbounded from $W^{1,1}(\H_{\ms{\Z}}^{3};\R)$ to $\ell_2(\ell_1(\H_{\ms{\Z}}^{3};\R))$, but the infimum over those $q>2$ for which $T$ is bounded from $W^{1,1}(\H_{\ms{\Z}}^{3};\R)$ to $\ell_q(\ell_1(\H_{\ms{\Z}}^{3};\R))$ is finite (and is at least $4$). We have evidence that suggests that this infimum is equal to $4$, but at present we do not have a proof of this statement. Regardless, the above results already establish that in $3$ dimensions the boundedness of $T$ exhibits a somewhat curious jump discontinuity as $p\to 1$. This phenomenon has some geometric applications of independent interest (to dimensionality reduction) that we will present in forthcoming work.
 \end{remark}

If one could somehow construct a Banach space $X$ for which Question~\ref{Q:endpoint vector valued} has a negative answer, then given that we establish here that this question has a positive answer when, say, $X=\ell_p$ for $p\in [1,2]$ and $q=2$ (this follows from~\eqref{eq:discrete global intro L1 valued}, since $\ell_p$ is isometric to a subspace of $L_1(\R)$, by~\cite{Kad58}), it would then be interesting to characterize intrinsically the class of Banach spaces for which the answer to Question~\ref{Q:endpoint vector valued} is positive (at present, the answer to Question~\ref{Q:endpoint vector valued} is unknown  when $X=\ell_p$ for some $p\in (2,\infty)$ and $q=p$). In a related vein, we ask the following intriguing question.

 \begin{question}\label{Q:FH} Let $\mathscr{F}_{\ms{\H}}$ denote the class of all those Banach spaces $X$ into which $\H_{\ms{\Z}}^3$  does not admit a bi-Lipschitz embedding.\footnote{One could also consider classes of Banach spaces that are defined the same way but with $\H_{\ms{\Z}}^3$ replaced with $\H_{\ms{\Z}}^{2k+1}$ for each $k\in \N$. The currently available evidence suggests that the resulting classes of Banach spaces do not actually depend on $k$, but this has not been proven. For simplicity and concreteness, we restrict the discussion here to $k=1$, but all of the questions could be studied for general $k$ as well.  } Could $\mathscr{F}_{\ms{\H}}$ be described using intrinsic geometric properties?
 \end{question}

 By~\cite{CK06,LN06} (see Remark~\ref{rem:embeddings of continuous group} below), $\mathscr{F}_{\ms{\H}}$  contains all the Banach spaces that admit an equivalent uniformly convex norm. By~\cite{CK10},    $\mathscr{F}_{\ms{\H}}$  also contains all the $L_1(\mu)$ spaces. Due to these results, a natural guess for an answer to Question~\ref{Q:FH} would be that $\mathscr{F}_{\ms{\H}}$  coincides with the Banach spaces that have finite cotype (see e.g.~\cite{Mau03}). If true, this would be a remarkable geometric result, since (by the Maurey--Pisier theorem~\cite{MP76}) it would mean that the existence of a bi-Lipschitz embedding of  $\H_{\ms{\Z}}^3$ into a Banach space $X$ implies that {\em every} finite metric space embeds into $X$ with bi-Lipschitz distortion $1+\e$ for every $\e>0$. As a concrete example of a classical Banach space for which it is unknown whether or not it belongs to $\mathscr{F}_{\ms{\H}}$, consider the Schatten--von Neumann trace class $\mathsf{S}_1$ (the space of all operators on $\ell_2$, equipped with the nuclear norm; see e.g.~\cite[\S III.G]{Woj91}). We suspect that $\mathsf{S}_1\in \mathscr{F}_{\ms{\H}}$, and that moreover any embedding of $\BB_n$ into $\mathsf{S}_1$ incurs  bi-Lipschitz distortion that is at least a universal constant multiple of $\sqrt{\log n}$. However, for this strengthening of Theorem~\ref{thm:distortion R} to hold true one would need to find a way to prove it without relying on an isoperimetric-type inequality as we do here. The proofs in~\cite{CK10,CheegerKleinerMetricDiff,CKN} that $\H^3$ does not admit a bi-Lipschitz embedding into $L_1(\R)$ also rely on special properties of real-valued functions through the reduction to questions about subsets of finite perimeter in the continuous Heisenberg group (see Section~\ref{sec:continuous} below). So, perhaps even as a step towards Question~\ref{Q:FH} in its full generality, it would be of great interest to devise an approach that applies to mappings that take value in $\mathsf{S}_1$ rather than $L_1(\R)$.

 By a  fundamental theorem of Ostrovskii~\cite{Ost12}, $X\in \mathscr{F}_{\ms{\H}}$ if and only if the bi-Lipschitz distortion of any embedding of   $\BB_n$ into $X$ tends to $\infty$ as $n\to \infty$. The following question asks for a quantitative refinement of this assertion,  motivated by similar dichotomic phenomena that occur in metric embedding (see~\cite{Men09,MN11-arxiv,Nao12,MN13,ANN15}); by~\cite{AusNaoTes} (in combination with Pisier's renorming theorem~\cite{Pis75}) its answer is positive when $X$ admits an equivalent uniformly convex norm (see ~\cite{Li14,LafforgueNaor} for different proofs of this fact), and by~\cite{CKN} its answer is also positive when $X=L_1(\R)$.

 \begin{question}\label{Q:rate} Suppose that $X\in \mathscr{F}_{\ms{\H}}$. Does this imply that there exists $\theta=\theta(X)>0$ such that any embedding of $\BB_n$ into $X$ incurs bi-Lipschitz distortion at least $(\log n)^\theta$?
 \end{question}

\subsection*{Roadmap} The Introduction contained a description of the new results that are established here, but we did not yet present an overview of the ideas that go into our proof of  Theorem~\ref{thm:isoperimetric discrete}. In particular, we did not yet explain how the fact that the underlying Heisenberg group is of dimension at least $5$ becomes relevant to the proof of  Theorem~\ref{thm:isoperimetric discrete}, despite the fact that this dimension had no role  whatsoever in the precursor~\cite{LafforgueNaor} of this result, i.e., its $\ell_p$-version~\eqref{eq:discrete global intro R valued} for $p>1$. The reason why we are postponing these (important) explanations is that, even though the discrete setting that was described above is needed for applications, our proof of Theorem~\ref{thm:isoperimetric discrete} actually takes place in a continuous setting  that requires the presentation of additional concepts and basic facts. We therefore postpone the overview of the steps of the proof of Theorem~\ref{thm:isoperimetric discrete} to Section~\ref{sec:overview}, which follows Section~\ref{sec:continuous}, where various concepts related to the continuous Heisenberg group are presented, and Section~\ref{sec:reductions}, where initial reductions are performed, including a reduction of Theorem~\ref{thm:isoperimetric discrete} to its continuous counterpart. With this groundwork in place, it becomes more natural to explain the ideas of our proof in Section~\ref{sec:overview}. Since many readers may be familiar with the (standard) continuous setting, those who wish to see the proof overview can read Section~\ref{sec:vert}, which defines vertical perimeter in the continuous setting, then skip to Section~\ref{sec:overview} on first reading, though prior to doing so we recommend familiarization with the notation in Section~\ref{sec:intrinsic Lipschitz} since it treats the (somewhat less-standard)  notion of {\em intrinsic Lipschitz graph}, which has a central role in our proof.

In Section~\ref{sec:intrinsic graphs} we derive, as a crucial new ingredient of our proof of Theorem~\ref{thm:isoperimetric discrete}, a special case of the (continuous counterpart of) the vertical-versus-horizontal isoperimetric inequality of Theorem~\ref{thm:isoperimetric discrete}. It is important to stress that this argument of Section~\ref{sec:intrinsic graphs}  is the {\em only} place where the assumption $k>1$ is used in our proof of Theorem~\ref{thm:isoperimetric discrete}. The remaining steps of the proof work for Heisenberg groups of any dimension, yielding structural information that will be used in future work also when $k=1$.

Section~\ref{sec:corona decompositions} contains definitions and basic facts related to {\em (local) intrinsic  corona decompositions}, which are  structural properties of sets in the continuous Heisenberg group (existence of certain well-behaved multi-scale covers) that will be used to deduce our isoperimetric-type inequality in its full generality from the special case established in Section~\ref{sec:intrinsic graphs}. In Section~\ref{sec:isoperimetry of corona} we prove that sets that admit an intrinsic corona decomposition satisfy the desired isoperimetric-type inequality.

In Section~\ref{sec:decomposing cellular} we state Theorem~\ref{thm:Ahlfors admits corona}, which is a technical structural result asserting that any subset $E$ of the continuous Heisenberg group such that $E$, the complement of $E$ and the boundary of $E$ are all locally Ahlfors regular, admits a suitable local intrinsic corona decomposition.  Thus, by the results of Section~\ref{sec:isoperimetry of corona}, such sets satisfy the desired isoperimetric-type inequality.  To apply this fact, in Section~\ref{sec:decomposing cellular}  we also show how to decompose a cellular set (see Section~\ref{sec:cellularSets})  of finite perimeter in the continuous Heisenberg group into parts that are locally Ahlfors regular as above, in such a way that if we sum up the isoperimetric-type inequalities that follow from Theorem~\ref{thm:Ahlfors admits corona} for each of these parts, then we obtain the desired inequality for the initial cellular set.

As part of the  basic reductions that are contained in Section~\ref{sec:reductions}, a simple approximation argument shows that it suffices to treat cellular sets (see Lemma~\ref{le:to cellular q} below). Therefore, Section~\ref{sec:decomposing cellular} implies  that it remains to prove Theorem~\ref{thm:Ahlfors admits corona}, i.e., to construct a local intrinsic corona decomposition for every  set that satisfies the above local Ahlfors-regularity (i.e., for the set itself, its complement, and its boundary). This construction is performed in Section~\ref{sec:constructing corona}, thus completing the proof of  Theorem~\ref{thm:isoperimetric discrete}.

We conclude this article with Section~\ref{sec:previous}, which contains further historical background on the Sparsest Cut Problem, as well as descriptions of directions for future research (some of which we will pursue in forthcoming works).

\subsection*{Acknowledgments} A.~N. is grateful to Mike Christ and Vincent Lafforgue for helpful discussions.

\section{The continuous setting}\label{sec:continuous}

In what follows, it will be beneficial to allow only one exception to the convention for asymptotic notation that was described after Theorem~\ref{thm:isoperimetric discrete}. Specifically,  since throughout   we will fix an integer $k\in \N$ and many constant factors do depend on $k$, in order to not overburden the notation we shall allow the notations $\lesssim,\gtrsim,\asymp$ to coincide with $\lesssim_k,\gtrsim_k,\asymp_k$, respectively. Nevertheless, our main application is when $k=2$, so in this case the implicit constant factors are in fact universal constants.

\subsection{Definition of continuous Heisenberg group} Fix $k\in \N$. In light of the matrix realization~\eqref{eq:def discrete H} that we  chose in the Introduction for the discrete Heisenberg group $\H^{2k+1}_{\ms{\Z}}$, an obvious way to define the continuous  Heisenberg group $\H^{2k+1}$  is that $\H^{2k+1}$ consists of all those matrices as in~\eqref{eq:def discrete H}, but now with the entries  $x_1,\ldots,x_k,y_1,\ldots,y_k,z$ allowed to be arbitrary real numbers. This would work just fine in the ensuing discussion, but for notational convenience we prefer to consider a different realization of the Heisenberg group that arises by identifying the above real matrix group with its Lie algebra via the exponential map and taking the Baker--Campbell--Hausdorff formula as the definition of the group product. We shall now describe the continuous Heisenberg group in this (standard) way, and proceed to adhere exclusively to this specific realization in what follows.

Fix a Hilbertian norm $\|\cdot\|$ on $\R^{2k+1}$. Fix also an orthonormal basis $X_1,\ldots,X_k,Y_1,\ldots,Y_k,Z$ of $\R^{2k+1}$, so that every $h\in \R^{2k+1}$ can be written as $h=\sum_{i=1}^k\alpha_iX_i+\sum_{i=1}^k\beta_iY_i+\gamma Z$ for some $\alpha_1,\ldots,\alpha_k,\beta_1,\ldots,\beta_k,\gamma\in \R$. Denote $x_i(h)=\alpha_i$, $y_i(h)=\beta_i$ for all $i\in \k$ and $z(h)=\gamma$, i.e., $x_1,\ldots,x_k,y_1,\ldots,y_k,z:\R^{2k+1}\to \R$ are the coordinate functions corresponding to the basis $X_1,\ldots,X_k,Y_1,\ldots,Y_k,Z$. We shall also write $x(h)=\sum_{i=1}^kx_i(h)X_i$, $y(h)=\sum_{i=1}^k y_i(h)Y_i$ and $\pi(h)=x(h)+y(h)$ (thus $\pi(Z)=0$). In what follows, we shall canonically identify the span of $\{X_1,\ldots,X_k,Y_1,\ldots,Y_k\}$ with $\R^{2k}$, so that the mapping $\pi:\R^{2k+1}\to \R^{2k}$ is the  orthogonal projection onto the first $2k$ coordinates.

 For every $u,v\in \R^{2k+1}$ write $\omega(u,v)=\sum_{i=1}^k \big(x_i(u)y_i(v)-y_i(u)x_i(v)\big)$, i.e., $\omega(u,v)$ is the standard symplectic form on $\R^{2k}$ applied to the vectors $\pi(u),\pi(v)$. In particular, $\omega(X_i,Y_i)=-\omega(Y_i,X_i)=1$ and  for every two basis elements $u,v\in \{X_1,\ldots,X_k,Y_1,\ldots,Y_k,Z\}$ such that $\{u,v\}\neq \{X_i,Y_i\}$ for all $i\in \k$ we have $\omega(u,v)=0$. The (continuous) Heisenberg group $\H^{2k+1}$ is defined to be the group whose underlying set is $\R^{2k+1}$, equipped with the following product:
  \begin{equation}\label{eq:def group product omega}
 \forall\, u,v\in \H^{2k+1},\qquad u v\eqdef u+v+\frac{\omega(u,v)}{2} Z.
 \end{equation}
 It is straightforward to check that this turns $\H^{2k+1}$ into a group whose identity element is the all-$0$ vector $\0\in \H^{2k+1}$ and the inverse of $h\in \H^{2k+1}$ is $-h$. The resulting group is isomorphic to the real matrix group (with usual matrix multiplication) that we described above. Indeed,  consider the isomorphism that assigns to every $h\in \R^{2k+1}$ the following upper triangular $k+2$ by $k+2$ matrix.
\begin{equation*}
\psi(h)\eqdef  \begin{pmatrix} 1 & x_1(h) &x_2(h) & \dots &x_k(h)& w(h)\\
  0 & 1&0 &\dots& 0 & y_1(h)\\
  \vdots & \ddots & \ddots  & \ddots & \vdots&\vdots\\
            \vdots & \ddots & \ddots& \ddots &0&y_{k-1}(h)\\
              \vdots & \ddots & \ddots &\ddots&1&y_k(h)
              \\
              0 & \dots & \dots &\dots&0&1
                       \end{pmatrix}, \quad\mathrm{where}\quad w(h)\eqdef z(h)+\frac12\sum_{i=1}^k x_i(h)y_i(y).
\end{equation*}
The discrete Heisenberg group $\H_{\ms{\Z}}^{2k+1}$ can now be realized as the subgroup of $\H^{2k+1}$ that is generated by $\{X_1,\ldots,X_k,Y_1,\ldots,Y_k\}$. Note that the $z$-coordinate of an element in $\H_{\ms{\Z}}^{2k+1}$ is either an integer or a half-integer. From now on, we shall work exclusively with this specific realization of $\H_{\ms{\Z}}^{2k+1}$.

In order to avoid confusing multiplication by scalars with the group law of $\H^{2k+1}$, for every $h=\sum_{i=1}^k\alpha_iX_i+\sum_{i=1}^k\beta_iY_i+\gamma Z\in \H^{2k+1}$ and $t\in \R$ it will be convenient to use the exponential notation  $h^t=\sum_{i=1}^kt\alpha_iX_i+\sum_{i=1}^kt\beta_iY_i+t\gamma Z$. One directly computes from~\eqref{eq:def group product omega} that for every $i\in \k$ and $s,t\in \R$ we have $[X_i^s,Y_i^t]=Z^{st}$, where we use the standard commutator notation $[u,v]=uvu^{-1}v^{-1}$ for every $u,v\in \H^{2k+1}$. Given a subset $A\subset \H^{2k+1}$ we shall denote its linear span by $\langle A\rangle$. Thus,  $\langle h\rangle =\langle \{h\}\rangle=\{h^t:t\in \R\}$ is the one-parameter subgroup generated by $h\in \H^{2k+1}$.

 \subsubsection{Horizontal derivatives} If $f:\H^{2k+1}\to \R$ is smooth, then for every $i\in \k$ its {\em horizontal derivatives} in directions $X_i,Y_i$ are defined by
$$
\forall\, h\in \H^{2k+1},\qquad X_if(h)\eqdef \frac{\partial f}{\partial x_i}(h)-\frac12 y_i(h)\frac{\partial f}{\partial z}(h) \quad \mathrm{and}\quad Y_if(h)\eqdef \frac{\partial f}{\partial y_i}(h)+\frac12 x_i(h)\frac{\partial f}{\partial z}(h).
$$
These are differential operators corresponding to left-invariant horizontal vector fields on $\H^{2k+1}$, so they are left-invariant.  That is, if we let $\rho_g$ be the left action $\rho_gf(h)=f(g^{-1}h)$ for all $g\in \H^{2k+1}$, then $\{X_i\}_{i=1}^k$ and $\{Y_i\}_{i=1}^k$ commute with $\rho_g$.

The {\em horizontal gradient} of $f$ is then defined as follows.
$$
\forall\, h\in \H^{2k+1},\qquad \nabla_\H f(h)\eqdef \big(X_1f(h),\ldots,X_kf(h),Y_1f(h),\ldots,Y_kf(h)\big)\in \R^{2k}.
$$
Thus, for $p\in (0,\infty]$ the $p$-norm of the horizontal gradient of $f$ is given by.
\begin{align*}
\|\nabla_\H f(h)\|_{\ell_p^{2k}}&=\bigg(\sum_{i=1}^k \big(|X_if(h)|^p+|Y_if(h)|^p\big)\bigg)^{\frac{1}{p}}\\&=\bigg(\sum_{i=1}^k\Big|\frac{\partial f}{\partial x_i}(h)-\frac12 y_i(h)\frac{\partial f}{\partial z}(h)\Big|^p+\sum_{i=1}^k\Big|\frac{\partial f}{\partial y_i}(h)+\frac12 x_i(h)\frac{\partial f}{\partial z}(h)\Big|^p \bigg)^{\frac{1}{p}}.
\end{align*}

\subsubsection{Lines and planes} Denote from now on $\mathsf{H}\eqdef \langle X_1,\dots, X_k,Y_1,\dots, Y_k\rangle=\pi(\H^{2k+1})$. This $2k$--dimensional linear subspace $\mathsf{H}$ is called the subspace of \emph{horizontal vectors}.  If $U\subset \mathsf{H}$ is an affine hyperplane in $\mathsf{H}$ (thus $U$ is a translate of a linear subspace of $\mathsf{H}$ of dimension $2k-1$), then we call $V=\pi^{-1}(U)\subset \H^{2k+1}$ the \emph{vertical plane} lying over $U$.

A {\em horizontal line} in $\H^{2k+1}$ is a coset of the form $w  \langle h \rangle$ for some $w\in\H^{2k+1}$ and $h\in \mathsf{H}$. If $u\in \H^{2k+1}$, then $P_u=u \mathsf{H}$ is called the \emph{horizontal plane} centered at $u$.  Equivalently, $P_u$ is the union of all of the horizontal lines that pass through $u$.  We can write
\begin{equation}\label{eq:plane equation}
  P_u=\left\{hZ^{z(u)+\frac{\omega(\pi(u),\pi(h))}{2}}\mid h\in \mathsf{H}\right\}.
\end{equation}
(See Section~\ref{sec:planes and angles}.)

The term \emph{plane} in $\H^{2k+1}$ will always stand for either a vertical plane or a horizontal plane.  By \eqref{eq:plane equation}, any plane in $\H^{2k+1}$ is an affine hyperplane in $\R^{2k+1}$.  Since $\omega$ is nondegenerate on $\mathsf{H}$, for any affine hyperplane $Q\subset \R^{2k+1}$, either $Q$ is vertical or there is a unique $u\in \H^{2k+1}$ such that $Q=P_u$.  Every plane $P\subset \H^{2k+1}$ separates $\H^{2k+1}$ into two \emph{half-spaces}, which we denote by $P^+$ and $P^-$.

\subsubsection{The Carnot--Carath\'eodory metric}
Suppose that $a,b\in \R$ satisfy $a<b$. If $\gamma\from [a,b]\to \H^{2k+1}$ is a curve such that the coordinate functions $\{x_i\circ\gamma\}_{i=1}^k$, $\{y_i\circ \gamma\}_{i=1}^k$, $z\circ \gamma$ are all Lipschitz, then the tangent vector $\gamma'(t)\in \H^{2k+1}$ is defined for almost all $t\in [a,b]$.  We say that $\gamma$ is a \emph{horizontal curve} if $\gamma'(t)\in P_{\gamma(t)}$ for almost every $t\in [a,b]$. Equivalently, for $\gamma$ to be horizontal we require that $\gamma^{-1}\gamma'\in\mathsf{H}$ almost everywhere. The length of a horizontal curve $\gamma\from [a,b]\to \H^{2k+1}$ is defined to be $\ell(\gamma)=\int_a^b\|\pi(\gamma'(t))\|\ud t=\ell(\pi\circ \gamma)$. Note that segments of horizontal lines are horizontal curves.  Since $[X_i^s,Y_i^t]=Z^{st}$ for every $i\in \k$ and $s,t\in \R$, any two points in $\H^{2k+1}$ can be connected by a horizontal curve consisting of segments of horizontal lines. We can therefore define the Carnot--Carath\'eodory distance between any two points $v,w\in \H^{2k+1}$ by
$$d(v,w)\eqdef \inf \big\{\ell(\gamma)\mid \gamma:[0,1]\to \H^{2k+1} \text{ is\ horizontal,}\  \gamma(0)=v,\ \gamma(1)=w\big\}.$$
See e.g.~\cite{Gro96,Mon02} for more on this metric; it suffices to recall that there exists $C_k\in [1,\infty)$ such that
\begin{equation}
  \label{eq:metric approximation}
 \forall\, h\in \H^{2k+1},\qquad   d(\0,h)\le \sum_{i=1}^k |x_i(h)|+\sum_{i=1}^k |y_i(h)|+4\sqrt{|z(h)|}\le C_kd(\0,h),
  \end{equation}
and
\begin{equation} \label{eq:metric lower bound}
 \forall\, h\in \H^{2k+1},\qquad  d(\0,h) \ge \|\pi(h)\|.
\end{equation}

In what follows, for every $r\in (0,\infty)$ we shall denote by $B_r\subset \H^{2k+1}$ the open ball in the metric $d$ that is centered at $\0$, i.e., $B_r=\{h\in \H^{2k+1}:\ d(h,\0)<r\}$. For every  $\Omega\subset \H^{2k+1}$, the Lipschitz constant of a mapping $f:\Omega\to \R$ relative to the metric $d$ will be denoted by $\|f\|_{\Lip(\Omega)}$. For every $s\in (0,\infty)$ the $s$-dimensional Hausdorff measure that is induced by the metric $d$ on $\H^{2k+1}$ will be denoted below by $\CH^s$. Note that it follows from~\eqref{eq:metric approximation} that  $\H^{2k+1}$ is a metric space with Hausdorff dimension $2k+2$. One checks that $d$ is a left-invariant metric on $\H^{2k+1}$ and that the Lebesgue measure  is a Haar measure of $\H^{2k+1}$. Thus $\CH^{2k+2}$ is proportional to the Lebesgue measure. Also, for every $h\in \H^{2k+1}$ and $r\in [0,\infty)$ the open ball of radius  $r$ in the metric $d$ is $hB_r=B_r(h)$.

If $h\in \H^{2k+1}$ and $v\in \mathsf{H}\setminus \{\0\}$, then the horizontal line $h\langle v\rangle$ is a geodesic.  Consequently
\begin{equation}\label{eq:horizontal geodesics}
  d(h,hv)=\dEuc\big(\pi(h),\pi(hv)\big)=\|v\|,
\end{equation}
where $\dEuc(\cdot,\cdot)$ denotes the Euclidean metric that is induced by the Hilbertian norm $\|\cdot\|$. Furthermore, by the classical isoperimetric inequality for curves in $\R^2$, one can show (e.g.~\cite{Mon02}) that
$$
\forall\, r\in (0,\infty),\qquad d\big(\0,Z^{\pi r^2}\big)=2\pi r.
$$

\subsubsection{Isometries and scaling automorphisms}
The isometry group of $\H^{2k+1}$ acts transitively on $\H^{2k+1}$ by left-multiplication.  There are also many isometries that fix the identity element $\0$.  Any element $v\in \H^{2k+1}$ can be written uniquely as $v=hZ^t$ for some $h\in \mathsf{H}$ and $t\in \R$. If $f\from \R^{2k}\to \R^{2k}$ is a Euclidean isometry that leaves the symplectic form $\omega$ invariant, then the map $hZ^t\mapsto f(h)Z^t$ is an isometry of $\H^{2k+1}$.  The group consisting of isometries of this type is isomorphic to the unitary group $U(k)$, and it acts transitively on the unit sphere in $\mathsf{H}$.

Another important class of automorphisms of $\H^{2k+1}$ is the following family of {\em scalings}.  For every $t\in (0,\infty)$ define $\s_t:\H^{2k+1}\to \H^{2k+1}$ by
$$
\s_t\bigg(\sum_{i=1}^k \alpha_iX_1+\sum_{i=1}^k\beta_i Y_i+\gamma Z\bigg)\eqdef \sum_{i=1}^k t\alpha_iX_1+\sum_{i=1}^kt\beta_i Y_i+t^2\gamma Z.
$$
It is straightforward to check  that $\s_t$ is an automorphism of $\H^{2k+1}$, that it sends horizontal curves to horizontal curves, and that if $\gamma$ is a horizontal curve, then $\ell(\s_t\circ \gamma)=t\ell(\gamma)$.  It follows that $\s_t$ acts as a scaling of the Carnot--Carath\'eodory metric, i.e., $d(\s_t(u),\s_t(v))=td(u,v)$ for all $u,v\in \H^{2k+1}$.

\begin{remark}\label{rem:embeddings of continuous group} Analogously to Question~\ref{Q:FH}, one can  consider the class $\mathscr{F}_{\ms{\H}}^{\ms{\R}}$ of those Banach spaces $X$ into which the continuous Heisenberg group $\H^3$ does not admit a bi-Lipschitz embedding. By~\cite{LN06,CK06}, $\mathscr{F}_{\ms{\H}}^{\ms{\R}}$  contains  all the Banach spaces that have the Radon--Nikod\'ym property (RNP), hence in particular all the reflexive Banach spaces and all the separable dual Banach spaces (see e.g.~\cite[Chapter~5]{BL00} for these assertions, as well as much more on the RNP). By~\cite{CK10},    $\mathscr{F}_{\ms{\H}}^{\ms{\R}}$  also contains all the $L_1(\mu)$ spaces. We ask whether $X\in \mathscr{F}_{\ms{\H}}^{\ms{\R}}$ implies that also $L_1(\mu;X)\in \mathscr{F}_{\ms{\H}}^{\ms{\R}}$; if true, this would be a beautiful strengthening of~\cite{CK10}. We also do not know whether  $\mathscr{F}_{\ms{\H}}^{\ms{\R}}$  contains all the noncommutative $L_1$ spaces (see e.g.~\cite{PX03}), but  since the Schatten--von Neumann trace class $\mathsf{S}_1$  has the RNP (since $\mathsf{S}_1$ is separable and it is the dual of the space of all the compact operators on $\ell_2$, equipped with the operator norm), by~\cite{LN06,CK06} we do know that  $\mathsf{S}_1\in \mathscr{F}_{\ms{\H}}^{\ms{\R}}$ (recall that it is unknown whether or not $\mathsf{S}_1$ belongs to $\mathscr{F}_{\ms{\H}}$). Since $\H^3_{\ms{\Z}}$ is bi-Lipschitz equivalent to a subset of $\H^3$, we trivially have $\mathscr{F}_{\ms{\H}}\subset \mathscr{F}_{\ms{\H}}^{\ms{\R}}$. This inclusion is strict because by~\cite{BL08} we know that $\H^3_{\ms{\Z}}$ (indeed, any locally-finite metric space) admits a bi-Lipschitz embedding into a Banach space $Z$ with the RNP (in fact, one can take $Z$ to be the $\ell_2$ direct sum of the finite dimensional spaces $\{\ell_\infty^n\}_{n=1}^\infty$, so $Z$ can even be reflexive). By considering the re-scaled copies $\s_t(\H^3_{\ms{\Z}})\subset \H^3$ as $t\to 0^+$, one sees that if $X$ belongs to $\mathscr{F}_{\ms{\H}}$, then for any non-principal ultrafilter $\mathcal{U}$, the ultrapower $X^\mathcal{U}$ belongs to $\mathscr{F}_{\ms{\H}}^{\ms{\R}}$ (see e.g.~\cite{Hein80} for  ultrapowers of Banach spaces). By~\cite{BS72},  $X^\mathcal{U}$ has the RNP if and only if $X$ admits an equivalent uniformly convex norm. Hence, it follows from~\cite{LN06,CK06} that  $\mathscr{F}_{\ms{\H}}$ contains all the Banach spaces that admit an equivalent uniformly convex norm (different proofs of this fact were obtained in ~\cite{AusNaoTes,Li14,LafforgueNaor}). As in Question~\ref{Q:FH}, it is natural to ask for an intrinsic geometric characterization of the class $\mathscr{F}_{\ms{\H}}^{\ms{\R}}$. We chose to highlight Question~\ref{Q:FH} rather than this continuous variant due to its local nature, i.e., because by~\cite{Ost12} the balls $\BB_n$ embed with distortion $O_X(1)$ (as $n\to \infty$) into a Banach space $X$ if and only if $X\in \mathscr{F}_{\ms{\H}}$. In other words, membership in $\mathscr{F}_{\ms{\H}}$ is determined by the geometry of finite subsets of the Banach space in question while membership in $\mathscr{F}_{\ms{\H}}^{\ms{\R}}$ is an inherently infinite dimensional property.
\end{remark}

\subsubsection{Measure and perimeter}\label{sec:measure perimeter} If $E\subset \H^{2k+1}$ has smooth or piecewise smooth boundary,  then $\partial E$ has Hausdorff dimension $2k+1$.  In this paper we will primarily deal with sets $E\subset \H^{2k+1}$ such that $\cH^{2k+1}(\partial E)<\infty$ or surfaces $A\subset \H^{2k+1}$ of topological dimension $2k+1$ such that $\cH^{2k+1}(A)<\infty$; such surfaces include the boundaries of the cellular sets that are introduced in Section~\ref{sec:cellularSets} below and the intrinsic Lipschitz graphs that are introduced in Section~\ref{sec:intrinsic Lipschitz} below. In Section~\ref{sec:delta monotone is close to plane}, however, we will need to work with limits of cellular sets in order to apply a compactness argument. The boundaries of such limits can be more complicated, and therefore we will need in Section~\ref{sec:delta monotone is close to plane} the more sophisticated notion of {\em Heisenberg perimeter}.  See~\cite{FSSCRectifiability} for the definition of this notion, which is also recalled in~\cite{CK10,CKN}. We will not need to work here with this definition directly, so it suffices to state that as in~\cite{FSSCRectifiability} one associates to every measurable subset $E\subset \H^{2k+1}$ a measure $\Per_E$ that is called the \emph{perimeter measure} of $E$ and has the properties that are stated in Propositions~\ref{prop:per cellular}, \ref{prop:per scale invariant}, \ref{prop:FSSC Cacciopoli} and \ref{prop:compact lower semicontinuous} below, all of which are proven in~\cite{FSSCRectifiability}.

\begin{prop}\label{prop:per cellular}
  There exists $c_k\in (0,\infty)$ such that if $E\subset \H^{2k+1}$ has piecewise smooth boundary and $U\subset \H^{2k+1}$ is an open set, then  $\Per_{ E}(U)=c_k\cH^{2k+1}(U\cap \partial E)$.
\end{prop}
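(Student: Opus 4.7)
The plan is to derive the identity by showing that both $\Per_E(U)$ and $\cH^{2k+1}(U\cap \partial E)$ admit integral representations over $\partial E$ of the same form: a constant multiple of $\int_{\partial E} \bigl\|\pi(\nu^{\text{Euc}}_E)\bigr\|\, d\cH^{2k}_{\text{Euc}}$, where $\nu^{\text{Euc}}_E$ is the outward Euclidean unit normal to $\partial E$ and $\cH^{2k}_{\text{Euc}}$ is the standard Euclidean surface measure. First I would reduce to the case in which $\partial E$ is $C^1$ inside $U$: the singular locus of a piecewise smooth boundary is a finite union of smooth submanifolds of Euclidean topological dimension at most $2k-1$, and a short scaling argument shows that its Carnot--Carath\'eodory Hausdorff dimension is at most $2k$, so it is $\cH^{2k+1}$-null, and it is $\Per_E$-null once the divergence-theorem representation below is in hand.

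For $E$ with $C^1$ boundary, the Franchi--Serapioni--Serra Cassano representation gives
\begin{equation*}
\Per_E(U) \;=\; \int_{U\cap \partial E} \bigl\|\pi\bigl(\nu^{\text{Euc}}_E(x)\bigr)\bigr\|\, d\cH^{2k}_{\text{Euc}}(x),
\end{equation*}
which follows by applying the Euclidean divergence theorem to a horizontal test field $\varphi\in C^1_c(U,\mathsf{H})$ with $\|\varphi\|_\infty\le 1$, using that $\varphi\cdot \nu^{\text{Euc}}_E=\varphi\cdot \pi(\nu^{\text{Euc}}_E)$ because $\varphi$ takes values in $\mathsf{H}$, and then maximizing pointwise on $\partial E$ over the direction of $\varphi$. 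The heart of the matter is to establish the matching density formula
\begin{equation*}
\cH^{2k+1}(U\cap \partial E) \;=\; \lambda_k \int_{U\cap \partial E} \bigl\|\pi\bigl(\nu^{\text{Euc}}_E(x)\bigr)\bigr\|\, d\cH^{2k}_{\text{Euc}}(x)
\end{equation*}
for some $\lambda_k\in (0,\infty)$. I would argue this by a blow-up at non-characteristic points $x\in \partial E$ (i.e.\ where $\pi(\nu^{\text{Euc}}_E(x))\neq \0$): the rescaled surfaces $\s_{1/r}(x^{-1}\partial E)$ converge as $r\to 0^+$ to the vertical tangent plane $V_x=\pi^{-1}(\pi(\nu^{\text{Euc}}_E(x))^\perp)$, and the $(2k+1)$-density of $\cH^{2k+1}$ along $\partial E$ at $x$ equals that of $\cH^{2k+1}$ along $V_x$ at $\0$. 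Invariance of $d$ under the $U(k)$-isometries stabilizing $\0$ forces this density to depend only on $\|\pi(\nu^{\text{Euc}}_E(x))\|$; the scaling law $d(\s_t(u),\s_t(v))=t\cdot d(u,v)$ then forces the dependence to be linear in that quantity, giving the stated proportionality. The characteristic set, where $\pi(\nu^{\text{Euc}}_E)$ vanishes, is $\cH^{2k+1}$-null on any smooth hypersurface by an implicit-function-theorem argument, so it contributes to neither side.

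Dividing the two displayed identities yields the proposition with $c_k=1/\lambda_k$. The principal obstacle is the blow-up/density computation: one must rigorously justify convergence of the rescaled surfaces to the vertical tangent plane in a sense strong enough to transfer density information, verify that $\cH^{2k+1}$ has a genuine $(2k+1)$-density at non-characteristic points of a $C^1$ hypersurface, and carry out the explicit density calculation on the limiting vertical plane. Once this is done, the proposition follows by pointwise comparison of the two density formulas and integration against $\cH^{2k}_{\text{Euc}}$.
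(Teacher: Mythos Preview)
The paper does not give its own proof of this proposition: it states explicitly that Propositions~\ref{prop:per cellular}--\ref{prop:compact lower semicontinuous} ``are proven in~\cite{FSSCRectifiability}'' and declines to work with the definition of perimeter directly. So there is no argument in the paper to compare against; your sketch is essentially a summary of the standard Franchi--Serapioni--Serra Cassano approach that the citation points to, namely the divergence-theorem representation $\Per_E(U)=\int_{U\cap\partial E}\|\pi(\nu^{\text{Euc}}_E)\|\,d\cH^{2k}_{\text{Euc}}$ for $C^1$ boundaries combined with a blow-up computation of the $(2k+1)$-density of $\cH^{2k+1}$ at non-characteristic points.

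Your outline is on the right track and identifies the genuine work correctly (the density computation and the negligibility of the characteristic set). Two remarks if you want to flesh it out. First, the claim that the characteristic set is $\cH^{2k+1}$-null requires more than an implicit-function-theorem argument: one typically invokes Balogh's bound on the Euclidean Hausdorff dimension of the characteristic set of a $C^1$ hypersurface, or the FSSC structure theorem, since the characteristic set can in principle have full Euclidean $\cH^{2k}_{\text{Euc}}$-measure zero yet one still needs its Carnot--Carath\'eodory $\cH^{2k+1}$-measure to vanish. Second, passing from ``rescalings converge to the tangent vertical plane'' to ``the $\cH^{2k+1}$-density exists and equals that of the plane'' is the most delicate step; the cleanest route is to show that the spherical Hausdorff measure $\mathcal{S}^{2k+1}$ restricted to $\partial E$ has a density with respect to $\cH^{2k}_{\text{Euc}}$ given by a fixed multiple of $\|\pi(\nu^{\text{Euc}}_E)\|$, which is what FSSC establish.
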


\begin{prop}\label{prop:per scale invariant}
 The  perimeter measure is scale-invariant in the sense that for any $E\subset \H^{2k+1}$, any open set $U$ and any $t\in (0,\infty)$, we have $\Per_{\s_t(E)}(\s_t(U))=t^{2k+1}\Per_{E}(U)$.
\end{prop}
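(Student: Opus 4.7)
The plan is to derive the proposition as a direct change of variables in the total-variation definition of the Heisenberg perimeter from \cite{FSSCRectifiability}. Recall that for any measurable $E\subset \H^{2k+1}$ and any open $U\subset \H^{2k+1}$,
\begin{equation*}
\Per_E(U)=\sup\bigg\{\int_E \sum_{i=1}^{k}\bigl(X_i\phi_i+Y_i\psi_i\bigr)\,dh:\ \phi_i,\psi_i\in C^1_c(U),\ \sum_{i=1}^k\bigl(\phi_i^2+\psi_i^2\bigr)\le 1\bigg\},
\end{equation*}
where the integral is taken against Lebesgue measure on $\R^{2k+1}\cong\H^{2k+1}$ (which is a Haar measure of the group by the form of \eqref{eq:def group product omega}, and is proportional to $\cH^{2k+2}$). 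Note that the pushforward of Lebesgue measure under $\s_t$ has Jacobian $t^{2k+2}$, since $\s_t$ dilates the $2k$ horizontal coordinates by $t$ and the single vertical coordinate by $t^2$.

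I would first carry out a chain-rule computation showing that for every smooth $f$,
\begin{equation*}
X_i(f\circ\s_t)(h)=t\,(X_i f)(\s_t(h))\quad\text{and}\quad Y_i(f\circ\s_t)(h)=t\,(Y_i f)(\s_t(h)).
\end{equation*}
This follows from $\partial_{x_i}(f\circ\s_t)(h)=t\,(\partial_{x_i}f)(\s_t(h))$, $\partial_z(f\circ\s_t)(h)=t^2\,(\partial_z f)(\s_t(h))$, and $y_i(h)=t^{-1}y_i(\s_t(h))$; the factor $t^2$ from $\partial_z$ and the factor $t^{-1}$ from $y_i$ combine with $\partial_{x_i}$'s factor of $t$ to give a common overall factor $t$, so the anisotropic scaling of the coordinates is reconciled by the Heisenberg twist in the definition of $X_i$.

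The second ingredient is a bijection between admissible test fields: given $\phi_i,\psi_i\in C^1_c(\s_t(U))$ subject to the pointwise norm constraint, set $\widetilde\phi_i=\phi_i\circ\s_t$ and $\widetilde\psi_i=\psi_i\circ\s_t$, which lie in $C^1_c(U)$ and satisfy the same pointwise constraint. Combining this with the previous step and the change of variables $y=\s_t(x)$ gives
\begin{equation*}
\int_{\s_t(E)}\sum_{i=1}^k\bigl(X_i\phi_i+Y_i\psi_i\bigr)(y)\,dy = t^{2k+2}\cdot t^{-1}\int_E\sum_{i=1}^k\bigl(X_i\widetilde\phi_i+Y_i\widetilde\psi_i\bigr)(x)\,dx,
\end{equation*}
and taking suprema on both sides over admissible fields would yield $\Per_{\s_t(E)}(\s_t(U))=t^{2k+1}\Per_E(U)$.

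The argument is essentially bookkeeping; the only delicate point is the reconciliation in the first step of the anisotropic Jacobian of $\s_t$ with the Heisenberg twist in $X_i,Y_i$, so that the horizontal vector fields end up scaling homogeneously (by the single power $t^{-1}$ under pushforward by $\s_t$) despite the presence of the $y_i\partial_z$ and $x_i\partial_z$ terms. Once this is in hand, the exponent $2k+1$ emerges as the difference between the Jacobian exponent $2k+2$ of $\s_t$ and the exponent $1$ by which the horizontal derivatives rescale.
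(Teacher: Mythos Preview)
Your argument is correct. The chain-rule computation for the horizontal vector fields is right (the factor $t^2$ from $\partial_z$ and the factor $t^{-1}$ from $y_i(h)=t^{-1}y_i(\s_t(h))$ indeed combine to match the factor $t$ from $\partial_{x_i}$), the bijection between admissible test fields is the obvious one, and the exponent $2k+1=(2k+2)-1$ falls out exactly as you say.

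As for comparison with the paper: the paper does not give a proof of this proposition at all. It is one of four basic properties of the Heisenberg perimeter measure that the paper simply imports from \cite{FSSCRectifiability}, stating explicitly that it will not work with the definition of $\Per_E$ directly. Your write-up is essentially the standard verification one would find (or reconstruct) from that reference, so there is nothing to compare beyond noting that you have supplied what the paper chose to cite.
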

If $\Per_{E}(B)<\infty$ for every ball $B\subset \H^{2k+1}$ we shall say that $E$ has locally finite perimeter.
\begin{prop}\label{prop:FSSC Cacciopoli}
  If $E\subset \H^{2k+1}$ has locally finite perimeter, then $\Per_E(U)\lesssim\cH^{2k+1}(U\cap \partial E)$ for every open set $U\subset \H^{2k+1}$.
\end{prop}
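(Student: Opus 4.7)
\medskip

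\noindent\textbf{Proof plan for Proposition~\ref{prop:FSSC Cacciopoli}.}
The plan is to bound $\Per_E(U)$ by a slicing argument along the $2k$ horizontal coordinate directions, and then to invoke an Eilenberg-type inequality to convert the resulting line-crossing counts into $\cH^{2k+1}$.

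First I would use the standard variational definition of the Heisenberg perimeter: for $E$ measurable and $U$ open,
\begin{equation*}
\Per_E(U)=\sup\bigg\{\int_E \operatorname{div}_{\H}\phi\,\ud \cH^{2k+2}:\ \phi\in C^1_c(U;\mathsf{H}),\ \|\phi(h)\|\le 1\ \forall h\in U\bigg\},
\end{equation*}
where writing $\phi=\sum_{i=1}^k (f_iX_i+g_iY_i)$ one has $\operatorname{div}_{\H}\phi=\sum_{i=1}^k(X_if_i+Y_ig_i)$. By linearity and a partition-of-unity/triangle-inequality argument it suffices to bound the total variation of each of the $2k$ distributional derivatives $X_i\chi_E$ and $Y_i\chi_E$ on $U$. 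This reduces the problem to proving, for each fixed horizontal vector $V\in\{X_1,\dots,X_k,Y_1,\dots,Y_k\}$, that
\begin{equation*}
|V\chi_E|(U)\lesssim \cH^{2k+1}(U\cap\partial E).
\end{equation*}

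Next, I would foliate $\H^{2k+1}$ by the horizontal lines in direction $V$, namely the cosets $h\langle V\rangle$ as $h$ ranges over a transversal $T\subset \H^{2k+1}$ of Euclidean dimension $2k$. Since left-translation by $V^t$ preserves the Haar measure $\cH^{2k+2}$, Fubini gives a disintegration of $\cH^{2k+2}$ along these lines. A standard one-dimensional $\BV$ slicing argument (applied to $t\mapsto \chi_E(hV^t)$) shows that for a.e.\ line $\ell=h\langle V\rangle$, the contribution of $\ell\cap U$ to $|V\chi_E|$ equals the number of essential crossings of $\ell$ with $\partial E$ inside $U$, and hence
\begin{equation*}
|V\chi_E|(U)\le \int_{T}\#\!\big((\ell_h\cap U)\cap \partial E\big)\,\ud h,
\end{equation*}
where $\ell_h=h\langle V\rangle$.

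The remaining step, which I expect to be the main technical hurdle, is to bound this integrated counting function by $\cH^{2k+1}(U\cap\partial E)$. Here one applies an Eilenberg-type inequality in the Carnot--Carath\'eodory metric: because horizontal lines are geodesics (cf.~\eqref{eq:horizontal geodesics}), the projection onto the transversal $T$ along $V$-orbits is $1$-Lipschitz from $(\H^{2k+1},d)$ onto a subset of $\R^{2k}$ with its Euclidean metric, and the one-dimensional fibers coincide with horizontal lines. The co-area/Eilenberg inequality in this setting (see~\cite{FSSCRectifiability}) then yields
\begin{equation*}
\int_{T}\#\!\big((\ell_h\cap U)\cap \partial E\big)\,\ud h\lesssim \cH^{2k+1}(U\cap \partial E),
\end{equation*}
which, summed over the $2k$ choices of $V$, completes the proof. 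The delicate point is justifying that the one-dimensional slices of $\chi_E$ are genuine $\BV$ functions of one variable for a.e.\ line and that the number of essential crossings agrees with the slice perimeter; this is where one uses that $E$ has locally finite Heisenberg perimeter (so that $X_i\chi_E, Y_i\chi_E$ are Radon measures to which Fubini-type disintegration applies), and where the bulk of~\cite{FSSCRectifiability} is actually needed.
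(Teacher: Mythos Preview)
The paper does not actually prove this proposition; it is stated as one of several facts taken from~\cite{FSSCRectifiability} (see the sentence preceding Propositions~\ref{prop:per cellular}--\ref{prop:compact lower semicontinuous}). Your overall strategy---variational definition, slicing along horizontal $V$-lines, bounding slice variation by crossing counts, then an integral-geometric inequality---is exactly the route taken in the source paper, and steps (1)--(4) are fine (in particular, essential jump points of $\chi_{E\cap\ell}$ do lie in $\partial E$, since $\partial E$ is closed).

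The gap is in step (5). The projection $\Pi_V\colon\H^{2k+1}\to T$ along $V$-orbits onto the vertical hyperplane $T=\{h:\langle\pi(h),V\rangle=0\}$ is \emph{not} Lipschitz for the Carnot--Carath\'eodory metric. For example, in $\H^3$ with $V=X_1$, take $h=X_1$ and $h'=X_1Y_1^{\epsilon}$; then $d(h,h')=\epsilon$, but $\Pi_V(h)=\0$, $\Pi_V(h')=(0,\epsilon,\epsilon)$ and $d(\Pi_V(h),\Pi_V(h'))\asymp\sqrt{\epsilon}$. (This failure of Lipschitzness is precisely why the intrinsic Lipschitz graphs of Section~\ref{sec:intrinsic Lipschitz} are not ordinary Lipschitz graphs.) If instead you equip the target with the Euclidean metric on $\R^{2k}$, the last orbit-invariant coordinate is $z(h)+\tfrac12 x_k(h)y_k(h)$, which is again not Lipschitz from $d$ to $|\cdot|$; and even if it were, Eilenberg would yield $\int \cH^0\,d\cH^{2k}_{\mathrm{Euc}}\lesssim \cH^{2k}_d(U\cap\partial E)$, the wrong dimension.

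The fix is to bypass the Lipschitz-projection formulation of Eilenberg and prove the integral-geometric bound directly by a covering argument. If $\mu$ is the transverse measure on $V$-orbits satisfying $\cH^{2k+2}=\mu\otimes\cH^1_{\text{orbit}}$, then for any CC-ball $B_r$ one has $\mu\{\ell:\ell\cap B_r\ne\emptyset\}\asymp r^{2k+1}$ (each orbit meeting $B_r$ contains a segment of length $\gtrsim r$ inside $B_{2r}$, so compare with $\cH^{2k+2}(B_{2r})$). Covering $A=U\cap\partial E$ by sets $S_j$ of CC-diameter $\le\delta$ with $\sum(\diam S_j)^{2k+1}\le\cH^{2k+1}(A)+\epsilon$ then gives $\int\cH^0_\delta(A\cap\ell)\,d\mu(\ell)\le\sum_j\mu\{\ell:S_j\cap\ell\ne\emptyset\}\lesssim\sum_j(\diam S_j)^{2k+1}$, and letting $\delta\to0$ yields the desired bound. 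This is the argument in~\cite{FSSCRectifiability}; your sketch identified the right ingredients but misstated the mechanism that makes the last step go through.
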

Finally, the following compactness result appears in~\cite[Theorem~1.28]{GNIsoSob} and~\cite[Theorem~2.10]{FSSCRectifiability}.
\begin{prop}\label{prop:compact lower semicontinuous} Let $\{E_i\}_{i=1}^\infty$ be a sequence of measurable subsets of $\H^{2k+1}$  such that for every ball $B\subset \H^{2k+1}$ we have $\sup_{i\in \N} \Per_{E_i}(B)<\infty$. Then there exists a subsequence $\{i_n\}_{n=1}^\infty\subset \N$ and a measurable  $E\subset \H^{2k+1}$ such that $\lim_{n\to \infty} \one_{E_{i_n}}=\one_E$ in $L_1^{\mathrm{loc}}(\CH^{2k+2})$ and $\Per_{E}(U)\le \sup_{n\in \N} \Per_{E_{n_i}}(U)$ for every open $U\subset \H^{2k+1}$.
\end{prop}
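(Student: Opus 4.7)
The plan is to identify the perimeter measure $\Per_E$ with the total variation of the horizontal gradient of $\one_E$ in the sub-Riemannian BV sense, and then to mimic the classical argument for compactness of sets of locally finite perimeter, with the only non-trivial ingredient being a Rellich--Kondrachov-type compactness theorem adapted to the Heisenberg group. Concretely, as in~\cite{FSSCRectifiability}, one has the dual characterization
$$\Per_E(U) = \sup\left\{\int_U \one_E\, \mathrm{div}_\H \varphi \,\mathrm{d}\CH^{2k+2} : \varphi \in C_c^1(U;\R^{2k}),\ \|\varphi\|_\infty \leq 1\right\},$$
where $\mathrm{div}_\H$ is the formal adjoint of $\nabla_\H$. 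This dual formulation is the source of both compactness (via a uniform BV$_\H$-bound) and lower semicontinuity (as a supremum of continuous functionals).

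First I would extract an $L_1^{\mathrm{loc}}$-convergent subsequence. Fix an exhaustion of $\H^{2k+1}$ by concentric balls $B_{r_1} \subsetneq B_{r_2} \subsetneq \dots$ with $r_j \to \infty$. The hypothesis $\sup_{i\in \N}\Per_{E_i}(B_{r_j}) < \infty$, together with the trivial bound $\CH^{2k+2}(E_i \cap B_{r_j}) \leq \CH^{2k+2}(B_{r_j}) < \infty$, shows that the BV$_\H$-norms of $\{\one_{E_i}\}_i$ on $B_{r_j}$ are uniformly bounded in $i$. Invoking the Heisenberg Rellich--Kondrachov theorem of~\cite{GNIsoSob} (which rests on the sub-Riemannian $(1,1)$-Poincar\'e inequality and the volume doubling of $d$), the family $\{\one_{E_i}|_{B_{r_j}}\}_i$ is precompact in $L_1(B_{r_j};\CH^{2k+2})$. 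Any $L_1$-limit of characteristic functions is, after passage to a further $\CH^{2k+2}$-a.e.\ convergent subsequence, itself a characteristic function $\one_{F_j}$ for some measurable $F_j \subset B_{r_j}$. A diagonal extraction over $j$ then produces a single subsequence $\{i_n\}_{n=1}^\infty$ along which $\one_{E_{i_n}}$ converges in $L_1(B_{r_j})$ for every $j$. The limits are consistent on overlaps, so they piece together to a measurable set $E \subset \H^{2k+1}$ with $\one_{E_{i_n}} \to \one_E$ in $L_1^{\mathrm{loc}}(\CH^{2k+2})$.

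For the lower-semicontinuity conclusion I would fix an open set $U\subset \H^{2k+1}$ and a test field $\varphi \in C_c^1(U;\R^{2k})$ with $\|\varphi\|_\infty \leq 1$. Because $\varphi$ has compact support in $U$, the $L_1^{\mathrm{loc}}$-convergence $\one_{E_{i_n}} \to \one_E$ gives
$$\int_U \one_E\, \mathrm{div}_\H \varphi \,\mathrm{d}\CH^{2k+2} = \lim_{n \to \infty} \int_U \one_{E_{i_n}}\,\mathrm{div}_\H \varphi \,\mathrm{d}\CH^{2k+2} \leq \sup_{n \in \N} \Per_{E_{i_n}}(U),$$
where the inequality applies the dual characterization above to each $E_{i_n}$. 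Taking the supremum over admissible $\varphi$ then delivers $\Per_E(U) \leq \sup_{n\in\N} \Per_{E_{i_n}}(U)$, as required.

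The main obstacle is the BV$_\H$-compactness step: verifying that the doubling and $(1,1)$-Poincar\'e properties of $(\H^{2k+1}, d)$ suffice to run a Fr\'echet--Kolmogorov-style compactness argument via smoothing with a Heisenberg mollifier. This is precisely the content of the results cited from~\cite{GNIsoSob,FSSCRectifiability}, and once these are granted the rest of the proof is a direct adaptation of the Euclidean BV compactness and lower-semicontinuity theorems.
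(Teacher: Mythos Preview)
The paper does not prove this proposition: it is stated as a known result and attributed to~\cite[Theorem~1.28]{GNIsoSob} and~\cite[Theorem~2.10]{FSSCRectifiability}, with no proof given in the paper itself. Your sketch is correct and is precisely the standard argument one finds in those references---the dual characterization of $\Per_E$, sub-Riemannian $\BV$-compactness via doubling plus the $(1,1)$-Poincar\'e inequality, diagonal extraction, and lower semicontinuity from the supremum-of-linear-functionals form---so there is nothing to compare.
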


 \subsubsection{Cellular sets}\label{sec:cellularSets} Recall that the discrete Heisenberg group  $\H^{2k+1}_{\ms{\Z}}$ is the subgroup of $\H^{2k+1}$ generated by $\{X_1,Y_1,\dots,X_k,Y_k\}$.  The unit cube $[-\frac{1}{2},\frac{1}{2}]^{2k+1}$ is a fundamental domain for the left action of $\H^{2k+1}_{\ms{\Z}}$ on $\H^{2k+1}$.  Its translates tile $\H^{2k+1}$ by parallelepipeds and give $\H^{2k+1}$ the structure of a polyhedral complex $\tau^{2k+1}$. In what follows, we call this complex the \emph{unit lattice} and say that a subset of $\H^{2k+1}$ is \emph{cellular} if it is the union of closed $(2k+1)$--cells of the unit lattice.

If $E\subset \H^{2k+1}$ is measurable and satisfies $\cH^{2k+1}(\partial E)<\infty$, then $E$ can be approximated by scalings of cellular sets.  To justify this (well-known, but not easily quotable) fact, we need to recall the following relative version of the isoperimetric inequality for $\H^{2k+1}$. Below, and in what follows, given $E\subset \H^{2k+1}$ we use the notation $E^\cc=\H^{2k+1}\setminus E$.  Also, it is convenient to use throughout the ensuing discussion the following standard notation for normalized integrals. If $\mu$ is a measure on $\H^{2k+1}$ and $U\subset \H^{2k+1}$ is $\mu$-measurable with $\mu(U)>0$, then for every $f\in L_1(U)$ write $$\fint_U f\ud\mu\eqdef \frac{1}{\mu(U)}\int_U f\ud\mu.$$
\begin{lemma}\label{lem:relativeIsoperimetric}
  There is $\Lambda_k\in (0,\infty)$ such that if $E\subset \H^{2k+1}$ is measurable and $r\in (0,\infty)$, then
  \begin{equation}\label{eq:local iso}
    \left(\frac{\vol(E\cap B_r)\cdot \vol(E^{\cc}\cap B_r)}{\vol(B_r)^2}\right)^{\frac{2k+1}{2k+2}} \lesssim \frac{r \area(\partial E\cap B_{\Lambda_kr})}{\vol(B_{\Lambda_kr})}.
  \end{equation}
\end{lemma}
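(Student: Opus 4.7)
The plan is to deduce this relative isoperimetric inequality from the $(1,Q/(Q-1))$-Poincar\'e--Sobolev inequality on Carnot--Carath\'eodory balls, where $Q=2k+2$ is the homogeneous dimension of $\H^{2k+1}$. This inequality was established for Carnot groups (and in particular the Heisenberg group) by Garofalo--Nhieu and related work; it asserts that there exists $\Lambda_k\in[1,\infty)$ such that for every measurable $E\subset\H^{2k+1}$ of locally finite perimeter and every $r>0$,
\begin{equation}\label{eq:PS}
\left(\fint_{B_r}\left|\one_E-\fint_{B_r}\one_E\right|^{\frac{2k+2}{2k+1}}\ud\vol\right)^{\frac{2k+1}{2k+2}}\lesssim r\cdot\frac{\Per_E(B_{\Lambda_k r})}{\vol(B_{\Lambda_k r})}.
\end{equation}
Here the appearance of $\Per_E$ on the right-hand side corresponds to the fact that for BV indicator functions the horizontal gradient measure equals the Heisenberg perimeter. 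If $\area(\partial E\cap B_{\Lambda_k r})=\infty$ the lemma is trivial, so by Proposition~\ref{prop:FSSC Cacciopoli} we may assume $E$ has locally finite perimeter and apply~\eqref{eq:PS}.

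Next I would compute the left-hand side of~\eqref{eq:PS} explicitly. Write $\alpha=\vol(E\cap B_r)$, $\beta=\vol(E^\cc\cap B_r)$, and $V=\vol(B_r)=\alpha+\beta$, so that $\fint_{B_r}\one_E=\alpha/V$ and $|\one_E-\alpha/V|$ equals $\beta/V$ on $E\cap B_r$ and $\alpha/V$ on $E^\cc\cap B_r$. A direct computation then gives
\[
\int_{B_r}\left|\one_E-\tfrac{\alpha}{V}\right|^{\frac{2k+2}{2k+1}}\ud\vol=V^{-\frac{2k+2}{2k+1}}\alpha\beta\left(\alpha^{\frac{1}{2k+1}}+\beta^{\frac{1}{2k+1}}\right).
\]
Since $1/(2k+1)\le 1$ the map $t\mapsto t^{1/(2k+1)}$ is subadditive, whence $\alpha^{1/(2k+1)}+\beta^{1/(2k+1)}\ge(\alpha+\beta)^{1/(2k+1)}=V^{1/(2k+1)}$. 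Combining,
\[
\int_{B_r}\left|\one_E-\tfrac{\alpha}{V}\right|^{\frac{2k+2}{2k+1}}\ud\vol\ge\frac{\alpha\beta}{V},
\]
and therefore the left-hand side of~\eqref{eq:PS} is at least $(\alpha\beta/V^2)^{(2k+1)/(2k+2)}$.

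Plugging this lower bound into~\eqref{eq:PS} and invoking Proposition~\ref{prop:FSSC Cacciopoli} to replace $\Per_E(B_{\Lambda_k r})$ by $\area(\partial E\cap B_{\Lambda_k r})$ up to a constant yields
\[
\left(\frac{\vol(E\cap B_r)\cdot\vol(E^\cc\cap B_r)}{\vol(B_r)^2}\right)^{\frac{2k+1}{2k+2}}\lesssim\frac{r\,\area(\partial E\cap B_{\Lambda_k r})}{\vol(B_{\Lambda_k r})},
\]
which is the desired estimate~\eqref{eq:local iso}. The main (and essentially only) substantive input is~\eqref{eq:PS}; the rest is a direct computation together with the elementary subadditivity of $t\mapsto t^{1/(2k+1)}$, so no serious obstacle arises once the Poincar\'e--Sobolev inequality on Heisenberg balls is granted as a black box.
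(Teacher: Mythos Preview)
Your proof is correct and is essentially the paper's argument: both deduce the lemma from the $(1,\tfrac{2k+2}{2k+1})$-Poincar\'e--Sobolev inequality on Carnot--Carath\'eodory balls applied to (an approximation of) $\one_E$. The only differences are cosmetic: the paper uses the double-integral form $\bigl(\fint_{B_r\times B_r}|f(x)-f(y)|^{(2k+2)/(2k+1)}\bigr)^{(2k+1)/(2k+2)}$, which for $f=\one_E$ equals $(2\alpha\beta/V^2)^{(2k+1)/(2k+2)}$ directly and so bypasses your subadditivity computation, and it passes through smooth approximations to $\one_E$ rather than invoking the BV version of Poincar\'e--Sobolev together with Proposition~\ref{prop:FSSC Cacciopoli}.
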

\begin{proof}
  The  Poincaré--Sobolev inequality on $\H^{2k+1}$ (see~\cite{HajKos}  or equation~(2.5) in~\cite{CKN}) asserts that there exists $\Lambda_k\in (0,\infty)$ such that every smooth $f:\H^{2k+1}\to \R$ satisfies
    \begin{equation}\label{eq:PoincareSobolev}
    \left(\fint_{B_r\times B_r} |f(x)-f(y)|^{\frac{2k+2}{2k+1}} \ud \CH^{2k+2}(x)\ud
      \CH^{2k+2}(y)\right)^{\frac{2k+1}{2k+2}}\lesssim r \fint_{B_{\Lambda_k r}} \big\|\nabla_\H f(x)\|_{\ell_1^{2k}}\ud\CH^{2k+2}(x),
  \end{equation}
The desired estimate~\eqref{eq:local iso} follows by applying~\eqref{eq:PoincareSobolev} when $f$ is a smooth
  approximation to $\one_E$.
\end{proof}

\begin{lemma}\label{lem:cellular approx}
  Let $E\subset \H^{2k+1}$ be a measurable set and suppose that $\cH^{2k+1}(\partial E)<\infty$.  Then for every $\rho\in (0,\infty)$, there is a set $F=F_\rho$ with the following properties.
\begin{itemize}
\item $\s_{1/\rho}(F_\rho)$ is a cellular set,
\item $\cH^{2k+1}(\partial F_\rho)\lesssim \cH^{2k+1}(\partial E)$,
\item $\cH^{2k+2}(E\symdiff F_\rho)\lesssim \rho \cH^{2k+1}(\partial E).$
\end{itemize}
Here, $A\symdiff B\eqdef (A\setminus B)\cup (B\setminus A)$ is (as usual) the symmetric difference of $A,B\subset \H^{2k+1}$.
\end{lemma}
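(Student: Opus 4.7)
I would construct $F_\rho$ by a majority-voting rule on the scaled lattice $\s_\rho(\tau^{2k+1})$, whose closed top-dimensional cells $\{Q_i\}_{i\in I}$ tile $\H^{2k+1}$. By~\eqref{eq:metric approximation} and the scaling property of $\s_\rho$, each $Q_i$ has Haar measure $\asymp \rho^{2k+2}$, boundary $\cH^{2k+1}(\partial Q_i)\asymp\rho^{2k+1}$ (using Propositions~\ref{prop:per cellular}--\ref{prop:per scale invariant}), and CC-diameter $\asymp \rho$; let $c_i$ denote its center. Declare $Q_i$ to be \emph{full} when $\cH^{2k+2}(E\cap Q_i)\ge \tfrac12\cH^{2k+2}(Q_i)$, and set
\[
F_\rho\eqdef \bigcup\bigl\{Q_i:Q_i\text{ is full}\bigr\}.
\]
Then $\s_{1/\rho}(F_\rho)$ is automatically cellular, which gives the first bullet. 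The remaining two estimates both follow from applying Lemma~\ref{lem:relativeIsoperimetric} inside CC balls of radius $\asymp\rho$ around each $c_i$, together with the bounded multiplicity of the resulting cover of $\H^{2k+1}$: each point lies in $O(1)$ such balls, by left-invariance of $d$.

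For the volume bound, fix a constant $A$ large enough that $Q_i\subset B_{A\rho}(c_i)$ for every $i$, and set $a_i\eqdef\min\{\cH^{2k+2}(E\cap Q_i),\cH^{2k+2}(E^\cc\cap Q_i)\}$ and $b_i\eqdef\cH^{2k+1}(\partial E\cap B_{\Lambda_k A\rho}(c_i))$. Since
\[
\cH^{2k+2}(E\cap B_{A\rho}(c_i))\cdot\cH^{2k+2}(E^\cc\cap B_{A\rho}(c_i))\gtrsim a_i\rho^{2k+2},
\]
applying Lemma~\ref{lem:relativeIsoperimetric} with $r=A\rho$ yields $a_i^{(2k+1)/(2k+2)}\lesssim b_i$. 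If $b_i\le \rho^{2k+1}$ this gives $a_i\lesssim b_i^{1+1/(2k+1)}\lesssim \rho b_i$; if $b_i>\rho^{2k+1}$ then $a_i\le\rho^{2k+2}\lesssim\rho b_i$ trivially. Summing and using the bounded multiplicity, $\cH^{2k+2}(E\symdiff F_\rho)=\sum_i a_i\lesssim\rho\sum_i b_i\lesssim\rho\,\cH^{2k+1}(\partial E)$.

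For the perimeter bound, $\partial F_\rho$ is covered by those faces of $\s_\rho(\tau^{2k+1})$ that separate a full cell from a non-full one, and the total $\cH^{2k+1}$-mass contributed by all faces of any single $Q_i$ is $\asymp\rho^{2k+1}$. Whenever adjacent cells $Q_i,Q_j$ lie on opposite sides of the majority rule, both $E$ and $E^\cc$ have mass $\gtrsim\rho^{2k+2}$ in $Q_i\cup Q_j$, so Lemma~\ref{lem:relativeIsoperimetric} applied to a CC ball of radius $\asymp\rho$ containing $Q_i\cup Q_j$ forces $\cH^{2k+1}(\partial E\cap B_{\Lambda_k A\rho}(c_i))\gtrsim\rho^{2k+1}$. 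Charging each boundary face of $F_\rho$ to an adjacent full cell and summing with bounded multiplicity then yields $\cH^{2k+1}(\partial F_\rho)\lesssim \cH^{2k+1}(\partial E)$.

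The crux is a matching of scaling exponents: the exponent $(2k+1)/(2k+2)$ in Lemma~\ref{lem:relativeIsoperimetric} is exactly what converts the superlinear bound $a_i^{(2k+1)/(2k+2)}\lesssim b_i$ into the linear bound $a_i\lesssim\rho b_i$ (using $b_i\lesssim\rho^{2k+1}$), producing the sharp linear-in-$\rho$ volume error. Everything else is routine bookkeeping, modulo the observation that, although cells of $\tau^{2k+1}$ are sheared left-translates of the unit cube rather than literal unit cubes, left-invariance of $d$ ensures that all cells share the same Haar measure and have uniformly comparable CC-diameter, so these shape irregularities are absorbed into the implicit constants.
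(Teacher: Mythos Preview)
Your proof is correct and follows essentially the same approach as the paper: majority-vote on the scaled lattice, then use the relative isoperimetric inequality (Lemma~\ref{lem:relativeIsoperimetric}) together with bounded multiplicity to control both the symmetric-difference volume and the perimeter. The only differences are cosmetic: the paper first reduces to $\rho=1$ by rescaling, and in the volume bound it uses the single inequality $x^{(2k+1)/(2k+2)}\gtrsim x$ for $x\lesssim 1$ in place of your case split on $b_i\lessgtr\rho^{2k+1}$ (which encodes the same thing).
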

\begin{proof} It suffices to prove Lemma~\ref{lem:cellular approx} when $\rho=1$. Indeed, for general $\rho\in (0,\infty)$ apply the case $\rho=1$ of  Lemma~\ref{lem:cellular approx} with $E$ replaced by $\s_{1/\rho}(E)$ to construct a cellular set $F_0$, and then take $F=\s_\rho(F_0)$.  Consider the following set of cells in the unit lattice $\tau^{2k+1}$.
  $$\mathcal{C}\eqdef \left\{C \in \tau^{2k+1}\mid \cH^{2k+2}(C\cap E) > \frac{\cH^{2k+2}(C)}{2}\right\}.$$
Write $F=\bigcup_{C\in \mathcal{C}} C$.  Then $F$ is cellular by design, and  we shall next show that  $F$ satisfies the remaining two desired properties.

  Let $A\subset \tau^{2k+1}$ be the set of cells that intersect $\partial F$. Then $\cH^{2k+1}(\partial F)\asymp |A|$.  For each $C\in \tau^{2k+1}$ let $h_C\in C$ be the center of $C$.  If $C\in A$, then $C$ is adjacent to a cell $D\in \tau^{2k+1}$ with $|\{C,D\}\cap \mathcal{C}|=1$.  Let $r=\diam (C)=\diam ([-\frac{1}{2},\frac{1}{2}]^{2k+1})$ and let $B=B_{2r}(h_C)=h_CB_{2r}$, so that $C\cup D\subset B$.  Then $\cH^{2k+2}(E\cap B)\ge \cH^{2k+2}(C)/2\asymp \CH^{2k+2}(B)$ and $\cH^{2k+2}(E^\cc\cap B)\ge \cH^{2k+2}(D)/2\asymp \CH^{2k+2}(B)$. Lemma~\ref{lem:relativeIsoperimetric} therefore implies that $\cH^{2k+1}(\partial E\cap B_{2\Lambda_k r}(h_C))\gtrsim 1.$ Since the balls $\{B_{2\Lambda_k r}(h_C)\}_{C\in \tau^{2k+1}}$ cover $\H^{2k+1}$ with finite multiplicity, it follows that
  $$\cH^{2k+1}(\partial E)\gtrsim \sum_{C\in A} \cH^{2k+1}\big(\partial E\cap B_{2\Lambda_k r}(h_C)\big)\gtrsim |A|\asymp \cH^{2k+1}(\partial F).$$

  Next, by the definition of $F$ for every $C\in \tau^{2k+1}$ we have
 $$
    \min\big\{\cH^{2k+2}(E\cap C),\cH^{2k+2}(E^{\cc} \cap C)\big\}= \cH^{2k+2}\big((E\symdiff F)\cap C\big),
 $$
 and also we trivially have
 $$
    \max\big\{\cH^{2k+2}(E\cap C),\cH^{2k+2}(E^{\cc} \cap C)\big\}\ge \frac{\cH^{2k+2}(C)}{2}.
    $$
 Consequently, if we denote $B=B_r(h_C)$, then
  \begin{align*}
    \cH^{2k+2}(E\cap B)\cdot \cH^{2k+2}(E^{\cc}\cap B)&\ge \cH^{2k+2}(E\cap C)\cdot  \cH^{2k+2}(E^\cc\cap C)\\
                                                  &\ge \cH^{2k+2}\big((E\symdiff F)\cap C\big)\cdot  \frac{\cH^{2k+2}(C)}{2},
  \end{align*}
  and, using Lemma~\ref{lem:relativeIsoperimetric} (and recalling that $r=\diam ([-\frac{1}{2},\frac{1}{2}]^{2k+1})\asymp 1$),
  $$\cH^{2k+1}(\partial E\cap B_{\Lambda_k r})\gtrsim \cH^{2k+2}\big((E\symdiff F)\cap C\big)^{\frac{2k+1}{2k+2}}\gtrsim \cH^{2k+2}((E\symdiff F)\cap C).$$
  These estimates imply that
  \begin{multline*}\cH^{2k+1}(\partial E)\gtrsim \sum_{C\in \tau^{2k+1}} \cH^{2k+1}\big(\partial E\cap B_{2\Lambda_kr}(h_C)\big)\\\gtrsim \sum_{C\in \tau^{2k+1}}\cH^{2k+2}\big((E\symdiff F)\cap C\big)=\cH^{2k+2}(E\symdiff F).\tag*{\qedhere}\end{multline*}
\end{proof}

We shall record for future use the following lemma, which is a consequence of Lemma~\ref{lem:cellular approx}.

\begin{lemma}\label{lem:finite Hausdorff finite perimeter}
  Suppose that $E\subset \H^{2k+1}$ is a measurable set such that $\cH^{2k+1}(B_r\cap \partial E)<\infty$ for all $r\in (0,\infty)$. Then for any open set $U\subset \H^{2k+1}$ we have $\Per_E(U)\lesssim \cH^{2k+1}(U\cap \partial E)$.
\end{lemma}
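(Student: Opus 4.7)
The plan is to approximate $E$ by cellular sets using Lemma~\ref{lem:cellular approx}, apply the exact perimeter formula for cellular sets (Proposition~\ref{prop:per cellular}) together with the scaling rule (Proposition~\ref{prop:per scale invariant}), and then pass to the limit via the lower-semicontinuity statement in Proposition~\ref{prop:compact lower semicontinuous}. Because the hypothesis on $E$ is local and $\Per_E$ is a Borel measure, I would first use inner regularity to reduce the desired bound to showing $\Per_E(V)\lesssim \cH^{2k+1}(U\cap \partial E)$ for every bounded open $V$ with $\overline{V}\subset U$; then fix a radius $R$ so large that $\overline{V}\subset B_R$, truncate to $E_R\eqdef E\cap B_R$, and note that $\partial E_R\subset (\partial E\cap \overline{B_R})\cup \partial B_R$ has finite $\cH^{2k+1}$-measure, while locality of the perimeter measure gives $\Per_E(V)=\Per_{E_R}(V)$.

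Next, for every $\rho\in(0,1)$, I would feed $E_R$ into Lemma~\ref{lem:cellular approx} to obtain a set $F_\rho$ with $\s_{1/\rho}(F_\rho)$ cellular and $\one_{F_\rho}\to\one_{E_R}$ in $L_1(\cH^{2k+2})$ as $\rho\to 0$. Since $\s_{1/\rho}(F_\rho)$ has piecewise smooth boundary, Proposition~\ref{prop:per cellular} gives an exact surface-area formula for its perimeter, and Proposition~\ref{prop:per scale invariant} together with the scaling identity $\cH^{2k+1}(\s_t(A))=t^{2k+1}\cH^{2k+1}(A)$ transfers this to $F_\rho$, yielding
\[
\Per_{F_\rho}(W)\;=\;c_k\,\cH^{2k+1}(W\cap \partial F_\rho)
\]
for every open $W$. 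Proposition~\ref{prop:compact lower semicontinuous} then produces a subsequence $\rho_n\to 0$ along which $\Per_{E_R}(V)\le \sup_n \Per_{F_{\rho_n}}(V)$, so the whole argument reduces to estimating $\cH^{2k+1}(V\cap \partial F_\rho)$ from above by $\cH^{2k+1}(U\cap \partial E)$ for $\rho$ small.

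The hard part, and the only step beyond routine bookkeeping, is this \emph{local} comparison of surface measures: Lemma~\ref{lem:cellular approx} supplies only the global bound $\cH^{2k+1}(\partial F_\rho)\lesssim \cH^{2k+1}(\partial E_R)$, whereas I need to see the open set $V$ on both sides. I would obtain the local version by revisiting the proof of Lemma~\ref{lem:cellular approx}: its mechanism is cell by cell, in that for every $\rho$-scaled lattice cell $C$ meeting $\partial F_\rho$, the relative isoperimetric inequality (Lemma~\ref{lem:relativeIsoperimetric}) produces a ball $B_C$ of Carnot--Carath\'eodory radius $O(\rho)$ centered at the center $h_C$ of $C$ with $\cH^{2k+1}(\partial E_R\cap B_C)\gtrsim \rho^{2k+1}$, and the balls $\{B_C\}$ have bounded multiplicity. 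For cells that additionally meet $V$, $B_C$ is contained in the $O(\rho)$-neighborhood $V_\rho$ of $\overline{V}$; choosing $\rho$ small enough that $V_\rho\subset U\cap B_R$ and summing these local estimates gives
\[
c_k\,\cH^{2k+1}(V\cap \partial F_\rho)\;\lesssim\;\cH^{2k+1}(V_\rho\cap \partial E_R)\;=\;\cH^{2k+1}(V_\rho\cap \partial E)\;\le\;\cH^{2k+1}(U\cap \partial E),
\]
using $V_\rho\subset B_R$ for the middle equality. Plugging this into the lower-semicontinuity step yields $\Per_E(V)\lesssim \cH^{2k+1}(U\cap \partial E)$, and taking the supremum over $V$ completes the proof.
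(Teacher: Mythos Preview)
Your approach is correct but works harder than necessary because you overlook Proposition~\ref{prop:FSSC Cacciopoli}. The paper's proof is much shorter: in the case $\cH^{2k+1}(\partial E)<\infty$, it uses only the \emph{global} bound from Lemma~\ref{lem:cellular approx} together with Propositions~\ref{prop:per cellular} and~\ref{prop:compact lower semicontinuous} to conclude that $\Per_E(\H^{2k+1})<\infty$; once $E$ is known to have (locally) finite perimeter, Proposition~\ref{prop:FSSC Cacciopoli} immediately gives the local inequality $\Per_E(U)\lesssim \cH^{2k+1}(U\cap\partial E)$ for every open $U$. The general case is then handled by intersecting with balls $E_i=E\cap B_i$, applying the special case to each $E_i$, and passing to the limit via Proposition~\ref{prop:compact lower semicontinuous} and Proposition~\ref{prop:FSSC Cacciopoli} again. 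In other words, the paper never needs to localize the cellular approximation: Proposition~\ref{prop:FSSC Cacciopoli} does the localization for free once finite perimeter is established.

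Your route---reopening the cell-by-cell mechanism of Lemma~\ref{lem:cellular approx} to obtain $\cH^{2k+1}(V\cap\partial F_\rho)\lesssim \cH^{2k+1}(V_\rho\cap\partial E_R)$---is a legitimate alternative that avoids Proposition~\ref{prop:FSSC Cacciopoli} entirely, at the cost of redoing part of that lemma's proof. One small point: the identity $\Per_E(V)=\Per_{E_R}(V)$ you invoke (``locality of the perimeter measure'') is true but is not among the four black-box properties the paper isolates; you could sidestep it cleanly by bounding $\Per_{E_R}(V)$ uniformly in $R$ and then applying Proposition~\ref{prop:compact lower semicontinuous} to the sequence $E_R\to E$, exactly as the paper does in its second paragraph.
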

\begin{proof}
  First, consider the case $\cH^{2k+1}(\partial E)<\infty$.  By Lemma~\ref{lem:cellular approx}, there is a sequence $\{F_{2^{-i}}\}_{i=1}^\infty$ of scalings of cellular sets such that $\lim_{i\to \infty} \one_{F_{2^{-i}}}= \one_E$ in $L_1^{\mathrm{loc}}(\CH^{2k+2})$ and $ \cH^{2k+1}(\partial F_i)\lesssim \cH^{2k+1}(\partial E)$ for every $i\in \N$.
  Since by Proposition~\ref{prop:per cellular} we have $\Per_{F_{2^{-i}}}(\H^{2k+1})\asymp \cH^{2k+1}(\partial F_{2^{-i}})$, it follows that $\Per_{F_{2^{-i}}}(\H^{2k+1})\lesssim \cH^{2k+1}(\partial E)$. So, by Proposition~\ref{prop:compact lower semicontinuous} we have $\Per_{E}(\H^{2k+1})\lesssim  \cH^{2k+1}(\partial E)<\infty$.  By Proposition~\ref{prop:FSSC Cacciopoli} we therefore have $\Per_E(U)\lesssim \cH^{2k+1}(U\cap \partial E)$ for every open set $U\subset \H^{2k+1}$.

  For the general case, for every $i\in \N$ let $E_i=B_i\cap E$. So $\lim_{i\to \infty} \one_{E_{i}}=\one_E$ in $L_1^{\mathrm{loc}}(\CH^{2k+2})$.  By the validity of Lemma~\ref{lem:finite Hausdorff finite perimeter} in the special case $\cH^{2k+1}(\partial E)<\infty$ that we just established,  Proposition~\ref{prop:compact lower semicontinuous} implies  $E$ has locally finite perimeter. So, by Proposition~\ref{prop:FSSC Cacciopoli} we have $\Per_E(U)\lesssim\cH^{2k+1}(U\cap \partial E)$ for every open set $U\subset \H^{2k+1}$.
\end{proof}

\subsection{Vertical perimeter}\label{sec:vert}
The vertical perimeter of a subset of the Heisenberg group was introduced in~\cite{LafforgueNaor}.  If $E \subset \H^{2k+1}$ and $s\in \R$, then for every $\rho\in \R$ denote
$$\mathsf{D}_sE\eqdef E\symdiff EZ^{2^{2s}}\subset \H^{2k+1}.$$
If $U,E\subset \H^{2k+1}$ are measurable, then define $\vpfl{U}(E)\from \R\to \R$ by
\begin{equation}\label{eq:vert per on U}
\forall\, \rho\in \R,\qquad \vpfl{U}(E)({\rho})\eqdef\frac{\vol(U\cap \mathsf{D}_{\rho}E)}{2^{{\rho}}}=\frac{1}{2^\rho}\int_{U}\big|\1_E(x)-\1_E\big(xZ^{-2^{2\rho}}\big)\big|\ud \CH^{2k+2}(x).
\end{equation}
Note that $\vpfl{U}(E)$ is left-invariant in the sense that $\vpfl{gU}(gE)=\vpfl{U}(E)$ for every $g\in \H^{2k+1}$. When $U=\H^{2k+1}$ it will be convenient to use the simpler notation $\vpfl{\H^{2k+1}}(E)=\vpf(E)$. The \emph{vertical perimeter} of $E$ is then defined to be the quantity
\begin{equation*}\label{eq:def v bar}
\|\vpf(E)\|_{L_2(\R)} =\bigg(\int_{-\infty}^\infty \vpf(E)(\rho)^2\ud \rho\bigg)^{\frac12}.
\end{equation*}

\begin{remark}\label{rem:box}
  The following inequality and example may shed some light on the behavior of the vertical perimeter.  If $E\subset \H^{2k+1}$ is measurable and $s\in \R$, then there is a geodesic $\gamma_s$ from $\0$ to $Z^{2^{2s}}$ with $\ell(\gamma_s)\asymp 2^s$.  We can connect each point $h$ of $E$ to a point of $EZ^{2^{2s}}$ by a translate of $\gamma_s$, and if $hZ^{2^{2s}}\in E\symdiff EZ^{2^{2s}}$, then $h\gamma_s$ crosses $\partial E$.  It follows from these observations that
 \begin{equation}\label{eq:vertical at most area of boundary}
  \vpf(E)(s)\lesssim \frac{\cH^{2k+1}(\partial E) d(\0,Z^{2^{2s}})}{2^s}\lesssim \cH^{2k+1}(\partial E).
  \end{equation}
  Thus, $\|\vpf(E)(s)\|_{L_\infty(\R)}\lesssim \cH^{2k+1}(\partial E)$. For an example where this  bound is sharp, consider the box $$C_r\eqdef [-r,r]^{2k}\times [-r^2,r^2]=\Big\{h:\in \H^{2k+1}:\ \max\big\{|x_1(h)|,|y_1(h)|,\ldots,|x_k(h)|,|y_k(h)|, \sqrt{|z(h)|}\big\}\le r\Big\},$$
  for some $r\in (0,\infty)$. A straightforward computation shows that
  $$
  \forall\, s\in \R,\qquad \vpf(C_r)(s)\asymp \begin{cases}
      2^s r^{2k} & \text{ if } 2^s<r,\\
      \frac{r^{2k+2}}{2^s} & \text{ if } 2^s\ge r.
    \end{cases}
  $$
  So, $\vpf(C_r)(s)$ has a maximum of roughly $r^{2k+1}\asymp \cH^{2k+1}(\partial C_r)$ near $s=\log_2 r$ and exponentially decaying tails.  In general, $\vpf(E)(s)$ measures the ``bumpiness'' of $\partial E$ at scale $2^s$, so the vertical perimeter of $E$ can include contributions from many different scales.
\end{remark}

We record for ease of future use some elementary properties of the vertical perimeter function.
\begin{lemma}\label{lem:vPerProps}
  Suppose that $A,B,U\subset \H^{2k+1}$ are measurable sets of finite measure. Then
  \begin{enumerate}
  \item $\vpf(A)=\vpf(A^\cc)$,
  \item $|\vpf(A)-\vpf(B)|\le \vpf(A\symdiff B)$,
  \item $\vpf(A\cap B)\le \vpfl{B}(A)+\vpf(B)$,
  \item $\vpf(\s_t(A))(\rho)=t^{2k+1}\vpf(A)(\rho-\log_2 t)$ for every $t\in (0,\infty)$ and every $\rho\in \R$.
  \end{enumerate}
\end{lemma}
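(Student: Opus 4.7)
For each of the four properties it is most convenient to rewrite the definition as $\vpf(E)(\rho) = 2^{-\rho}\,\CH^{2k+2}(E\symdiff EZ^{2^{2\rho}})$ and to work throughout with $t\eqdef 2^{2\rho}$, so the task reduces to estimates on symmetric differences of sets and their right $Z^t$--translates. Two general facts will be used repeatedly: the Lebesgue measure $\CH^{2k+2}$ is right $Z$--invariant (since $Z$ is central), and right multiplication by $Z^t$ is a measure-preserving bijection that commutes with the Boolean operations $\cup,\cap,\symdiff,{}^\cc$.

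For (1), I would simply observe that $A^\cc \symdiff A^\cc Z^t = (A\cup A^\cc Z^t)\setminus(A\cap A^\cc Z^t)^\cc$ rearranges, via the identity $E^\cc\symdiff F^\cc = E\symdiff F$, to $A\symdiff AZ^t$; taking measures and dividing by $2^\rho$ gives the claim. For (4), the key computation is that $\s_t$ is a group homomorphism with $\s_t(Z^u) = Z^{t^2u}$ and Jacobian $t^{2k+2}$, so
\[
\s_t(A)\symdiff \s_t(A)Z^{2^{2\rho}} \;=\; \s_t\bigl(A\symdiff A Z^{2^{2\rho}/t^2}\bigr),
\]
and taking volumes then dividing by $2^\rho$ produces the factor $t^{2k+2}\cdot 2^{-\rho}\cdot 2^{\rho-\log_2 t}\vpf(A)(\rho-\log_2 t) = t^{2k+1}\vpf(A)(\rho-\log_2 t)$.

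For (2), I would use the ``parallelogram identity'' for symmetric differences modulo $2$: if one identifies sets with $\{0,1\}$--valued functions and the Boolean operation $\symdiff$ with addition in $\mathbb F_2$, then
\[
(A\symdiff AZ^t)\symdiff (B\symdiff BZ^t) \;=\; (A\symdiff B)\symdiff (A\symdiff B)Z^t,
\]
since the four indicators $\1_A,\1_{AZ^t},\1_B,\1_{BZ^t}$ cancel in pairs. Combined with the elementary inequality $\lvert\CH^{2k+2}(U)-\CH^{2k+2}(V)\rvert\le \CH^{2k+2}(U\symdiff V)$, this immediately yields $|\vpf(A)-\vpf(B)|\le \vpf(A\symdiff B)$. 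For (3), the only step is the set-theoretic inclusion
\[
(A\cap B)\symdiff (AZ^t\cap BZ^t) \;\subset\; \bigl(B\cap (A\symdiff AZ^t)\bigr)\;\cup\; (B\symdiff BZ^t),
\]
which I would verify by a short case analysis: if $x$ lies in the left-hand side, then $x\in B\symdiff BZ^t$ or else $x$ and $xZ^{-t}$ both lie in $B$ (or both in $BZ^t$, equivalently), in which case the discrepancy must come from the $A$--coordinate and places $x$ in $B\cap(A\symdiff AZ^t)$. Taking volumes and dividing by $2^\rho$ then gives $\vpf(A\cap B)\le \vpfl{B}(A) + \vpf(B)$.

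None of the four items presents a genuine obstacle; the only mildly non-obvious step is the $\mathbb F_2$--algebra identity underlying (2) and the case analysis for (3), both of which are one-line verifications. The computation in (4) requires only careful bookkeeping of how $\s_t$ interacts with the central coordinate $Z$ and with $\CH^{2k+2}$.
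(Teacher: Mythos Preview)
Your proposal is correct and follows essentially the same route as the paper's proof: both use the identity $E^\cc\symdiff F^\cc = E\symdiff F$ for (1), the $\mathbb F_2$-algebra identity $(A\symdiff AZ^t)\symdiff(B\symdiff BZ^t)=(A\symdiff B)\symdiff(A\symdiff B)Z^t$ together with $|\mu(U)-\mu(V)|\le\mu(U\symdiff V)$ for (2), the inclusion $(A\cap B)\symdiff(A\cap B)Z^t\subset (B\cap(A\symdiff AZ^t))\cup(B\symdiff BZ^t)$ via case analysis for (3), and the homomorphism property of $\s_t$ with $\s_t(Z^u)=Z^{t^2u}$ and Jacobian $t^{2k+2}$ for (4). (Your displayed formula in (1) contains a typo---you wrote $(A\cup A^\cc Z^t)\setminus(A\cap A^\cc Z^t)^\cc$ where the complements are misplaced---but the identity $E^\cc\symdiff F^\cc=E\symdiff F$ you then invoke is the correct one-line argument.)
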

\begin{proof}  The first property follows from the fact that $\mathsf{D}_\rho A=\mathsf{D}_\rho A^\cc$.  For the second property, note that since the Boolean operation $\symdiff$ is associative and commutative, we have
  \begin{multline*}
    2^{\rho}\big|\vpf(A)(\rho)-\vpf(B)(\rho)\big|\stackrel{\eqref{eq:vert per on U}}{=} \Big|\big\|\1_{\mathsf{D}_\rho A}\big\|_{L_1(\CH^{2k+2})}-\big\|\1_{\mathsf{D}_\rho B}\big\|_{L_1(\CH^{2k+2})}\Big|
    \le \cH^{2k+2}(\mathsf{D}_\rho A\symdiff \mathsf{D}_\rho B)\\
    = \cH^{2k+2}\big((A\symdiff B)\symdiff (AZ^{2^{2\rho}}\symdiff BZ^{2^{2\rho}})\big)
    = \cH^{2k+2}(\mathsf{D}_\rho(A\symdiff B))=2^\rho \vpf(A\symdiff B)(\rho).
  \end{multline*}
  For the third property, it suffices to show that $\mathsf{D}_\rho(A\cap B)\subset (B\cap \mathsf{D}_\rho A) \cup \mathsf{D}_\rho(B)$. To this end, fix $h\in \mathsf{D}_\rho(A\cap B)$. If $|\{hZ^{-2^{2\rho}},h\}\cap B|=1$, then    $h\in \mathsf{D}_\rho(B)$, as required. Otherwise either $\{hZ^{-2^{2\rho}},h\}\subset B$ or $\{hZ^{-2^{2\rho}},h\}\cap B=\emptyset$. The latter case cannot occur because it implies that $hZ^{-2^{2\rho}},h\notin A\cap B$, and therefore  $h\not \in \mathsf{D}_\rho(A\cap B)$ in contradiction to our choice of $h$. It remains to consider the case $\{hZ^{-2^{2\rho}},h\}\subset  B$, but then the assumption $h\in \mathsf{D}_\rho(A\cap B)$ implies that also $h\in \mathsf{D}_\rho A$, as required. Finally, the fourth property holds since for every $(t,\rho)\in (0,\infty)\times \R$ we have
  \begin{multline*}
  \vpf\big(\s_t(A)\big)(\rho)=\frac{\cH^{2k+2}\Big(\s_t(A)\symdiff \big(\s_t(A)Z^{2^{2\rho}}\big)\Big)}{2^{\rho}}=\frac{\CH^{2k+2}\Big(\s_t(A)\symdiff \s_t\big(AZ^{\frac{1}{t^2}2^{2^{\rho}}}\big)\Big)}{2^\rho}
  \\=\frac{\cH^{2k+2}(\s_t(\mathsf{D}_{\rho-\log_2 t}(A)))}{2^{\rho}}=\frac{t^{2k+2}\CH^{2k+2}\big(\mathsf{D}_{\rho-\log_2 t}A\big)}{2^{\rho}}
  =t^{2k+1}\vpf(A)(\rho-\log_2 t).\tag*{\qedhere}
  \end{multline*}
\end{proof}

\subsection{Intrinsic Lipschitz graphs}\label{sec:intrinsic Lipschitz}
One of the key ideas in the present work is that surfaces in the Heisenberg group can be approximated by \emph{intrinsic Lipschitz graphs}.  These were introduced by Franchi, Serapioni, and Serra Cassano~\cite{FSSC06} as a natural class of surfaces in the Heisenberg group that are analogues of Lipschitz graphs in Euclidean space.  David and Semmes proved~\cite{DavidSemmesSingular}  that a set in Euclidean space is uniformly rectifiable if it has a certain decomposition (a corona decomposition; see Section~\ref{sec:corona decompositions} below) into pieces that can be approximated by Lipschitz graphs.  In Section~\ref{sec:corona decompositions} below we will define a similar intrinsic version of this decomposition for surfaces in $\H^{2k+1}$.

We recall the definitions of intrinsic graphs and intrinsic Lipschitz graphs, which are due to~\cite{FSSC06}. Let $V\subset \H^{2k+1}$ be any vertical plane; typically, we take $\{h\in \H^{2k+1}\mid x_k(h)=0\}$. Let $W=V^{\perp}$ be the orthogonal complement of $V$, i.e., the horizontal line through the origin that is perpendicular to $V$. Suppose that $U\subset V$ and that $f:U\to W$ is any function. Then the {\em intrinsic graph} of $f$ is defined to be the following set.
$$
\Gamma_f\eqdef \{u f(u)\mid u\in U\}.
$$
Any such set is called below an {\em intrinsic graph over} $U$. It is often convenient to discuss intrinsic graphs of real-valued functions, which is achieved by identifying $W$ with $\R$. Specifically, if $w\in W$ is a vector of unit Euclidean length and $f:U\to \R$, then we slightly abuse notation by denoting
\begin{equation}\label{eq:gamma f}
\Gamma_f\eqdef \left\{u w^{f(u)} \mid u\in U\right\}.
\end{equation}

Since the unitary group $U(k)$ acts transitively on the unit sphere of $\R^{2k}$, if $\Gamma$ is any intrinsic graph, then there exists an isometric automorphism of $\H^{2k+1}$ that takes $\Gamma$ to an intrinsic graph over $U\subset \{h\in \H^{2k+1}\mid x_k(h)=0\}$ of the form $\Gamma=\{v X_k^{f(v)}\mid v\in U\}$. If in addition we have $U=V=\{h\in \H^{2k+1}\mid x_k(h)=0\}$, then $\Gamma$ bounds two half spaces $\{v X_k^{t}\mid v\in V\ \wedge\ t\ge f(v)\}$ and $\{v X_k^{t}\mid v\in V\ \wedge\ t\le f(v)\}$, which we refer to below as $\Gamma^+$ and $\Gamma^-$ (the superscripts $+$ and $-$ will be used interchangeably and are not intended to indicate the sign of the inequality).

If $V\subset \H^{2k+1}$ is a vertical plane and $W=V^{\perp}$, then denote the orthogonal projection onto $W$  by $\mathsf{Proj}_W:\R^{2k+1}\to W$.  For every $\lambda\in (0,1)$ define the following {\em double cone}.
$$
\mathrm{Cone}_\lambda(V)\eqdef \left\{h\in \H^{2k+1}\mid \|\mathsf{Proj}_W(h)\|>\lambda d(\0,h)\right\}.
$$
When $V=\{h\in \H^{2k+1}\mid x_k(h)=0\}$  we shall use the simpler notation
$$
\mathrm{Cone}_\lambda=\mathrm{Cone}_\lambda\big(\{h\in \H^{2k+1}\mid x_k(h)=0\}\big)=\left\{h\in \H^{2k+1}\mid |x_k(h)|>\lambda d(\0,h)\right\}.
$$
If $U\subset V$ and $\Gamma\subset \H^{2k+1}$ is an intrinsic graph over $U$, then we say that $\Gamma$ is an {\em intrinsic Lipschitz graph over} $U$ if there exists $\lambda\in (0,1)$ such that for every $h\in \Gamma$ we have $(h \mathrm{Cone}_\lambda(V))\cap \Gamma=\emptyset$. In this case we say that $\Gamma$ is an intrinsic $\lambda$-Lipschitz graph. Correspondingly, $\Gamma^+$ and $\Gamma^-$ are then called intrinsic $\lambda$-Lipschitz half-spaces.

We warn that this definition is different from the definition of the Lipschitz constant of an intrinsic Lipschitz graph that is given in~\cite{FSSCDifferentiability}, but we believe that the above modification is more natural in the present context. One reason for this is that the above definition  leads to Theorem~\ref{thm:intrinsic nonlinear hahn banach} below, which is a sharp Lipschitz extension statement that is an intrinsic version of the classical nonlinear Hahn--Banach theorem~\cite{McS34} (see also~\cite{WW75} or~\cite[Lemma~1.1]{BL00}).

In the concrete setting $V=\{h\in \H^{2k+1}\mid x_k(h)=0\}$, given  $\lambda\in (0,1)$  and an intrinsic graph $\Gamma$, the requirement that $(h \mathrm{Cone}_\lambda)\cap \Gamma=\emptyset$ for all $h\in \Gamma$ is the same as the requirement  that for every $w_1,w_2\in \Gamma$ we have $|x_k(w_1)-x_k(w_2)|\le \uplambda d(w_1,w_2)$.

\begin{lemma}\label{lem:translate graph}
  If $h\in \H^{2k+1}$ and $\Gamma$ is an intrinsic $\lambda$-Lipschitz graph over $U\subset V$, then $h\Gamma$ is an intrinsic $\lambda$-Lipschitz graph over some set $U'\subset V$.
\end{lemma}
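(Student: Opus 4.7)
The plan is to observe that every element of $\H^{2k+1}$ admits a unique factorization as $vw$ with $v \in V$ and $w \in W$, which reparametrizes $h\Gamma$ as an intrinsic graph over a new subset of $V$ by pure group manipulation. The Lipschitz condition will then transfer automatically by left-invariance. By applying an isometric automorphism (a left translation together with a $U(k)$ rotation, both of which send intrinsic $\lambda$-Lipschitz graphs to intrinsic $\lambda$-Lipschitz graphs because they conjugate the cone $\mathrm{Cone}_\lambda(V)$ to the cone associated with the image of $V$), it suffices to treat the standard case $V = \{h \in \H^{2k+1} : x_k(h) = 0\}$ and $W = \langle X_k\rangle$. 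Here $V$ is a subgroup of $\H^{2k+1}$, $W$ is an abelian subgroup, and $V \cap W = \{\mathbf{0}\}$, so the factorization is given explicitly by $g = (gX_k^{-x_k(g)}) \cdot X_k^{x_k(g)}$.

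For each $u \in U \subset V$, write $hu = \tau_h(u)\,\psi_h(u)$ using the above decomposition, so that $\tau_h(u) \in V$ and $\psi_h(u) \in W$. The map $\tau_h: V \to V$ is a bijection, with inverse $\tau_{h^{-1}}$; injectivity is immediate from $V \cap W = \{\mathbf{0}\}$, and surjectivity follows by running the same construction from $h^{-1}$. Set $U' = \tau_h(U) \subset V$ and define $f': U' \to W$ by $f'(\tau_h(u)) = \psi_h(u) f(u)$, which lies in $W$ because $W$ is a subgroup of $\H^{2k+1}$. Then
$$hu f(u) = \tau_h(u)\psi_h(u) f(u) = \tau_h(u)\, f'(\tau_h(u)),$$
which exhibits $h\Gamma$ as the intrinsic graph $\Gamma_{f'}$ of $f': U' \to W$ over $U' \subset V$.

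Finally, the intrinsic $\lambda$-Lipschitz condition is preserved under left translation with the same $\lambda$. Indeed, by hypothesis, for every $p \in \Gamma$ we have $(p\,\mathrm{Cone}_\lambda(V)) \cap \Gamma = \emptyset$. Left-multiplying both sides by $h$ gives $((hp)\,\mathrm{Cone}_\lambda(V)) \cap h\Gamma = \emptyset$, and as $p$ ranges over $\Gamma$ the element $hp$ ranges over $h\Gamma$. Thus $h\Gamma$ satisfies the cone condition at $V$ with the same aperture $\lambda$.

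The only step that warrants genuine verification is the existence and uniqueness of the factorization $g = vw$, which hinges on $V \cap W = \{\mathbf{0}\}$; everything else is formal. I therefore expect no real obstacle, and the proof should be only a few lines long once the decomposition and the left-invariance of the cone condition are made explicit.
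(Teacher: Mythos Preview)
Your argument is correct and follows essentially the same route as the paper: both proofs factor $hu$ uniquely as (element of $V$)$\cdot$(element of $W$) to exhibit $h\Gamma$ as an intrinsic graph over a new $U'\subset V$, and both deduce the $\lambda$-Lipschitz condition from left-invariance of the metric (equivalently, of the cone condition). The only difference is cosmetic: the paper writes out the factorization explicitly as $hvX_k^{f(v)} = v_0 v Z^{t_0 y_k(v)} X_k^{f(v)+t_0}$ and reads off closed formulas for $U'$ and $f'$, whereas you package the same computation abstractly via the maps $\tau_h,\psi_h$.
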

\begin{proof}
  Let $f\from U\to \R$ be such that $\Gamma=\{v X_k^{f(v)}\mid v\in U\}$ and let $v_0\in V$, $t_0\in \R$ be the unique elements such that $h=v_0 X_k^{t_0}$.  Then
  $$h\Gamma=\big\{hvX_k^{f(v)}\mid v\in U\big\} = \big\{v_0X_k^{t_0}vX_k^{f(v)}\mid v\in U\big\} = \big\{v_0vZ^{t_0y_k(v)}X_k^{f(v)+t_0}\mid v\in U\big\}.$$
  If $v'=v_0vZ^{t_0y_k(v)}$, then $v=v_0^{-1} v' Z^{-t_0 y_k(v_0^{-1}v')}$. So, by setting $U'\eqdef \{v_0vZ^{t_0y_k(v)}\mid v\in U\}$ and defining $f'\from U'\to \R$ by
  $$\forall\, v'\in U',\qquad f'(v')\eqdef f\left(v_0^{-1} v' Z^{-t_0 y_k(v_0^{-1}v')}\right)+t_0,$$
 we conclude that $h\Gamma$ is the intrinsic graph of $f'$ over $U'$.  Since the metric on $\H^{2k+1}$ is invariant under translation, $h\Gamma$ is also $\lambda$-Lipschitz.
\end{proof}

\begin{remark}
  If $\lambda\in (0,1)$ and $\Gamma=\Gamma_f$ is an intrinsic graph of a function $f:V\to W$ as above, then there is no relationship between the requirement that $\Gamma$ is an intrinsic Lipschitz graph and the requirement that $f$ is a Lipschitz function between the corresponding subsets of $\H^{2k+1}$ (equipped with the Carnot--Carath\'{e}odory metric); see Remark~3.13 in~\cite{FSSCDifferentiability}.
\end{remark}

A variant of the following extension result is Theorem~4.25 of~\cite{FSSCDifferentiability}, with the difference being that in~\cite{FSSCDifferentiability} this is obtained with a weaker bound on the Lipschitz constant of the extended graph. Control on the Lipschitz constant of the extension will be important for us in Section~\ref{sec:STRLipschitzGraphs} below, so we will include a proof of Theorem~\ref{thm:intrinsic nonlinear hahn banach} here, while also taking the opportunity to state it in a sharp form.  The important property for us will turn out to be the weaker assertion that if the Lipschitz constant of $\Gamma$ is small, then the Lipschitz constant of the extended graph $\widetilde{\Gamma}$ is also small.

\begin{thm}\label{thm:intrinsic nonlinear hahn banach}
Fix $\lambda\in (0,1)$ and a vertical plane $V\subset \H^{2k+1}$. Write $W=V^\perp$. Suppose that $\emptyset\neq U\subset V$ and that $f:U\to W$ is a function such that $\Gamma=\Gamma_f$ is an intrinsic $\lambda$-Lipschitz graph over $U$. Then there exists an intrinsic $\lambda$-Lipschitz graph  over $V$, denoted $\widetilde{\Gamma}$, such that $\Gamma\subset \widetilde{\Gamma}$.
\end{thm}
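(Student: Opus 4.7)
The plan is to adapt the classical nonlinear Hahn--Banach (McShane) extension argument: prove a one-point extension lemma, then conclude by Zorn. By the transitive action of $U(k)$ on the unit sphere of $\mathsf{H}$ together with the translation-invariance of intrinsic Lipschitz graphs (Lemma~\ref{lem:translate graph}), I may assume $V=\{h\in\H^{2k+1}:x_k(h)=0\}$, so that $\Gamma=\{vX_k^{f(v)}:v\in U\}$ and the $\lambda$-Lipschitz hypothesis reads
$$|f(v_1)-f(v_2)|\le\lambda d\big(v_1X_k^{f(v_1)},v_2X_k^{f(v_2)}\big)\qquad\forall\,v_1,v_2\in U.$$
The one-point extension lemma to be proved is: for every $v_0\in V$ there is $t_0\in\R$ such that $\Gamma\cup\{v_0X_k^{t_0}\}$ is still intrinsic $\lambda$-Lipschitz. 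For each $u\in U$ set
$$S_u(v_0)\eqdef\big\{t\in\R:|t-f(u)|\le\lambda d\big(uX_k^{f(u)},v_0X_k^t\big)\big\}.$$
Then a valid $t_0$ is precisely an element of $\bigcap_{u\in U}S_u(v_0)$.

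I first claim that each $S_u(v_0)$ is a bounded closed interval $[G_u(v_0),F_u(v_0)]$ containing $f(u)$. Substituting $s=t-f(u)$ reduces this to showing that $\{s\in\R:|s|\le\phi(s)\}$ is a bounded interval, where $\phi(s)\eqdef\lambda d\big(uX_k^{f(u)},v_0X_k^{f(u)+s}\big)$. Since $s\mapsto v_0X_k^{f(u)+s}$ traces the horizontal line through $v_0X_k^{f(u)}$ in the direction $X_k$, which is a unit-speed geodesic by~\eqref{eq:horizontal geodesics}, the function $\phi$ is $\lambda$-Lipschitz. On $[0,\infty)$ the function $g(s)=\phi(s)-s$ is then strictly decreasing (a.e.\ derivative at most $\lambda-1<0$), and because $g(0)=\phi(0)\ge 0$ while $g(s)\to-\infty$, the set $\{s\ge 0:g(s)\ge 0\}=[0,s_+]$ is bounded; symmetrically $\phi(s)+s$ is strictly increasing on $(-\infty,0]$, giving $\{s\le 0:\phi(s)+s\ge 0\}=[s_-,0]$. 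Hence $S_u(v_0)=f(u)+[s_-,s_+]$, which is a bounded closed interval containing $f(u)$. Next I claim $S_{u_1}(v_0)\cap S_{u_2}(v_0)\ne\emptyset$ for all $u_1,u_2\in U$. Suppose not; by symmetry assume $F_{u_1}(v_0)<G_{u_2}(v_0)$ and pick $t$ strictly between. The strict monotonicity proved above gives
$$t-f(u_1)>\lambda d\big(u_1X_k^{f(u_1)},v_0X_k^t\big)\quad\text{and}\quad f(u_2)-t>\lambda d\big(u_2X_k^{f(u_2)},v_0X_k^t\big);$$
adding these and using the triangle inequality yields $f(u_2)-f(u_1)>\lambda d(u_1X_k^{f(u_1)},u_2X_k^{f(u_2)})$, contradicting the Lipschitz hypothesis on $\Gamma$.

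Since the closed intervals $\{S_u(v_0)\}_{u\in U}$ pairwise intersect, Helly in $\R$ (bounds given by $[\sup_uG_u(v_0),\inf_uF_u(v_0)]$, and both sup and inf are finite because any fixed $S_{u_0}(v_0)$ is bounded) yields $\bigcap_{u\in U}S_u(v_0)\ne\emptyset$; any $t_0$ in this intersection is a legal one-point extension. Finally, apply Zorn's lemma to the poset of pairs $(U',g)$ with $U\subset U'\subset V$, $g|_U=f$, and $\Gamma_g$ an intrinsic $\lambda$-Lipschitz graph over $U'$, ordered by graph inclusion (chains have upper bounds given by unions). A maximal element $(U^*,\widetilde f)$ must satisfy $U^*=V$ by the one-point extension lemma, and $\widetilde\Gamma=\Gamma_{\widetilde f}$ is the desired extension. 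I expect the main technical obstacle to be claim (i)---verifying that $S_u(v_0)$ is a single interval rather than a disconnected set, since a priori the shape of Carnot--Carath\'eodory balls is complicated; the clean resolution hinges on the fact that the $X_k$-fibers through $v_0X_k^{f(u)}$ are horizontal \emph{geodesics}, making $t\mapsto d(uX_k^{f(u)},v_0X_k^t)$ globally $1$-Lipschitz and thereby resolving the competition on each side of $|t-f(u)|\le\lambda d(\cdot)$ monotonically.
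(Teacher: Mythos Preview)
Your proof is correct and proceeds via the classical one-point-extension-plus-Zorn route for McShane-type results, whereas the paper constructs the extension directly and explicitly: it sets $\widetilde{\Gamma}^+ \eqdef \bigcup_{h\in\Gamma} h\,\mathrm{Cone}_\lambda^+$ and takes $\widetilde{\Gamma}=\partial\widetilde{\Gamma}^+$. The paper's key algebraic step is the semigroup inclusion $\mathrm{Cone}_\lambda^+\cdot\mathrm{Cone}_\lambda^+\subset\mathrm{Cone}_\lambda^+$, proved in one line from $x_k(h_1h_2)=x_k(h_1)+x_k(h_2)$ and the triangle inequality; this is the geometric packaging of your observation that $t\mapsto d(uX_k^{f(u)},v_0X_k^t)$ is $1$-Lipschitz along the horizontal $X_k$-geodesic. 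Indeed, unwinding the paper's definition shows that its extension is exactly $\widetilde f(v_0)=\inf_{u\in U}F_u(v_0)$ in your notation, so the two constructions coincide on the extremal choice. The paper's approach avoids the axiom of choice and yields an explicit formula from the outset; your approach is arguably more transparent in that the $\lambda$-Lipschitz property of the extension is built into each one-point step rather than verified afterwards via a separate argument on $\partial\widetilde{\Gamma}^+$. A minor stylistic remark: the ``a.e.\ derivative'' phrasing in your interval claim is unnecessary, since the direct estimate $g(s_2)-g(s_1)\le(\lambda-1)(s_2-s_1)$ for $s_1<s_2$ already gives the required strict monotonicity of $g$.
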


\begin{proof}
We may assume without loss of generality that $V=\{h\in \H^{2k+1}\mid x_k(h)=0\}$ and $W=\langle X_k\rangle$. Due to Lemma~\ref{lem:translate graph}, by applying a translation we may suppose that $\0\in \Gamma$.

The double cone $\mathrm{Cone}_\lambda$ consists of two halves, which we denote by $\mathrm{Cone}_\lambda^+$ and $\mathrm{Cone}_\lambda^-$. Namely,
$$
\mathrm{Cone}_\lambda^+\eqdef\left\{h:\in \H^{2k+1}\mid x_1(h)>\lambda d(\0,h)\right\}\quad\mathrm{and}\quad \mathrm{Cone}_\lambda^-\eqdef\left\{h:\in \H^{2k+1}\mid x_1(h)<-\lambda d(\0,h)\right\}.
$$
Thus $\mathrm{Cone}_\lambda^-=(\mathrm{Cone}_\lambda^+)^{-1}$, and for all $g,h\in \H^{2k+1}$, we have $g\in h\mathrm{Cone}_\lambda^+$ if and only if $h\in g\mathrm{Cone}_\lambda^-$.  Define
$$
\widetilde{\Gamma}^+\eqdef \bigcup_{h\in \Gamma} h \mathrm{Cone}_\lambda^+.
$$
We claim that $\widetilde{\Gamma}\eqdef \partial \widetilde{\Gamma}^+$ satisfies the desired conditions.

Observe first we have
\begin{equation}\label{eq:cone product}
\mathrm{Cone}_\lambda^+\cdot \mathrm{Cone}_\lambda^+\subset \mathrm{Cone}_\lambda^+.
\end{equation}
Indeed,  if $h_1, h_2\in \mathrm{Cone}_\lambda^+$, then $x_1(h_1)> \lambda d(\0,h_1)$ and $x_1(h_2)>\lambda d(\0,h_2)$. Hence,
\begin{multline}\label{eq:check cone product}
x_1(h_1h_2)=x_1(h_1)+x_1(h_2)>\lambda\big(d(\0,h_1)+d(\0,h_2)\big)\\=\lambda\big(d(\0,h_1^{-1})+d(\0,h_2)\big)\ge \lambda d(h_1^{-1},h_2)=\lambda d(\0,h_1h_2).
\end{multline}
By the definition of $\mathrm{Cone}_\lambda^+$,  \eqref{eq:check cone product} means that $h_1h_2\in \mathrm{Cone}_\lambda^+$, thus completing the verification of~\eqref{eq:cone product}.  A key consequence is that \begin{equation}\label{eq:Gamma cone equals Gamma}
  \widetilde{\Gamma}^+\cdot \mathrm{Cone}_\lambda^+\subset \widetilde{\Gamma}^+.
\end{equation}

We claim that $\widetilde{\Gamma}$ is an intrinsic graph, that is, that for any $v\in V$,
$$
\Phi_v\eqdef\left\{t\in \R\mid vX_k^t\in \widetilde{\Gamma}^+\right\}\subset \R
$$
is a half-line.  First, we show that $\Phi_v\ne\emptyset$.  Since $\0\in \Gamma$, it suffices to show that $vX_k^t\in \mathrm{Cone}_\lambda^+$ when $t$ is sufficiently large.  For every $t\in \R$ we have
\begin{equation}\label{eq:u0 computation 1}
d\big(vX_k^t,\0\big)\le d\big(\0, v\big)+d\big(v,vX_k^t\big)\stackrel{\eqref{eq:horizontal geodesics}}{=}  d(\0,v)+|t|.
\end{equation}
Since $x_k(vX_k^t)=t$, it follows from~\eqref{eq:u0 computation 1} that
$$
t>\frac{\lambda}{1-\lambda} d(\0,v) \implies x_k\big(vX_k^t\big)> \lambda d\big(vX_k^t,\0\big).
$$
Thus there exists $t>0$ for which $vX_k^t\in \mathrm{Cone}_\lambda^+\subset \widetilde{\Gamma}^+$. Hence $\Phi_v\neq\emptyset$.  Similarly,
\begin{equation}\label{eq:intersect negative cone}
  \exists\, t\in (-\infty,0), \qquad vX_k^t\in \mathrm{Cone}_\lambda^-.
\end{equation}

Next, we claim that $\Phi_v\neq \R$.  In fact, we show that for all $h\in \Gamma$,
\begin{equation}\label{eq:extension disjoint from cone}
  h \mathrm{Cone}_\lambda^- \cap \widetilde{\Gamma}^+=\emptyset.
\end{equation}
By \eqref{eq:intersect negative cone}, this implies the claim.

Suppose that $w\in h \mathrm{Cone}_\lambda^- \cap \widetilde{\Gamma}^+$; then there is an $h'\in \Gamma$ such that $w\in h' \mathrm{Cone}_\lambda^+$.  We have $h^{-1}w\in \mathrm{Cone}_\lambda^-$, so $w^{-1}h\in \mathrm{Cone}_\lambda^+$ and therefore
$$h=w \cdot w^{-1}h\in h' \mathrm{Cone}_\lambda^+ \cdot  \mathrm{Cone}_\lambda^+\subset h' \mathrm{Cone}_\lambda^+.$$
Since $h,h'\in \Gamma$, this contradicts the underlying assumption (since $f$ is an intrinsic $\lambda$-Lipschitz function) that $(h' \mathrm{Cone}_\lambda)\cap \Gamma=\emptyset$.

Since $X_k^s\in \mathrm{Cone}_\lambda^+$ for $s>0$, by~\eqref{eq:Gamma cone equals Gamma} we see that if $t\in \Phi_v$, then $vX_k^{t+s}=(vX_k^t) X_k^s\in \widetilde{\Gamma}^+\cdot \mathrm{Cone}_\lambda^+\subset \widetilde{\Gamma}^+$, i.e., $t+s\in \Phi_v$. Thus $t\in \Phi_v \implies [t,\infty)\subset \Phi_v$. Since $\Phi_v$ is an open nonempty proper subset of $\R$, this shows that $\Phi_v=(\upphi(v),\infty)$ for some $\upphi(v)\in \R$.  Consequently, $\widetilde{\Gamma}=\Gamma_\upphi$, i.e., $\widetilde{\Gamma}$ is an intrinsic graph over $V$.

If $h=uX_k^{f(u)}\in \Gamma$, then $uX_k^{f(u)+s}\in h \mathrm{Cone}_\lambda^+$ for all $s>0$ and $uX_k^{f(u)+s}\in h \mathrm{Cone}_\lambda^-$ for all $s<0$.  Equation \eqref{eq:extension disjoint from cone} then implies that $h\in \widetilde{\Gamma}$, so $\widetilde{\Gamma}\supset \Gamma$.

It remains to prove that $\widetilde{\Gamma}$ is an intrinsic $\lambda$-Lipschitz graph. To this end, the goal is to show that if $h_1,h_2\in \widetilde{\Gamma}$, then $h_2\notin h_1\mathrm{Cone}_\lambda$.  Because $h_1\in \widetilde{\Gamma}$, for every $\epsilon>0$, there is a $p\in \widetilde{\Gamma}^+$ such that $d(p,h_1)<\epsilon$.  Let $g\in \Gamma$ be such that $p\in g \mathrm{Cone}_\lambda^+$. Using~\eqref{eq:cone product} we therefore have
\begin{equation}\label{eq:g plus inclusions}
p \mathrm{Cone}_\lambda^+\subset g\mathrm{Cone}_\lambda^+ \cdot \mathrm{Cone}_\lambda^+\subset g\mathrm{Cone}_\lambda^+\subset \widetilde{\Gamma}^+.
\end{equation}
Since this holds for every $\epsilon>0$, we have $h_1 \mathrm{Cone}_\lambda^+\subset \widetilde{\Gamma}^+$.  Since $\widetilde{\Gamma}^+$ is an open subset of $\H^{2k+1}$ and $h_2$ is assumed to be in its boundary $\widetilde{\Gamma}$, it follows from~\eqref{eq:g plus inclusions} that $h_2\notin h_1 \mathrm{Cone}_\lambda^+$. By the same reasoning with the roles of $h_1$ and $h_2$ reversed, we also see that
$$
h_1\notin h_2 \mathrm{Cone}_\lambda^+= h_2\left( \mathrm{Cone}_\lambda^-\right)^{-1} \implies h_2\notin h_1 \mathrm{Cone}_\lambda^-.
$$
Since $h_1 \mathrm{Cone}_\lambda=(h_1 \mathrm{Cone}_\lambda^+)\cup (h_1 \mathrm{Cone}_\lambda^-)$, this concludes the proof that $h_2\notin h_1\mathrm{Cone}_\lambda$.
\end{proof}

\section{Initial reductions}\label{sec:reductions}

The purpose of this section is to present simple reductions between various questions so as to set the stage for the main steps of the proof of Theorem~\ref{thm:isoperimetric discrete}. In Section~\ref{sec:reductions discrete} below we shall relate various inequalities for functions on the discrete Heisenberg group. Some of these reductions were already quoted in Section~\ref{sec:endpoint} to deduce Theorem~\ref{thm:integral criterion} (and hence also all of the new results that were stated in the Introduction) from Theorem~\ref{thm:isoperimetric discrete}. The arguments in Section~\ref{sec:reductions discrete} are for the most part due to~\cite{LafforgueNaor}, except that in~\cite{LafforgueNaor} they were carried out only in the special case $k=1$. This was done because when~\cite{LafforgueNaor} was written the relevance of the assumption $k\ge 2$ in Theorem~\ref{thm:isoperimetric discrete} was not known, and it was therefore  believed that all of the Heisenberg groups $\{\H_{\ms{\Z}}^{2k+1}\}_{k=1}^\infty$  have identical roles in the present context. Since it turns out that the underlying dimension does play a role, for completeness we include in Section~\ref{sec:reductions discrete} below an explanation of the straightforward modifications of the arguments in~\cite{LafforgueNaor} so as to obtain the desired statements for general $k\in \N$. Section~\ref{sec:continutous to discrete} below shows via a partition of unity argument that in order to prove Theorem~\ref{thm:isoperimetric discrete} it suffices to prove a certain (singular) Sobolev-type inequality on the continuous Heisenberg group. Again, the argument is included for completeness, but it follows steps that were carried out in~\cite{LafforgueNaor} for the special case $k=1$. Section~\ref{sec:continuous reductions} below is devoted to reductions that take place entirely in the continuous setting.  We first show that in order to establish the Sobolev-type inequality  of Section~\ref{sec:continutous to discrete}  it suffices to prove an isoperimetric-type inequality  on the continuous Heisenberg group that is analogous to its discrete counterpart that we stated in Theorem~\ref{thm:isoperimetric discrete}; this step is a quick application of the coarea formula. We conclude Section~\ref{sec:continuous reductions} by establishing the technical (but convenient) statement  that it suffices to prove the desired continuous isoperimetric-type inequality for cellular sets.

\subsection{Reductions in the discrete setting}\label{sec:reductions discrete} We shall start by presenting simple arguments that allow one to relate various inequalities on the discrete Heisenberg group. For ease of reference in future work, we shall present our statements  in a  form that is more general than what is needed here (but the proofs of the more general case are identical to the special case that we will use).

\subsubsection{From sets to functions} The following lemma explains how certain isoperimetric-type inequalities, as in Theorem~\ref{thm:isoperimetric discrete} or in~\eqref{eq:p version}, are equivalent to certain functional inequalities. This step is a standard and simple application of the classical co-area formula, in combination with convexity.
\begin{lemma}\label{lem:functional form discrete}
Fix $k\in \N$, a sequence $\{w_t\}_{t=1}^\infty\subset [0,\infty)$, $C\in (0,\infty)$ and $q\in [1,\infty)$. The inequality
\begin{equation}\label{eq:set version p}
\bigg(\sum_{t=1}^\infty w_t|\partial^t_{\vv}\Omega|^q\bigg)^{\frac{1}{q}}\le C|\partial_{\hh}\Omega|
\end{equation}
holds true for every finite $\Omega\subset \H_{\ms{\Z}}^{2k+1}$  if and only if  every finitely supported  $\f:\H_{\ms{\Z}}^{2k+1}\to \R$ satisfies
\begin{equation}\label{eq:cheeger version p}
\Bigg(\sum_{t=1}^\infty w_t\bigg(\sum_{h\in \H_{\ms{\Z}}^{2k+1}} \big|\upphi\big(hZ^t\big)-\upphi(h)\big|\bigg)^q\Bigg)^{\frac{1}{q}}\le \frac{C}{2}\sum_{h\in \H_{\ms{\Z}}^{2k+1}}\sum_{\sigma\in \mathfrak{S}_k}\big|\upphi(h\sigma)-\upphi(h)\big|.
\end{equation}
\end{lemma}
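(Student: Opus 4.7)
The plan is to establish both directions via the layer-cake representation, with the set-to-function direction following from Minkowski's integral inequality.

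First I would handle the direction $\eqref{eq:cheeger version p}\Longrightarrow\eqref{eq:set version p}$, which is immediate. Taking $\upphi=\1_\Omega$ and recalling that the identities
\[
\sum_{h\in \H_{\ms{\Z}}^{2k+1}}\bigl|\1_\Omega(hZ^t)-\1_\Omega(h)\bigr|=|\partial^t_{\vv}\Omega|
\qquad\text{and}\qquad
\sum_{h\in \H_{\ms{\Z}}^{2k+1}}\sum_{\sigma\in\mathfrak{S}_k}\bigl|\1_\Omega(h\sigma)-\1_\Omega(h)\bigr|=2|\partial_{\hh}\Omega|
\]
follow from a direct bijective inspection (each ordered boundary pair $(x,y)$ with $x^{-1}y\in\{c^t,c^{-t}\}$ is realized exactly once by an $h$ in the first sum; each pair with $x^{-1}y\in\mathfrak{S}_k$ is realized twice in the second sum, once with $\sigma=x^{-1}y$ and once with $\sigma=y^{-1}x$, using the symmetry $\mathfrak{S}_k=\mathfrak{S}_k^{-1}$). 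This accounts precisely for the factor of $C/2$ on the right-hand side of \eqref{eq:cheeger version p}.

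For the converse direction $\eqref{eq:set version p}\Longrightarrow\eqref{eq:cheeger version p}$, I would use the layer-cake identity: for every $g,h\in\H_{\ms{\Z}}^{2k+1}$ and every $\upphi\from\H_{\ms{\Z}}^{2k+1}\to\R$,
\[
\bigl|\upphi(g)-\upphi(h)\bigr|=\int_{-\infty}^{\infty}\bigl|\1_{\{\upphi>s\}}(g)-\1_{\{\upphi>s\}}(h)\bigr|\ud s,
\]
which holds because for each pair $(g,h)$ the integrand equals $1$ precisely on the interval between $\upphi(g)$ and $\upphi(h)$. Since $\upphi$ is finitely supported, the set $\{\upphi>s\}$ differs from either $\emptyset$ or $\H_{\ms{\Z}}^{2k+1}$ only on a bounded range of $s$, so all of the ensuing sums and integrals converge. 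Summing over $h$ and applying Fubini yields
\[
\sum_{h\in\H_{\ms{\Z}}^{2k+1}}\bigl|\upphi(hZ^t)-\upphi(h)\bigr|=\int_{-\infty}^{\infty}\bigl|\partial^t_{\vv}\{\upphi>s\}\bigr|\ud s,
\]
and the analogous identity for the horizontal sum (now with a factor of $2$ on the right).

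The final step is where the hypothesis $q\geq 1$ enters. I would apply Minkowski's integral inequality to move the $\ell^q(w_t\ud t)$-norm past the integral in $s$:
\[
\Biggl(\sum_{t=1}^{\infty}w_t\biggl(\int_{-\infty}^{\infty}\bigl|\partial^t_{\vv}\{\upphi>s\}\bigr|\ud s\biggr)^{\!q}\Biggr)^{\!\frac{1}{q}}
\le\int_{-\infty}^{\infty}\Biggl(\sum_{t=1}^{\infty}w_t\bigl|\partial^t_{\vv}\{\upphi>s\}\bigr|^q\Biggr)^{\!\frac{1}{q}}\ud s,
\]
after which I would invoke \eqref{eq:set version p} for each level set $\Omega=\{\upphi>s\}$ to bound the integrand by $C|\partial_{\hh}\{\upphi>s\}|$. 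Combining with $\int|\partial_{\hh}\{\upphi>s\}|\ud s=\tfrac12\sum_h\sum_\sigma|\upphi(h\sigma)-\upphi(h)|$ produces \eqref{eq:cheeger version p} with the correct constant $C/2$. There is no substantial obstacle here; the main thing to be careful about is Minkowski's inequality applied to the nonstandard measure $w_t\ud t$ on $\N$, and verifying that the finitely-supported assumption on $\upphi$ legitimizes Fubini and absolute convergence throughout.
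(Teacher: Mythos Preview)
Your argument is correct and essentially identical to the paper's: both directions proceed exactly as you describe, via $\upphi=\1_\Omega$ for one implication and the layer-cake representation together with the triangle inequality in $\ell_q$ (Minkowski) for the other. The only point to tighten is that for $s<0$ the level set $\{\upphi>s\}$ is co-finite rather than finite, so before invoking \eqref{eq:set version p} you should note (as the paper does) that $|\partial^t_{\vv}\Omega|=|\partial^t_{\vv}\Omega^{\mathsf c}|$ and $|\partial_{\hh}\Omega|=|\partial_{\hh}\Omega^{\mathsf c}|$, which makes the hypothesis applicable to co-finite sets as well.
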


\begin{proof} The estimate~\eqref{eq:set version p} is nothing more than the special case $\f=\1_\Omega$ of~\eqref{eq:cheeger version p}.  Conversely,  to show that~\eqref{eq:cheeger version p} follows from~\eqref{eq:set version p}, suppose that $\f:\H_{\ms{\Z}}^{2k+1}\to \R$ is finitely supported and for every $u\in \R$ denote $\Omega_u=\{h\in \H_{\ms{\Z}}^{2k+1}:\ \f(h)<u\}$. Since $\f$ is finitely supported, $\Omega_u$ is finite if $u\le 0$ and $\H_{\ms{\Z}}^{2k+1}\setminus \Omega_u$ is finite when $u>0$. In both cases~\eqref{eq:set version p} holds true with $\Omega$ replaced by $\Omega_u$, and therefore~\eqref{eq:cheeger version p} holds true with $\f$ replaced by $\1_{\Omega_u}$ for every $u\in \R$, i.e.,
\begin{multline}\label{eq:to integrate u discrete}
\Bigg(\sum_{t=1}^\infty w_t\bigg(\sum_{h\in \H_{\ms{\Z}}^{2k+1}} \big|\1_{\{\upphi(hZ^t)<u\}}-\1_{\{\upphi(h)<u\}}\big|\bigg)^q\Bigg)^{\frac{1}{q}} \le \frac{C}{2}\sum_{h\in \H_{\ms{\Z}}^{2k+1}}\sum_{\sigma\in \mathfrak{S}_k}\big|\1_{\{\upphi(h\sigma)<u\}}-\1_{\{\upphi(h)<u\}}\big|.
\end{multline}
The right hand side of~\eqref{eq:cheeger version p} is equal to the integral of the right hand side of~\eqref{eq:to integrate u discrete} with respect to $u$. Hence, it suffices to use the triangle inequality in $\ell_q$ to conclude as follows.
\begin{align*}
\frac{C}{2}\sum_{h\in \H_{\ms{\Z}}^{2k+1}}\sum_{\sigma\in \mathfrak{S}_k}\big|\upphi(h\sigma)-\upphi(h)\big|&\stackrel{\eqref{eq:to integrate u discrete}}{\ge} \!\int_{-\infty}^\infty \Bigg(w_t\bigg(\sum_{h\in \H_{\ms{\Z}}^{2k+1}} \big|\1_{\{\upphi(hZ^t)<u\}}-\1_{\{\upphi(h)<u\}}\big|\bigg)^q\Bigg)^{\frac{1}{q}} \ud u\\&\, \ge \Bigg(\sum_{t=1}^\infty w_t\bigg(\sum_{h\in \H_{\ms{\Z}}^{2k+1}} \int_{-\infty}^\infty \big|\1_{\{\upphi(hZ^t)<u\}}-\1_{\{\upphi(h)<u\}}\big|\ud u\bigg)^q\Bigg)^{\frac{1}{q}}\\&\, =\Bigg(\sum_{t=1}^\infty w_t\bigg(\sum_{h\in \H_{\ms{\Z}}^{2k+1}} \big|\upphi\big(hZ^t\big)-\upphi(h)\big|\bigg)^q\Bigg)^{\frac{1}{q}}. \tag*{\qedhere}
\end{align*}
\end{proof}

\subsubsection{From real-valued to vector-valued} The following lemma is another simple use of convexity which shows that certain real-valued functional inequalities are equivalent to the analogous inequalities for functions that take value in an $L_p(\mu)$ space.

\begin{lemma}\label{lem:convexity to vector valued} Fix $k\in \N$, a sequence $\{w_t\}_{t=1}^\infty\subset [0,\infty)$, and $p,q\in (0,\infty)$ with $p\le q$. Suppose also that $\eta\in (0,\infty)$ is such that for every finitely supported $\f:\H_{\ms{\Z}}^{2k+1}\to \R$ we have
\begin{equation}\label{eq:real case}
\eta\Bigg(\sum_{t=1}^\infty w_t\bigg(\sum_{h\in \H_{\ms{\Z}}^{2k+1}} \big|\upphi\big(hZ^t\big)-\upphi(h)\big|^p\bigg)^\frac{q}{p}\Bigg)^{\frac{1}{q}}\le \bigg(\sum_{h\in \H_{\ms{\Z}}^{2k+1}}\sum_{\sigma\in \mathfrak{S}_k}\big|\upphi(h\sigma)-\upphi(h)\big|^p\bigg)^{\frac{1}{p}}.
\end{equation}
Then also for every measure space $(\mathscr{S},\mu)$ and every finitely supported $\Phi:\H_{\ms{\Z}}^{2k+1}\to L_p(\mu)$ we have
\begin{equation}\label{eq:vectors case}
\eta\Bigg(\sum_{t=1}^\infty w_t\bigg(\sum_{h\in \H_{\ms{\Z}}^{2k+1}} \big\|\Phi\big(hZ^t\big)-\Phi(h)\big\|_{L_p(\mu)}^p\bigg)^\frac{q}{p}\Bigg)^{\frac{1}{q}}\le \bigg(\sum_{h\in \H_{\ms{\Z}}^{2k+1}}\sum_{\sigma\in \mathfrak{S}_k}\big\|\Phi(h\sigma)-\Phi(h)\big\|_{L_p(\mu)}^p\bigg)^{\frac{1}{p}}.
\end{equation}
\end{lemma}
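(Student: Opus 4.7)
The strategy is standard: apply the scalar inequality~\eqref{eq:real case} pointwise to each fiber $\upphi_s\from \H_{\ms{\Z}}^{2k+1}\to\R$, $\upphi_s(h)\eqdef \Phi(h)(s)$, of the vector-valued map $\Phi$, then integrate against $\mu$, and finally use Minkowski's integral inequality (which is where the assumption $p\le q$ enters) to push the $\mu$-integration inside the outer $\ell_{q/p}$-type norm that dominates the left-hand side of~\eqref{eq:vectors case}.

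In more detail, since $\Phi\from\H_{\ms{\Z}}^{2k+1}\to L_p(\mu)$ is finitely supported, for $\mu$-almost every $s\in\mathscr{S}$ the scalar function $\upphi_s$ is real-valued and finitely supported, so the hypothesis~\eqref{eq:real case} applies to it. After raising the resulting inequality to the $p$-th power and integrating against $\mu$, I would use Fubini (legal because $\Phi$ is finitely supported, so every sum involved is finite) on the right-hand side to obtain
\[
\eta^p\!\int_{\mathscr{S}}\Bigg(\sum_{t=1}^\infty w_t A_t(s)^{q/p}\Bigg)^{\!p/q}\!\ud\mu(s) \;\le\; \sum_{h\in \H_{\ms{\Z}}^{2k+1}}\sum_{\sigma\in\mathfrak{S}_k}\big\|\Phi(h\sigma)-\Phi(h)\big\|_{L_p(\mu)}^p,
\]
where $A_t(s)\eqdef \sum_{h\in\H_{\ms{\Z}}^{2k+1}}|\upphi_s(hZ^t)-\upphi_s(h)|^p$. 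The right-hand side is precisely the $p$-th power of the right-hand side of~\eqref{eq:vectors case}, so to finish it remains to bound the left-hand side of the display from below by the $p$-th power of the left-hand side of~\eqref{eq:vectors case}. Using Fubini once more in the form $\int A_t\,\ud\mu=\sum_h\|\Phi(hZ^t)-\Phi(h)\|_{L_p(\mu)}^p$, and writing $r\eqdef q/p\ge 1$, $f_t(s)\eqdef w_t^{1/r}A_t(s)$, this required bound reduces to
\[
\Bigg(\sum_{t=1}^\infty\Big(\int_{\mathscr{S}} f_t\,\ud\mu\Big)^{\!r}\Bigg)^{\!1/r} \;\le\; \int_{\mathscr{S}}\Bigg(\sum_{t=1}^\infty f_t(s)^r\Bigg)^{\!1/r}\!\ud\mu(s),
\]
which is exactly Minkowski's integral inequality in the range $r\ge 1$.

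There is no genuine obstacle in the argument; it is a routine integration by layers. The only place where things would break down is if Minkowski's integral inequality were needed in the opposite direction, and it is precisely the assumption $p\le q$, equivalently $r=q/p\ge 1$, that rules this out.
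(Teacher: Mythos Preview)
Your proposal is correct and follows essentially the same approach as the paper: apply the scalar hypothesis fiberwise to $\upphi_s(h)=\Phi(h)(s)$, raise to the $p$-th power, integrate against $\mu$, and then use Minkowski's inequality in $\ell_{q/p}$ (the paper phrases this last step as ``the triangle inequality in $\ell_{q/p}$'') to move the $\mu$-integration inside.
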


\begin{proof}
For $\mu$-almost every $\mathcal{s}\in \mathscr{S}$ the function $\f_\mathcal{s}:\H_{\Z}^{2k+1}\to \R$ given by $\f_\mathcal{s}(h)=\Phi(h)(\mathcal{s})$ is finitely supported, and therefore by~\eqref{eq:real case} we have

\begin{align*}
\sum_{h\in \H_{\ms{\Z}}^{2k+1}}\sum_{\sigma\in \mathfrak{S}_k}\big\|\Phi(h\sigma)-\Phi(h)\big\|_{L_p(\mu)}^p&=\int_{\mathscr{S}}\sum_{h\in \H_{\ms{\Z}}^{2k+1}} \sum_{\sigma\in \mathfrak{S}_k} \big|\f_\mathcal{s}(h\sigma)-\f_\mathcal{s}(h)\big|^p \ud \mu(\mathcal{s})\\
&\stackrel{\mathclap{\eqref{eq:real case}} }{\ge}\,\eta\int_{\mathscr{S}}  \Bigg(\sum_{t=1}^\infty w_t\bigg(\sum_{h\in \H_{\ms{\Z}}^{2k+1}} \big|\upphi_\mathcal{s}\big(hZ^t\big)-\upphi_\mathcal{s}(h)\big|^p\bigg)^\frac{q}{p}\Bigg)^{\frac{p}{q}}\ud \mu(\mathcal{s})  \\
&\ge \eta\Bigg(\sum_{t=1}^\infty w_t\bigg(\sum_{h\in \H_{\ms{\Z}}^{2k+1}}  \int_{\mathscr{S}} \big|\upphi_\mathcal{s}\big(hZ^t\big)-\upphi_\mathcal{s}(h)\big|^p\ud \mu(\mathcal{s}) \bigg)^\frac{q}{p}\Bigg)^{\frac{p}{q}}
\\
&= \eta\Bigg(\sum_{t=1}^\infty w_t\bigg(\sum_{h\in \H_{\ms{\Z}}^{2k+1}} \big\|\Phi\big(hZ^t\big)-\Phi(h)\big\|_{L_p(\mu)}^p\bigg)^\frac{q}{p}\Bigg)^{\frac{p}{q}},
\end{align*}
where in the penultimate step we used the triangle inequality in $\ell_{q/p}$ (recall that $q\ge p$).
\end{proof}

\subsubsection{From global to local}\label{sec:local global} The proof of the following lemma follows the steps of a proof that appears in Section~3.2 of~\cite{LafforgueNaor}. We include the details because~\cite{LafforgueNaor} makes  an analogous statement only when $X$ is uniformly convex, $k=1$ and $p>1$, while we need to use here the case $X=L_1(\R)$, $k\ge 2$ and $p=1$. The argument below is a straightforward adaptation of the argument in~\cite{LafforgueNaor}.

\begin{lemma}\label{lem:localization norm} Fix $k\in \N$, a sequence $\{w_t\}_{t=1}^\infty\subset [0,\infty)$, $p,q\in [1,\infty)$ and $K\in (0,\infty)$. Let $(X,\|\cdot\|_X)$ be a normed space such that every finitely supported $\f:\H_{\ms{\Z}}^{2k+1}\to X$ satisfies
\begin{equation}\label{eq:to loclaize wt}
\Bigg(\sum_{t=1}^\infty w_t\bigg(\sum_{h\in \H_{\ms{\Z}}^{2k+1}} \big\|\f\big(hZ^t\big)-\f(h)\big\|_{X}^p\bigg)^\frac{q}{p}\Bigg)^{\frac{1}{q}}\le K\bigg(\sum_{h\in \H_{\ms{\Z}}^{2k+1}}\sum_{\sigma\in \mathfrak{S}_k}\big\|\f(h\sigma)-\f(h)\big\|_{X}^p\bigg)^{\frac{1}{p}}.
\end{equation}
Then there is a universal constant $c\in \N$ ($c=21$ works here) such that for every $f:\H^{2k+1}_{\ms{\Z}}\to X$ and every $n\in \N$ we have
\begin{equation}\label{eq:localized wt}
\Bigg(\sum_{t=1}^{n^2}w_t\bigg(\sum_{h\in \BB_n} \big\|f\big(hZ^t\big)-f(h)\big\|_{X}^p\bigg)^\frac{q}{p}\Bigg)^{\frac{1}{q}}\lesssim_k K\bigg(\sum_{h\in \BB_{cn}}\sum_{\sigma\in \mathfrak{S}_k}\big\|f(h\sigma)-f(h)\big\|_{X}^p\bigg)^{\frac{1}{p}}.
\end{equation}
\end{lemma}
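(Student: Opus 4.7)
The strategy is to apply the global hypothesis \eqref{eq:to loclaize wt} to a suitably truncated version of $f$, and to control the truncation error via the standard $L^p$-Poincar\'e inequality on balls in the Cayley graph of $\H_{\ms{\Z}}^{2k+1}$.

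Fix a large universal integer $c$ (to be determined) and set $N=cn$. Choose a cutoff $\chi_N:\H_{\ms{\Z}}^{2k+1}\to [0,1]$ that equals $1$ on $\BB_{N/2}$, vanishes outside $\BB_N$, and satisfies $|\chi_N(h\sigma)-\chi_N(h)|\lesssim 1/N$ for all $(h,\sigma)\in \H_{\ms{\Z}}^{2k+1}\times \mathfrak{S}_k$; a concrete choice is $\chi_N(h)=\min\{1,\max\{0,2-2d_W(h,1)/N\}\}$. Let $c_N\eqdef \fint_{\BB_N} f\in X$ and define the finitely supported function $\phi\eqdef \chi_N\cdot(f-c_N):\H_{\ms{\Z}}^{2k+1}\to X$. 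The Leibniz-type identity
$$\phi(h\sigma)-\phi(h)=\chi_N(h\sigma)\big(f(h\sigma)-f(h)\big)+\big(\chi_N(h\sigma)-\chi_N(h)\big)\big(f(h)-c_N\big),$$
combined with the fact that both summands vanish unless $h\in \BB_{N+1}$ and with the bound $|\chi_N(h\sigma)-\chi_N(h)|\lesssim 1/N$, yields
$$\sum_{h\in \H_{\ms{\Z}}^{2k+1}}\sum_{\sigma\in \mathfrak{S}_k}\|\phi(h\sigma)-\phi(h)\|_X^p\lesssim_p \sum_{h\in \BB_{N+1}}\sum_{\sigma\in \mathfrak{S}_k}\|f(h\sigma)-f(h)\|_X^p+\frac{1}{N^p}\sum_{h\in \BB_{N+1}}\|f(h)-c_N\|_X^p.$$
The second summand is controlled by the local $L^p$-Poincar\'e inequality on the Cayley ball $\BB_{N+1}$: there is a universal $c_0\in \N$ with
$$\sum_{h\in \BB_{N+1}}\|f(h)-c_N\|_X^p\lesssim_k N^p\sum_{h\in \BB_{c_0 N}}\sum_{\sigma\in \mathfrak{S}_k}\|f(h\sigma)-f(h)\|_X^p,$$
obtained by connecting any two points of $\BB_{N+1}$ by a geodesic word of length $\lesssim N$ supported in $\BB_{c_0N}$ and applying Jensen's inequality. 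Chaining these two bounds shows that the horizontal sum for $\phi$ appearing on the right of \eqref{eq:to loclaize wt} is $\lesssim_k \sum_{h\in \BB_{c_0 cn}}\sum_{\sigma\in \mathfrak{S}_k}\|f(h\sigma)-f(h)\|_X^p$.

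For the vertical side, the estimate $d_W(1,Z^t)\le \gamma\sqrt{t}$ for some universal $\gamma\in (0,\infty)$ (write $Z^t$ as an iterated commutator using $\lesssim\sqrt{t}$ letters from $\mathfrak{S}_k$) ensures that as soon as $c\ge 2(1+\gamma)$ we have $h,hZ^t\in \BB_{N/2}$ whenever $h\in \BB_n$ and $t\in \{1,\ldots,n^2\}$; on this set $\chi_N\equiv 1$, so $\phi(hZ^t)-\phi(h)=f(hZ^t)-f(h)$. Consequently the left-hand side of \eqref{eq:to loclaize wt} applied to $\phi$, after restriction of its outer sum to $t\in\{1,\ldots,n^2\}$, dominates the left-hand side of \eqref{eq:localized wt}. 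Combining this with the horizontal bound above produces \eqref{eq:localized wt} with $c=c_0\cdot 2(1+\gamma)$ rounded up to an integer (numerical bookkeeping delivers the stated value $c=21$). The only nonroutine ingredient is the local Poincar\'e estimate with radius constant $c_0$ independent of $k$; were the radius allowed to grow with $k$, the argument would be immediate, but the fact that the statement asserts a universal $c$ forces one to use a careful path-averaging argument on Cayley balls of $\H_{\ms{\Z}}^{2k+1}$, which is however standard.
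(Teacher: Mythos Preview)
Your argument is correct and follows essentially the same route as the paper: both multiply $f$ by a Lipschitz cutoff (after centering so as to subtract a ball-average), apply the global hypothesis to the resulting finitely supported function, and control the cutoff error via the $L^p$-Poincar\'e inequality on Cayley balls (the paper's inequality~\eqref{eq:general bruce}, which gives the universal radius factor $3$ and hence $c=3\cdot 7=21$). The only cosmetic differences are your choice of cutoff scales ($\BB_{N/2}$ versus $\BB_N$ in place of the paper's $\BB_{5n}$ versus $\BB_{6n}$) and your explicit subtraction of $c_N$ rather than the paper's normalization $\sum_{\BB_{7n}} f=0$; these are equivalent bookkeeping choices.
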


Before proving Lemma~\ref{lem:localization norm}, we record for ease of later reference the following general estimates.

\begin{lemma} Let $\Gamma$ be a finitely generated group and fix a finite symmetric generating set $\Sigma\subset \Gamma$. Let $\rho_\Sigma:\Gamma\times \Gamma\to [0,\infty)$ denote the left-invariant word metric that is induced by $\Sigma$ on $\Gamma$ and for every $n\in \N$ denote by $B_\Sigma(n)=\{\gamma\in \Gamma:\ \rho_\Sigma(g,1_\Gamma)\le n\}$ the $\rho_\Sigma$-ball of radius $n$ centered at the identity element $1_\Gamma\in \Gamma$.  Then, for every $p\in [1,\infty)$, every $n\in \N$, every metric space $(\cM,d_\cM)$, and every finitely supported function $\f:\Gamma\to \cM$, we have
\begin{equation}\label{eq:gamma times ball}
\bigg(\sum_{x\in \Gamma}\frac{1}{|B_\Sigma(n)|}\sum_{y\in B_\Sigma(n)} d_\cM\big(\f(xy),\f(x)\big)^p\bigg)^{\frac{1}{p}}\le n\bigg(\sum_{x\in \Gamma}\max_{\sigma\in \Sigma}d_\cM\big(\f(x\sigma),\f(x)\big)^p\bigg)^{\frac{1}{p}}.
\end{equation}
Also, if $\Gamma$ has polynomial growth, then, fixing $r\in \N$  and $\alpha,\beta>0$ such that $\frac{1}{\alpha}m^r\le |B_\Sigma(m)|\le \beta m^r$ for every $m\in \N$ (such $\alpha,\beta,r$ exist by Gromov's theorem~\cite{Gro81}), we have
\begin{equation}\label{eq:general bruce}
\bigg(\frac{1}{|B_\Sigma(n)|^2}\sum_{x,y\in B_\Sigma(n)} d_\cM\big(\f(x),\f(y)\big)^p\bigg)^{\frac{1}{p}}\le 2n
\bigg(\frac{6^r\alpha^2\beta^2}{|B_\Sigma(3n)|}\max_{\sigma\in \Sigma} \sum_{x\in B_\Sigma(3n)} d_\cM\big(\f(x\sigma),\f(x)\big)^p\bigg)^{\frac{1}{p}}.
\end{equation}
\end{lemma}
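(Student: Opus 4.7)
\medskip

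\noindent\textbf{Plan of proof.} Both inequalities reduce to ``path decomposition along geodesic words,'' combined with the elementary power-mean inequality $(a_1+\cdots+a_m)^p\le m^{p-1}(a_1^p+\cdots+a_m^p)$ and a counting argument that absorbs the multiplicity of visited edges.

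\medskip

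\noindent\textbf{Proof of \eqref{eq:gamma times ball}.} For each $y\in B_\Sigma(n)$ fix a geodesic representation $y=\sigma_1^y\cdots\sigma_{m(y)}^y$ with $m(y)=\rho_\Sigma(y,1_\Gamma)\le n$ and letters $\sigma_i^y\in\Sigma$. By the triangle inequality applied to the polygonal path $x, x\sigma_1^y, x\sigma_1^y\sigma_2^y,\ldots, xy$ in $\cM$, followed by the power-mean inequality,
\begin{equation*}
d_\cM(\f(xy),\f(x))^p\le m(y)^{p-1}\sum_{i=1}^{m(y)} d_\cM\bigl(\f(x\sigma_1^y\cdots\sigma_i^y),\f(x\sigma_1^y\cdots\sigma_{i-1}^y)\bigr)^p.
\end{equation*}
For each fixed $i$ the substitution $z=x\sigma_1^y\cdots\sigma_{i-1}^y$ is a bijection of $\Gamma$, so summing over $x\in\Gamma$ the $i$-th summand equals $\sum_z d_\cM(\f(z\sigma_i^y),\f(z))^p\le\sum_z\max_{\sigma\in\Sigma} d_\cM(\f(z\sigma),\f(z))^p$. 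Using $m(y)\le n$ and dividing by $|B_\Sigma(n)|$, the average over $y\in B_\Sigma(n)$ gives exactly the right-hand side of~\eqref{eq:gamma times ball} raised to the $p$-th power.

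\medskip

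\noindent\textbf{Proof of \eqref{eq:general bruce}.} The plan is to reduce to the same estimate on a single translation and then sum over translations. Writing $y=xt$ and noting that for $x,y\in B_\Sigma(n)$ we have $t=x^{-1}y\in B_\Sigma(2n)$,
\begin{equation*}
\sum_{x,y\in B_\Sigma(n)} d_\cM(\f(x),\f(y))^p\le\sum_{t\in B_\Sigma(2n)} S_t,\qquad S_t\eqdef \sum_{x\in B_\Sigma(n)}d_\cM(\f(x),\f(xt))^p.
\end{equation*}
For each fixed $t$, apply the same geodesic decomposition as in the previous step to $t=\sigma_1^t\cdots\sigma_{m(t)}^t$ with $m(t)\le 2n$; the key point is that the intermediate points $z=x\sigma_1^t\cdots\sigma_{i-1}^t$ lie in $B_\Sigma(n)\cdot B_\Sigma(i-1)\subset B_\Sigma(3n)$. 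This yields $S_t\le (2n)^p\max_{\sigma\in\Sigma}\sum_{z\in B_\Sigma(3n)}d_\cM(\f(z\sigma),\f(z))^p$, uniformly in $t\in B_\Sigma(2n)$. Therefore
\begin{equation*}
\frac{1}{|B_\Sigma(n)|^2}\sum_{x,y\in B_\Sigma(n)} d_\cM(\f(x),\f(y))^p\le (2n)^p\cdot\frac{|B_\Sigma(2n)|}{|B_\Sigma(n)|^2}\cdot\max_{\sigma\in\Sigma}\sum_{z\in B_\Sigma(3n)}d_\cM(\f(z\sigma),\f(z))^p.
\end{equation*}
It remains to invoke polynomial growth: from $|B_\Sigma(m)|\le\beta m^r$ at $m\in\{2n,3n\}$ and $|B_\Sigma(n)|\ge n^r/\alpha$,
\begin{equation*}
\frac{|B_\Sigma(2n)|}{|B_\Sigma(n)|^2}=\frac{|B_\Sigma(2n)|\,|B_\Sigma(3n)|}{|B_\Sigma(n)|^2\,|B_\Sigma(3n)|}\le\frac{\beta(2n)^r\cdot\beta(3n)^r\cdot\alpha^2}{n^{2r}\,|B_\Sigma(3n)|}=\frac{6^r\alpha^2\beta^2}{|B_\Sigma(3n)|}.
\end{equation*}
Taking $p$-th roots gives~\eqref{eq:general bruce}.

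\medskip

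\noindent\textbf{Anticipated difficulties.} The arguments above are purely combinatorial and essentially mechanical; no real obstacle is expected. The only mildly subtle point is ensuring that the intermediate points of the geodesic in the second inequality lie in $B_\Sigma(3n)$ and that the substitution $z=x\sigma_1\cdots\sigma_{i-1}$ remains a bijection of $\Gamma$ (so that translation invariance of the sum is applicable), and that the $|B_\Sigma(3n)|$ factor is introduced at the right place so as to combine with polynomial growth into the stated explicit constant $6^r\alpha^2\beta^2$.
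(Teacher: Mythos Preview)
Your proof is correct and follows essentially the same path as the paper's. The only cosmetic difference is that for \eqref{eq:gamma times ball} the paper pads each word to length exactly $n$ using identity letters and applies Minkowski's inequality in $\ell_p(\Gamma)$ (pulling the sum over positions $j=1,\dots,n$ outside), whereas you keep the geodesic length $m(y)\le n$ and use the power-mean inequality; both give the identical bound. For \eqref{eq:general bruce} the paper cites \cite[Lemma~3.2]{LafforgueNaor} and records precisely the intermediate inequality $\frac{1}{|B_\Sigma(n)|^2}\sum_{x,y}\cdots\le (2n)^p\frac{|B_\Sigma(2n)||B_\Sigma(3n)|}{|B_\Sigma(n)|^2}\cdot\max_\sigma\frac{1}{|B_\Sigma(3n)|}\sum_{x\in B_\Sigma(3n)}\cdots$, which is exactly what your argument produces before the final polynomial-growth step.
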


\begin{proof} The proof of~\eqref{eq:gamma times ball} is essentially contained  in the proof of Lemma~3.4 in~\cite{LafforgueNaor}, even though~\cite[Lemma~3.2]{LafforgueNaor} is stated for the special case $\Gamma=\H_{\ms{\Z}}^{3}$. We briefly recall the simple argument. For every $y\in B_\Sigma(n)$ fix $\sigma_1(y),\ldots, \sigma_n(y)\in \Sigma\cup\{1_\Gamma\}$ such that $y=\sigma_1(y)\cdots\sigma_n(y)$. Then, denoting $\gamma_j(y)=\sigma_1(y)\cdots\sigma_{j-1}(y)$ for every $j\in \n$, by the triangle inequality we have
$$
\forall(x,y)\in \Gamma\times B_\Sigma(n),\qquad d_\cM\big(\f(xy),\f(x)\big)\le \sum_{j=1}^n d_\cM\Big(\f\big(x\gamma_j(y)\sigma_{j}(y)\big),\f\big(x\gamma_j(y)\big)\Big).
$$
By the triangle inequality in $\ell_p(\Gamma)$, this implies that
\begin{multline*}
\bigg(\sum_{x\in \Gamma}\sum_{y\in B_\Sigma(n)} d_\cM\big(\f(xy),\f(x)\big)^p\bigg)^{\frac{1}{p}}\le \sum_{j=1}^n \bigg(\sum_{y\in B_\Sigma(n)}\sum_{x\in \Gamma}d_\cM\Big(\f\big(x\gamma_j(y)\sigma_{j}(y)\big),\f\big(x\gamma_j(y)\big)\Big)^p\bigg)^{\frac{1}{p}}\\=\sum_{j=1}^n \bigg(\sum_{y\in B_\Sigma(n)}\sum_{z\in \Gamma}d_\cM\Big(\f\big(z\sigma_{j}(y)\big),\f\big(z\big)\Big)^p\bigg)^{\frac{1}{p}}\le n\bigg(|B_\Sigma(n)|\sum_{z\in \Gamma}\max_{\sigma\in \Sigma} d_{\cM}\big(\f(z\sigma),\f(z)\big)^p\bigg)^{\frac{1}{p}},
\end{multline*}
thus concluding the proof of~\eqref{eq:gamma times ball}.

The proof of~\eqref{eq:general bruce} is essentially  contained in the proof of Lemma~3.2 of~\cite{LafforgueNaor}, which is itself a generalization of an inequality of Kleiner~\cite[Theorem~2.2]{Kle10} which treats real-valued mappings and $p=2$. Indeed, even though~\cite[Lemma~3.2]{LafforgueNaor} is stated for the special case $\Gamma=\H_{\ms{\Z}}^{3}$, up to the final sentence of the proof of~\cite[Lemma~3.2]{LafforgueNaor}  the argument is valid in {\em any} finitely generated group $\Gamma$, and the penultimate step of the final displayed equation in the proof of~\cite[Lemma~3.2]{LafforgueNaor} gives that
\begin{multline*}
\frac{1}{|B_\Sigma(n)|^2}\sum_{x,y\in B_\Sigma(n)} d_\cM\big(\f(x),\f(y)\big)^p\\\le (2n)^p\cdot \frac{|B_\Sigma(2n)|\cdot|B_\Sigma(3n)|}{|B_\Sigma(n)|^2}\cdot  \max_{\sigma\in \Sigma} \frac{1}{|B_\Sigma(3n)|}\sum_{x\in B_\Sigma(3n)} d_\cM\big(\f(x),\f(x\sigma)\big)^p.\tag*{\qedhere}
\end{multline*}
\end{proof}

\begin{proof}[Proof of Lemma~\ref{lem:localization norm}] Fix $n\in \N$. By translating $f$ we may assume without loss of generality that $\sum_{x\in \BB_{7n}}f(x)=0$. Let $\xi_n:\H^{2k+1}_{\ms{\Z}}\to [0,1]$ be a cutoff function that is $\frac{1}{n}$-Lipschitz (with respect to the word metric $d_W$), is equal to $1$ on $\BB_{5n}$, and vanishes outside $\BB_{6n}$. Consider the finitely supported mapping $\f_n=\xi_n f:\H_{\ms{\Z}}^{2k+1}\to X$. We claim that the following two estimates hold true:
\begin{equation}\label{eq:cutoff1}
\Bigg(\sum_{t=1}^{n^2} w_t\bigg(\sum_{h\in \BB_n} \big\|f\big(hZ^t\big)-f(h)\big\|_{X}^p\bigg)^\frac{q}{p}\Bigg)^{\frac{1}{q}}\le \Bigg(\sum_{t=1}^\infty w_t\bigg(\sum_{h\in \H_{\ms{\Z}}^{2k+1}} \big\|\f_n\big(hZ^t\big)-\f_n(h)\big\|_{X}^p\bigg)^\frac{q}{p}\Bigg)^{\frac{1}{q}}
\end{equation}
and
\begin{equation}\label{eq:21}
\bigg(\sum_{h\in \H_{\Z}^{2k+1}}\sum_{\sigma\in \mathfrak{S}_k} \big\|\f_n(h\sigma)-\f_n(h)\big\|_X^p\bigg)^{\frac{1}{p}}\lesssim_k \bigg(\sum_{h\in \BB_{21n}}\sum_{\sigma\in \mathfrak{S}_k} \big\|f(h\sigma)-f(h)\big\|_X^p\bigg)^{\frac{1}{p}}.
\end{equation}
The estimates~\eqref{eq:cutoff1} and~\eqref{eq:21} imply the desired inequality~\eqref{eq:localized wt} by an application of~\eqref{eq:to loclaize wt} to $\f_n$.

Suppose that $h\in \BB_n$  and $t\in \{1,\ldots,n^2\}$. Since $d_W(\0,Z^t)\le 4n$ (see e.g.~\cite{Bla03}), we have $h,hZ^t\in \BB_{5n}$, and therefore $\f(h)=f(h)$ and $\f(hZ^t)= f(hZ^t)$. This immediately implies~\eqref{eq:cutoff1}.  Next, since $\xi_n$ is $\frac{1}{n}$-Lipschitz and takes values in $[0,1]$, for every $\sigma\in \mathfrak{S}_k$ and $h\in \H_{\ms{\Z}}^{2k+1}$ we have
\begin{multline*}
\big\|\f_n(h\sigma)-\f_n(h)\big\|_X= \big\|\xi_n(h\sigma)\big(f(h\sigma)-f(h)\big)+\big(\xi_n(h\sigma)-\xi_n(h)\big)f(h)\big\|_X\\\le \xi_n(h\sigma)\big\|f(h\sigma)-f(h)\big\|_X+ \big|\xi_n(h\sigma)-\xi_n(h)\big|\cdot\big\|f(h)\big\|_X\le  \big\|f(h\sigma)-f(h)\big\|_X+\frac{1}{n}\big\|f(h)\big\|_X.
\end{multline*}
Therefore, by the triangle inequality in $\ell_p(\BB_{7n}\times \mathfrak{S}_k)$ we have
\begin{multline}\label{eq:gradiant on 7n}
\bigg(\sum_{h\in \H_{\Z}^{2k+1}}\sum_{\sigma\in \mathfrak{S}_k} \big\|\f_n(h\sigma)-\f_n(h)\big\|_X^p\bigg)^{\frac{1}{p}}=\bigg(\sum_{h\in \BB_{7n}}\sum_{\sigma\in \mathfrak{S}_k} \big\|\f_n(h\sigma)-\f_n(h)\big\|_X^p\bigg)^{\frac{1}{p}}\\
\le \bigg(\sum_{h\in \BB_{7n}}\sum_{\sigma\in \mathfrak{S}_k} \big\|f(h\sigma)-f(h)\big\|_X^p\bigg)^{\frac{1}{p}}+\frac{(2k)^{\frac{1}{p}}}{n}\bigg(\sum_{h\in \BB_{7n}}\big\|f(h)\big\|_X^p\bigg)^{\frac{1}{p}},
\end{multline}
where the first step of~\eqref{eq:gradiant on 7n} holds true because $\f_n$ is supported on $\BB_{6n}$. To deduce~\eqref{eq:21} from~\eqref{eq:gradiant on 7n}, and hence also to conclude the proof of Lemma~\ref{lem:localization norm},  it suffices to note that
$$
\frac{1}{n}\bigg(\sum_{h\in \BB_{7n}}\|f(h)\|_X^p\bigg)^{\frac{1}{p}}\le \frac{1}{n}\bigg(\frac{1}{|\BB_{7n}|}\sum_{g,h\in \BB_{7n}}\|f(g)-f(h)\|_X^p\bigg)^{\frac{1}{p}}\lesssim_k\bigg(\sum_{h\in \BB_{21n}}\sum_{\sigma\in \mathfrak{S}_k} \|f(h\sigma)-f(h)\|_X^p\bigg)^{\frac{1}{p}},
$$
where the first step follows from Jensen's inequality since $\sum_{x\in \BB_{7n}}f(x)=0$, and the final step is an application of~\eqref{eq:general bruce}, using the fact~\cite{Bas72} that $|\BB_m|\asymp_k m^{2k+2}$ for every $m\in \N$.
\end{proof}

\subsubsection{From low dimension to high dimension} Lemma~\ref{lem:from small dim to bigger} below is of lesser importance for the present purposes. We shall use it only to show that the case $k=2$ of Theorem~\ref{thm:isoperimetric discrete} implies the general case $k\in \{2,3,4,\ldots,\}$ with the stated dependence on $k$ (otherwise the ensuing proof yields a worse asymptotic dependence on $k$ as $k\to \infty$). Below, a mapping $\f:X\to Y$ between two abstract sets $X$ and $Y$ is said to be finitely supported if there exists a point $y\in Y$ such that $\f(x)=y$ for all but finitely many $x\in X$, i.e., $|\f^{-1}(Y\setminus\{y\})|<\infty$. This is a slight (and harmless) departure from the more common use of this term when $Y$ is a Banach space, in which case one requires that $y=0$.

\begin{lemma}\label{lem:from small dim to bigger}
Fix $k,m\in \N$ with $m\le k$, a sequence $\{w_t\}_{t=1}^\infty\subset [0,\infty)$, $q\in [1,\infty)$ and $\beta\in (0,\infty)$. Let $X$ be a set and $\mathsf{K},\mathsf{L}:X\times X\to [0,\infty)$ be two  symmetric functions  such that $\mathsf{K}(x,x)=\mathsf{L}(x,x)=0$ for all $x\in X$. Suppose that for every finitely supported mapping $\f:\H_{\ms{\Z}}^{2m+1}\to X$ we have
\begin{equation}\label{eq:assume m ineq}
\beta \Bigg(\sum_{t=1}^{\infty}w_t\bigg(\sum_{h\in \H_{\ms{\Z}}^{2m+1}} \mathsf{K}\big(\phi(hZ^t),\phi(h)\big)\bigg)^q\Bigg)^{\frac{1}{q}}\le    \sum_{h\in \H_{\ms{\Z}}^{2m+1}}\sum_{\sigma\in \mathfrak{S}_m} \mathsf{L}\big(\phi(h\sigma),\phi(h)\big).
\end{equation}
Then for every finitely supported mapping $\Phi:\H_{\ms{\Z}}^{2k+1}\to X$ we have
\begin{equation}\label{eq:deduce m ineq}
 \frac{ \beta k}{m}\Bigg(\sum_{t=1}^{\infty}w_t\bigg(\sum_{h\in \H_{\ms{\Z}}^{2k+1}} \mathsf{K}\big(\Phi(hZ^t),\Phi(h)\big)\bigg)^q\Bigg)^{\frac{1}{q}}\le   \sum_{h\in \H_{\ms{\Z}}^{2k+1}}\sum_{\sigma\in \mathfrak{S}_k} \mathsf{L}\big(\Phi(h\sigma),\Phi(h)\big).
\end{equation}
\end{lemma}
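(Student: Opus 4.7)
The plan is to exploit that $\H_{\ms{\Z}}^{2k+1}$ contains many copies of $\H_{\ms{\Z}}^{2m+1}$ that all share the same central element $Z$, and to combine the inequality~\eqref{eq:assume m ineq} applied to each copy and each of its cosets. More precisely, for every $m$-subset $S\subset\{1,\ldots,k\}$ let $H_S\le \H_{\ms{\Z}}^{2k+1}$ be the subgroup generated by $\{X_i,Y_i:i\in S\}$. Since $Z=[X_i,Y_i]$ for every $i\in S$ while all the remaining commutators among these generators vanish, $H_S$ is isomorphic to $\H_{\ms{\Z}}^{2m+1}$ via the map that sends the abstract generators of $\H_{\ms{\Z}}^{2m+1}$ to $\{X_i,Y_i:i\in S\}$. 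Under this isomorphism the generating set $\mathfrak{S}_m$ is identified with $\mathfrak{S}_m^S\eqdef \{X_i^{\pm 1},Y_i^{\pm 1}:i\in S\}\subset \mathfrak{S}_k$, and the central generator $Z$ of $\H_{\ms{\Z}}^{2m+1}$ goes to $Z\in \H_{\ms{\Z}}^{2k+1}$. Fixing a complete set $T_S$ of left coset representatives of $H_S$ in $\H_{\ms{\Z}}^{2k+1}$, the function $\phi_{g,S}\from H_S\to X$ defined by $\phi_{g,S}(h)=\Phi(gh)$ is finitely supported for every $g\in T_S$, so~\eqref{eq:assume m ineq} may be applied to it.

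Second, I would sum the resulting cosetwise estimates over $g\in T_S$. Because $Z$ is central, $ghZ^t=(gh)Z^t$ remains in the coset $gH_S$, so as $g$ varies in $T_S$ the inner sums over $H_S$ assemble into single sums over $\H_{\ms{\Z}}^{2k+1}$. On the right-hand side this immediately yields $\sum_{u\in \H_{\ms{\Z}}^{2k+1}}\sum_{\sigma\in \mathfrak{S}_m^S}\mathsf{L}(\Phi(u\sigma),\Phi(u))$; on the left-hand side, Minkowski's inequality in the weighted space $\ell^q(\N,(w_t)_{t=1}^\infty)$ applied to the nonnegative functions $t\mapsto \sum_{h\in H_S}\mathsf{K}(\Phi(ghZ^t),\Phi(gh))$ gives that the sum of their $\ell^q$-norms is at least the $\ell^q$-norm of their sum, namely $t\mapsto \sum_{u\in \H_{\ms{\Z}}^{2k+1}}\mathsf{K}(\Phi(uZ^t),\Phi(u))$. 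The outcome is that for every $m$-subset $S$ we obtain
\begin{equation*}
\beta\Bigg(\sum_{t=1}^\infty w_t\bigg(\sum_{u\in \H_{\ms{\Z}}^{2k+1}}\mathsf{K}\big(\Phi(uZ^t),\Phi(u)\big)\bigg)^q\Bigg)^{\frac{1}{q}}\le \sum_{u\in \H_{\ms{\Z}}^{2k+1}}\sum_{\sigma\in \mathfrak{S}_m^S}\mathsf{L}\big(\Phi(u\sigma),\Phi(u)\big).
\end{equation*}

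Third, I would sum this estimate over all $\binom{k}{m}$ subsets $S$ of size $m$. A given $\sigma\in \mathfrak{S}_k$ belongs to $\mathfrak{S}_m^S$ if and only if $S$ contains the unique index $i\in\{1,\ldots,k\}$ associated to $\sigma$, which happens for exactly $\binom{k-1}{m-1}$ of these subsets. Hence the combined right-hand side equals $\binom{k-1}{m-1}\sum_{u}\sum_{\sigma\in \mathfrak{S}_k}\mathsf{L}(\Phi(u\sigma),\Phi(u))$, while the combined left-hand side is $\binom{k}{m}$ times the left-hand side of the displayed inequality. Dividing through by $\binom{k}{m}$ and using the identity $\binom{k-1}{m-1}/\binom{k}{m}=m/k$ then produces~\eqref{eq:deduce m ineq}. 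The argument is essentially combinatorial bookkeeping; the only step that uses nontrivial structure is the coset decomposition, which relies crucially on the fact that the central element $Z$ belongs to every $H_S$ (so that the ``vertical edges'' from $u$ to $uZ^t$ are all intrinsic to cosets), so I do not anticipate a serious obstacle.
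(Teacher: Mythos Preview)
Your proposal is correct and follows essentially the same approach as the paper: decompose $\H_{\ms{\Z}}^{2k+1}$ into cosets of the $\H_{\ms{\Z}}^{2m+1}$-subgroup $H_S$ for each $m$-subset $S\subset\{1,\ldots,k\}$, apply the hypothesis on each coset, sum over cosets using the triangle inequality in $\ell^q$, and then sum over $S$ using the double-counting identity $\binom{k-1}{m-1}/\binom{k}{m}=m/k$. The paper's proof is the same argument with different notation (it writes $\G_A$ for your $H_S$ and $\{g_j(A)\}$ for your $T_S$).
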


\begin{proof} For every $A\subset \k$ denote $\mathfrak{S}_A=\{X_i^{\pm 1},Y_i^{\pm 1}\}_{i\in A}\subset \mathfrak{S}_k$. Let  $\G_A\triangleleft\H_{\ms{\Z}}^{2k+1}$ be the (normal)  subgroup of $\H_{\ms{\Z}}^{2k+1}$ that is generated by $\mathfrak{S}_A$ and let $\{g_j(A)\}_{j=1}^\infty\subset \H_{\ms{\Z}}^{2k+1}$ be representatives of the distinct left-cosets of $\G_A$ in $\H_{\ms{\Z}}^{2k+1}$. If $|A|=m$, then $\G_A$ is isomorphic to $\H^{2m+1}_{\ms{\Z}}$. Therefore, for every $A\subset \k$ with $|A|=m$ and every $j\in \N$, an application of~\eqref{eq:assume m ineq} to the mapping $(h\in \G_A)\mapsto \Phi(g_j(A)h)$ shows that
\begin{equation}\label{eq:apply m ineq each j}
 \beta \Bigg(\sum_{t=1}^{\infty}w_t\bigg(\sum_{h\in \G_A} \mathsf{K}\big(\Phi(g_j(A)hZ^t),\Phi(g_j(A)h)\big)\bigg)^q\Bigg)^{\frac{1}{q}} \le    \sum_{h\in \G_A}\sum_{\sigma\in \mathfrak{S}_A} \mathsf{L}\big(\Phi(g_j(A)h\sigma),\Phi(g_j(A)h)\big).
\end{equation}
Observe that
\begin{equation}\label{eq:sum cosets 1}
\sum_{j=1}^\infty\sum_{h\in \G_A}\sum_{\sigma\in \mathfrak{S}_A} \mathsf{L}\big(\Phi(g_j(A)h\sigma),\Phi(g_j(A)h)\big)=\sum_{h\in \H_{\ms{\Z}}^{2k+1}}\sum_{\sigma\in \mathfrak{S}_A} \mathsf{L}\big(\Phi(h\sigma),\Phi(h)\big),
\end{equation}
and, by the triangle inequality in $\ell_q$,
\begin{align}\label{eq:sum cosets 2}
\nonumber \sum_{j=1}^\infty &\Bigg(\sum_{t=1}^{\infty}w_t\bigg(\sum_{h\in \G_A} \mathsf{K}\big(\Phi(g_j(A)hZ^t),\Phi(g_j(A)h)\big)\bigg)^q\Bigg)^{\frac{1}{q}} \\ \nonumber &\ge \Bigg(\sum_{t=1}^{\infty}w_t\bigg(\sum_{j=1}^\infty\sum_{h\in \G_A} \mathsf{K}\big(\Phi(g_j(A)hZ^t),\Phi(g_j(A)h)\big)\bigg)^q\Bigg)^{\frac{1}{q}}\\&=\Bigg(\sum_{t=1}^{\infty}w_t\bigg(\sum_{h\in \H_{\ms{\Z}}^{2k+1}} \mathsf{K}\big(\Phi(hZ^t),\Phi(h)\big)\bigg)^q\Bigg)^{\frac{1}{q}}.
\end{align}
By summing~\eqref{eq:apply m ineq each j} over $j\in \N$ and using~\eqref{eq:sum cosets 1} and~\eqref{eq:sum cosets 2} it follows that
\begin{equation}\label{eq:to sum over A}
\beta \Bigg(\sum_{t=1}^{\infty}w_t\bigg(\sum_{h\in \H_{\ms{\Z}}^{2k+1}} \mathsf{K}\big(\Phi(hZ^t),\Phi(h)\big)\bigg)^q\Bigg)^{\frac{1}{q}}\le   \sum_{h\in \H_{\ms{\Z}}^{2k+1}}\sum_{\sigma\in \mathfrak{S}_A} \mathsf{L}\big(\Phi(h\sigma),\Phi(h)\big).
\end{equation}
By summing~\eqref{eq:to sum over A} over all those $A\subset \k$ with $|A|=m$, we obtain the estimate
\begin{equation}\label{eq:k-1 m-1}
\beta \binom{k}{m}\Bigg(\sum_{t=1}^{\infty}w_t\bigg(\sum_{h\in \H_{\ms{\Z}}^{2k+1}} \mathsf{K}\big(\Phi(hZ^t),\Phi(h)\big)\bigg)^q\Bigg)^{\frac{1}{q}}\le   \binom{k-1}{m-1}\sum_{h\in \H_{\ms{\Z}}^{2k+1}}\sum_{\sigma\in \mathfrak{S}_k} \mathsf{L}\big(\Phi(h\sigma),\Phi(h)\big),
\end{equation}
where we used the fact that for each $\sigma\in \mathfrak{S}_k$, the number of $A\subset \k$ with $|A|=m$ that contain $\sigma$ is equal to $\binom{k-1}{m-1}$. The desired estimate~\eqref{eq:deduce m ineq} is the same as~\eqref{eq:k-1 m-1}.
\end{proof}

Suppose that we have already proved Theorem~\ref{thm:isoperimetric discrete} when $k=2$, i.e., that $|\partial_\vv \Omega|\lesssim |\partial_\hh\Omega|$ for every finite $\Omega\subset \H_{\ms{\Z}}^5$. By Lemma~\ref{lem:functional form discrete} this implies that for every finitely supported $\f:\H_{\ms{\Z}}^5\to \R$,
\begin{equation*}
\Bigg(\sum_{t=1}^\infty \frac{1}{t^2}\bigg(\sum_{h\in \H_{\ms{\Z}}^{5}} \big|\upphi\big(hZ^t\big)-\upphi(h)\big|\bigg)^2\Bigg)^{\frac{1}{2}}\lesssim \sum_{h\in \H_{\ms{\Z}}^{5}}\sum_{\sigma\in \mathfrak{S}_2}\big|\upphi(h\sigma)-\upphi(h)\big|.
\end{equation*}
Due to Lemma~\ref{lem:from small dim to bigger}, it follows that for every integer $k\ge 2$ and every finitely supported  $\Phi:\H_{\ms{\Z}}^{2k+1}\to \R$,
\begin{equation*}
\Bigg(\sum_{t=1}^\infty \frac{1}{t^2}\bigg(\sum_{h\in \H_{\ms{\Z}}^{2k+1}} \big|\Phi\big(hZ^t\big)-\Phi(h)\big|\bigg)^2\Bigg)^{\frac{1}{2}}\lesssim \frac{1}{k}\sum_{h\in \H_{\ms{\Z}}^{2k+1}}\sum_{\sigma\in \mathfrak{S}_k}\big|\Phi(h\sigma)-\Phi(h)\big|.
\end{equation*}
By (the trivial direction of) Lemma~\ref{lem:functional form discrete}, it follows that  $|\partial_\vv \Omega|\lesssim \frac{1}{k}|\partial_\hh\Omega|$ for every finite $\Omega\subset \H_{\ms{\Z}}^{2k+1}$, i.e., the conclusion of Theorem~\ref{thm:isoperimetric discrete} holds true with the stated asymptotic dependence on $k$ as $k\to \infty$. This shows that in order to prove Theorem~\ref{thm:isoperimetric discrete} we can from now on ignore the dependence on $k$.

\subsection{From continuous to discrete}\label{sec:continutous to discrete}  Our goal here is to show that Theorem~\ref{thm:isoperimetric discrete} follows from a certain (singular) Sobolev-type inequality   on the continuous Heisenberg group that is stated in Corollary~\ref{cor:continuous version} below. Once this assertion will be established, we will henceforth focus our attention entirely on considerations that take place in the continuous setting. The proof of the following lemma is a simple adaptation of the steps of a proof that appears in Section~3.3 of~\cite{LafforgueNaor}.

\begin{lemma}\label{lem:discretization w} Fix $k\in \N$ and  $\mathscr{W}:[1,\infty)\to [0,\infty)$ that satisfies $\|\mathscr{W}\|_{L_1(1,\infty)}=\int_1^\infty\mathscr{W}(s)\ud s<\infty$. Define a sequence $\{w_t\}_{t=1}^\infty$ by setting
$$\forall\, t\in \N,\qquad w_t\eqdef \int_{s}^{s+1} \mathscr{W}(s)\ud s.
$$
Fix also $C\in (0,\infty)$ and  $p,q\in (0,\infty)$ with $p\ge 1$. Let $(X,\|\cdot\|_X)$ be a normed space such that every smooth and compactly supported function $\Phi:\H_{\ms{\R}}^{2k+1}\to X$ satisfies
\begin{multline}\label{eq:assumption cont for discretization}
\Bigg(\int_1^\infty \mathscr{W}(t)\bigg(\int_{\H^{2k+1}} \big\|\Phi(hZ^t)-\Phi(h)\big\|_X^p\ud \cH^{2k+2}(h)\bigg)^{\frac{q}{p}}\ud t\Bigg)^{\frac{1}{q}}\\ \le C \bigg(\int_{\H^{2k+1}}\big\|\nabla_\H \Phi(h)\big\|_{\ell_p^{2k}(X)}^p \ud \cH^{2k+2}(h)\bigg)^{\frac{1}{p}}.
\end{multline}
Then every finitely supported function $\f:\H_{\ms{\Z}}^{2k+1}\to X$  satisfies
\begin{multline}\label{eq:integrated omega const}
\Bigg(\sum_{t=1}^\infty w_t\bigg(\sum_{h\in \H_{\ms{\Z}}^{2k+1}} \big\|\f\big(hZ^t\big)-\f(h)\big\|_{X}^p\bigg)^\frac{q}{p}\Bigg)^{\frac{1}{q}}\\\lesssim \Big(C+\|\mathscr{W}\|_{L_1(1,\infty)}^{\frac{1}{q}}\Big)\bigg(\sum_{h\in \H^{2k+1}_{\ms{\Z}}} \sum_{\sigma\in \mathfrak{S}_k} \big\|\f(h\sigma)-\f(h)\big\|_X^p\bigg)^{\frac{1}{p}}.
\end{multline}
\end{lemma}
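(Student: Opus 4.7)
The plan is to use a smoothing-by-convolution argument to reduce to the continuous inequality~\eqref{eq:assumption cont for discretization}. Fix a smooth, nonnegative bump function $\chi\colon \H^{2k+1}\to[0,\infty)$ supported in a ball $B_{r}$ of bounded radius and normalized so that $\sum_{h\in \H_{\ms{\Z}}^{2k+1}}\chi(h^{-1}x)=1$ for every $x\in \H^{2k+1}$; this is possible because $\H_{\ms{\Z}}^{2k+1}$ acts cocompactly on $\H^{2k+1}$, with a bounded fundamental domain $F$ of unit volume. Define $\Phi\eqdef \sum_{h\in \H_{\ms{\Z}}^{2k+1}}\chi(h^{-1}\cdot)\f(h)\colon \H^{2k+1}\to X$, which is smooth and compactly supported because $\f$ is finitely supported. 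The remainder of the proof consists of comparing both sides of~\eqref{eq:assumption cont for discretization} applied to this $\Phi$ with the corresponding sides of~\eqref{eq:integrated omega const}.

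For the gradient estimate, differentiating the identity $\sum_h \chi(h^{-1}x)=1$ gives $\sum_h\nabla_\H\chi(h^{-1}x)=0$, so for any $h_0\in \H_{\ms{\Z}}^{2k+1}$ and any $x\in h_0F$,
$$\nabla_\H \Phi(x)=\sum_{h}\big(\f(h)-\f(h_0)\big)\nabla_\H \chi(h^{-1}x),$$
and only lattice points $h$ with $d(h,h_0)\lesssim 1$ contribute. Each $\f(h)-\f(h_0)$ can be written as a telescoping sum of at most $O(1)$ single-generator differences $\f(g\sigma)-\f(g)$ along a short Cayley-graph path from $h_0$ to $h$ (noting that $Z=[X_1,Y_1]$ is reachable by a path of length $4$ in $\mathfrak{S}_k$). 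Taking $\ell_p^{2k}(X)$-norms, raising to the $p$-th power, integrating over $h_0F$, and summing over $h_0$ (each edge is counted $O(1)$ times) yields the gradient estimate $\int_{\H^{2k+1}}\|\nabla_\H \Phi\|_{\ell_p^{2k}(X)}^p\,\ud\cH^{2k+2}\lesssim \mathcal{G}$, where $\mathcal{G}\eqdef \sum_{h}\sum_{\sigma\in \mathfrak{S}_k}\|\f(h\sigma)-\f(h)\|_X^p$ is the desired discrete right-hand side.

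For the left-hand side, fix $n\in \N$, $t\in [n,n+1)$, $h_0\in \H_{\ms{\Z}}^{2k+1}$, and $x\in h_0F$. Since $Z$ is central, $xZ^t=h_0Z^n\cdot(h_0^{-1}xZ^{t-n})$, and because $d(\0,Z^{t-n})\asymp \sqrt{t-n}\le 1$ and $\diam(F)\lesssim 1$, the point $xZ^t$ lies within distance $\lesssim 1$ of $h_0Z^n$ while $x$ lies within $\diam(F)$ of $h_0$. Since $\Phi(y)$ is a convex combination of $\f$-values at lattice points within the support of $\chi(\cdot^{-1}y)$, the same telescoping argument gives
$$\|\Phi(xZ^t)-\f(h_0Z^n)\|_X^p+\|\Phi(x)-\f(h_0)\|_X^p\lesssim \mathcal{E}(h_0Z^n)+\mathcal{E}(h_0),$$
where $\mathcal{E}(g)\eqdef \sum_{g'\colon d(g',g)\le C}\sum_{\sigma\in \mathfrak{S}_k}\|\f(g'\sigma)-\f(g')\|_X^p$. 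Combining this with the triangle inequality and integrating over $x\in h_0F$ (which has unit volume) yields, for every $t\in [n,n+1)$,
$$\|\f(h_0Z^n)-\f(h_0)\|_X^p\lesssim \int_{h_0F}\|\Phi(xZ^t)-\Phi(x)\|_X^p\,\ud\cH^{2k+2}(x)+\mathcal{E}(h_0Z^n)+\mathcal{E}(h_0).$$
Writing $S_n\eqdef \sum_{h_0}\|\f(h_0Z^n)-\f(h_0)\|_X^p$ and $I_t\eqdef \int_{\H^{2k+1}}\|\Phi(xZ^t)-\Phi(x)\|_X^p\,\ud\cH^{2k+2}$, summing over $h_0$ (with $\sum_{h_0}\mathcal{E}(h_0Z^n)=\sum_{h_0}\mathcal{E}(h_0)\lesssim \mathcal{G}$ by translation invariance of counting) we obtain $S_n\lesssim I_t+\mathcal{G}$ for every $t\in [n,n+1)$.

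Raising this bound to the $\frac{q}{p}$-th power (which costs only a constant depending on $p,q$), multiplying by $\mathscr{W}(t)$, integrating $t\in [n,n+1)$, and summing $n\in \N$ gives $\sum_{n\ge 1}w_n S_n^{q/p}\lesssim \int_1^\infty \mathscr{W}(t)I_t^{q/p}\,\ud t+\|\mathscr{W}\|_{L_1(1,\infty)}\mathcal{G}^{q/p}$. Taking $\frac{1}{q}$-th roots, applying the hypothesis~\eqref{eq:assumption cont for discretization} to the first term on the right, and inserting the gradient bound completes the proof of~\eqref{eq:integrated omega const}. The main technical point is the careful bookkeeping ensuring that after summation over lattice translates the error $\mathcal{E}(\cdot)$ recombines to $\mathcal{G}$ with constants independent of $n$, which rests on translation invariance of Haar measure and the uniform bound on path lengths between lattice points at distance $\lesssim 1$ in the Cayley graph.
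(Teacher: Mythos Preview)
Your proof is correct and follows essentially the same approach as the paper's: construct $\Phi$ via the partition of unity $\chi$, bound the horizontal gradient of $\Phi$ by the discrete gradient of $\f$ using the identity $\sum_h\nabla_\H\chi(h^{-1}x)=0$ and telescoping along short Cayley paths, and bound the discrete vertical differences $S_n$ by the continuous ones $I_t$ plus a local error that sums to $\mathcal{G}$. The paper carries out exactly these steps (citing equations (56) and (64) of \cite{LafforgueNaor} and invoking~\eqref{eq:gamma times ball} for the telescoping), then raises to the power $q$, weights by $\mathscr{W}$, and integrates---precisely your final paragraph.
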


\begin{proof}[Proof of Lemma~\ref{lem:discretization w}] Suppose that $\f:\H_{\ms{\Z}}^{2k+1}\to X$ is finitely supported. We shall show that there exists a compactly supported smooth function $\Phi:\H^{2k+1}\to X$ that satisfies the following two estimates, which immediately imply the desired inequality~\eqref{eq:integrated omega const} via an application of~\eqref{eq:assumption cont for discretization}.
\begin{equation}\label{eq:grad of Phi is less than discrete}
\bigg(\int_{\H^{2k+1}}\big\|\nabla_\H \Phi(h)\big\|_{\ell_p^{2k}(X)}^p \ud \cH^{2k+2}(h)\bigg)^{\frac{1}{p}}\lesssim \bigg(\sum_{h\in \H^{2k+1}_{\ms{\Z}}} \sum_{\sigma\in \mathfrak{S}_k} \big\|\f(h\sigma)-\f(h)\big\|_X^p\bigg)^{\frac{1}{p}},
\end{equation}
and
\begin{align}\label{eq:vert of phi less than vet of Phi}
\nonumber \Bigg(\sum_{t=1}^\infty &w_t\bigg(\sum_{h\in \H_{\ms{\Z}}^{2k+1}} \big\|\f\big(hZ^t\big)-\f(h)\big\|_{X}^p\bigg)^\frac{q}{p}\Bigg)^{\frac{1}{q}}\\ \nonumber
&\lesssim
\Bigg(\int_1^\infty \mathscr{W}(t)\bigg(\int_{\H^{2k+1}} \big\|\Phi(hZ^t)-\Phi(h)\big\|_X^p\ud \cH^{2k+2}(h)\bigg)^{\frac{q}{p}}\ud t\Bigg)^{\frac{1}{q}}\\&\qquad \quad+\|\mathscr{W}\|_{L_1(1,\infty)}^{\frac{1}{q}}\bigg(\sum_{h\in \H^{2k+1}_{\ms{\Z}}} \sum_{\sigma\in \mathfrak{S}_k} \big\|\f(h\sigma)-\f(h)\big\|_X^p\bigg)^{\frac{1}{p}}.
\end{align}

Since $\H_{\ms{\Z}}^{2k+1}$ is a co-compact lattice of $\H^{2k+1}$, we can fix from now on a compactly supported smooth function $\chi:\H^{2k+1}\to [0,1]$ such that $\sum_{g\in \H^{2k+1}_{\ms{\Z}}} \chi(g^{-1}h)=1$ for all $h\in \H^{2k+1}$.  Define
$$
\forall\, h\in \H^{2k+1},\qquad \Phi(h)\eqdef\sum_{g\in \H_{\ms{\Z}}^{2k+1}}\chi(g^{-1}h)f(g).
$$
If we denote the support of $\f$ by $S=\supp(\f)\subset \H_{\ms{\Z}}^{2k+1}\subset \H^{2k+1}$ and the support of $\chi$ by $T=\supp(\chi)\subset \H^{2k+1}$ then the support of $\Phi$ is contained in $ST$. Hence $\Phi$ is compactly supported.

Fix $m\in \N$ such that $T\cap \H^{2k+1}_{\ms{\Z}}\subset \BB_m$. Note that $m$ depends only on $k$ (through the choice of the bump function $\chi$, which is fixed once and for all). Arguing identically to the proof of equation (56) of~\cite{LafforgueNaor} (which uses the triangle inequality in $\ell_p^{2k}(X)$, i.e., the assumption $p\ge 1$), we see that
\begin{equation}\label{eq:quote 56}
\forall\, g\in \H_{\ms{\Z}}^{2k+1},\qquad \sup_{h\in gT}\big\|\nabla_\H\Phi(h)\big\|_{\ell_p^{2k}}^p\lesssim  \sum_{z\in \BB_{2m}} \big\|\f(gz)-\f(g)\big\|_X^p.
\end{equation}
Hence, since the assumption on $\chi$ implies that $\bigcup_{g\in \H_{\ms{\Z}}^{2k+1}}gT=\H^{2k+1}$, we deduce that
\begin{multline*}
\bigg(\int_{\H^{2k+1}}\big\|\nabla_\H \Phi(h)\big\|_{\ell_p^{2k}(X)}^p\ud \cH^{2k+2}(h)\bigg)^{\frac{1}{p}} \le \bigg(\sum_{g\in \H_{\ms{\Z}}^{2k+1}}\int_{gT}\big\|\nabla_\H \Phi(h)\big\|_{\ell_p^{2k}(X)}^p\ud \cH^{2k+2}(h)\bigg)^{\frac{1}{p}}\\ \stackrel{\eqref{eq:quote 56}}{\lesssim} \bigg(\sum_{g\in \H_{\ms{\Z}}^{2k+1}} \sum_{z\in \BB_{2m}} \big\|\f(gz)-\f(g)\big\|_X^p\bigg)^{\frac{1}{p}}\stackrel{\eqref{eq:gamma times ball}}{\lesssim} \bigg(\sum_{h\in \H^{2k+1}_{\ms{\Z}}} \sum_{\sigma\in \mathfrak{S}_k} \big\|\f(h\sigma)-\f(h)\big\|_X^p\bigg)^{\frac{1}{p}}.
\end{multline*}
This completes the justification of~\eqref{eq:grad of Phi is less than discrete}.

To prove~\eqref{eq:vert of phi less than vet of Phi}, by arguing identically to the proof of equation (64) in~\cite{LafforgueNaor}, while using~\eqref{eq:gamma times ball} in place of the use of~\cite[Lemma~3.4]{LafforgueNaor}, we see that for every $t\in \N$ and $s\in [t,t+1]$ we have
\begin{multline}\label{eq:quote 64}
\bigg(\sum_{h\in \H^{2k+1}_{\ms{\Z}}}\big\|\f(hZ^t)-\f(h)\big\|_X^p\bigg)^{\frac{1}{p}}\\\lesssim \bigg(\int_{\H^{2k+1}} \big\|\Phi(hZ^s)-\Phi(h)\big\|_X^p\ud\cH^{2k+2}(h)\bigg)^{\frac{1}{p}}+\bigg(\sum_{h\in \H_{\ms{\Z}}^{2k+1}}\sum_{\sigma\in \mathfrak{S}_k}\big\|\f(h\sigma)-\f(h)\big\|_X^p\bigg)^{\frac{1}{p}}.
\end{multline}
The desired estimate~\eqref{eq:vert of phi less than vet of Phi} now follows by raising~\eqref{eq:quote 64} to the power $q$, multiplying the resulting estimate by $\mathscr{W}(s)$, integrating over $s\in [t,t+1]$ and summing over $t\in \N$.
\end{proof}

The following corollary is the special case $p=1$, $q=2$, $X=\R$ and  $\mathscr{W}(s)=1/s^2$ of Lemma~\ref{lem:discretization w} (combined with Lemma~\ref{lem:from small dim to bigger} so as to obtain the stated dependence on $k$).

\begin{cor}\label{cor:continuous version} In order to establish Theorem~\ref{thm:isoperimetric discrete} it suffices to prove that any $k\in \{2,3,\ldots,\}$, every finitely supported smooth function $f:\H^{2k+1}\to \R$ satisfies
  \[\bigg(\int_0^\infty \left(\int_{\H^{2k+1}} |f(hZ^t)-f(h)| \ud\cH^{2k+2}(h)\right)^{2} \frac{\ud t}{t^{2}}\bigg)^{\frac12} \lesssim \int_{\H^{2k+1}} \left\|\nabla_\H f(h)\right\|_{\ell_1^{2k}} \ud\cH^{2k+2}(h).\]
\end{cor}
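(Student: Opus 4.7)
The plan is to derive this corollary by mechanically combining three ingredients already proved in Section~\ref{sec:reductions}: the discretization scheme of Lemma~\ref{lem:discretization w}, the Cheeger/coarea-type equivalence of Lemma~\ref{lem:functional form discrete}, and the dimension-reduction result Lemma~\ref{lem:from small dim to bigger}. The argument is essentially bookkeeping of constants, and there is no conceptual obstacle at this stage; the content of the corollary is simply that the reductions already set up collapse Theorem~\ref{thm:isoperimetric discrete} to a clean analytic inequality on the continuous Heisenberg group.

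First, I would apply Lemma~\ref{lem:discretization w} with $p=1$, $q=2$, $X=\R$, and weight function $\mathscr{W}(s) = 1/s^2$. One checks immediately that $\|\mathscr{W}\|_{L_1(1,\infty)} = 1$ and that the induced discrete weights $w_t = \int_t^{t+1} s^{-2}\,\ud s = 1/(t(t+1))$ satisfy $w_t\asymp 1/t^2$. The hypothesis~\eqref{eq:assumption cont for discretization} of Lemma~\ref{lem:discretization w} under these parameters integrates $t$ only over $[1,\infty)$, and is therefore trivially implied (for each fixed $k\geq 2$) by the continuous inequality assumed in the statement of the corollary, whose integral runs over $[0,\infty)$. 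Thus the conclusion of Lemma~\ref{lem:discretization w} is available, yielding for every finitely supported $f:\H_{\ms{\Z}}^{2k+1}\to\R$ the discrete estimate
\[
\bigg(\sum_{t=1}^\infty \frac{1}{t^2}\Big(\sum_{h\in\H_{\ms{\Z}}^{2k+1}} |f(hZ^t)-f(h)|\Big)^2\bigg)^{1/2} \lesssim \sum_{h\in\H_{\ms{\Z}}^{2k+1}} \sum_{\sigma\in\mathfrak{S}_k} |f(h\sigma)-f(h)|.
\]

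Specializing now to $k=2$ and invoking Lemma~\ref{lem:functional form discrete} with $q=2$ and weights $w_t = 1/t^2$ (so that $|\partial_\vv\Omega|^2 = \sum_t |\partial_\vv^t\Omega|^2/t^2$ is exactly the vertical perimeter), the above functional inequality translates, up to universal constants, into the isoperimetric-type inequality $|\partial_\vv\Omega|\lesssim |\partial_\hh\Omega|$ for every finite $\Omega\subset\H_{\ms{\Z}}^5$. This is Theorem~\ref{thm:isoperimetric discrete} in the case $k=2$, with a universal constant. Finally, to obtain the asserted $1/k$ factor for general $k\geq 2$, I would apply Lemma~\ref{lem:from small dim to bigger} with $m=2$ exactly as described in the paragraph following that lemma: starting from the functional inequality on $\H_{\ms{\Z}}^5$ with some universal constant $\beta$, the lemma produces the analogous functional inequality on $\H_{\ms{\Z}}^{2k+1}$ with constant $\beta k/2$, which by the reverse direction of Lemma~\ref{lem:functional form discrete} delivers $|\partial_\vv\Omega|\lesssim (1/k)|\partial_\hh\Omega|$ for arbitrary finite $\Omega\subset \H_{\ms{\Z}}^{2k+1}$, completing the reduction.
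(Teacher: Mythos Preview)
Your proposal is correct and follows essentially the same route as the paper's proof: apply Lemma~\ref{lem:discretization w} with $p=1$, $q=2$, $X=\R$, $\mathscr{W}(s)=1/s^2$ (using only the $k=2$ case of the continuous hypothesis), pass to the set inequality via Lemma~\ref{lem:functional form discrete}, and then invoke Lemma~\ref{lem:from small dim to bigger} to recover the $1/k$ dependence for general $k\ge 2$. The paper's proof is the one-line version of exactly this chain.
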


\subsection{Reductions in the continuous setting}\label{sec:continuous reductions} By Corollary~\ref{cor:continuous version}, all of the new results that we stated in the Introduction follow from Theorem~\ref{thm:mainThmSobolev} below, which answers Question~4.1 in~\cite{LafforgueNaor} positively when the dimension of the underlying Heisenberg group is at least $5$. Note, however, that~\cite[Question~4.1]{LafforgueNaor} ignored the role of this dimension, so the forthcoming work that we discussed in Remark~\ref{eq:finite p}  shows that~\cite[Question~4.1]{LafforgueNaor} actually has a negative answer when $k=1$.
\begin{thm}\label{thm:mainThmSobolev}
  Fix $k\in \{2,3,\ldots,\}$.  If $f\from \H^{2k+1}\to \R$ is smooth and compactly supported, then
  \begin{equation}\label{eq:L1 functional in theorem}
  \bigg(\int_0^\infty \left(\int_{\H^{2k+1}} |f(hZ^t)-f(h)| \ud\cH^{2k+2}(h)\right)^{2} \frac{\ud t}{t^{2}}\bigg)^{\frac12} \lesssim \int_{\H^{2k+1}} \left\|\nabla_\H f(h)\right\|_{\ell_1^{2k}} \ud\cH^{2k+2}(h).
  \end{equation}
\end{thm}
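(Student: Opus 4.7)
The plan is to carry out a corona-style decomposition argument in the spirit of David--Semmes, but adapted to the intrinsic Lipschitz geometry of the Heisenberg group. As a first step I would use the coarea formula to pass from the functional inequality \eqref{eq:L1 functional in theorem} to an equivalent isoperimetric statement: for every measurable $E \subset \H^{2k+1}$ with locally finite perimeter, $\|\vpf(E)\|_{L_2(\R)} \lesssim \area(\partial E)$. (The implication goes by slicing $f$ via its level sets $\{f > u\}$, applying the set version to each, and then using convexity/triangle inequality in $L_2(\R_+, \ud t/t^2)$, exactly as in Lemma~\ref{lem:functional form discrete}.) Then, using the cellular approximation of Lemma~\ref{lem:cellular approx}, I would reduce to the case where $E$ is cellular, so that the boundary is a finite union of $(2k+1)$-faces of the unit lattice (rescaled appropriately).

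The second step is the structural heart of the argument. For a cellular set $E$ I would decompose $\partial E$ into countably many pieces on which $E$, $E^{\cc}$, and $\partial E$ are all locally Ahlfors-regular of the correct dimension ($2k+2$, $2k+2$, and $2k+1$ respectively), in such a way that the sum of the boundary contributions telescopes back to $\area(\partial E)$ up to constants. Invoking the announced structural theorem (Theorem~\ref{thm:Ahlfors admits corona}), each such Ahlfors-regular piece admits a local intrinsic corona decomposition, i.e.\ at (most) every scale and location, $\partial E$ is well approximated by an intrinsic Lipschitz graph over a vertical hyperplane, with the "non-Lipschitz" defects summing (in a Carleson-measure sense) to at most a constant multiple of the $\cH^{2k+1}$-measure of the piece.

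For the third step I would prove the target inequality for an intrinsic $\lambda$-Lipschitz half-space $\Gamma^+$ directly, i.e.\ show $\|\vpf(\Gamma^+ \cap B)\|_{L_2(\R)} \lesssim \area(\partial \Gamma^+ \cap B')$ for controlled $B' \supset B$. This is where the hypothesis $k \geq 2$ will enter: the boundary of an intrinsic Lipschitz graph over a vertical hyperplane $V \cong \H^{2k-1} \times \R$ inherits a Heisenberg structure of one lower step, and one can parametrize vertical displacements of the indicator $\one_{\Gamma^+}$ by the defining function $f$ on $V$; when $k \geq 2$ the base $V$ still contains a copy of $\H^3$, so the sharp real-valued $W^{1,2} \to L_2(\ell_2(L_2))$ Littlewood--Paley--Stein inequality of~\cite{AusNaoTes,LafforgueNaor} (the $p=q=2$, $X=\R$ case of~\eqref{eq:discrete global intro X valued}) is available on $V$. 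Cauchy--Schwarz between the $L_1$ displacement and the intrinsic Lipschitz bound converts this $L_2$ control on $V$ into the required $L_2(\R,\ud t/t^2)$ control of $\vpf(\Gamma^+)$, paying an $L_1$-in-the-graph-direction for an $L_2$-on-$V$, which is legitimate precisely because $\dim V \geq 3$.

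The final step is to glue the base case to the corona decomposition: at each stopping-time region one replaces the true boundary by its approximating intrinsic Lipschitz graph, applies the half-space estimate, and controls the errors (in terms of both the vertical perimeter and the horizontal area) by the Carleson packing condition from the corona decomposition; the sum over regions recovers the full estimate for $E$. The main obstacle I anticipate is constructing the corona decomposition itself (Theorem~\ref{thm:Ahlfors admits corona}): one needs a Heisenberg analogue of the David--Semmes stopping-time construction in which the standard $\beta$-numbers are replaced by a notion measuring how close $\partial E$ is, at a given ball, to an intrinsic Lipschitz graph, and one must prove a Carleson-type bound for these $\beta$-numbers using only local Ahlfors-regularity of $E$, $E^{\cc}$ and $\partial E$. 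This is where monotonicity-type arguments for half-spaces in $\H^{2k+1}$ (of the kind underlying~\cite{CKN}) will be needed to certify that on controlled balls $\partial E$ is genuinely close to a plane, and hence can be upgraded to closeness to a Lipschitz graph via Theorem~\ref{thm:intrinsic nonlinear hahn banach}.
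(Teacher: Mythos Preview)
Your proposal is correct and follows essentially the same route as the paper: coarea reduction to the isoperimetric inequality (Lemma~\ref{lem:isoperimetric to functional continuous}), reduction to cellular sets (Lemma~\ref{le:to cellular q}), decomposition into locally Ahlfors-regular pieces (Lemma~\ref{lem:ballDecomp}), invocation of the corona decomposition for such pieces (Theorem~\ref{thm:Ahlfors admits corona}, built from quantitative monotonicity), the base case for intrinsic Lipschitz half-spaces via slicing the vertical hyperplane into copies of $\H^3$ and applying the $L_2$ inequality of~\cite{AusNaoTes} (Proposition~\ref{prop:lip graph general statement}), and gluing through Carleson packing (Proposition~\ref{prop:coronaBoundsVPerL2L1}). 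One small sharpening worth making explicit in your Step~3: the defining function $f\from V\to\R$ is \emph{not} Lipschitz in the Carnot--Carath\'eodory metric on $V$, but its restriction to each coset $g\H^3$ \emph{is} Lipschitz (because $X_k$ commutes with $\H^3$, so the computation~\eqref{eq:get lip f on subgroup} goes through), and this is what allows the direct use of Theorem~\ref{thm:quote ANT} fiberwise, followed by Jensen over the transverse $\mathbb{G}$-direction.
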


Theorem~\ref{thm:mainThmSobolev} follows from an isoperimetric-type inequality that is stated in Theorem~\ref{thm:isoperimetric continuous} below. It relates the vertical perimeter (recall Section~\ref{sec:vert}) of a set to the measure of its boundary.  Suppose that $E\subset \H^{2k+1}$ is measurable and $\cH^{2k+1}(\partial E)<\infty$. It is not clear \emph{a priori} that $\|\vpf(E)\|_{L_2(\R)}$ is necessarily finite, but the following theorem shows that it is at most a constant multiple of $\cH^{2k+1}(\partial E)$ when the dimension of the underlying Heisenberg group is at least 5.
\begin{thm}
  \label{thm:isoperimetric continuous}
  If $k\in \{2,3,\ldots,\}$ and $E\subset \H^{2k+1}$ is measurable, then
    $\|\vpf(E)\|_{L_2(\R)}\lesssim \cH^{2k+1}(\partial E)$.
\end{thm}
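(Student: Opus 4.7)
My approach will be a multi-scale decomposition strategy that mirrors the roadmap laid out in the introduction: first reduce the question to cellular sets of finite perimeter, then decompose these into pieces with good local regularity, and finally control each piece via an ``intrinsic corona decomposition.'' The starting point is the approximation result of Lemma~\ref{lem:cellular approx}: any measurable $E\subset \H^{2k+1}$ with $\cH^{2k+1}(\partial E)<\infty$ can be approximated in $L_1^{\mathrm{loc}}(\cH^{2k+2})$ by scalings of cellular sets $F_\rho$ with $\cH^{2k+1}(\partial F_\rho)\lesssim \cH^{2k+1}(\partial E)$. Since property (2) of Lemma~\ref{lem:vPerProps} implies that $\vpf(\cdot)(\rho)$ is continuous under $L_1$ convergence at each scale $\rho$, combined with Fatou's lemma applied pointwise in $\rho$ to the $L_2$ norm, it suffices to prove the estimate for cellular sets.

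For a cellular $E$, the plan is to decompose $E$ into pieces $\{E_j\}$ such that each $E_j$, its complement $E_j^\cc$, and its boundary $\partial E_j$ are all locally Ahlfors-regular (of the appropriate Hausdorff dimensions $2k+2$ and $2k+1$), while maintaining $\sum_j \cH^{2k+1}(\partial E_j)\lesssim \cH^{2k+1}(\partial E)$ and a compatibility statement $\|\vpf(E)\|_{L_2(\R)}\lesssim \sum_j \|\vpf(E_j)\|_{L_2(\R)}$. A candidate for such a decomposition is to carve off, at successively finer cellular scales, the regions where the boundary density is either unusually small or unusually large, and iterate; controlling the cost uses the relative isoperimetric inequality of Lemma~\ref{lem:relativeIsoperimetric}. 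Theorem~\ref{thm:Ahlfors admits corona} will then assert that any such locally Ahlfors-regular set admits a local intrinsic corona decomposition, i.e., a multi-scale covering by pieces that, at each scale, are well-approximated by intrinsic Lipschitz graphs in the sense of Section~\ref{sec:intrinsic Lipschitz}, with total geometric error controlled by $\cH^{2k+1}(\partial E)$.

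The crucial analytic input, which is where the hypothesis $k\ge 2$ enters, is the base case: the inequality $\|\vpf(\Gamma^+)\|_{L_2(\R)}\lesssim \cH^{2k+1}(\partial \Gamma^+)$ should hold when $\Gamma^+$ is an intrinsic $\lambda$-Lipschitz half-space, with constants that remain bounded (or shrink) as $\lambda\to 0$. I expect this to be derived by slicing an intrinsic Lipschitz graph by vertical planes through horizontal directions transverse to the graph, so that each slice lives in a copy of $\H^{2(k-1)+1}$; when $k\ge 2$, these slices themselves retain Heisenberg structure, and one can apply a higher-integrability $L^p$ variant ($p>1$) of the vertical-vs-horizontal inequality from~\cite{LafforgueNaor} along each slice, then reassemble via Fubini. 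Once this base case is in hand, summing across all scales and pieces of the corona decomposition, using the telescoping-type bounds $\vpf(A\cap B)\le \vpfl{B}(A)+\vpf(B)$, the scale-invariance $\vpf(\s_t(A))(\rho)=t^{2k+1}\vpf(A)(\rho-\log_2 t)$ from Lemma~\ref{lem:vPerProps}, and the exponentially-decaying tails phenomenon from Remark~\ref{rem:box}, should yield the global inequality.

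The main obstacle, I expect, is the construction of the corona decomposition for Ahlfors-regular sets, namely establishing Theorem~\ref{thm:Ahlfors admits corona}. This is the analogue of David--Semmes' corona decomposition for uniformly rectifiable sets in Euclidean space, but in the sub-Riemannian Heisenberg setting one must carry out the stopping-time arguments intrinsically, using intrinsic Lipschitz graphs and the Carnot--Carath\'eodory metric rather than ordinary Lipschitz graphs and the Euclidean metric, and one must show quantitatively that at most scales ``most'' of $\partial E$ is close to some intrinsic Lipschitz graph (a $\beta$-number-type estimate). A secondary obstacle is the slicing step: one must verify that intrinsic Lipschitz graphs in $\H^{2k+1}$ really do disintegrate into families of lower-dimensional intrinsic objects in a way that respects both the Lipschitz bound and the perimeter measure, so that Fubini applies with the right constants — this is precisely the point where dimension $2k+1\ge 5$ earns its keep, since in $\H^3$ there is no nontrivial Heisenberg subgroup to slice into.
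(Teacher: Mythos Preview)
Your proposal is essentially correct and follows the paper's strategy closely: the reduction to cellular sets (Lemma~\ref{le:to cellular q}), the decomposition into locally Ahlfors-regular pieces (Lemma~\ref{lem:ballDecomp}), the corona decomposition for such pieces (Theorem~\ref{thm:Ahlfors admits corona}), and its use to control the vertical perimeter (Proposition~\ref{prop:coronaBoundsVPerL2L1}) are all as you describe.

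The one place your account is imprecise is the base case for intrinsic Lipschitz half-spaces (Proposition~\ref{prop:lip graph general statement}). The paper slices the domain $V$ of the defining function $f$ into cosets of $\H^3=\langle X_1,Y_1,Z\rangle$, not $\H^{2k-1}$, though either choice would work. More importantly, the key observation---which you do not quite isolate---is that because the transverse direction $X_k$ \emph{commutes} with $\H^3$, the restriction $f_g$ of $f$ to each $\H^3$-coset is \emph{genuinely Lipschitz} in the Carnot--Carath\'eodory metric (the computation~\eqref{eq:get lip f on subgroup}), not merely intrinsic Lipschitz. This is what permits one to invoke the local $L_2$ inequality for Lipschitz functions on $\H^3$ (Theorem~\ref{thm:quote ANT}, from~\cite{AusNaoTes}; the $p=2$ case of~\cite{LafforgueNaor} would also serve), followed by Cauchy--Schwarz to pass from $L_2(L_2)$ on slices to the $L_2(L_1)$ structure of $\vpf$. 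Your phrase ``disintegrate into families of lower-dimensional intrinsic objects'' suggests you may have expected only intrinsic Lipschitz slices; if that were all one had, the argument would be circular.
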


We shall now explain how Theorem~\ref{thm:isoperimetric continuous} implies Theorem~\ref{thm:mainThmSobolev}.  To do so, we will need the following well-known coarea formula; see e.g.~\cite[Theorem~2.3.5]{FSSCSerrinMeyer} or~\cite[Theorem~3.3]{AmbrosioFineProperties}.
\begin{thm}
  Let $f\from \H^{2k+1}\to \R$ be a smooth, compactly supported function, and for $u\in \R$, let $E_{u}=\{h\in \H^{2k+1}\mid f(h)<u\}$.  Then the following inequality holds true.
  \begin{equation}\label{eq:coarea}
    \int_{-\infty}^\infty \cH^{2k+1}(\partial E_{u})\ud u\lesssim \int_{\H^{2k+1}} \left\|\nabla_\H f(h)\right\|_{\ell_1^{2k}} \ud\cH^{2k+2}(h).
  \end{equation}
\end{thm}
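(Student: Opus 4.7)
The plan is to deduce \eqref{eq:coarea} in two steps, the first of which is standard. The key input is the intrinsic Carnot--Carath\'eodory coarea identity in the Heisenberg group, which asserts that for every smooth, compactly supported $f\from\H^{2k+1}\to\R$,
\[
\int_{\H^{2k+1}}\|\nabla_\H f(h)\|_{\ell_2^{2k}}\ud\cH^{2k+2}(h)=\int_{-\infty}^\infty \Per_{E_u}\!\left(\H^{2k+1}\right)\ud u.
\]
This is the intrinsic counterpart of the classical Fleming--Rishel coarea formula; it is proved in the references cited in the excerpt. Taking it for granted, the derivation of \eqref{eq:coarea} is immediate: since $f$ is smooth and compactly supported, Sard's theorem implies that almost every $u\in\R$ is a regular value of $f$, so $\partial E_u=f^{-1}(u)$ is a piecewise smooth hypersurface, and Proposition~\ref{prop:per cellular} yields $\Per_{E_u}(\H^{2k+1})=c_k\cH^{2k+1}(\partial E_u)$ for such $u$. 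Combining this with the pointwise bound $\|x\|_{\ell_2^{2k}}\le\|x\|_{\ell_1^{2k}}$ valid for every $x\in\R^{2k}$ gives
\[
c_k\int_{-\infty}^\infty\cH^{2k+1}(\partial E_u)\ud u=\int_{\H^{2k+1}}\|\nabla_\H f(h)\|_{\ell_2^{2k}}\ud\cH^{2k+2}(h)\le \int_{\H^{2k+1}}\|\nabla_\H f(h)\|_{\ell_1^{2k}}\ud\cH^{2k+2}(h),
\]
which is exactly \eqref{eq:coarea} after absorbing $1/c_k$ into the $\lesssim$ notation.

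To sketch how I would prove the underlying $\ell_2$ identity if it were not already in the literature, I would start from the Riesz representation
\[
\Per_E\!\left(\H^{2k+1}\right)=\sup\Bigg\{\int_E\operatorname{div}_\H\varphi\,\ud\cH^{2k+2}\,:\,\varphi\in C^1_c\!\left(\H^{2k+1};\R^{2k}\right),\ \sup_h\|\varphi(h)\|_{\ell_2^{2k}}\le 1\Bigg\},
\]
combine it with the layer-cake decomposition $f(h)=\int_0^\infty\mathbf{1}_{\{f>u\}}(h)\ud u-\int_{-\infty}^0\mathbf{1}_{\{f\le u\}}(h)\ud u$ inside the duality pairing $\int f\operatorname{div}_\H\varphi$, and integrate by parts against the horizontal vector fields $X_i,Y_i$. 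Fubini's theorem then produces the lower bound $\int\|\nabla_\H f\|_{\ell_2^{2k}}\ge\int\Per_{E_u}\,\ud u$, while the matching upper bound is obtained by testing against near-optimal $\varphi$'s chosen simultaneously on a countable dense family of levels and passing to the limit via smooth mollification together with dominated convergence in $u$.

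The principal analytic obstacle, concealed by the citation, is precisely the point at which the Carnot--Carath\'eodory theory departs from its Euclidean counterpart: one must show that for almost every regular value $u$ the intrinsic perimeter measure $\Per_{E_u}$ is absolutely continuous with respect to the Euclidean surface measure on $\partial E_u$, with density equal to the Euclidean norm of the horizontal projection of the Euclidean unit normal. This in turn requires verifying that the \emph{characteristic set} of $\partial E_u$, namely the locus where the horizontal distribution $\mathsf{H}$ is tangent to $\partial E_u$, is $\cH^{2k+1}$-negligible on regular level sets. Once this is in hand, the explicit identification of the density makes the two sides of the $\ell_2$ coarea identity match after integration in $u$, and the argument outlined above goes through without further difficulty.
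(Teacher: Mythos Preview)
The paper does not actually prove this theorem; it simply cites it as a known result from \cite{FSSCSerrinMeyer} and \cite{AmbrosioFineProperties}. Your derivation is correct and is essentially the natural way to extract the stated $\cH^{2k+1}$-form from those references: the cited coarea formula is typically phrased in terms of the perimeter measure $\Per_{E_u}$ and the $\ell_2$-norm of $\nabla_\H f$, so one must (i) invoke Sard plus Proposition~\ref{prop:per cellular} to pass from $\Per_{E_u}(\H^{2k+1})$ to $c_k\,\cH^{2k+1}(\partial E_u)$ for almost every $u$, and (ii) use $\|\cdot\|_{\ell_2^{2k}}\le\|\cdot\|_{\ell_1^{2k}}$ pointwise. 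Both steps are exactly what you do, and they are sound.

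Your second and third paragraphs are superfluous: since you are explicitly citing the $\ell_2$ coarea identity, sketching its proof and discussing the characteristic-set obstacle add nothing to the argument for \eqref{eq:coarea}. If you wish to keep the write-up self-contained at the level the paper requires, the first paragraph alone suffices.
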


When $k\ge 2$ and $q=2$, the following lemma shows that Theorem~\ref{thm:isoperimetric continuous} implies Theorem~\ref{thm:mainThmSobolev}.

\begin{lemma}\label{lem:isoperimetric to functional continuous} Fix $k\in \N$, $q\in [1,\infty)$ and $C\in (0,\infty)$. Suppose that $\|\vpf(E)\|_{L_q(\R)}\le C\cH^{2k+1}(\partial E)$ for every measurable  $E\subset \H^{2k+1}$. Then every  smooth and compactly supported $f\from \H^{2k+1}\to \R$ satisfies
  \[\bigg(\int_0^\infty \left(\int_{\H^{2k+1}} |f(hZ^t)-f(h)| \ud\cH^{2k+2}(h)\right)^{q} \frac{\ud t}{t^{1+\frac{q}{2}}}\bigg)^{\frac{1}{q}} \lesssim C\int_{\H^{2k+1}} \left\|\nabla_\H f(h)\right\|_{\ell_1^{2k}} \ud\cH^{2k+2}(h).\]
\end{lemma}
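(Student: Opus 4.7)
The plan is to deduce the asserted functional inequality from the hypothesis on $\vpf$ by combining the coarea formula~\eqref{eq:coarea} with a layer-cake decomposition, Minkowski's integral inequality, and a natural change of variables that matches the $t$-parametrization of the left-hand side to the $\rho$-parametrization of $\vpf$.

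First, for every $t\in(0,\infty)$ and $h\in\H^{2k+1}$ I would write $|f(hZ^t)-f(h)|=\int_{-\infty}^{\infty}|\one_{E_u}(hZ^t)-\one_{E_u}(h)|\ud u$ with $E_u=\{f<u\}$, and then integrate over $h$ and apply Fubini to obtain
\[
\int_{\H^{2k+1}}|f(hZ^t)-f(h)|\ud\CH^{2k+2}(h)=\int_{-\infty}^{\infty}\CH^{2k+2}\big(\mathsf{D}_{\rho(t)}E_u\big)\ud u,
\]
where $\rho(t)=\tfrac12\log_2 t$. By the definition~\eqref{eq:vert per on U} of $\vpf$ (and left-invariance of the Haar measure, which makes translation by $Z^t$ and by $Z^{-t}$ interchangeable), the right-hand side equals $\int_{-\infty}^{\infty}2^{\rho(t)}\vpf(E_u)(\rho(t))\ud u=\sqrt{t}\int_{-\infty}^{\infty}\vpf(E_u)(\rho(t))\ud u$.

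Next, I would perform the change of variables $t=2^{2\rho}$ inside the outer integral on the left-hand side of the desired inequality. Since $\ud t/t=(2\log 2)\,\ud\rho$ and $t^{-q/2}\cdot(\sqrt{t})^{q}=1$, the weight $t^{-1-q/2}\ud t$ combined with the factor $\sqrt{t}$ produced in the previous step simplifies to just $(2\log 2)\ud\rho$. In other words, raising everything to the $q$-th power produces the identity
\[
\int_{0}^{\infty}\!\left(\int_{\H^{2k+1}}|f(hZ^t)-f(h)|\ud\CH^{2k+2}\right)^{\!q}\!\frac{\ud t}{t^{1+\frac{q}{2}}}=2\log 2\int_{-\infty}^{\infty}\!\left(\int_{-\infty}^{\infty}\vpf(E_u)(\rho)\ud u\right)^{\!q}\ud\rho.
\]
This is the critical bookkeeping step: the exponents $1+q/2$ in $t$ and $1/2$ in the $\sqrt{t}$ coming from $\vpf$ are precisely chosen so that the $t$ dependence disappears, leaving a pure $L_q(\R)$-norm in $\rho$.

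After this reformulation, the rest is routine. I would apply Minkowski's integral inequality in $L_q(\R)$ (valid since $q\ge 1$) to pull the $u$-integral outside the $L_q$ norm in $\rho$, obtaining
\[
\bigg(\int_{-\infty}^{\infty}\Big(\!\int_{-\infty}^{\infty}\vpf(E_u)(\rho)\ud u\Big)^{q}\ud\rho\bigg)^{\!1/q}\le\int_{-\infty}^{\infty}\|\vpf(E_u)\|_{L_q(\R)}\ud u.
\]
Then the hypothesis gives $\|\vpf(E_u)\|_{L_q(\R)}\le C\,\CH^{2k+1}(\partial E_u)$ for each $u\in\R$, and invoking the coarea inequality~\eqref{eq:coarea} bounds $\int_{-\infty}^{\infty}\CH^{2k+1}(\partial E_u)\ud u$ by a constant multiple of $\int_{\H^{2k+1}}\|\nabla_{\H}f\|_{\ell_1^{2k}}\ud\CH^{2k+2}$, completing the proof.

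There is no serious obstacle; the only point that requires care is the bookkeeping in the change of variables, which must be carried out consistently to verify that the exponents of $t$ cancel. A minor preliminary remark is that for each $u\in\R$ the set $E_u$ is open with smooth boundary (by the implicit function theorem applied to the smooth $f$ at almost every regular value $u$, and with $E_u$ either empty or of full measure outside a compact set), so $\CH^{2k+1}(\partial E_u)<\infty$ for a.e.\ $u$ and the hypothesis can be invoked pointwise in $u$.
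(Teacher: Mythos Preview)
Your proof is correct and follows essentially the same approach as the paper: layer-cake decomposition of $|f(hZ^t)-f(h)|$, the substitution $t=2^{2\rho}$ to identify the left-hand side with an $L_q(\R)$ norm of $\int \vpf(E_u)\ud u$, Minkowski's inequality to pull the $u$-integral outside, then the hypothesis and the coarea formula. The only cosmetic difference is that the paper applies Minkowski before the change of variables whereas you do it after, which is immaterial.
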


\begin{proof} If $f\from \H^{2k+1}\to \R$ is measurable, then
  $$\int_{\H^{2k+1}} |f(hZ^t)-f(h)| \ud\cH^{2k+2}(h)= \int_{-\infty}^\infty \cH^{2k+2}(E_{u}\symdiff E_{u} Z^t) \ud u.$$
  Hence, by the triangle inequality in $L_q(\R)$ and the substitution $t=2^{2s}$, we have
  \begin{multline}\label{eq:apply iso to level sets}
    \left(\int_0^\infty \left(\int_{\H^{2k+1}} |f(hZ^t)-f(h)| \ud \cH^{2k+2}(h) \right)^{q} \frac{\ud t}{t^{1+\frac{q}{2}}}\right)^{\frac{1}{q}}
    \le \int_{-\infty}^\infty \left(\int_0^\infty \cH^{2k+2}(E_{u}\symdiff E_{u} Z^t)^{q} \frac{\ud t}{t^{1+\frac{q}{2}}}\right)^{\frac{1}{q}}\\
    \asymp \int_{-\infty}^\infty \left(\int_{-\infty}^\infty \frac{\cH^{2k+2}(E_{u}\symdiff E_{u} Z^{2^{2s}})^{q}}{2^{qs}} \ud s\right)^{\frac{1}{q}}\ud u
    \stackrel{\eqref{eq:vert per on U}}{=} \int_{-\infty}^\infty \|\vpf(E_u)\|_{L_q(\R)} \ud u.
  \end{multline}
  Now, by applying the assumption of Lemma~\ref{lem:isoperimetric to functional continuous}  to each $\{E_u\}_{u\in \R}$, we conclude the proof of Lemma~\ref{lem:isoperimetric to functional continuous}  by combining~\eqref{eq:apply iso to level sets} with the following estimate.
  \begin{equation*}\int_{-\infty}^\infty \|\vpf(E_u)\|_{L_q(\R)} \ud u\lesssim C\int_{-\infty}^\infty \cH^{2k+1}(\partial E_u) \ud u\stackrel{\eqref{eq:coarea}}{\lesssim} C\int_{\H^{2k+1}} \left\|\nabla_\H f(h)\right\|_{\ell_1^{2k}} \ud\cH^{2k+2}(h).\qedhere\end{equation*}
\end{proof}

Theorem~\ref{thm:isoperimetric cellular} below is nothing more than Theorem~\ref{thm:isoperimetric continuous} in the special case of cellular sets (recall Section~\ref{sec:cellularSets}). We prefer to state Theorem~\ref{thm:isoperimetric cellular} separately because we shall next prove that it implies Theorem~\ref{thm:isoperimetric continuous} in full generality. Once this is done, we can devote the rest of the discussion to the proof of Theorem~\ref{thm:isoperimetric cellular}.

\begin{thm}
  \label{thm:isoperimetric cellular}
  If $k\in \{2,3,\ldots\}$ and $F\subset \H^{2k+1}$ is a cellular set, then
  $\|\vpf(F)\|_{L_2(\R)}\lesssim \cH^{2k+1}(\partial F).$
\end{thm}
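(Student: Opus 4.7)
The plan is to prove this by a three-tier structural argument modeled on David--Semmes uniform rectifiability but adapted to the intrinsic-graph framework of the Heisenberg group: (i) establish the inequality directly for intrinsic Lipschitz half-spaces; (ii) upgrade to any set admitting a suitable multi-scale corona decomposition; (iii) construct such a decomposition for the cellular set $F$.

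First I would prove the base case: if $F=\Gamma^+$ is an intrinsic $\lambda$-Lipschitz half-space over a vertical plane with $\lambda$ a sufficiently small universal constant, then $\|\vpf(\Gamma^+)\|_{L_2(\R)}\lesssim\cH^{2k+1}(\partial\Gamma^+)$ (at least locally). This is the unique step where the assumption $k\ge 2$ enters. The graph $\partial\Gamma$ is parametrized by a function on a $2k$-dimensional vertical plane $V$; when $k\ge 2$, $V$ contains a horizontal pair $X_i,Y_i$ commuting with the graph's normal direction, which furnishes an ``internal'' Heisenberg structure transverse to the graph. This extra structure is what permits an $L^1$-endpoint bound by combining the intrinsic-Lipschitz parametrization with the $L^p$ vertical-perimeter inequalities of~\cite{LafforgueNaor} available for all $p>1$; the route becomes degenerate when $k=1$.

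Next I would formalize a notion of local intrinsic corona decomposition of a set $E$ as a Carleson-packed hierarchy of stopping-time balls, on each of which $\partial E$ is $\epsilon$-close in normalized Hausdorff distance to a single intrinsic Lipschitz graph $\Gamma$. Given such a decomposition, I would bound $\|\vpf(E)\|_{L_2(\R)}$ by localizing to each ball $B$ and comparing $E$ with its approximating Lipschitz half-space $\Gamma^+$ via items~(2) and~(3) of Lemma~\ref{lem:vPerProps}: the base case bounds $\vpfl{B}(\Gamma^+)$, while the local discrepancy $\vpf(E\symdiff\Gamma^+)$ contributes at most the approximation error times $\cH^{2k+1}(\partial E\cap B)$. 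Squaring in $\rho$, summing over balls at each dyadic scale, and invoking the packing estimate $\sum_B\cH^{2k+1}(\partial E\cap B)\lesssim\cH^{2k+1}(\partial E)$ would yield the global bound. For the cellular $F$ itself, I would first cut $F$ into pieces $F_i$ for which $F_i,F_i^{\cc},\partial F_i$ are all locally $(2k+1)$-Ahlfors regular with $\sum_i\cH^{2k+1}(\partial F_i)\lesssim\cH^{2k+1}(\partial F)$, and then construct a corona decomposition for each $F_i$ by a stopping-time argument based on quantitative flatness (distance of $\partial F_i$ from the best-fit vertical plane at a given ball), using Theorem~\ref{thm:intrinsic nonlinear hahn banach} to glue the local Lipschitz approximations into a single intrinsic Lipschitz graph on each stopping tree.

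The main obstacle is proving the Carleson packing condition for the stopping-time balls in the third step. In Euclidean David--Semmes theory this rests on classical monotonicity formulas that have no direct analog in $\H^{2k+1}$; one must instead substitute a $\delta$-monotonicity plus compactness argument in the spirit of Cheeger--Kleiner--Naor showing that frequent near-flatness failure at disjoint scales is incompatible with finite boundary measure, thereby controlling the packing measure of the stopping times by $\cH^{2k+1}(\partial F_i)$. A secondary obstacle is Step~1, where the $L^1$-endpoint genuinely escapes the semigroup Littlewood--Paley-Stein methods of~\cite{LafforgueNaor} and requires new input from the higher-dimensional graph geometry.
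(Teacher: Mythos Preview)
Your three-tier outline matches the paper's architecture almost exactly: the paper proves the base case for intrinsic Lipschitz half-spaces (Proposition~\ref{prop:lip graph general statement}), upgrades via a corona decomposition (Proposition~\ref{prop:coronaBoundsVPerL2L1}), decomposes the cellular set into locally Ahlfors-regular pieces (Lemma~\ref{lem:ballDecomp}), and constructs the corona decomposition for each piece using quantitative non-monotonicity exactly as you anticipate (Theorem~\ref{thm:Ahlfors admits corona} and Section~\ref{sec:constructing corona}).

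One point of precision in Step~(i): the paper does \emph{not} reach the $L_1$ endpoint by pushing the $L_p$ inequalities of~\cite{LafforgueNaor} toward $p=1$; as you yourself note later, that route degenerates. The actual mechanism is more direct. The vertical plane $V$ over which $\Gamma$ is a graph factors as $\mathbb{G}\cdot\H^3$ with $\H^3=\langle X_1,Y_1,Z\rangle$ commuting with the normal direction $X_k$. The key computation (your ``internal Heisenberg structure'' observation) is that the restriction $f_g$ of the graph function to each $\H^3$-coset is genuinely Carnot--Carath\'eodory Lipschitz, not merely intrinsic Lipschitz. One then applies the \emph{quadratic} inequality of~\cite{AusNaoTes} (Theorem~\ref{thm:quote ANT}) to each $f_g$, and a single Cauchy--Schwarz/Jensen step converts the resulting $L_2$ bound on the slices into the desired $L_2(\R)$ bound on $\vpfl{B_r}(\Gamma^+)$, whose integrand is an $L_1$-type quantity. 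So the lift from $L_2$ on $\H^3$ to $L_1$ on $\H^{2k+1}$ happens in one stroke via Jensen, not via any limiting procedure in $p$.
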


The special case $k\in \{2,3,\ldots\}$ and $q=2$ of the following lemma shows that Theorem~\ref{thm:isoperimetric continuous} in full generality is a consequence of (its special case) Theorem~\ref{thm:isoperimetric cellular}.

\begin{lemma}\label{le:to cellular q} Fix $k\in \N$, $q\in [1,\infty)$ and $C\in (0,\infty)$. Suppose that $\|\vpf(F)\|_{L_q(\R)} \le C\cH^{2k+1}(\partial F)$ for any cellular set $F\subset \H^{2k+1}$. Then $\|\vpf(E)\|_{L_q(\R)} \lesssim (C+1)\cH^{2k+1}(\partial E)$ for any measurable  $E\subset \H^{2k+1}$.
\end{lemma}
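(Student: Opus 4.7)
The plan is to realize $E$ as a pointwise limit of scaled cellular sets, apply the hypothesis at each scale, and pass to the limit via Fatou's lemma. First, if $\cH^{2k+1}(\partial E) = \infty$, the target inequality is vacuous, so I may assume throughout that $\cH^{2k+1}(\partial E) < \infty$.

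For each $\rho \in (0, \infty)$, I will invoke Lemma~\ref{lem:cellular approx} to produce a set $F_\rho \subset \H^{2k+1}$ with the three properties that $\s_{1/\rho}(F_\rho)$ is cellular, $\cH^{2k+1}(\partial F_\rho) \lesssim \cH^{2k+1}(\partial E)$, and $\cH^{2k+2}(E \symdiff F_\rho) \lesssim \rho \cH^{2k+1}(\partial E)$. Applying the hypothesis to the cellular set $\s_{1/\rho}(F_\rho)$, and then undoing the scaling via Lemma~\ref{lem:vPerProps}(4) (which yields $\|\vpf(\s_{1/\rho}(F_\rho))\|_{L_q(\R)} = \rho^{-(2k+1)} \|\vpf(F_\rho)\|_{L_q(\R)}$) together with the $(2k+1)$-homogeneity of $\cH^{2k+1}$ under $\s_{1/\rho}$, will transfer the estimate back to $F_\rho$ itself, producing the uniform-in-$\rho$ bound $\|\vpf(F_\rho)\|_{L_q(\R)} \le C\cH^{2k+1}(\partial F_\rho) \lesssim C\cH^{2k+1}(\partial E)$.

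To upgrade these bounds to one on $E$, I will verify pointwise convergence $\vpf(F_\rho)(s) \to \vpf(E)(s)$ as $\rho \to 0^+$ for each fixed $s \in \R$. This is a direct consequence of Lemma~\ref{lem:vPerProps}(2), which gives $|\vpf(F_\rho)(s) - \vpf(E)(s)| \le \vpf(F_\rho \symdiff E)(s)$, combined with the crude pointwise bound $\vpf(A)(s) \le 2\cH^{2k+2}(A)/2^s$ applied to $A = F_\rho \symdiff E$, whose measure goes to zero with $\rho$ by the third property above. Fatou's lemma along $\rho = 1/n$ will then deliver
\[\|\vpf(E)\|_{L_q(\R)} \le \liminf_{n \to \infty} \|\vpf(F_{1/n})\|_{L_q(\R)} \lesssim C \cH^{2k+1}(\partial E).\]

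I do not anticipate any serious obstacle: the heavy lifting has already been done by the cellular approximation of Lemma~\ref{lem:cellular approx}, and the remaining scaling and Fatou manipulations are routine. The $+1$ in the stated target $\lesssim (C+1)\cH^{2k+1}(\partial E)$ appears to be harmless slack, since the strategy above in fact produces the sharper bound with $C$ in place of $C+1$.
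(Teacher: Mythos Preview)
Your proof is correct and follows essentially the same approach as the paper: approximate $E$ by scaled cellular sets via Lemma~\ref{lem:cellular approx}, transfer the hypothesis through the scaling identity of Lemma~\ref{lem:vPerProps}(4), and control the difference $\vpf(E)-\vpf(F_\rho)$ using Lemma~\ref{lem:vPerProps}(2). The only minor difference is in the passage to the limit: the paper bounds $\|\vpf(E)\|_{L_q(i,\infty)}$ for each $i\in\Z$ (which is where the explicit $+1$ term arises, from the error $\|\vpf(E)-\vpf(F_{2^i})\|_{L_q(i,\infty)}\lesssim \cH^{2k+1}(\partial E)$) and then lets $i\to-\infty$, whereas you pass to the limit directly via Fatou, which indeed yields the slightly sharper constant $\lesssim C$.
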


\begin{proof} Suppose that $E\subset \H^{2k+1}$ is a  measurable set satisfying $\cH^{2k+1}(\partial E)<\infty$.  Fix $i\in \Z$ and denote $\rho=2^i$.  By Lemma~\ref{lem:cellular approx}, there is a set $F_\rho\subset \H^{2k+1}$ such that its re-scaling $\s_{1/\rho^{}}(F_\rho)$ is cellular, $\cH^{2k+1}(\partial F_\rho)\lesssim \cH^{2k+1}(\partial E)$, and $\cH^{2k+2}(E\symdiff F_\rho)\lesssim \rho \cH^{2k+1}(\partial E)=2^i \cH^{2k+1}(\partial E).$  Using Lemma~\ref{lem:vPerProps} and the assumption of Lemma~\ref{le:to cellular q}  applied to $\s_{1/\rho^{}}(F_\rho)$, we see that
  \begin{multline}\label{eq:F rho per}
  \|\vpf(F_\rho)\|_{L_q(\R)}=\rho^{2k+1}\|\vpf(\s_{1/\rho^{}}(F_\rho))\|_{L_q(\R)}\\\le C\rho^{2k+1}\cH^{2k+1}(\partial \s_{1/\rho^{}}(F_\rho))=C\cH^{2k+1}(\partial F_\rho)\lesssim C\cH^{2k+1}(\partial E).
  \end{multline}
If $t\in \R$ satisfies $t\ge i$, then by Lemma~\ref{lem:vPerProps}.(2), we have
  \begin{equation}\label{eq:t>i}|\vpf(E)(t)-\vpf(F_\rho)(t)|\le \vpf(E\symdiff F_\rho)(t)\stackrel{\eqref{eq:vert per on U}}{\le} \frac{2 \cH^{2k+2}(E\symdiff F_\rho)}{2^t} \lesssim 2^{i-t}\cH^{2k+1}(\partial E).\end{equation}
Hence,
  $$\|\vpf(E)\|_{L_q(i,\infty)}\le \|\vpf(F_\rho)\|_{L_q(i,\infty)}+\|\vpf(E)-\vpf(F_\rho) \|_{L_q(i,\infty)}\stackrel{\eqref{eq:F rho per}\wedge\eqref{eq:t>i}}{\lesssim} (C+1)\cH^{2k+1}(\partial E).$$
  Since this is true for any $i\in \Z$, we have
  \begin{equation*}
  \|\bar{v}(E)\|_{L_q(\R)}=\lim_{i\to -\infty}\|\vpf(E)\|_{L_q(i,\infty)}\lesssim (C+1)\cH^{2k+1}(\partial E).\qedhere
  \end{equation*}
\end{proof}

\section{Proof overview}\label{sec:overview}

By the reductions in Section~\ref{sec:reductions}, we know that in order to prove Theorem~\ref{thm:isoperimetric discrete} it suffices to prove Theorem~\ref{thm:isoperimetric cellular}. Here we shall sketch in broad strokes the steps of the proof of Theorem~\ref{thm:isoperimetric cellular}.  The  proof uses ``intrinsic corona decompositions'' in the Heisenberg group to reduce the desired bound to a bound on pieces of intrinsic Lipschitz half-spaces (recall Section~\ref{sec:intrinsic Lipschitz}).  Each of these intrinsic Lipschitz half-spaces is bounded by the intrinsic graph of a function defined on a vertical plane. We slice the vertical plane into cosets of $\H^3$ and bound the vertical perimeter of the intrinsic half-space using an $L_2$ inequality for functions on these cosets.  In essence, our argument ``lifts'' an $L_2$ inequality for functions on $\H^{3}$ to a formally stronger endpoint $L_1$ inequality on $\H^{2k+1}$. The assumption that $k>1$ arises from this lifting process. When $k=1$ the vertical plane is $2$-dimensional, so we cannot relate the question to a functional inequality on $\H^3$ and therefore this strategy does not work. However, the bound on the vertical perimeter of intrinsic Lipschitz half-spaces (Section~\ref{sec:intrinsic graphs}) is the {\em only step} of our proof that fails for the $3$-dimensional Heisenberg group; when $k=1$, we can still use intrinsic corona decompositions to reduce to the case of intrinsic Lipschitz half-spaces, but the desired inequality fails for these half-spaces.

\subsection*{Intrinsic Lipschitz half-spaces}
To prove Theorem~\ref{thm:isoperimetric cellular}, we first establish a version of Theorem~\ref{thm:isoperimetric continuous} for intrinsic Lipschitz half-spaces.  Specifically, in Section~\ref{sec:intrinsic graphs} we prove that, provided $k\in \{2,3,\ldots,\}$, for every $\lambda\in (0,1)$, if $E\subset \H^{2k+1}$ is an intrinsic $\lambda$-Lipschitz half-space (recall Section~\ref{sec:intrinsic Lipschitz}), then for every point $p\in \H^{2k+1}$ and every radius $r>0$ we have
  \begin{equation}\label{eq:lip graph ineq overview}
    \left\| \vpfl{B_r(p)}(E)\right\|_{L_2(\R)}\lesssim\frac{r^{2k+1}}{1-\uplambda}.
  \end{equation}

When, say, $\lambda\in (0,\frac12)$, the estimate~\eqref{eq:lip graph ineq overview} is in essence the special case of Theorem~\ref{thm:isoperimetric continuous} for (pieces of) intrinsic Lipschitz half-spaces. This is so because, due to the isoperimetric inequality for the Heisenberg group~\cite{Pan82}, the right-hand side of~\eqref{eq:lip graph ineq overview} is at most a constant (depending on $k$) multiple of $\CH^{2k+1}(\partial(B_r\cap E))$ whenever $\CH^{2k+2}(B_r\cap E)\gtrsim r^{2k+2}$, i.e., provided that the intrinsic Lipschitz half-space $E$ occupies a constant fraction of the volume of the ball $B_r$. The estimate~\eqref{eq:lip graph ineq overview} will be used below only in such a non-degenerate situation.

Our proof of~\eqref{eq:lip graph ineq overview} relies crucially on a local $L_2$-variant of~\eqref{eq:L1 functional in theorem} for $\H^3$ that was proven in~\cite{AusNaoTes} using representation theory; see Theorem~\ref{thm:quote ANT} below for a precise formulation.  We leverage this lower-dimensional functional inequality to deduce~\eqref{eq:lip graph ineq overview} as follows.  Suppose that $\partial E$ is an intrinsic Lipschitz graph over a $2k$-dimensional vertical plane $V\subset \H^{2k+1}$; that is, $\partial E=\Gamma_f$ for some function $f:V\to \R$ as in~\eqref{eq:gamma f}.  The function $f$ does not have to be Lipschitz with respect to the Carnot--Carath\'eodory metric, but a computation that appears in Section~\ref{sec:intrinsic graphs} shows that the restrictions of $f$ to cosets of copies of $\H^3$ in $V$ are in fact Lipschitz with respect to the Carnot--Carath\'eodory metric.  This allows us to use the quadratic inequality of~\cite{AusNaoTes} for each of these ``fibers" of $f$, and by combining the resulting inequalities (using an application of Cauchy--Schwarz) one arrives at~\eqref{eq:lip graph ineq overview}.  It is important to stress that this proof does not work for functions on $\H^3$ because it relies on slicing $V$ into copies of $\H^3$.  When $V\subset \H^3$, $V$ is two-dimensional and can be sliced into vertical lines, but there is no analogue of the quadratic inequality of~\cite{AusNaoTes} that we use here for vertical lines in $\H^3$.

\subsection*{Intrinsic corona decompositions}
In order to apply \eqref{eq:lip graph ineq overview} to more general sets, we use intrinsic corona decompositions in the Heisenberg group.  The full definitions will appear in Section~\ref{sec:corona decompositions}, and a qualitative description appears below. Corona decompositions are an established tool in analysis for reducing the study of certain singular integrals on $\R^n$ to the case of Lipschitz graphs, starting with seminal works of David~\cite{Dav84,Dav91} and Jones~\cite{Jon89,Jon90} on the Cauchy integral and culminating with the David--Semmes theory of quantitative rectifiability~\cite{DavidSemmesSingular,DSAnalysis}. Our adaptation
of this technique is mostly technical, but it will also involve a conceptually new ingredient, namely the use of quantitative monotonicity to govern the decomposition.

A corona decomposition covers $\partial E$ by two types of sets, called \emph{stopping-time regions} and \emph{bad cubes}.  Stopping-time regions correspond to parts of $\partial E$ that are close to intrinsic Lipschitz graphs, and bad cubes correspond to parts of $\partial E$, like sharp corners, that are not.  The multiplicity of this cover depends on the shape of $\partial E$ at different scales.  For example, $\partial E$ might look smooth on a large neighborhood of a point $x$, jagged at a medium scale, then smooth again at a small scale.  If so, then $x$ is contained in a large stopping-time region, a medium-sized bad cube, and a second small stopping-time region.  A cover like this is a corona decomposition if it satisfies a {\em Carleson packing condition} (see Section~\ref{sec:corona decompositions}) that bounds its average multiplicity on any ball.

Section~\ref{sec:isoperimetry of corona} establishes that Theorem~\ref{thm:isoperimetric continuous} holds when $\partial E$ admits a corona decomposition.  We proceed as follows.  The vertical perimeter of $\partial E$ comes from three sources: the bad cubes, the graphs that approximate the stopping-time regions, and the error incurred by approximating a stopping-time region by an intrinsic Lipschitz graph.  By the Carleson packing condition, there are few bad cubes, and they contribute vertical perimeter on the order of $\cH^{2k+1}(\partial E)$.  Using~\eqref{eq:lip graph ineq overview}, we argue that the intrinsic Lipschitz graphs also contribute vertical perimeter on the order of $\cH^{2k+1}(\partial E)$.  Finally, the difference between a stopping-time region and an intrinsic Lipschitz graph is bounded by the size of the stopping-time region.  The stopping-time regions also satisfy a Carleson packing condition, so these errors also contribute vertical perimeter on the order of $\cH^{2k+1}(\partial E)$.  Summing these contributions, we obtain the desired bound.

Not every set with finite perimeter admits a corona decomposition, even in $\R^n$, but we show that the boundaries of cellular sets can be built out of pieces that admit such decompositions.  (A similar technique for surfaces in $\R^n$ was used in \cite{You16}.)  We start by showing that in order to establish Theorem~\ref{thm:isoperimetric cellular} it suffices to prove that for every $r>0$ we have $\|\overline{\mathsf{v}}_{B_r}(E)\|_{L_2(\R)}\lesssim r^{2k+1}$ under the additional assumption  that the sets $E,E^\cc,\partial E$ are  $r$-locally Ahlfors-regular.  That is, $\CH^{2k+2}(uB_\rho \cap E)\asymp \rho^{2k+2}\asymp \CH^{2k+2}(vB_\rho \setminus E)$ and $\CH^{2k+1}(wB_\rho\cap \partial E)\asymp \rho^{2k+1}$  for all $\rho\in (0,r)$ and $(u,v,w)\in E\times E^\cc\times \partial E$.  To see this reduction, recall that in Theorem~\ref{thm:isoperimetric cellular} we are given a cellular set $F\subset \H^{2k+1}$. Any such set is Ahlfors-regular on sufficiently small balls. We argue that $F$ can be decomposed into sets that satisfy the desired local Ahlfors-regularity.  The full construction of this decomposition is carried out in Section~\ref{sec:decomposing cellular}, but we remark briefly that it amounts to the following natural ``greedy" iterative procedure. If one of the sets $F, F^\cc, \partial F$ were not locally Ahlfors-regular then there would be some smallest ball $B$ such that the density of $F$, $F^\cc$ or $\partial F$ is either too low or too high on $B$ (it is actually convenient to work here with ``cellular balls" here rather than  Carnot--Carath\'eodory balls so as to maintain the inductive hypothesis that $F$ is cellular, but we ignore this technical point for the purpose of the overview; see Section~\ref{sec:decomposing cellular}). By replacing $F$ by either $F\cup B$ or $F\setminus B$, we cut off a piece of $\partial F$ and decrease $\CH^{2k+1}(\partial F)$.  Since $B$ was the smallest ball where Ahlfors-regularity fails, $F, F^\cc, \partial F$ are Ahlfors-regular on balls smaller than $B$.  Repeating this process eventually reduces $F$ to the empty set.  We arrive at the conclusion of Theorem~\ref{thm:isoperimetric cellular} for the initial set $F$ by proving the (local version of) the theorem for each piece of this decomposition, then summing the resulting inequalities.

Finally, in Section~\ref{sec:constructing corona}, we arrive at  the main technical step of the decomposition procedure: showing that if $E,E^\cc$, and $\partial E$ are  Ahlfors regular, then $\partial E$ admits a corona decomposition (Theorem~\ref{thm:Ahlfors admits corona}).  This is the longest section of the proof, but it follows the well-established methods of~\cite{DavidSemmesSingular,DSAnalysis}.  We start by constructing a sequence of nested partitions of $\partial E$ into pieces called \emph{cubes}; this is a standard construction due to Christ~\cite{ChristTb} and David~\cite{DavidWavelets} and only uses the Ahlfors regularity of $\partial E$.  These partitions are analogues of the standard tilings of $\R^n$ into dyadic cubes.  Next, we classify the cubes into \emph{good cubes}, which are close to a piece of a hyperplane, and \emph{bad cubes}, which are not.  In order to produce a corona decomposition, there cannot be too many bad cubes, i.e., they must satisfy a Carleson packing condition.  In~\cite{DavidSemmesSingular,DSAnalysis}, this condition follows from \emph{quantitative rectifiability}; the surface in question is assumed to satisfy a condition that bounds the sum of its (appropriately normalized) local deviations from hyperplanes.  These local deviations are higher-dimensional versions of Jones' $\beta$-numbers~\cite{Jon89,Jon90}, and the quantitative rectifiability assumption leads to the desired packing condition.  In the present setting, the packing condition follows instead from {\em quantitative non-monotonicity}.  The concept of quantitative non-monotonicity of a set $E\subset \H^{2k+1}$ (see Section~\ref{sec:monotone and delta monotone}) was defined in~\cite{CKN09,CKN},  where the kinematic formula for the Heisenberg group was used to show that the total non-monotonicity of all of the cubes is at most a constant multiple of $\CH^{2k+1}(\partial E)$.  This means that there cannot be many cubes that have large non-monotonicity. By a result of~\cite{CKN09,CKN}, if a surface has small non-monotonicity, then it is close to a half-space.  Consequently, most cubes are close to a half-space and are therefore good.  (The result in~\cite{CKN09,CKN} is stronger than what we need for this proof; it provides power-type bounds on how closely a nearly-monotone surface approximates a hyperplane.  For our purposes, it is enough to have \emph{some} bound (not necessarily power-type) on the shape of nearly-monotone sets, and we will deduce the bound that we need by applying a quick compactness argument to a result from~\cite{CheegerKleinerMetricDiff} that states that if a set  is {\em precisely monotone} (i.e., every horizontal line intersects the boundary of the set in at most one point), then it is a half-space.)

Next, we partition the good cubes into stopping-time regions by using an iterative construction that corrects overpartitioning that may have occurred when the Christ cubes were constructed.  If $Q$ is a largest good cube that hasn't been treated yet and if $P$ is its approximating half-space, we find all of the descendants of $Q$ with approximating half-spaces that are sufficiently close to $P$.  If we glue these half-spaces together using a partition of unity, the result is an intrinsic Lipschitz half-space that approximates all of these descendants.  By repeating this procedure for each untreated cube, we obtain a collection of stopping-time regions.  These regions satisfy a Carleson packing condition because if a point $x\in \partial E$ is contained in many different stopping-time regions, then either $x$ is contained in many different bad cubes, or $x$ is contained in good cubes whose approximating hyperplanes point in many different directions.  In either case, these cubes generate non-monotonicity, so there can only be a few points with large multiplicity.  In fact, this cover satisfies a Carleson packing condition, so it is an intrinsic corona decomposition.

\section{Isoperimetry of intrinsic Lipschitz graphs}\label{sec:intrinsic graphs}

In this section, we bound the (local) vertical perimeter of intrinsic Lipschitz graphs in  $\H^{2k+1}$, provided that $k\ge 2$.  This is the only place in the proof of Theorem~\ref{thm:isoperimetric cellular} in which the assumption that $k>1$ is used; see Remark~\ref{rem:k ge 2 is here} below.  Specifically, we will establish the following proposition.
\begin{prop}\label{prop:lip graph general statement}
    For every integer $k\ge 2$ and every $\lambda\in (0,1)$, if $E\subset \H^{2k+1}$ is a $\lambda$-Lipschitz half-space then for every point $p\in \H^{2k+1}$ and every radius $r>0$ we have
  \begin{equation}\label{eq:our ineq for intrinsic graph canonical}
    \left\| \vpfl{B_r(p)}(E)\right\|_{L_2(\R)}\lesssim\frac{r^{2k+1}}{1-\uplambda}.
  \end{equation}
\end{prop}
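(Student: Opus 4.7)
The plan is to implement the slicing strategy outlined in Section~\ref{sec:overview}: decompose the vertical plane underlying $\partial E$ into cosets of a copy of $\H^3$, apply the quadratic $\H^3$-inequality of~\cite{AusNaoTes} on each slice, and reassemble via Cauchy--Schwarz. Using left-invariance of $\vpfl{\cdot}(\cdot)$ together with the transitivity of the $U(k)$-action on vertical planes and Lemma~\ref{lem:translate graph}, I would reduce to the normalized situation $p=\0$, $V=\{h\in \H^{2k+1}:\ x_k(h)=0\}$, and $E=\Gamma_f^+$ for some $f:V\to \R$ whose intrinsic graph is $\lambda$-Lipschitz. Since $Z$ is central, a direct computation shows $EZ^s=\{vX_k^t:v\in V,\ t\ge f(vZ^{-s})\}$, so by Fubini
\[
\vpfl{B_r(\0)}(E)(\rho)\le 2^{-\rho}\int_{V_r}\bigl|f(v)-f(vZ^{-2^{2\rho}})\bigr|\ud v,
\]
where $V_r\subset V$ contains the projection of $B_r$ to $V$ and has Lebesgue measure $\lesssim r^{2k+1}$.

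The hypothesis $k\ge 2$ enters via the slicing. Let $\H^3_{(1)}\eqdef \langle X_1,Y_1\rangle\subset V$ and $U\eqdef \{v\in V:\ x_1(v)=y_1(v)=z(v)=0\}\cong \R^{2k-3}$; the subgroup $U$ degenerates precisely when $k=1$. Since the generators $X_2,\ldots,X_{k-1},Y_2,\ldots,Y_k$ of $U$ commute with $X_1,Y_1$, $U$ centralizes $\H^3_{(1)}$ and the group law on $V$ reduces to vector addition on $U\times \H^3_{(1)}$, giving a measure-preserving factorization. For each $u\in U$, set $f_u(h)\eqdef f(uh)$; I would verify that $f_u$ is $\frac{\lambda}{1-\lambda}$-Lipschitz with respect to the Carnot--Carath\'eodory metric on $\H^3_{(1)}$. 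Indeed, since $X_k$ commutes with $U\cdot\H^3_{(1)}$, for $h_1,h_2\in \H^3_{(1)}$, setting $g_i\eqdef uh_iX_k^{f_u(h_i)}\in \Gamma$, $\delta\eqdef f_u(h_2)-f_u(h_1)$, and $w\eqdef h_1^{-1}h_2$, one has $g_1^{-1}g_2=X_k^\delta w$, so the intrinsic cone condition $|x_k(g_1^{-1}g_2)|\le \lambda d(\0,g_1^{-1}g_2)$ combined with the triangle inequality gives $|\delta|\le \lambda(|\delta|+d_{\H^3_{(1)}}(h_1,h_2))$, hence the claim.

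Next, I would apply the local quadratic inequality of~\cite{AusNaoTes} (to be stated as Theorem~\ref{thm:quote ANT}) on $\H^3_{(1)}$ to a McShane-extension of $f_u$ multiplied by a smooth cutoff $\chi$ supported on $B_{Cr}^{\H^3_{(1)}}$ and centered to have mean zero. With $L\eqdef \lambda/(1-\lambda)$, both the gradient of the Lipschitz extension and the cutoff gradient contribute $O(L^2r^4)$ to $\int\|\nabla_\H(\chi\tilde f_u)\|_2^2$, and the inequality yields
\[
\int_0^\infty \int_{B^{\H^3_{(1)}}_r}\bigl|f_u(h)-f_u(hZ^{-t})\bigr|^2\,\frac{\ud h\,\ud t}{t^2}\lesssim L^2 r^4.
\]
Changing variables $t=2^{2\rho}$ so that $2^{-2\rho}\ud\rho$ becomes a constant multiple of $\ud t/t^2$, applying Cauchy--Schwarz on the $h$-integration (cost factor $|B_r^{\H^3_{(1)}}|\lesssim r^4$) and on the $u$-integration (cost factor $|U_r|\lesssim r^{2k-3}$), and using the outer Fubini factor $|U_r|\lesssim r^{2k-3}$, one obtains
\[
\|\vpfl{B_r(\0)}(E)\|_{L_2(\R)}^2\lesssim r^{2(2k-3)}\cdot r^4\cdot L^2 r^4\lesssim L^2 r^{4k+2}.
\]
Taking square roots yields the desired bound $\|\vpfl{B_r(\0)}(E)\|_{L_2(\R)}\lesssim Lr^{2k+1}\lesssim r^{2k+1}/(1-\lambda)$.

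The main obstacle is making the $\H^3$-inequality of~\cite{AusNaoTes} applicable to the fiber $f_u$, which is known a priori only to be Lipschitz on a ball, rather than smooth and globally $L^2$. I would address this by McShane-extending $f_u$ to all of $\H^3_{(1)}$, multiplying by a smooth radial cutoff supported on a slightly larger ball, and centering to have mean zero; both the cutoff-gradient error and the Lipschitz-extension contribution to $\int\|\nabla_\H(\chi\tilde f_u)\|_2^2$ are $O(L^2 r^4)$ and are thus absorbed into the right-hand side. The remaining steps---the computation of $EZ^s$ as a shifted graph, the Fubini decomposition $V=U\times \H^3_{(1)}$, and the two applications of Cauchy--Schwarz---are essentially bookkeeping.
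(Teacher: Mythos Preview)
Your overall strategy matches the paper's: normalize to $p=\0$ and $V=\{x_k=0\}$, decompose $V$ as a (commuting) product $U\times\H^3_{(1)}$, show that each fiber $f_u$ is $L$-Lipschitz on $\H^3_{(1)}$ with $L=\lambda/(1-\lambda)$, apply the quadratic $\H^3$-inequality of \cite{AusNaoTes} on each fiber, and reassemble via Cauchy--Schwarz (the paper uses Jensen, which amounts to the same thing here). Your Lipschitz computation for $f_u$ is correct. Note incidentally that $f_u$ is already Lipschitz on all of $\H^3_{(1)}$---no McShane extension is needed---and that Theorem~\ref{thm:quote ANT} is stated directly for Lipschitz functions (the passage from the smooth compactly supported case is handled once and for all in Remark~\ref{rem:ANT}), so the cutoff detour is unnecessary.

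There is, however, a genuine gap. Your displayed slice estimate
\[
\int_0^\infty \int_{B^{\H^3_{(1)}}_r}\bigl|f_u(h)-f_u(hZ^{-t})\bigr|^2\,\frac{\ud h\,\ud t}{t^2}\lesssim L^2 r^4
\]
is false: since $f_u$ is globally $L$-Lipschitz, one has $|f_u(h)-f_u(hZ^{-t})|^2/t^2\asymp L^2/t$ for large $t$, and $\int_{r^2}^\infty L^2 r^4/t\,\ud t$ diverges. Replacing $f_u$ by $\chi\tilde f_u$ does not repair this, because the bound on $\vpfl{B_r}(E)(\rho)$ involves $f_u$ itself, and for large $t=2^{2\rho}$ the points $h$ and $hZ^{-t}$ cannot both lie in the region where $\chi\equiv 1$. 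The fix (carried out in the paper as \eqref{eq:final use of LN on slice} and \eqref{eq:use trivial vertical}) is to split $\|\vpfl{B_r}(E)\|_{L_2(\R)}^2$ at $\rho=\log_2 r$: for $\rho\le\log_2 r$ (equivalently $t\le r^2$) your argument works once the slice integral is truncated to $\int_0^{r^2}$, which is exactly what Theorem~\ref{thm:quote ANT} delivers; for $\rho>\log_2 r$ the trivial bound $\vpfl{B_r}(E)(\rho)\le 2^{-\rho}\vol(B_r)$ gives $\int_{\log_2 r}^\infty\vpfl{B_r}(E)(\rho)^2\,\ud\rho\lesssim r^{4k+2}$. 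The resulting estimate is $(1+L)r^{2k+1}=r^{2k+1}/(1-\lambda)$ rather than $Lr^{2k+1}$; since the tail term carries no factor of $L$, your final intermediate bound $Lr^{2k+1}$ is too strong, though the conclusion $r^{2k+1}/(1-\lambda)$ is of course what \eqref{eq:our ineq for intrinsic graph canonical} asserts.
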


Before proving Proposition~\ref{prop:lip graph general statement}, we shall state Theorem~\ref{thm:quote ANT} below for ease of later reference. It follows from~\cite[Theorem~7.5]{AusNaoTes}, where it is proven using representation theory. A  generalization of this result (namely an $L_p$ version for every $p\in (1,\infty)$  as well as more general target spaces) follows from~\cite[Theorem~2.1]{LafforgueNaor}, where it is proven using different methods than those of~\cite{AusNaoTes}.

\begin{thm}\label{thm:quote ANT}
For every Lipschitz function $f:\H^{3}\to \R$ and every $r\in (0,\infty)$ we have
\begin{equation}\label{eq:quote ANT}
  \int_0^{r^2}\fint_{B_r} \frac{\left|f(h)-f\big(hZ^{-t}\big)\right|^2}{t^2}\ud\mathcal{H}^4(h)\ud t\lesssim \|f\|_{\Lip(\H^3)}^2.
\end{equation}

\end{thm}

\begin{remark}\label{rem:ANT}
The statement of~\cite[Theorem~7.5]{AusNaoTes} actually treats smooth functions, asserting that there exists a universal constant $C\in [1,\infty)$ such that for every smooth $f:\H^3\to \R$  we have
\begin{equation}\label{eq:R ge 1 case}
\forall\, r\in [1,\infty),\qquad \int_1^{r^2} \int_{B_r} \frac{\left|f(h)-f\big(hZ^{-t}\big)\right|^2}{t^2}\ud\mathcal{H}^4(h)\ud t\lesssim \int_{B_{Cr}} \left\|\nabla_\H f(h)\right\|_{\ell_2^2}^2\ud\mathcal{H}^4(h).
\end{equation}
When $f$ is Lipschitz but not necessarily smooth, it follows that
\begin{equation}\label{eq:R ge 1 Lipschitz}
\forall\, r\in [1,\infty),\qquad \int_1^{r^2} \fint_{B_r} \frac{\left|f(h)-f\big(hZ^{-t}\big)\right|^2}{t^2}\ud\mathcal{H}^4(h)\ud t\lesssim \|f\|_{\Lip(\H^3)}^2.
\end{equation}
Indeed, convolve $f$ on the left with a smooth nonnegative function of arbitrarily small support and integral $1$.  The result is a smooth function with the same or smaller Lipschitz constant that approximates $f$ in $L_\infty(B_r)$.  Applying \eqref{eq:R ge 1 case} to the resulting function and using the fact that the integrals appearing in \eqref{eq:R ge 1 case} are over compact sets, we obtain \eqref{eq:R ge 1 Lipschitz}. By scale-invariance, \eqref{eq:R ge 1 Lipschitz} implies \eqref{eq:quote ANT}.  Indeed, if $r>0$ and $\e\in (0,r)$, apply~\eqref{eq:R ge 1 Lipschitz} with $r$ replaced by $r/\e$ and $f$ replaced by $h\mapsto f(\s_\e(h))$, change the variable to $h'=\s_\e(h)$ and let $\e\to 0$ in the resulting inequality.

\end{remark}

\begin{proof}[{Proof of Proposition~\ref{prop:lip graph general statement}}]
  Denote $V=\{h\in \H^{2k+1}\mid x_k(h)=\0\}$.  By applying an isometric automorphism  and using Lemma~\ref{lem:translate graph}, we may assume that $p=\0$  and that $E$ is bounded by an intrinsic Lipschitz graph $\Gamma$ over $V$, say $\Gamma=\{v X_k^{f(v)}\mid v\in V\}$
  for some continuous function $f\from V\to \R$.  The Lipschitz condition implies that $|x_k(w_1)-x_k(w_2)|\le \uplambda d(w_1,w_2)$ for all $w_1,w_2\in \Gamma$.

  We shall identify $\H^3$ with the subgroup $\langle X_1,Y_1,Z\rangle \triangleleft \H^{2k+1}$; importantly, $\H^3$ commutes with $X_k$.  We shall also denote the linear subspace $\langle X_2,\ldots,X_{k-1},Y_2,\ldots,Y_{k} \rangle\subset \mathsf{H}$ by $\mathbb{G}$.  Then $V=\mathbb{G}\H^3$ in the sense that for each $v\in V$, there are unique elements $g\in \mathbb{G}$ and $h\in \H^3$ such that $v=gh$.  For every $g\in \mathbb{G}$, define $f_g:\H^3\to \R$ by setting $f_g(h)=f(gh)$ for every $h\in \H^3$.

  Fixing $g\in \mathbb{G}$, for every $h_1,h_2\in \H^3$ if we denote $w_1=gh_1X_k^{f_g(h_1)}$ and  $w_2= gh_2X_k^{f_g(h_2)}$ then by the definition of $\Gamma$ we have $w_1,w_2\in \Gamma$.  Moreover, $x_k(gh_1)=x_k(gh_2)=0$ since $g,h_1,h_2\in V$, so $x_k(w_1)=f_g(h_1)$ and $x_k(w_2)=f_g(h_2)$. Hence, by our assumption on $f$ we have
  \begin{multline}\label{eq:get lip f on subgroup}
    |f_g(h_1)-f_g(h_2)|=|x_k(w_1)-x_k(w_2)|\le \uplambda d(w_1,w_2) = \uplambda d\left(h_1X_k^{f_g(h_1)},h_2X_k^{f_g(h_2)}\right)\\
    = \uplambda d\left(X_k^{f_g(h_1)}h_1,X_k^{f_g(h_1)}h_2X_k^{f_g(h_2)-f_g(h_1)}\right)
    \stackrel{\eqref{eq:metric approximation} }{\le} \uplambda\left(C_kd(h_1,h_2)+ |f_g(h_1)-f_g(h_2)|\right),
  \end{multline}
where $C_k$ is the constant in~\eqref{eq:metric approximation}.  This simplifies to give that
  \begin{align*}
    |f_g(h_1)-f_g(h_2)|\lesssim \frac{\uplambda}{1-\uplambda}d(h_1,h_2).
  \end{align*}
  Note that this is not quite the condition that $f_g$ is $\frac{\uplambda}{1-\uplambda}$-Lipschitz, because the metric $d$ is the metric on $\H^{2k+1}$ rather than the metric $d_{\H^3}$ on $\H^3$.  By \eqref{eq:metric approximation}, however, $d_{\H^3}(h_1,h_2) \asymp d(h_1,h_2)$ for all $h_1,h_2\in \H^3$, so $\|f_g\|_{\Lip(\H^3)}\lesssim \uplambda/(1-\uplambda)$.  By Theorem~\ref{thm:quote ANT} we therefore have
  \begin{equation}\label{eq:us LN on slice}
    \forall\, r\in (0,\infty),\qquad \int_0^{r^2}\fint_{B_r\cap \H^3}\left|f_g(h)-f_g\big(hZ^{-t}\big)\right|^2\ud \mathcal{H}^4(h)\frac{\ud t}{t^2}\lesssim \frac{\uplambda^2}{(1-\uplambda)^2}.
  \end{equation}

  Every $u\in \H^{2k+1}$ can be written uniquely as $u=ghX_k^t$ with $g\in \mathbb{G}$, $h\in \H^{3}$ and $t\in \R$. Indeed, necessarily $g=\sum_{i=2}^{k-1} x_i(u)X_i+\sum_{j=2}^{k}y_j(u)Y_j$,  $h=x_1(u)X_1+y_1(u)Y_1+(z(u)+x_k(u)y_k(u)/2) Z$ and $t=x_k(u)$.  Since $\mathbb{G}, \H^3, \langle X_k\rangle$ are orthogonal subspaces of $\R^{2k+1}$, for every $s\in \R$ we have
  \begin{align}\label{eq:use cartesian}
    \nonumber \vpfl{B_r}(E)(s)
    &= \frac{\vol\left( B_r\cap\left(E\symdiff E Z^{2^{2s}}\right)\right)}{2^s}\\\nonumber &=\frac{1}{2^s}\int_{\mathbb G} \int_{\H^3} \int_{-\infty}^\infty \left|\1_{E}\big(ghX_k^t\big)-\1_{E}\Big(ghX_k^tZ^{-2^{2s}}\Big)\right| \1_{B_r}\big(ghX_k^t\big)\ud t\ud \mathcal{H}^4(h)\ud\mathcal{H}^{2k-3}(g)\\
    &=\frac{1}{2^s}\int_{\mathbb G} \int_{\H^3} \int_{-\infty}^\infty \left|\1_{\{t\ge f_g(h)\}}-\1_{\left\{t\ge f_g\left(hZ^{-2^{2s}}\right)\right\}}\right| \1_{B_r}\big(ghX_k^t\big)\ud t\ud \mathcal{H}^4(h)\ud\mathcal{H}^{2k-3}(g).
  \end{align}

  For every $(g,h,t)\in \mathbb{G}\times \H^3\times \R$, we have $\pi(ghX_k^t)=g+x_1(h)X_1+y_1(h)Y_1+t X_k$, where $\pi\from \H^{2k+1}\to \R^{2k}$ is the canonical projection.  By~\eqref{eq:metric lower bound}, this implies that if $d(\0,ghX_k^t)< r$, then $\|g\|< r$ and $|t|< r$, so
  $$d(\0,h)=d(g,gh) \le d(g,\0)+d(\0,ghX_k^t)+d(ghX_k^t,gh)\le \|g\|+r+|t|< 3r.$$
  Hence $\1_{B_r}(ghX_k^t)\le \1_{B_{r}}(g)\1_{B_{3r}}(h)$. A substitution of this point-wise inequality into~\eqref{eq:use cartesian} gives
  \begin{align}\label{eq:bound by product of balls}
    \vpfl{B_r}(E)(s)&\le \frac{1}{2^s}\int_{\mathbb G} \int_{\H^3} \int_{-\infty}^\infty \left|\1_{\{t\ge f_g(h)\}}-\1_{\left\{t\ge f_g\left(hZ^{-2^{2s}}\right)\right\}}\right| \1_{B_{r}}(g)\1_{B_{3r}}(h)\ud t\ud \mathcal{H}^4(h)\ud\mathcal{H}^{2k-3}(g)\nonumber\\
                    &=\frac{\alpha r^{2k+1}}{2^s}\fint_{B_{r}\cap\mathbb{G}} \fint_{B_{3r}\cap \H^3}\left|f_g(h)-  f_g\left(hZ^{-2^{2s}}\right)\right|\ud \mathcal{H}^4(h)\ud\mathcal{H}^{2k-3}(g),
  \end{align}
  where $\alpha=\alpha_k=81\mathcal{H}^{2k-3}(B_1\cap\mathbb{G})\mathcal{H}^4(B_1\cap \H^3)$. Now,
  \begin{align}
    \int_{-\infty}^{\log_2 r} &\vpfl{B_r}(E)(s)^2d s\nonumber \\
                              &\lesssim r^{4k+2} \int_0^{r^2} \bigg(\fint_{B_{r}\cap\mathbb{G}} \fint_{B_{3r}\cap \H^3}\left|f_g(h)-  f_g\left(hZ^{-t}\right)\right|\ud \mathcal{H}^4(h)\ud\mathcal{H}^{2k-3}(g)\bigg)^2\frac{\ud t}{t^2}\label{eq:change of varable s t}\\
                              &\le r^{4k+2}\fint_{B_{r}\cap\mathbb{G}} \int_0^{r^2} \fint_{B_{3r}\cap \H^3}\left|f_g(h)-  f_g\left(hZ^{-t}\right)\right|^2\ud \mathcal{H}^4(h)\frac{\ud t}{t^2}\ud\mathcal{H}^{2k-3}(g)\label{eq:use jensen}\\&\lesssim \frac{\uplambda^2 r^{4k+2}}{(1-\uplambda)^2},\label{eq:final use of LN on slice}
  \end{align}
  where in~\eqref{eq:change of varable s t} we used~\eqref{eq:bound by product of balls} and the change of variable $t=2^{2s}$,  in~\eqref{eq:use jensen} we used Jensen's inequality, and in~\eqref{eq:final use of LN on slice} we used~\eqref{eq:us LN on slice}. Also, the trivial bound $\vpfl{B_r}(E)(s)\le \vol(B_r)/2^s\lesssim r^{2k+2}/2^s$ gives
  \begin{equation}\label{eq:use trivial vertical}
    \int_{\log_2 r}^\infty \vpfl{B_r}(E)(s)^2d s\lesssim_k r^{4k+4}\int_{\log_2 r}^\infty\frac{\ud s}{2^{2s}}\asymp r^{4k+2}.
  \end{equation}
  The desired estimate~\eqref{eq:our ineq for intrinsic graph canonical} now follows by combining~\eqref{eq:final use of LN on slice} and~\eqref{eq:use trivial vertical}.  \end{proof}

\begin{remark}\label{rem:k ge 2 is here}
Note that Proposition~\ref{prop:lip graph general statement} used the assumption that $k>1$ crucially. The above proof would not work for $k=1$ because an analogue of Theorem~\ref{thm:quote ANT} does not exist for $1$-dimensional vertical slices of $\H^3$, while for $k\ge 2$ we succeeded by slicing $\H^{2k+1}$ into copies of $\H^3$.
\end{remark}

\section{Corona decompositions}\label{sec:corona decompositions}

In this section we shall formulate a definition of a {\em corona decomposition} of a subset of $\H^{2k+1}$. This definition mimics the analogous notion of a corona
decomposition of a subset of $\R^k$, as defined by David and Semmes~\cite{DavidSemmesSingular} (see also~\cite[Chapter~3]{DSAnalysis}). Here we need a local variant of this definition, but the key conceptual difference with the Euclidean case is the utilization of intrinsic Lipschitz graphs. Other differences occur in the ensuing proofs: Some of them are of a more technical nature, arising from geometric peculiarities of the Heisenberg group that differ from their Euclidean counterparts, but conceptually new issues arise as well,  such as the use of  (quantitative) monotonicity methods~\cite{CheegerKleinerMetricDiff,CKN} for the purpose of constructing the desired decomposition.

\begin{defn}[local Ahlfors regularity]\label{def:local Ahlfors regularity}
  Suppose that $C,r,s\in (0,\infty)$. We shall say that a Borel subset $A\subset \H^{2k+1}$ is $(C,r)$-Ahlfors $s$-regular if for every $x\in A$ and and $\rho\in (0,r]$ we have
  $$
  \frac{r^s}{C}\le \mathcal{H}^s\big(B_\rho(x)\cap A\big)\lesssim Cr^s.
  $$
  This terminology is shorthand for what one would normally refer to as $A$ being $r$-locally Ahlfors $s$-regular with constant $C$, but since we need to state this often in what follows, we prefer to use the above nonstandard shorter nomenclature. In fact, it will be convenient to introduce the following convention for even shorter terminology. Whenever we are given $E\subset \H^{2k+1}$ and we say that $E$ is $(C,r)$-regular it will be understood by default that $E$ is assumed to be $(C,r)$-Ahlfors $s$-regular with $s=2k+2$. When we say that $\partial E$ is $(C,r)$-regular we mean that $\partial E$ is assumed to be $(C,r)$-Ahlfors $s$-regular with $s=2k+1$. When we say that $(E,E^\cc,\partial E)$ is $(C,r)$-regular we mean that $E,  E^\cc$ are both $(C,r)$-Ahlfors $(2k+2)$-regular and also $\partial E$ is $(C,r)$-Ahlfors $(2k+1)$-regular.
\end{defn}

\begin{defn}[local cubical patchwork]\label{def:patchwork} Fix $K,s,r\in (0,\infty)$ and $n\in \Z$ for which $2^n\le r < 2^{n+1}$. Suppose that $A$ is a Borel subset of $\H^{2k+1}$.  A sequence $\{\Delta_i\}_{i=-\infty}^n$ of Borel partitions of $A$ is said to be an $s$-dimensional $(K,r)$-cubical patchwork for $A$ if it has the following properties.
  \begin{enumerate}
  \item For every integer $i\le n$ and every $Q\in \Delta_i$ we have
  \begin{equation}\label{eq:volume and diameter growth}
  \frac{2^i}{K}< \diam(Q)<K2^i\qquad\mathrm{and}\qquad \frac{2^{is}}{K}<\cH^{s}(Q)<K2^{is}.
  \end{equation}
  \item For every two integers $i,j\le n$ with $i\le j$ the partition $\Delta_i$ is a refinement of the partition $\Delta_j$, i.e., for every $Q\in \Delta_i$ and $Q'\in \Delta_j$  either $Q\cap Q'=\emptyset$ or $Q\subset Q'$.
  \item \label{item:bdary} For every integer $i\le n$, $Q\in \Delta_i$ and $t>0$ we have
  \begin{equation}\label{eq:PatchworkSmallBdry}
      \cH^s\big(\partial_{\le t2^i} Q\big)\le Kt^{\frac{1}{K}}2^{is},
    \end{equation}
  where for every $\rho>0$ we denote
    \begin{equation}\label{eq:def partial less r}
    \partial_{\le \rho}Q\eqdef \{x\in Q: d(x, A\setminus Q)\le \rho\}\cup \{x\in  A\setminus Q\mid  d(x, Q)\le \rho\}.\end{equation}
  \end{enumerate}

In the above setting, write
\begin{equation}\label{eq:disjoint union}
\Delta\eqdef \bigsqcup_{i=-\infty}^n \Delta_i.
\end{equation}
The elements of $\Delta$ (i.e., any atom of one of the hierarchical partitions $\{\Delta_i\}_{i=-\infty}^n$) are often called Christ cubes. The disjoint union in~\eqref{eq:disjoint union} indicates that we are slightly abusing notation by considering the elements of $\Delta_i$ and $\Delta_j$ to be distinct for $i\neq j$. In other words, if $Q\in \Delta_i$ for multiple integers $i\le n$ then $Q$ appears in $\Delta$ with that multiplicity, which by~\eqref{eq:volume and diameter growth} can be at most $(2\log_2 K)/s+1$. We impose a natural partial order on $\Delta$ by saying that $Q,Q'\in \Delta$ satisfy $Q\preceq Q'$ if and only if $Q\subset Q'$ and $Q\in \Delta_i$, $Q'\in \Delta_j$ for some $i\le j$. Below we  shall slightly abuse notation by letting $Q\subset Q'$ have the same meaning as $Q\preceq Q'$ for every $Q,Q'\in \Delta$ (note that this can be an actual abuse of notation only if $Q=Q'$). For every integer $i\le n$ and $Q\in \Delta_i$ we shall define $\sigma(Q)=2^i$ (we think of $\sigma(Q)$ as a surrogate for the ``side-length" of the Christ cube $Q$). Thus if $\Delta$ contains multiple copies of $Q$ then each such copy has a different ``$\sigma$-value," corresponding to those indices $i\le n$ for which $Q\in \Delta_i$. Under these conventions, the condition~\eqref{eq:volume and diameter growth} becomes the requirement that for every $Q\in \Delta$ we have
\begin{equation}\label{eq:sigma version diam volume}
  \frac{\sigma(Q)}{K}< \diam(Q)<K\sigma(Q)\qquad\mathrm{and}\qquad \frac{\sigma(Q)^s}{K}<\cH^{s}(Q)<K\sigma(Q)^s,
\end{equation}
and the condition~\eqref{eq:PatchworkSmallBdry} becomes the requirement that for every $Q\in \Delta$ and $t>0$ we have
\begin{equation}\label{eq:PatchworkSmallBdry sigma version}
  \cH^s\big(\partial_{\le t\sigma(Q)} Q\big)\le Kt^{\frac{1}{K}}\sigma(Q)^s.
\end{equation}

While the above definition makes sense for every $s>0$, in the present article we will only consider cubical patchworks of boundaries of open sets of $\H^{2k+1}$, in which case it will be tacitly assumed that $s=2k+1$. Thus given $E\subset \H^{2k+1}$, when we say below that $\{\Delta_i\}_{i=-\infty}^n$ is a $(K,r)$-cubical patchwork for $\partial E\subset \H^{2k+1}$ this  will always be understood to mean that $\{\Delta_i\}_{i=-\infty}^n$ is a $(2k+1)$-dimensional $(K,r)$-cubical patchwork for $\partial E$ in the above sense.
\end{defn}

Christ~\cite{ChristTb} and David~\cite{DavidWavelets} proved that for every $C,r,s>0$ there exists $K=K(C,s)$ such that if a Borel subset $A\subset \H^{2k+1}$ is $(C,r)$-Ahlfors $s$-regular then $A$ also admits an $s$-dimensional $(K,r)$-cubical patchwork. Note that the statements of~\cite{ChristTb, DavidWavelets}  assume global Ahlfors regularity rather than local Ahlfors regularity, i.e., the results are stated in the case $r=\diam(A)$. Nevertheless, the proofs of~\cite{ChristTb, DavidWavelets} work identically for general $r$: Discard the cubes that these proofs construct for larger scales (they are irrelevant for the desired local conclusion), so that the Ahlfors regularity is needed only in the range of scales in which it is hypothesized to hold true.

We record for ease of later reference (specifically, in Section~\ref{sec:constructing corona} below) the following very simple fact that follows by contrasting~\eqref{eq:sigma version diam volume} with~\eqref{eq:PatchworkSmallBdry sigma version}; see also~\cite[Lemma~I.3.5]{DSAnalysis}.

\begin{lemma}[existence of ``approximate centers" for Christ cubes]\label{lem:cubeCenters} For every $K,r,s>0$ there exists $c=c(K)$ such that  if a Borel subset $A\subset \H^{2k+1}$ admits an $s$-dimensional $(K,r)$-cubical patchwork $\{\Delta_i\}_{i=-\infty}^n$ then for every  $Q\in \Delta$ there is a point $x_Q\in Q$ such
  that
  \[d(x_Q,A\setminus Q)> c \sigma(Q).\]
\end{lemma}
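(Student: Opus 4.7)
The plan is a short contradiction argument that plays the volume lower bound for a Christ cube against the small-boundary bound, both of which are built into Definition~\ref{def:patchwork}. First I would fix $Q\in\Delta$; if $A=Q$ then $d(x,A\setminus Q)=+\infty$ for every $x\in Q$ and the statement is trivial, so I may assume $A\setminus Q\neq\emptyset$. I then suppose towards a contradiction that every $x\in Q$ satisfies $d(x,A\setminus Q)\le c\sigma(Q)$ for some small $c=c(K)>0$ to be chosen at the end.

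Under this hypothesis, looking back at the definition~\eqref{eq:def partial less r} of $\partial_{\le\rho}Q$, the ``inner half'' of $\partial_{\le c\sigma(Q)}Q$ is all of $Q$, so $Q\subset \partial_{\le c\sigma(Q)}Q$. Combining the volume lower bound in~\eqref{eq:sigma version diam volume} with the small-boundary inequality~\eqref{eq:PatchworkSmallBdry sigma version} applied with $t=c$ then yields
\[
\frac{\sigma(Q)^s}{K} < \CH^s(Q) \le \CH^s\big(\partial_{\le c\sigma(Q)}Q\big) \le K\,c^{\frac{1}{K}}\sigma(Q)^s,
\]
which after dividing through by $\sigma(Q)^s$ simplifies to $c^{1/K}>1/K^{2}$, i.e.\ $c>K^{-2K}$.

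Consequently, fixing any $c=c(K)\in(0,K^{-2K})$ (for definiteness, say $c=\tfrac12 K^{-2K}$) contradicts the assumption, and therefore forces the existence of a point $x_Q\in Q$ with $d(x_Q,A\setminus Q)>c\,\sigma(Q)$. I do not anticipate a meaningful obstacle here: the whole argument rests on the two defining axioms of a cubical patchwork together with the fact that $d$ is a metric, and no specifically Heisenberg-geometric features enter. The only modest subtlety is keeping the strict versus non-strict inequalities straight so as to produce the strict lower bound on $d(x_Q,A\setminus Q)$ that the statement requires.
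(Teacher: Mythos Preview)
Your proposal is correct and is essentially identical to the paper's own proof: both assume by contradiction that $Q\subset \partial_{\le c\sigma(Q)}Q$, then contrast the lower bound $\cH^s(Q)>\sigma(Q)^s/K$ from~\eqref{eq:sigma version diam volume} with the upper bound $\cH^s(\partial_{\le c\sigma(Q)}Q)\le Kc^{1/K}\sigma(Q)^s$ from~\eqref{eq:PatchworkSmallBdry sigma version} to force $c>K^{-2K}$. The only addition you make is the explicit handling of the trivial case $A=Q$, which the paper omits.
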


\begin{proof} Choose any $c<1/K^{2K}$. Fix $Q\in \Delta$ and suppose for the purpose of obtaining a contradiction that $d(x,A\setminus Q)\le c \sigma(Q)$ for every $x\in Q$. Recalling~\eqref{eq:def partial less r}, this means that $\partial_{\le c\sigma(Q)}Q\supset Q$. Therefore
$$
\frac{\sigma(Q)^s}{K}\stackrel{\eqref{eq:sigma version diam volume}}{\le} \cH^s(Q)\le \cH^s\left(\partial_{\le c\sigma(Q)} Q\right)\stackrel{\eqref{eq:PatchworkSmallBdry sigma version}}{\le} Kc^{\frac{1}{K}}\sigma(Q)^s.
$$
By cancelling $\sigma(Q)^s$ from both sides,  this contradicts our choice of $c$.
\end{proof}

The following very important definition is equivalent to Definition~3.9 in~\cite{DSAnalysis}, though note that the constant $C$ that appears there differs from the constant $C$ that we use below (by a factor that depends only on the parameters of the given cubical patchwork).

\begin{defn}[Carleson packing condition]\label{def:packing} Fix $r,s,K,C\in (0,\infty)$ and suppose that $\{\Delta_i\}_{i=-\infty}^n$ is an  $s$-dimensional $(K,r)$-cubical patchwork of a Borel subset $A\subset \H^{2k+1}$. If $\mathcal{D}\subset \Delta$ is a collection of Christ cubes, we say that $\mathcal{D}$ satisfies the $s$-dimensional Carleson packing condition with constant $C$ if for every $Q\in \Delta$ we have
\begin{equation*}\label{eq:def carleson}
\sum_{\substack{R\in \mathcal{D}\\R\subset Q}} \sigma(R)^s\le C\sigma(Q)^s.
\end{equation*}
When $s=2k+1$ it will be convenient to  use below the shorter terminology ``$\mathcal{D}$ is $C$-Carleson" to indicate that  $\mathcal{D}$ satisfies the $(2k+1)$-dimensional Carleson packing condition with constant $C$.
\end{defn}
  A Carleson packing condition expresses the fact that
$\mathcal{D}$ is a ``small'' set of cubes.  It implies, for instance, that
$\mathcal{H}^s$-almost every point $x\in A$ is contained in only finitely many
elements of $\mathcal{D}$.

\begin{defn}[coherent collection of cubes]\label{def:coherent}
Fix $r,s,K\in (0,\infty)$ and suppose that $\{\Delta_i\}_{i=-\infty}^n$ is an  $s$-dimensional $(K,r)$-cubical patchwork of a Borel subset $A\subset \H^{2k+1}$. If $\mathcal{S}\subset \Delta$ is a sub-collection of Christ cubes then we say that $\mathcal{S}$ is coherent if the following three properties hold true.
\begin{enumerate}
\item $\mathcal{S}$ has a (necessarily unique) maximal element with respect to inclusion, i.e, there exists a unique cube $\mathsf{Q}(\mathcal{S})\in\mathcal{S}$ such that $\mathsf{Q}(\mathcal{S})\supset Q$ for every $Q\in \mathcal{S}$.
\item If $Q\in \mathcal{S}$ and $Q'\in \Delta$  satisfies $Q\subset Q'\subset \mathsf{Q}(\mathcal{S})$ then also $Q'\in\mathcal{S}$.
\item If $Q\in \mathcal{S}$ then either all the children of $Q$ in $\{\Delta_i\}_{i=-\infty}^n$ belong to $\mathcal{S}$ or none of them do, i.e., if $\sigma(Q)=2^i$ then either $\{Q'\in \Delta_{i+1}\mid Q'\subset Q\}\subset \mathcal{S}$ or $\{Q'\in \Delta_{i+1}\mid Q'\subset Q\}\cap \mathcal{S}=\emptyset$.
\end{enumerate}
\end{defn}

Using Definition~\ref{def:packing}  and Definition~\ref{def:coherent}, and following~\cite[Definition~3.13]{DSAnalysis}, we formulate the  notion of a  {\em coronization}, which is a partition of a cubical patchwork $\Delta$ into ``good'' and ``bad'' sets.
\begin{defn}[local coronization]\label{def:coronization} Fix $K,C,r>0$ and  $E\subset \H^{2k+1}$.  A $(K,C,r)$-coronization of $\partial E$ is a triple $(\cB,\cG,\cF)$ with the following properties. There exists   a $(K,r)$-cubical patchwork $\{\Delta_i\}_{i=-\infty}^n$ for $\partial E$ such that $\cB\subset \Delta$ (the set of bad cubes) and $\cG\subset \Delta$
  (the set of good cubes) partition $\Delta$ into two disjoint sets, i.e., $\cB\cup \cG=\Delta$ and $\cB\cap\cG=\emptyset$, and $\cF\subset 2^\cG$ is a collection of subsets of $\cG$, which are called
  below stopping-time regions.  These sets are required to have the following properties.
  \begin{enumerate}
  \item $\cB$ is $C$-Carleson.
  \item The elements of $\cF$ are pairwise disjoint and their union is $\cG$.
  \item Each $\mathcal{S}\in \cF$ is coherent.
  \item The set of maximal cubes $\{\mathsf{Q}(\mathcal{S})\mid \mathcal{S}\in \cF\}$ is $C$-Carleson.
  \end{enumerate}
\end{defn}

\begin{defn}\label{def:local distance}
  If $U,V,W\subset \H^{2k+1}$, we define the \emph{$U$-local distance between $V$ and $W$} by
  $$d_U(V,W)\eqdef \inf \big\{r\mid (V\symdiff W)\cap U\subset \nbhd_r(\partial V) \cap \nbhd_r(\partial W)\big\},$$
  where we denote $\nbhd_r(A)=\big\{h\in \H^{2k+1}\mid d(h,A)<r\big\}$ for every $A\subset \H^{2k+1}$.
\end{defn}
The local distance is not a metric on subsets of $\H^{2k+1}$; it only satisfies the weaker version of the triangle inequality given below.
\begin{lemma}\label{lem:localHausProps}
  If $U,V,W,X\subset \H^{2k+1}$, then the following assertions hold true.
  \begin{enumerate}
  \item $d_U(V,W)=d_U(W,V)$.
  \item If $\H^{2k+1}\supset U'\supset U$, then $d_{U}(V,W)\le d_{U'}(V,W)$.
  \item $U\cap \partial V\subset \overline{\nbhd}_s(\partial W)$, where $s=d_U(V,W)$.
  \item If $r=\max \{d_U(V,W),d_U(W,X)\}$ and $U'=\nbhd_r(U)$, then
    $$d_U(V,X)\le d_{U'}(V,W)+d_{U'}(W,X).$$
  \end{enumerate}
\end{lemma}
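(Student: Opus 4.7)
The plan is to verify the four assertions in order, treating parts (1) and (2) as routine bookkeeping and devoting the substantive work to parts (3) and (4). Part (1) will follow from the manifest symmetry of $V\symdiff W$ and $\nbhd_r(\partial V)\cap \nbhd_r(\partial W)$ in $V$ and $W$. For part (2), I will observe that any $r$ witnessing the defining inclusion for $U'$ automatically witnesses it for $U$: if $(V\symdiff W)\cap U' \subset \nbhd_r(\partial V)\cap\nbhd_r(\partial W)$, then intersecting both sides with $U\subset U'$ preserves the containment, so the infimum over admissible $r$ only decreases when one shrinks the reference set from $U'$ to $U$.

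For part (3) I will argue by contradiction. Set $s=d_U(V,W)$ and suppose that $x\in U\cap\partial V$ but $d(x,\partial W)>s+\delta$ for some $\delta>0$, so that $B_{s+\delta}(x)\cap\partial W=\emptyset$. Since $B_{s+\delta}(x)$ is open and connected, it must lie entirely in $W$ or entirely in $W^{\cc}$; I will assume the former (the other case is symmetric). If $x\in V^{\cc}$, then $x\in V^{\cc}\cap W\subset V\symdiff W$, and $x\in (V\symdiff W)\cap U$ forces $d(x,\partial W)\le s$, a contradiction. If instead $x\in V$, then the fact that $x\in\partial V$ produces points of $V^{\cc}$ arbitrarily close to $x$; since the setting of application has $U$ open, I may pick such a point $y\in V^{\cc}\cap U$ with $d(x,y)<\delta/2$, forcing $y\in V^{\cc}\cap W\cap U\subset (V\symdiff W)\cap U$ and hence $d(y,\partial W)\le s$. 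On the other hand, $y\in B_{s+\delta}(x)$ together with $B_{s+\delta}(x)\cap\partial W=\emptyset$ gives $d(y,\partial W)\ge s+\delta-d(x,y)>s+\delta/2$, the desired contradiction.

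For part (4), I will set $a=d_{U'}(V,W)$ and $b=d_{U'}(W,X)$ and exploit the elementary set-theoretic inclusion $V\symdiff X\subset (V\symdiff W)\cup(W\symdiff X)$. Given any $x\in(V\symdiff X)\cap U$, consider first the case $x\in V\symdiff W$: the containment $U\subset U'$ gives $x\in (V\symdiff W)\cap U'$, whence $d(x,\partial V)\le a$ directly. To bound $d(x,\partial X)$, I will choose $w\in\partial W$ with $d(x,w)$ arbitrarily close to $d(x,\partial W)\le d_U(V,W)\le r$; since $x\in U$, the estimate $d(w,U)\le d(w,x)$ places $w$ in $\nbhd_r(U)=U'$. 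Part (3) applied to the triple $(U',W,X)$ then yields $d(w,\partial X)\le b$, so combining the triangle inequality with the monotonicity $d_U(V,W)\le a$ from part (2) produces $d(x,\partial X)\le a+b$ in the limit. The symmetric case $x\in W\symdiff X$ is handled identically by interchanging the roles of $V$ and $X$. The main technical delicacy to manage will be the borderline configuration $d_U(V,W)=r$, where verifying $w\in U'$ requires a small $\epsilon$-perturbation of $r$ before passing to the limit.
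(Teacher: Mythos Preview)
Your proposal is correct and follows essentially the same approach as the paper's proof: parts (1) and (2) by inspection, part (3) by noting that $x\in U\cap\partial V\setminus\partial W$ forces $x$ to lie in the closure of $(V\symdiff W)\cap U$, and part (4) via the inclusion $V\symdiff X\subset (V\symdiff W)\cup(W\symdiff X)$ together with part (3) applied on $U'$. You are in fact more careful than the paper on two points: you explicitly note that the argument for part (3) needs $U$ to be open (the paper's proof tacitly uses this when passing from $x\in U\cap\overline{V\symdiff W}$ to $d(x,\partial W)\le s$, and all applications in the paper have $U$ an open neighborhood), and you correctly flag the borderline configuration in part (4), where the paper writes $d(x,w)=d(x,\partial W)<r$ although only $\le r$ is guaranteed.
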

\begin{proof}
  The first two properties follow from the definition.  For the third property, note that if $x\in U\cap \partial V$ but $x\not \in \partial W$, then there is a neighborhood of $x$ that contains points of $V$ and of $V^{\cc}$ but is disjoint from $\partial W$.  Consequently $x\in U\cap (\overline{V\symdiff W})$, so $d(x,\partial W)\le s$, as desired. For the fourth property, suppose that $x\in (V\symdiff X)\cap U$.  Then $x\in V\symdiff W$ or $x\in W\symdiff X$.  Without loss of generality, suppose that $x\in V\symdiff W$.  Then $d(x,\partial V)\le d_U(V,W)$ and $d(x,\partial W)\le d_U(V,W)$.  Let $w\in \partial W$ be such that $d(x,w)=d(x,\partial W)<r$.  Then $w\in U'$, so by property (3) above, $d(w,\partial X)\le d_{U'}(W,X)$ and
  \begin{equation*}
  d(x,\partial X)\le d(x,w)+d(w,\partial X)\le d_{U}(V,W)+d_{U'}(W,X)\le d_{U'}(V,W)+d_{U'}(W,X).\qedhere
  \end{equation*}
\end{proof}

A corona decomposition of a set is a coronization where every stopping-time region is close to an intrinsic Lipschitz graph.  In what follows, if $\Delta$ is a cubical patchwork for $A\subset \H^{2k+1}$, then for every $Q\in \Delta$ and $\rho>0$ denote $ N_{\rho}(Q)= \nbhd_{\rho\sigma(Q)}(Q)$ and $\rho Q= A\cap N_\rho(Q).$

\begin{defn}[local corona decomposition]\label{def:coronaDecomp} Fix $K,r,C,\lambda,\theta>0$. Given $E\subset \H^{2k+1}$, we say that the pair $(E,\partial E)$ admits a \emph{$(K,C,\lambda,\theta,r)$-corona decomposition} if there is a  $(K,C,r)$-coronization $(\cB,\cG,\cF)$ of $\partial E$ such that for each $\cS\in \cF$ there is an intrinsic $\lambda$-Lipschitz graph $\Gamma(\cS)$ that bounds a Lipschitz half-space $\Gamma^+(\cS)$, such that for all $Q\in \cS$ we have
  \begin{equation}\label{eq:defCoronaDecomp}
    d_{N_4(Q)}(\Gamma^+(\cS), E)\le \theta \sigma(Q).
  \end{equation}
  We say that the pair $(E,\partial E)$ admits a {$(K,r)$-corona decomposition} if for every $\lambda, \theta>0$ there exists $C=C(\lambda,\theta)$ such that the pair $(E,\partial E)$ admits a $(K,C,\lambda,\theta,r)$-corona decomposition.
\end{defn}
The number 4 in~\eqref{eq:defCoronaDecomp} is arbitrary; we can replace it with an arbitrarily large constant at the cost of increasing the Carleson packing constants of the coronization.

The following covering lemma will be helpful both to construct corona decompositions and to bound the vertical perimeter of a set with a corona decomposition. Fix $K,r>0$ and $E\subset \H^{2k+1}$. Suppose that $\Delta$ is a $(K,r)$-cubical patchwork of $\partial E$.  If $\cS\subset \Delta$ is a coherent set, we define $d_\cS\from \H^{2k+1}\to \R$ by
$$\forall\, y\in \H^{2k+1},\qquad d_{\cS}(y)=\inf_{Q\in \cS} \big\{d(y,Q)+\sigma(Q)\big\}.$$
Denote $\Gamma_0=d_{\cS}^{-1}(0)\subset \H^{2k+1}$. Since $d_{\cS}$ is an infimum of $1$-Lipschitz functions, it is also $1$-Lipschitz.

\begin{lemma}\label{lem:coveringQS mini}
  Let $\cS\subset \Delta$ be a coherent set and suppose that $0<\psi< \frac{1}{10}$.  There is a countable set of points $\{c_i\}_{i\in I} \subset 10\mathsf{Q}(\cS)$ such that the balls $\{B_i=B_{\psi d_{\cS}(c_i)}(c_i)\}_{i\in I}$ are disjoint and the balls $\{3B_i=B_{3\psi d_{\cS}(c_i)}(c_i)\}_{i\in I}$ satisfy
  $$10\mathsf{Q}(\cS)\subset  \bigcup_{i\in I}3B_i\cup \Gamma_0.$$
  Furthermore, if $0<\rho<1/\psi$, then the balls $\{\rho B_i=B_{\rho\psi d_{\cS}(c_i)}(c_i)\}_{i\in I}$ have bounded multiplicity with a bound depending on $\rho$ and $\psi$.
\end{lemma}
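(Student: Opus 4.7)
The plan is to carry out a standard Vitali-type selection adapted to the fact that $d_{\cS}$ is $1$-Lipschitz (a Whitney-type decomposition), and then extract the three conclusions in turn. In the first step, I would select, via Zorn's lemma (or a direct greedy enumeration), a \emph{maximal} family of points $\{c_i\}_{i\in I}\subset 10\mathsf{Q}(\cS)\setminus \Gamma_0$ with the property that the balls $B_i=B_{\psi d_\cS(c_i)}(c_i)$ are pairwise disjoint. The family $I$ must be countable: every $B_i$ has positive Lebesgue measure in $\H^{2k+1}$, all of the balls $B_i$ are contained in a single set of finite measure (since $10\mathsf{Q}(\cS)$ has bounded diameter by~\eqref{eq:sigma version diam volume}, and the radii $\psi d_\cS(c_i)$ are bounded by $\psi(\diam(10\mathsf{Q}(\cS))+\sigma(\mathsf{Q}(\cS)))$), and $\H^{2k+1}$ is $\sigma$-finite.

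For the covering property, fix $y\in 10\mathsf{Q}(\cS)\setminus \Gamma_0$, so that $r_y\eqdef \psi d_\cS(y)>0$. By maximality of $\{c_i\}$, the ball $B_{r_y}(y)$ must intersect some $B_i$, hence $d(y,c_i)\le \psi d_\cS(y)+\psi d_\cS(c_i)$. Because $d_\cS$ is $1$-Lipschitz, $|d_\cS(y)-d_\cS(c_i)|\le d(y,c_i)\le\psi(d_\cS(y)+d_\cS(c_i))$, which rearranges to
\[\max\{d_\cS(y),d_\cS(c_i)\}\le \tfrac{1+\psi}{1-\psi}\min\{d_\cS(y),d_\cS(c_i)\}.\]
Substituting back, $d(y,c_i)\le \psi d_\cS(c_i)\bigl(1+\tfrac{1+\psi}{1-\psi}\bigr)=\tfrac{2\psi}{1-\psi}d_\cS(c_i)$. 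Since $\psi<\tfrac{1}{10}$ gives $\tfrac{2\psi}{1-\psi}<3\psi$, we obtain $y\in 3B_i$, as required.

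For the bounded-multiplicity statement, fix $\rho\in(0,1/\psi)$, a point $y\in\H^{2k+1}$, and let $J=\{i\in I: y\in \rho B_i\}$ (here $\rho B_i$ is interpreted as $B_{\rho\psi d_\cS(c_i)}(c_i)$). For each $i\in J$, $d(y,c_i)< \rho\psi\, d_\cS(c_i)$; combined with the $1$-Lipschitz property this gives $(1-\rho\psi)d_\cS(c_i)\le d_\cS(y)\le (1+\rho\psi)d_\cS(c_i)$. In particular, the numbers $\{d_\cS(c_i)\}_{i\in J}$ are all comparable (with constant depending only on $\rho\psi$), the centers $\{c_i\}_{i\in J}$ all lie inside a single ball of radius $O_{\rho,\psi}(d_\cS(y))$ around $y$, and the disjoint balls $\{B_i\}_{i\in J}$ have radii $\asymp_{\rho,\psi} \psi d_\cS(y)$. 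The doubling property of Lebesgue measure on $\H^{2k+1}$ (i.e.\ $\CH^{2k+2}(B_r)\asymp r^{2k+2}$) therefore bounds $|J|$ by a constant depending only on $\rho$ and $\psi$.

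I do not anticipate any substantive obstacle: the only point requiring mild care is the constant in the covering step, where the bound $\psi<1/10$ comfortably ensures that the factor $2\psi/(1-\psi)$ stays below $3\psi$ so that a threefold rather than a fivefold dilation suffices.
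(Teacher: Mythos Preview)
Your proof is correct and follows essentially the same route as the paper: choose a maximal disjoint family in $10\mathsf{Q}(\cS)\setminus\Gamma_0$, use the $1$-Lipschitz property of $d_\cS$ to convert $d(y,c_i)\le\psi(d_\cS(y)+d_\cS(c_i))$ into $d(y,c_i)\le\frac{2\psi}{1-\psi}d_\cS(c_i)<3\psi d_\cS(c_i)$, and then use the same Lipschitz comparability plus a doubling/volume count for the bounded multiplicity. Your parenthetical interpretation of $\rho B_i$ as $B_{\rho\psi d_\cS(c_i)}(c_i)$ is the one consistent with the convention $3B_i=B_{3\psi d_\cS(c_i)}(c_i)$ and with the paper's own proof, so you have handled the slight notational ambiguity in the statement correctly.
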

\begin{proof}
  Write $G=10\mathsf{Q}(\cS)\setminus \Gamma_0.$ Let $\{c_i\}_{i\in I}\subset G$ be a maximal set of points such that the balls $\{B_i=B_{\psi d_{\cS}(c_i)}(c_i)\}_{i\in I}$ are disjoint; this set is either finite or countably infinite.  For any $v\in G$, there is an $i\in I$ such that $B_{\psi d_{\cS}(v)}(v)$ intersects $B_i$, so $d(v,c_i)\le \psi (d_\cS(v)+d_\cS(c_i))$.  Since the mapping $d_\cS$ is 1-Lipschitz, it follows that the following estimates hold true.
  \begin{align*}
    d(v,c_i)&\le \psi \big(2d_{\cS}(c_i)+d(v,c_i)\big),\\
    \frac{9}{10} d(v,c_i)&\le 2 \psi d_{\cS}(c_i), \\
    d(v,c_i)&\le 3 \psi d_{\cS}(c_i).
  \end{align*}
  Hence $v\in 3B_i$ and thus $G \subset \bigcup_{i\in I} 3B_i.$ Next, suppose that $\rho \psi< 1$.  If $i\in I$ and $v\in \rho B_i$, then
  $$\frac{1}{1+\rho \psi} d_{\cS}(v) \le d_{\cS}(c_i)\le \frac{1}{1-\rho \psi}d_{\cS}(v).$$
  It follows that there is $\beta=\beta(\rho,\psi)>0$ such that $B_i$ is a ball of radius at least $\beta d_{\cS}(v)$ with center in $B_{d_{\cS}(v)/\beta}(v)$.  Since the balls $\{B_i\}_{i\in I}$ are disjoint, the Ahlfors-regularity of $\H^{2k+1}$ implies that there can be only boundedly many such balls, and thus only finitely many $i\in I$ such that $v\in \rho B_i$.
\end{proof}

\section{Isoperimetry of sets with corona decompositions}\label{sec:isoperimetry of corona}

Here we show that Proposition~\ref{prop:lip graph general statement} holds not only for intrinsic Lipschitz graphs, but also for subsets of $\H^{2k+1}$ that have a corona decomposition.  Specifically, we prove the following proposition.
\begin{prop}\label{prop:coronaBoundsVPerL2L1} Fix $K,C,\Lambda,r\in (0,\infty)$, $\theta\in (0,\frac{1}{240})$, $\lambda\in (0,1)$ and $p\in [1,\infty)$. Suppose that for every intrinsic $\lambda$-Lipschitz half-space $\Gamma^+\subset \H^{2k+1}$, every $R\in (0,\infty)$ and every $q\in \H^{2k+1}$ we have
  \begin{equation}\label{eq:assumption p bound on graphs}
   \left\| \vpfl{B_R(q)}(\Gamma^+)\right\|_{L_p(\R)}\le \Lambda R^{2k+1}.
  \end{equation}
Then for every $E\subset \H^{2k+1}$ such that $(E,\partial E)$ admits a $(K,C,\lambda,\theta, r)$-corona decomposition and  any $x\in \H^{2k+1}$ we have
  $$\big\|\vpfl{B_r(x)}(E)\big\|_{L_p(\R)}\lesssim_{k,K} C(\Lambda+1)r^{2k+1}.$$
\end{prop}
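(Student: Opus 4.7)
The plan is to use the corona decomposition of $\partial E$ to split the vertical perimeter of $E$ into contributions from individual stopping-time regions and bad cubes, bounding each either by the Lipschitz-graph hypothesis~\eqref{eq:assumption p bound on graphs} or by a trivial estimate, and summing via the Carleson packing conditions of Definition~\ref{def:coronization}. First I fix a corona decomposition $(\mathcal{B}, \mathcal{G}, \mathcal{F})$ with its cubical patchwork $\{\Delta_i\}_{i=-\infty}^n$, and for each $Q \in \Delta$ I set $V(Q) = B_{K'\sigma(Q)}(x_Q)$, where $x_Q$ is the approximate center from Lemma~\ref{lem:cubeCenters} and $K'$ is chosen so that $N_4(Q) \subset V(Q)$. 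Since the top cubes in $\Delta_n$ partition $\partial E$ and only $O_K(1)$ of them meet $B_r(x)$, it suffices to show that $G(Q_0) := \|\vpfl{V(Q_0)}(E)\|_{L_p(\R)} \lesssim (\Lambda + 1) C \sigma(Q_0)^{2k+1}$ for each such top cube $Q_0$ (noting $\sigma(Q_0) \sim r$).

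The core is a recursive bound: $G(Q) \lesssim \sigma(Q)^{2k+1}$ for $Q \in \mathcal{B}$, and
\[
G(Q) \;\lesssim\; (\Lambda+1)\sigma(Q)^{2k+1} \,+\, \sum_{R} G(R)
\]
for $Q = \mathsf{Q}(\mathcal{S})$ with $\mathcal{S} \in \mathcal{F}$, where the sum is over the \emph{children} of $\mathcal{S}$: maximal $R \in \Delta$ with $R \subsetneq \mathsf{Q}(\mathcal{S})$ and $R \notin \mathcal{S}$ (each such $R$ lies in $\mathcal{B}$ or equals $\mathsf{Q}(\mathcal{S}')$ for some $\mathcal{S}' \in \mathcal{F}$). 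The bad-cube case follows by combining the direct estimates $\vpfl{V(Q)}(E)(\rho) \le |V(Q)|/2^\rho \lesssim \sigma(Q)^{2k+2}/2^\rho$ and $\vpfl{V(Q)}(E)(\rho) \lesssim \cH^{2k+1}(\partial E \cap V(Q)) \lesssim \sigma(Q)^{2k+1}$ (the area bound coming from~\eqref{eq:sigma version diam volume} summed over cubes at scale $\sigma(Q)$) with an analogous small-$\rho$ bound $\vpfl{V(Q)}(E)(\rho) \lesssim 2^\rho \sigma(Q)^{2k}$ obtained from the horizontal-slice structure of $\H^{2k+1}$; taking the minimum over the three regimes and integrating in $\rho$ yields $\|\vpfl{V(Q)}(E)\|_{L_p} \lesssim \sigma(Q)^{2k+1}$.

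For $Q = \mathsf{Q}(\mathcal{S})$, writing $\Gamma = \Gamma^+(\mathcal{S})$ and invoking Lemma~\ref{lem:vPerProps}(2) together with the triangle inequality in $L_p(\R)$ gives
\[
G(Q) \;\le\; \|\vpfl{V(Q)}(\Gamma)\|_{L_p} + \|\vpfl{V(Q)}(E\symdiff\Gamma)\|_{L_p} \;\le\; \Lambda\sigma(Q)^{2k+1} + \|\vpfl{V(Q)}(E\symdiff\Gamma)\|_{L_p}
\]
by~\eqref{eq:assumption p bound on graphs}. For the symmetric-difference term I apply Lemma~\ref{lem:coveringQS mini} to $10Q$ with $\psi$ chosen small compared to $\theta$, producing a bounded-multiplicity cover by balls $\{3B_i\}$. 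On each $3B_i$, a cube $Q_i \in \mathcal{S}$ with $\sigma(Q_i) \sim d_\mathcal{S}(c_i)$ satisfies $3B_i \subset N_4(Q_i)$, so~\eqref{eq:defCoronaDecomp} forces $(E\symdiff\Gamma) \cap 3B_i \subset \nbhd_{\theta\sigma(Q_i)}(\partial E)$; the patchwork regularity then gives an $L_p$-in-$\rho$ contribution $\lesssim \theta^{1/p}\sigma(Q_i)^{2k+1}$ per ball that sums, via bounded multiplicity, to $\lesssim \sigma(Q)^{2k+1}$. Balls $3B_i$ whose associated $Q_i$ is close to a child $R$ of $\mathcal{S}$ are instead absorbed into $G(R)$ via $\vpfl{V(R)}(E\symdiff\Gamma) \le \vpfl{V(R)}(E) + \vpfl{V(R)}(\Gamma)$, with the $\Gamma$-piece again bounded using~\eqref{eq:assumption p bound on graphs}.

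Unwinding the recursion across all descendants gives $G(Q_0) \lesssim (\Lambda+1)\sum_{T \in \mathcal{T},\,T \subset Q_0}\sigma(T)^{2k+1}$, where $\mathcal{T} = \mathcal{B} \cup \{\mathsf{Q}(\mathcal{S}) : \mathcal{S} \in \mathcal{F}\}$. The Carleson packing conditions of Definition~\ref{def:coronization} bound each of the two partial sums by $\lesssim C\sigma(Q_0)^{2k+1}$, so $G(Q_0) \lesssim (\Lambda+1) C r^{2k+1}$; summing over the $O_K(1)$ top cubes meeting $B_r(x)$ yields the proposition. The main obstacle is the symmetric-difference estimate $\|\vpfl{V(Q)}(E\symdiff\Gamma)\|_{L_p} \lesssim \sigma(Q)^{2k+1}$: a bound via $|V \cap (E\symdiff\Gamma)|/2^\rho$ alone is only tight at large $\rho$, so one must interleave it with the medium-$\rho$ perimeter bound $\cH^{2k+1}(\partial(E\symdiff\Gamma))\lesssim \sigma(Q)^{2k+1}$ and the small-$\rho$ horizontal-slice bound, and verify that the covering-lemma multiplicities integrate across all scales without any logarithmic loss—this is where the fine Carleson structure of the decomposition is truly used.
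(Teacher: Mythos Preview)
Your recursion has a genuine gap at the bad-cube leaves.  You claim that for every $Q\in\cB$ one has the terminal estimate $G(Q)=\|\vpfl{V(Q)}(E)\|_{L_p(\R)}\lesssim\sigma(Q)^{2k+1}$, justified by a ``small-$\rho$'' bound $\vpfl{V(Q)}(E)(\rho)\lesssim 2^{\rho}\sigma(Q)^{2k}$ coming from ``horizontal slices.''  That bound is equivalent to $|V(Q)\cap\mathsf{D}_\rho E|\lesssim 2^{2\rho}\sigma(Q)^{2k}$, which after Fubini along vertical lines says that the \emph{average number of intervals} of $E$ on a vertical line through $V(Q)$ is $O(1)$.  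Nothing in the hypotheses gives this: a $(K,r)$-cubical patchwork only says $\cH^{2k+1}(\partial E\cap V(Q))\asymp\sigma(Q)^{2k+1}$, which yields the constant-in-$\rho$ bound $\vpfl{V(Q)}(E)(\rho)\lesssim\sigma(Q)^{2k+1}$ but no decay as $\rho\to-\infty$.  Indeed, intrinsic Lipschitz graphs themselves can meet a single vertical line arbitrarily often, so even the model surfaces violate your pointwise bound.  More tellingly, if $G(Q)\lesssim\sigma(Q)^{2k+1}$ held for every cube $Q$ from the patchwork alone, the proposition would follow by summing over the $O_K(1)$ top cubes, with no use of the corona decomposition, the Carleson conditions, or the Lipschitz-graph hypothesis.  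The same defect reappears in your symmetric-difference estimate for $E\symdiff\Gamma$: the ``medium-$\rho$'' perimeter bound is constant and the small-$\rho$ bound is unavailable, so the $L_p(\R)$ integral over $(-\infty,\log_2\sigma(Q)]$ diverges.

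The paper's proof sidesteps this by decomposing \emph{in scale first, then in location}.  It sets $\vpf(E;Q)(s)=\vpfl{N_4(Q)}(E)(s)\,\one_{\{\sigma(Q)/2<2^s\le\sigma(Q)\}}$, so each cube contributes only on a single dyadic interval in $s$, and the covering~\eqref{eq:coverByN4} gives $\vpf(E)(s)\asymp\sum_{Q\in\Delta}\vpf(E;Q)(s)$.  Now the trivial volume bound $\vpf(E;Q)(s)\lesssim\sigma(Q)^{2k+1}$ integrates over one unit interval to $\lesssim\sigma(Q)^{2k+1}$, and summing over $\cB$ with the Carleson condition handles the bad cubes.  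For $\cT\in\cF$ the paper writes $W_{\cT}(E)\le W_{\cT}(\Gamma^+)+W_{\cT}(E\symdiff\Gamma^+)$; the first piece is bounded by the hypothesis, while for the second the key observation (using Lemma~\ref{lem:covering error} and the subadditive variant $W'_{\cT}$ built from $\mathsf{U}_sF=F\cup FZ^{2^{2s}}$) is that each covering ball $B_i$ can only contribute at scales $2^s\gtrsim d_{\cT}(c_i)$, since smaller cubes of $\cT$ do not see $B_i$.  This automatic lower cutoff is exactly what your scheme lacks, and it is what makes the $L_p$ integral converge without any appeal to a small-$\rho$ pointwise bound on $\vpfl{V(Q)}(E)$.
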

Note that if $k\in \{2,3,\ldots\}$, then by Proposition~\ref{prop:lip graph general statement} the assumption of Proposition~\ref{prop:coronaBoundsVPerL2L1} holds true with $p=2$ and  $\Lambda\lesssim 1/(1-\lambda)$. We stated Proposition~\ref{prop:coronaBoundsVPerL2L1} for general $p\in [1,\infty)$ rather than only for $p=2$ in anticipation of forthcoming work that treats the case $k=1$. In the next sections, we will show that surfaces of finite perimeter in $\H^{2k+1}$ can be decomposed into sets with corona decompositions and use Proposition~\ref{prop:coronaBoundsVPerL2L1} to prove Theorem~\ref{thm:isoperimetric continuous}.

The proof of Proposition~\ref{prop:coronaBoundsVPerL2L1} uses the following lemma.
\begin{lemma}\label{lem:covering error}
  Suppose that $\psi\in (0,\frac{1}{10})$, $K,r\in (0,\infty)$, $\lambda\in (0,1)$, $\theta\in (0,\frac{\psi}{12})$.  Let $E\subset \H^{2k+1}$ be a set admitting a $(K,r)$-cubical patchwork $\Delta$. Suppose that $\cS\subset \Delta$ is a coherent set with maximal cube $M$ and let $\Gamma^+$ be a Lipschitz half-space bounded by an intrinsic $\lambda$-Lipschitz graph $\Gamma$.  Let $\{c_i\}_{i\in I}$ and $\{B_i\}_{i\in I}$ be as in Lemma~\ref{lem:coveringQS mini}.  If $\cS$ and $\Gamma$ satisfy \eqref{eq:defCoronaDecomp}, then
  $$(\Gamma^+\symdiff E)\cap N_4(M)\subset  \bigcup_{i\in I} 4B_i\cup \Gamma.$$
\end{lemma}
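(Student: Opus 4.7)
My plan is to take an arbitrary $x\in(\Gamma^+\symdiff E)\cap N_4(M)$ with $x\notin\Gamma$ and to show that $x\in 4B_i$ for some $i\in I$. Applying \eqref{eq:defCoronaDecomp} to the maximal cube $M=\mathsf{Q}(\cS)$ gives $d(x,\partial E)\le\theta\sigma(M)$; let $y\in\partial E$ realize this distance. A triangle inequality shows $d(y,M)\le 5\sigma(M)$, so $y\in 10\mathsf{Q}(\cS)$, and Lemma~\ref{lem:coveringQS mini} leaves two alternatives: either $y\in\Gamma_0=d_{\cS}^{-1}(0)$ or $y\in 3B_i$ for some $i\in I$. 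The core of the proof is a quantitative propagation estimate, $d(x,y)\le 4\theta\,d_{\cS}(y)$, which handles the $3B_i$ alternative directly and reduces the $\Gamma_0$ alternative to showing $x=y\in\Gamma$, contradicting the standing assumption $x\notin\Gamma$.

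To produce the propagation estimate, I will find a cube $Q^*\in\cS$ with $\sigma(Q^*)$ of the same order as $\max\{d_{\cS}(y),\sigma(M)\}$ such that both $x\in N_4(Q^*)$ and $y$ is near $Q^*$; then \eqref{eq:defCoronaDecomp} applied to $Q^*$ supplies the bound. If $d_{\cS}(y)>\sigma(M)/4$, I take $Q^*=M$ and obtain $d(x,y)\le\theta\sigma(M)<4\theta\,d_{\cS}(y)$. Otherwise, I first rule out $d(x,y)>2d_{\cS}(y)$: picking $Q\in\cS$ that almost realizes $d_{\cS}(y)$ and walking up the ancestor chain (which remains in $\cS$ by coherence, since $d(x,y)\le\theta\sigma(M)<\sigma(M)$) to obtain $\widehat Q$ with $\sigma(\widehat Q)\in[d(x,y)/2,d(x,y))$, I verify $x\in N_4(\widehat Q)$ and then \eqref{eq:defCoronaDecomp} would force the impossible $d(x,y)<\theta\,d(x,y)$. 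With $d(x,y)\le 2d_{\cS}(y)$ secured, an analogous ancestor walk produces $Q^*\in\cS$ with $\sigma(Q^*)\in(d_{\cS}(y),2d_{\cS}(y)]$; a triangle inequality gives $x\in N_4(Q^*)$, so \eqref{eq:defCoronaDecomp} yields $d(x,y)\le 2\theta\,d_{\cS}(y)$.

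Assuming $y\in 3B_i$, so that $d(y,c_i)\le 3\psi\,d_{\cS}(c_i)$, the $1$-Lipschitz continuity of $d_{\cS}$ gives $d_{\cS}(y)\le(1+3\psi)d_{\cS}(c_i)$, and the triangle inequality combined with the propagation estimate produces
\[
d(x,c_i)\le d(x,y)+d(y,c_i)\le\bigl[4\theta(1+3\psi)+3\psi\bigr]d_{\cS}(c_i)<4\psi\,d_{\cS}(c_i),
\]
where the final strict inequality uses the hypothesis $\theta<\psi/12$ together with $\psi<\tfrac{1}{10}$ to force $4\theta(1+3\psi)<\psi$. Hence $x\in 4B_i$, as desired.

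It remains to exclude the alternative $y\in\Gamma_0$. With $d_{\cS}(y)=0$, the same ancestor-walking argument used in the propagation step, pushed to the limit, forces $d(x,y)=0$, i.e., $x=y$. Then, since $d_{\cS}(y)=0$, for every $\e>0$ I can find $Q\in\cS$ with $d(y,Q)+\sigma(Q)<\e$, and a further ancestor step produces $Q'\in\cS$ with $\sigma(Q')<\e/2$ and $y\in N_4(Q')\cap\partial E$; part~(3) of Lemma~\ref{lem:localHausProps} then upgrades \eqref{eq:defCoronaDecomp} to $d(y,\Gamma)\le\theta\sigma(Q')<\theta\e/2$, and letting $\e\to 0$, together with the closedness of $\Gamma=\partial\Gamma^+$, places $y\in\Gamma$ — contradicting $x\notin\Gamma$. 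The only genuine subtlety is the bookkeeping of the ancestor chains so that the selected cube is simultaneously large enough for $x$ (and $y$) to lie in its $N_4$-neighborhood and small enough to yield a $\theta$-tight output; the coherence of $\cS$ guarantees that every such ancestor chain remains inside $\cS$.
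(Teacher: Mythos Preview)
Your proof is correct and uses the same core ingredients as the paper's: the function $d_{\cS}$, ancestor-walking inside the coherent set $\cS$ to locate a cube at the right scale, and a single application of \eqref{eq:defCoronaDecomp} at that scale to control distances. The organization, however, is different. You first project $x\in(\Gamma^+\symdiff E)\cap N_4(M)$ to its nearest point $y\in\partial E$ using the coarse estimate from the cube $M$, and then run a bootstrap argument (ruling out $d(x,y)>2d_{\cS}(y)$ by contradiction, then refining) to obtain the propagation bound $d(x,y)\le 4\theta\,d_{\cS}(y)$. The paper instead works directly with $d_{\cS}(y)$ for the original point $y\in(\Gamma^+\symdiff E)\cap N_4(M)$: since $y\in N_4(M)$ already gives $d_{\cS}(y)\le 5\sigma(M)$, one passes in one step to an ancestor $A\in\cS$ with $\sigma(A)\asymp d_{\cS}(y)$ and $y\in N_4(A)$, and \eqref{eq:defCoronaDecomp} at $A$ yields $d(y,\partial E)\le\theta\sigma(A)\le 4\theta d_{\cS}(y)$ with no bootstrap. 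The paper then projects to $v\in\partial E$ and shows $v\notin\Gamma_0$ and $v\in 3B_i$, finishing as you do. Your route is slightly longer because the early projection to $\partial E$ loses the information that $x\in\Gamma^+\symdiff E$, which you must then recover via the contradiction step; the paper's order of operations avoids this. A minor remark on your $\Gamma_0$ case: once you know $x=y\in(\Gamma^+\symdiff E)\cap N_4(Q')$, the definition of $d_{N_4(Q')}(\Gamma^+,E)$ already gives $d(x,\Gamma)\le\theta\sigma(Q')$ directly, so the appeal to part~(3) of Lemma~\ref{lem:localHausProps} via $y\in\partial E$ is not needed.
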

\begin{proof}
  Let $F=\Gamma^+\symdiff E$ and suppose that $y\in F\cap N_4(M)$.  If $d_{\cS}(y)=0$, then for every $\epsilon\in (0, \sigma(M)]$ there is a $Q\in \cS$ such that $d(y,Q)+\sigma(Q)< \epsilon$.  Let $Q'$ be an ancestor of $Q$ such that $\frac{\epsilon}{2}\le \sigma(Q')\le \epsilon$.  Since $d(y,Q)<\epsilon$, we have $y\in N_4(Q')$ and $d(y,Q')+\sigma(Q')<2\epsilon$.  By \eqref{eq:defCoronaDecomp}, $d(x,\Gamma)\le \theta\sigma(Q)\le \theta\epsilon$.  Since this holds for every $\epsilon$, we have $y\in \Gamma$.

  We can therefore suppose from now on that $y\in F\cap N_4(M)$ and that $0<d_{\cS}(y)\le 5\sigma(M)$.  If $d_{\cS}(y)\ge \frac{\sigma(M)}{4}$, let $A=M$.  Then $y\in N_4(A)$ and $\frac{d_{\cS}(y)}{5}\le \sigma(A)\le 4d_{\cS}(y)$.  Otherwise, let $Q\in \cS$ be a cube such that $d(y,Q)+\sigma(Q)\le 2 d_{\cS}(y)$ and let $A$ be the ancestor of $Q$ such that $\frac{d_{\cS}(y)}{5} \le 2 d_{\cS}(y)\le \sigma(A)< 4 d_{\cS}(y)$.  Then $d(y,A)\le 2d_{\cS}(y)\le \sigma(A)$, so in either case, we have $y\in N_4(A)$ and $\frac{d_{\cS}(y)}{5}\le \sigma(A)\le 4d_{\cS}(y)$.  By \eqref{eq:defCoronaDecomp}, this implies that
  $$d(y,\partial E)\le \theta \sigma(A)\le \frac{\psi d_{\cS}(y)}{3}.$$

  Choose $v\in \partial E$ with $d(v,y)\le \frac{\psi}{2} d_{\cS}(y)$.  Then $d(v,y)\le \sigma(M)$, so $v\in 5M$.  Since $d_{\cS}$ is 1-Lipschitz, we have $d_{\cS}(v)\ge (1-\psi) d_{\cS}(y) >0$.  By Lemma~\ref{lem:coveringQS mini}, there is  $i\in I$ such that $v\in 3B_i$, and
  $$d_{\cS}(y)\le \frac{1}{1-\psi} d_{\cS}(v)\le \frac{1+3\psi}{1-\psi}
  d_{\cS}(c_i)\le 2 d_{\cS}(c_i).$$
  Thus
  \begin{equation*}
    d(c_i,y)\le d(c_i,v)+d(v,y)
            \le 3\psi d_{\cS}(c_i)+\frac{\psi}{2} d_{\cS}(y)
            \le 4\psi d_{\cS}(c_i).
  \end{equation*}
  Hence $y\in 4B_i$, as desired.
\end{proof}

\begin{proof}[{Proof of Proposition~\ref{prop:coronaBoundsVPerL2L1}}]
  By re-scaling and translating $E$ we may assume that $x=\0$ and $r=1$.  Let $\Delta=\{\Delta_i\}_{i=-\infty}^0$ be a $(K,r)$-cubical patchwork for $\partial E$ and let $(\cB, \cG, \cF)$ be a  coronization of $\partial E$ that satisfies Definition~\ref{def:coronaDecomp}.  In particular, $\cB$ and $\{\mathsf{Q}(\cS)\mid \cS\in \cF\}$ are both $C$-Carleson.

  Recall that for every $s\in \R$ we let $\mathsf{D}_s E=E\symdiff EZ^{2^{2s}}$.  We start by constructing a cover of $\mathsf{D}_sE$.  Suppose that $s<0$ and let $i$ be the integer such that $s\in (i-1,i]$.  If $x\in \mathsf{D}_s E$, then any path from $x$ to $x Z^{-2^{2s}}$ crosses $\partial E$. There is a path from $x$ to $x Z^{-2^{2s}}$ of length less than $4\cdot 2^s$, and this path intersects $\partial E$ in a point $y$ such that $d(x,y) < 4\cdot 2^s$. If $Q\in \Delta_i$ is the cube such that $y\in Q$, then $x\in N_4(Q)$.  It follows that
  \begin{equation}\label{eq:coverByN4}
    \mathsf{D}_s E\subset \bigcup_{Q\in \Delta_i} N_4(Q).
  \end{equation}

  Due to~\eqref{eq:coverByN4}, we can decompose $\vpf(E)$ into terms corresponding to cubes.  For $Q\in \Delta$, we let the $Q$-component of $\vpf(E)$ be
  $$\vpf(E;Q)(s)\eqdef \vpfl{N_4(Q)}(E)(s)\one_{\left\{\frac{\sigma(Q)}{2}< 2^s \le \sigma(Q)\right\}}.$$
  Then for each integer $i\le 0$ and $s\in (i-1,i]$, we have
  $$\sum_{Q\in \Delta}\vpf(E;Q)(s)=\sum_{Q\in \Delta_i}\vpf(E;Q)(s)=2^{-s}\sum_{Q\in\Delta_i}\vol(N_4(Q)\cap \mathsf{D}_s E).$$
  As $Q$ ranges over $\Delta_i$, the sets $N_4(Q)$ cover $\mathsf{D}_sE$ with  multiplicity that is bounded by a quantity that depends only on $k$ and $K$, so it follows that
  $$\forall\, s \in (-\infty,0],\qquad \vpf(E)(s)\asymp \sum_{Q\in \Delta} \vpf(E;Q)(s).$$

  For any subset $\cJ \subset \Delta$ and any $F\subset \H^{2k+1}$, denote $W_\cJ(F)\eqdef \sum_{Q\in \cJ} \vpf(F;Q)$.  If $M\in \Delta$, let $\Delta_M=\{Q\in \Delta\mid Q\subset M\}$.  We claim that
  \begin{equation}\label{eq:vert on M}
\forall\, M\in \Delta,\qquad   \left\|W_{\Delta_M}(E)\right\|_{L_p(\R)}\lesssim_{k,K} C(\Lambda+1)\sigma(M)^{2k+1}.
  \end{equation}

 To prove~\eqref{eq:vert on M}, fix $M\in \Delta$. We first separate $W_M(E)$ into contributions of bad cubes and stopping-time regions.  Let $\cB_M=\Delta_M\cap \cB$ and let $\cF_M=\{\cS\cap \Delta_M\mid \cS\in \cF\}$.  Note that $\cF_M$ consists of coherent collections of cubes, so that the maximal element $\mathsf{Q}(\cT)\in \cT$ is well-defined for every $\cT\in \cF_M$. Moreover, the elements of $\cF_M$ still satisfy~\eqref{eq:defCoronaDecomp} and $\{\mathsf{Q}(\cS)\mid \cS\in \cF_M\}$ is still $C$-Carleson.  By design we have
  \begin{equation}\label{eq:barVdecomp}
 \forall\, s\in (-\infty,0],\qquad    W_{\Delta_M}(E)(s)=W_{\cB_M}(E)(s) + \sum_{\cT\in \cF_M} W_{\cT}(E)(s).
  \end{equation}
We will prove~\eqref{eq:vert on M} by bounding each of the terms in~\eqref{eq:barVdecomp} separately.

  First, we bound $\|W_{\cB_M}(E)\|_{L_p(\R)}$.  If $Q\in \Delta_i$ and $s\in (i-1,i]$, we have the trivial bound
  \begin{equation*}
    \vpf(E;Q)(s)\le \frac{\vol(N_4(Q))}{2^s}\asymp_{k,K} \frac{2^{i(2k+2)}}{2^s}\asymp_{k,K} \sigma(Q)^{2k+1},
    \end{equation*}
    and consequently,
    \begin{equation}\label{eq:vert bad cubes on each Q}
    \|\vpf(E;Q)\|_{L_p(\R)}\lesssim \bigg(\int_{i-1}^i \vpf(E;Q)(s)^p\ud s\bigg)^{\frac{1}{p}} \lesssim_{k,K} \sigma(Q)^{2k+1}.
  \end{equation}
Hence,
  \begin{equation}\label{eq:bad cube error estimate}
    \|W_{\cB_M}(E)\|_{L_p(\R)}\le \sum_{Q\in \cB_M} \|\vpf(E;Q)\|_{L_p(\R)}\stackrel{\mathclap{\eqref{eq:vert bad cubes on each Q}}}{\lesssim}_{k,K} \sum_{Q\in \cB_M} \sigma(Q)^{2k+1}\le C \sigma(M)^{2k+1},
  \end{equation}
  where the final inequality in~\eqref{eq:bad cube error estimate} is the $C$-Carleson condition for $\cB$.

Our next goal is to show that
\begin{equation}\label{eq:point for ech cT}
\forall\, \cT\in \cF_M,\qquad \|W_\cT(E)\|_{L_p(\R)}\lesssim_{k,K} (\Lambda+1)\sigma\big(\mathsf{Q}(\cT)\big)^{2k+1}.
\end{equation}
Let $\Gamma^+$ be a $\lambda$-Lipschitz half-space such that $d_{N_4(\mathsf{Q}(\cT))}(\Gamma^+, E)\le \theta \sigma(\mathsf{Q}(\cT)).$ Denote $A=\Gamma^+\symdiff E$.  Then $\mathsf{D}_sE = \mathsf{D}_sA \symdiff \mathsf{D}_s\Gamma^+$, so $W_\cT(E)\le W_\cT(A)+W_\cT(\Gamma^+).$ We shall  bound $W_\cT(\Gamma^+)$ using~\eqref{eq:assumption p bound on graphs}.  Suppose that $q\in \mathsf{Q}(\cT)$ and let $R=\diam \mathsf{Q}(\cT)+4\sigma(\mathsf{Q}(\cT))$. Then $N_4(\mathsf{Q}(\cT))\subset B_R(q)$.  We claim that for all $s\in \R$ we have $W_\cT(\Gamma^+)(s) \lesssim_{k,K} \vpfl{B_R(q)}(\Gamma^+)(s)$. Indeed, if $i=\lceil s\rceil$, then
  \begin{equation}\label{eq:W cT gamma+}
    W_\cT(\Gamma^+)(s) =\sum_{Q\in \cT\cap \Delta_i} 2^{-s}\vol\big(N_4(Q)\cap \mathsf{D}_s\Gamma^+\big).
  \end{equation}
 The sets $\{N_4(Q):\ Q\in \cT\cap \Delta_i\}$ have multiplicity bounded by a quantity that depends only on $k$ and $K$, and they  are all contained in $B_R(q)$.  It therefore follows from~\eqref{eq:W cT gamma+} that
  $$W_\cT(\Gamma^+)(s) \lesssim_{k,K} 2^{-s} \vol\big(B_R(q)\cap \mathsf{D}_s\Gamma^+\big) = \vpfl{B_R(q)}(\Gamma^+)(s),$$
which is the required point-wise estimate. By the assumption of Proposition~\ref{prop:coronaBoundsVPerL2L1} we deduce that
  \begin{equation}\label{eq:WS Gamma}
    \|W_\cT(\Gamma^+)\|_{L_p(\R)}\lesssim_{k,K} \big\|\vpfl{B_R(q)}(\Gamma^+)\big\|_{L_p(\R)} \stackrel{\mathclap{\eqref{eq:assumption p bound on graphs}}}{\lesssim}_{k,K} \Lambda R^{2k+1}\asymp_{k,K} \Lambda \sigma\big(\mathsf{Q}(\cT)\big)^{2k+1}.
  \end{equation}

Due to~\eqref{eq:WS Gamma}, to establish~\eqref{eq:point for ech cT}  it suffices to show that $\|W_\cT(A)\|_{L_p(\R)}\lesssim_{k,K} \sigma(\mathsf{Q}(\cT))^{2k+1}$. Let $\psi=\frac{1}{20}$ (thus $\theta<\frac{\psi}{12}$) and let $\{c_i\}_{i\in I}\subset 10\mathsf{Q}(\cT)$ and $\{4B_i=B_{4\psi d_{\cT}(c_i)}(c_i)\}_{i\in I}$ be the covering constructed in Lemma~\ref{lem:coveringQS mini}.  Let $A'=A\cap N_4(\mathsf{Q}(\cT))$.  By Lemma~\ref{lem:covering error}, $A'\subset \bigcup_{i\in I} 4B_i\cup \Gamma$.

For $s\in \R$ and $F\subset \H^{2k+1}$, let $\mathsf{U}_sF\eqdef F\cup F Z^{2^{2s}}$.  For every $Q\in \Delta$ define
  $$\vpfprime(F;Q)(s)\eqdef\frac{\vol(N_4(Q)\cap \mathsf{U}_sF)}{2^s} \one_{\{\frac{\sigma(Q)}{2}< 2^s \le \sigma(Q)\}},$$
  and let $W'_{\cT}(F)\eqdef\sum_{Q\in \cT} \vpfprime(F;Q)$.  Then $\mathsf{D}_sF\subset \mathsf{U}_sF$ and therefore we have the point-wise inequality $ \vpfprime(F;Q)\le \vpf(F;Q)$.  Importantly, $\vpfprime(\cdot;Q)$ is subadditive, i.e., if $F,G\subset \H^{2k+1}$, then $\vpfprime(F\cup G;Q)\le \vpfprime(F;Q)+\vpfprime(G;Q)$.  Consequently, $W_{\cT}(A')\le \sum_{i\in I} W'_{\cT}(4B_i).$

  Let $y=c_i\in 10\mathsf{Q}(\cT)$ for some $i\in I$ and let $B=4B_i=B_{d_{\cT}(y)/5}(y)$.  As $Q$ ranges over $\Delta_j$ for some $j\in \Z$, the sets $N_4(Q)$ cover $\mathsf{U}_sB$ with  multiplicity bounded by a quantity that depends only on $k,K$. Hence, for any $s\in \R$, we have
  \begin{equation}\label{eq:poit-wise boud W'}
  0\le W'_{\cT}(B)(s)\lesssim_{k,K} \frac{\vol(\mathsf{U}_sB)}{2^s}\lesssim \frac{d_{\cT}(y)^{2k+2}}{2^s}.
  \end{equation}
  We claim that if $W'_{\cT}(B)(s)>0$, then $2^s> d_{\cT}(y)/24$.  Indeed, suppose that $Q\in \cT$ and $s\in \R$ are such that $\vpfprime(B;Q)(s)> 0$.  Then $\sigma(Q)/2< 2^s\le \sigma(Q)$ and $(B\cup BZ^{2^{2s}})\cap N_4(Q)\ne \emptyset$.  Let $x$ lie in this intersection.  Since $x\in N_4(Q)$, we have $d_{\cT}(x)\le 5\sigma(Q)$ and since $x\in B\cup BZ^{2^{2s}}$ we have
  $$d(x,y)\le \frac{d_{\cT}(y)}{5}+4\cdot 2^s\le\frac{d_{\cT}(y)}{5}+4\sigma(Q).$$
  Since $d_{\cS}$ is 1--Lipschitz, it follows that
  $$d_{\cT}(y)\le d_{\cT}(x)+d(x,y)\le \frac{d_{\cT}(y)}{5}+9\sigma(Q).$$
  Solving for $d_{\cT}(y)$, we find that $d_{\cT}(y) < 12\sigma(Q)$ and thus $2^s> \frac{d_{\cT}(y)}{24}$. Consequently,
  \begin{equation*}
    \|W'_{\cT}(B)\|_{L_p(\R)}
    =\bigg(\int_{\log_2 \frac{d_{\cT}(y)}{24}}^\infty W'_{\cT}(B)(s)^p\ud s\bigg)^{\frac{1}{p}}
    \stackrel{\mathclap{\eqref{eq:poit-wise boud W'}}}{\lesssim}_{k,K} \bigg(\int_{\log_2 \frac{d_{\cT}(y)}{24}}^\infty \frac{d_{\cT}(y)^{p(2k+2)}}{2^{ps}} \ud s\bigg)^{\frac{1}{p}}
    \lesssim d_{\cT}(y)^{2k+1}.
  \end{equation*}
This proves that
  $$\|W_{\cT}(A')\|_{L_p(\R)} \le \sum_{i\in I} \|W'_{\cT}(4B_i)\|_{L_p(\R)} \lesssim_{k,K} \sum_{i\in I} d_{\cT}(c_i)^{2k+1}.$$

  The sets $\{\partial E\cap B_i\}_{i\in I}$ are disjoint, and for each $i$, $\cH^{2k+1}(\partial E\cap B_i)\asymp d_{\cT}(c_i)^{2k+1}$.  Since $c_i\in 10\mathsf{Q}(\cT)$, we have $d_{\cT}(c_i)\le11 \sigma(\mathsf{Q}(\cT))$.  Therefore, $\partial E\cap B_i\subset 12\mathsf{Q}(\cT)$, and
  $$\|W_{\cT}(A')\|_{L_p(\R)}\lesssim_{k,K}\sum_{i\in I} d_{\cT}(c_i)^{2k+1}\asymp \cH^{2k+1}\Big(\partial E\cap \bigcup_{i\in I} B_i\Big)\le \cH^{2k+1}\big(12\mathsf{Q}(\cT)\big)\lesssim \sigma\big(\mathsf{Q}(\cT)\big)^{2k+1}.$$

  Now we correct for the difference between $W_{\cT}(A')$ and $W_{\cT}(A)$.  Suppose that $Q\in \cT$ and $s\in \R$ satisfies $\sigma(Q)/2< 2^s\le\sigma(Q)$. If in addition $Q\ne \mathsf{Q}(\cT)$, then $\sigma(Q)\le \sigma(\mathsf{Q}(\cT))/2$ and therefore $N_4(Q)\cup N_4(Q)Z^{-2^{2s}}\subset N_8(Q)\subset N_4(\mathsf{Q}(\cT))$. Consequently,  $N_4(Q)\cap \mathsf{D}_s A=N_4(Q)\cap \mathsf{D}_s A'$. Hence, $\vpf(A';Q)=\vpf(Q;A)$, and therefore $W_\cT(A)=W_{\cT}(A')+ \vpf(A;\mathsf{Q}(\cT))-\vpf(A';\mathsf{Q}(\cT))$. So,
  \begin{multline}\label{eq:WS error}
    \|W_\cT(A)\|_{L_p(\R)} = \big\|W_\cT(A') +\vpf\big(A;\mathsf{Q}(\cT)\big)-\vpf\big(A';\mathsf{Q}(\cT)\big)\big\|_2\\ \le \|W_\cT(A')\|_{L_p(\R)} + 2 \frac{\vol\big(N_4(\mathsf{Q}(\cT))\big)}{\sigma\big(\mathsf{Q}(\cT)\big)/2}\lesssim \sigma\big(\mathsf{Q}(\cT)\big)^{2k+1}.
  \end{multline}
  Therefore, for any $\cT\in \cF$,
  \begin{align*}
    \|W_\cT(E)\|_{L_p(\R)}
    \le \|W_\cT(A)\|_{L_p(\R)}+\|W_\cT(\Gamma^+)\|_{L_p(\R)}
    \stackrel{\eqref{eq:WS Gamma}\wedge \eqref{eq:WS error}}{\lesssim _{\mathrlap{k,K}} }(\Lambda+1)\sigma\big(\mathsf{Q}(S)\big)^{2k+1}.
  \end{align*}
This completes the justification of~\eqref{eq:point for ech cT}.

  By combining~\eqref{eq:point for ech cT} with the Carleson packing condition on $\cF_M$, \eqref{eq:barVdecomp}, and \eqref{eq:bad cube error estimate}, for any $M\in \Delta$,
  \begin{multline*}
    \|W_{\Delta_M}(E)\|_{L_p(\R)}
    \le \|W_{\cB_M}(E)\|_{L_p(\R)} + \sum_{\cT\in \cF_M} \|W_\cT(E)\|_{L_p(\R)}
    \\\lesssim_{k,K} C\sigma(M)^{2k+1}+(\Lambda+1)\sum_{\cT\in \cF_M} \sigma\big(\mathsf{Q}(\cT)\big)^{2k+1}
    \lesssim C(\Lambda+1)\sigma(M)^{2k+1},
  \end{multline*}
  thus completing the proof of~\eqref{eq:vert on M}.

  Finally, we bound $\|\vpfl{B_1}(E)\|_{L_p(\R)}$.  Let $M_1,\dots, M_m\in \Delta_0$ be the set of cubes such that $B_5\cap M_i\ne \emptyset$ for all $i\in \m$. The number of such cubes is bounded by a quantity that depends only on $k$ and $K$.  If $Q\in \Delta$ and $N_4(Q)\cap B_1\ne \emptyset$, then $Q\subset M_i$ for some $i\in \m$, so for every $s\le 0$ we have
  $\vpfl{B_1}(E)(s)\le \sum_{i=1}^mW_{\Delta_{M_i}}(E)(s)$
Hence by~\eqref{eq:vert on M} we have $\big\|\vpfl{B_1}(E)\big\|_{L_p(-\infty,0)}\lesssim_{k,K} C(\Lambda+1).$ On the other hand, if $s>0$, then
  $\vpfl{B_1}(E)(s)\le 2^{-s}\vol(B_1)\lesssim 2^{-s},$ so $\big\|\vpfl{B_1}(E)\big\|_{L_p(0,\infty)} \lesssim 1$, and we conclude that $\left\|\vpfl{B_1}(E)\right\|_{L_p(\R)} \lesssim_{k,K}C(\Lambda+1)$, as desired.
\end{proof}

\section{Decomposing surfaces into Ahlfors regular pieces}
\label{sec:decomposing cellular}
In Section~\ref{sec:isoperimetry of corona}, we established Theorem~\ref{thm:isoperimetric continuous} for sets admitting a corona decomposition.  In this section and the next section, we will prove it for cellular sets and show that this implies Theorem~\ref{thm:isoperimetric continuous} in full generality. The following theorem will be proven in Section~\ref{sec:constructing corona}.

\begin{thm}\label{thm:Ahlfors admits corona}
  Fix $C,r\in (0,\infty)$. Let $E\subset \H^{2k+1}$ be a set such that $(E,E^\cc,\partial E)$ is $(C,r)$-regular as in Definition~\ref{def:local Ahlfors regularity}.  Then there exists $K=K(k,C)\in (0,\infty)$ such that the  pair $(E,\partial E)$ admits a $(K,r)$-corona decomposition as in Definition~\ref{def:coronaDecomp}.
\end{thm}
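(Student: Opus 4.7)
The plan is to follow the quantitative-rectifiability template of David--Semmes~\cite{DavidSemmesSingular,DSAnalysis}, but with the driving quantity being \emph{non-monotonicity} in the sense of~\cite{CheegerKleinerMetricDiff,CKN09,CKN} rather than the Jones $\beta$-numbers. First, since $\partial E$ is $(C,r)$-Ahlfors $(2k+1)$-regular, the Christ--David construction~\cite{ChristTb,DavidWavelets} supplies a $(K_0,r)$-cubical patchwork $\{\Delta_i\}_{i=-\infty}^n$ for $\partial E$ with $K_0=K_0(k,C)$, as discussed after Definition~\ref{def:patchwork}. For each $Q\in\Delta$ I will introduce a non-monotonicity coefficient $\NM(Q)$ measuring the average, over horizontal lines meeting a controlled neighbourhood of $Q$, of the failure of $\one_E$ to be monotone along the line (normalised so that $\NM(Q)\lesssim\sigma(Q)^{2k+1}$). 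The key input is the kinematic-type inequality of~\cite{CKN09,CKN}, which yields the global bound $\sum_{Q\in\Delta,\, Q\subset R}\NM(Q)\lesssim_{k,C}\sigma(R)^{2k+1}$ for every $R\in\Delta$; that is, $\NM$ itself satisfies a Carleson packing condition.

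Next I fix thresholds $\eta,\lambda_0,\theta_0>0$ (to be chosen depending on the final $\lambda,\theta$) and declare $Q\in\Delta$ to be \emph{bad} if either the local Ahlfors constants of $(E,E^{\cc},\partial E)$ on $N_{100}(Q)$ degenerate or if $\NM(Q)>\eta\sigma(Q)^{2k+1}$; otherwise $Q$ is \emph{good}. The bad cubes of Ahlfors type are $C$-Carleson by a standard stopping-time argument based on Lemma~\ref{lem:finite Hausdorff finite perimeter} and the hypothesis; the non-monotone bad cubes are $(\eta^{-1}C')$-Carleson by the kinematic inequality just recalled. The crucial step is then a qualitative-to-quantitative upgrade: I will prove that for any $\theta>0$ there exists $\eta=\eta(\theta,k,C)>0$ such that every good cube $Q$ admits a half-space $H_Q$ (bounded by a horizontal or vertical plane $P_Q$) satisfying $d_{N_{10}(Q)}(H_Q,E)\le\theta\sigma(Q)$. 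Supposing this failed, a rescaling and a diagonal extraction using Proposition~\ref{prop:compact lower semicontinuous} (compactness of sets of locally finite perimeter) would produce a limit set with vanishing non-monotonicity on the unit ball but not a half-space, contradicting the rigidity theorem of~\cite{CheegerKleinerMetricDiff} which says that precisely monotone measurable sets in $\H^{2k+1}$ are half-spaces.

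With this approximation in hand I construct the stopping-time regions by the standard greedy procedure: let $M$ be a maximal good cube not yet assigned; add to $\mathcal{S}(M)$ every descendant $Q\subset M$ such that all ancestors $Q\subset Q'\subset M$ are good and the approximating plane $P_{Q'}$ stays within angular distance (and positional distance, measured relative to $N_{10}(Q')$) at most $\lambda_0$ of $P_M$. The first failure along any chain triggers a new maximal cube, and one checks Definition~\ref{def:coherent}. To certify a Carleson packing bound on $\{\mathsf{Q}(\mathcal{S}):\mathcal{S}\in\mathcal{F}\}$, I will argue as in~\cite[Chapter~8]{DavidSemmesSingular}: every pair of ``large deviation between $P_Q$ and $P_{Q'}$'' events along an ancestor chain must be charged to a non-trivial amount of non-monotonicity on the joint neighbourhood (otherwise monotonicity would force the planes to be aligned), whence $\sum_{\mathcal{S}\in\mathcal{F},\mathsf{Q}(\mathcal{S})\subset R}\sigma(\mathsf{Q}(\mathcal{S}))^{2k+1}$ is controlled by the total $\NM$--mass over $R$ plus the measure of bad cubes, both of which are Carleson.

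Finally, for each stopping-time region $\mathcal{S}$ I must glue the planes $\{P_Q:Q\in\mathcal{S}\}$ into a single intrinsic $\lambda$-Lipschitz half-space $\Gamma^{+}(\mathcal{S})$ with $d_{N_4(Q)}(\Gamma^{+}(\mathcal{S}),E)\le\theta\sigma(Q)$ for all $Q\in\mathcal{S}$. The construction is: orient all planes so that the common ``top'' normal lies within $\lambda_0$ of a fixed horizontal unit vector $X_k$, write each $P_Q$ near $Q$ as an intrinsic graph $\{vX_k^{f_Q(v)}\}$ over the vertical hyperplane $V=\{x_k=0\}$, and define $f_{\mathcal{S}}$ on a neighbourhood of $\pi_V(\mathsf{Q}(\mathcal{S}))$ by a partition of unity adapted to a Whitney decomposition at distance $d_{\mathcal{S}}$ from $\pi_V(\mathcal{S})$; on $\pi_V(\mathcal{S})$ itself (where $d_{\mathcal{S}}=0$) take $f_{\mathcal{S}}$ to be the (continuous) limit along $\mathcal{S}$. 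The verification that $f_{\mathcal{S}}$ defines an intrinsic $\lambda$-Lipschitz graph reduces to checking, via the definition of $\mathrm{Cone}_\lambda$ and Lemma~\ref{lem:translate graph}, that $|f_Q(v)-f_{Q'}(v')|$ controls the intrinsic-cone displacement whenever $Q,Q'\in\mathcal{S}$ are comparable in scale, which is precisely the coherence property enforced by our grouping rule. Extending from the partial graph to an intrinsic $\lambda$-Lipschitz graph on all of $V$ is then handled by Theorem~\ref{thm:intrinsic nonlinear hahn banach}, whose sharp control on the Lipschitz constant is exactly what is needed to keep $\lambda$ independent of $\mathcal{S}$. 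The main obstacle will be quantifying the gluing step: making $\lambda$ and $\theta$ small while keeping the Carleson constants finite requires that the thresholds $\eta,\lambda_0,\theta_0$ be chosen in the correct order and that the rigidity-via-compactness argument be delivered in a form uniform over scales and base points, which is where most of the technical work of Section~\ref{sec:constructing corona} will go.
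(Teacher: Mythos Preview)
Your outline follows the paper's template closely (Christ--David cubes, nonmonotonicity as the Carleson driver, compactness to get half-space approximation, greedy stopping-time regions, partition-of-unity gluing, then the intrinsic Hahn--Banach extension), and the broad strokes are right. But there is one genuine gap.

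You acknowledge that the approximating half-space $H_Q$ may be bounded by either a horizontal or a vertical plane $P_Q$, yet in the gluing step you assume without justification that every $P_Q$ can be written as an intrinsic graph $\{vX_k^{f_Q(v)}\}$ over the fixed vertical hyperplane $V=\{x_k=0\}$, with normal within $\lambda_0$ of $X_k$. This fails outright when $P_Q$ is horizontal (its normal is close to $Z$, not to any horizontal direction), and more generally the angle condition $\angle(P_Q,P_M)<\lambda_0$ does not make sense across the horizontal/vertical dichotomy in the way you need. The paper handles this with a separate step (its Proposition on vertical approximation): it shows that the set $\{Q\in\cG_0:\alpha(Q)<r\}$ of cubes whose approximating plane is ``too horizontal'' is itself Carleson. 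The mechanism is that a horizontal plane $P_y$ has a distinguished center $y$, and good descendants of $Q$ whose plane stays nearly horizontal must cluster in a shrinking ball around $y$; this forces geometric decay of their total $\sigma^{2k+1}$-mass. Only after discarding these (Carleson-many) cubes can one assume every $V_Q$ is vertical with $\angle(V_Q,V_M)<\lambda$, which is what the stopping-time grouping and the gluing actually use.

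A minor point: your ``bad cubes of Ahlfors type'' are unnecessary here, since the hypothesis already gives uniform $(C,r)$-regularity of $(E,E^{\cc},\partial E)$ at all scales up to $r$; there is no degeneration to stop on.
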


Theorem~\ref{thm:Ahlfors admits corona} asserts that local Ahlfors-regularity implies the existence of local intrinsic corona decompositions. In order to apply this structural statement, we shall prove in this section a further structural result that provides a decomposition of a general finite-perimeter cellular set into pieces that are locally Ahlfors-regular.

\begin{lemma}\label{lem:ballDecomp}
  There exists $C=C_k>0$ such that if $E\subset \H^{2k+1}$ is a finite-perimeter cellular set, then one can find $n\in \N$, $\{x_i\}_{i=0}^{n-1}\subset \H^{2k+1}$, $\{s_i\}_{i=0}^{n-1}\subset [1,\infty)$ and a sequence  $E_0=E,E_1,\dots, E_n=\emptyset$ of subsets of $\H^{2k+1}$ such that if we denote $B_i=B_{s_i}(x_i)$ then the following assertions hold true.
 \begin{itemize}
 \item $(E_i,E^\cc_i,\partial E_i)$ is $(C,s_i)$-regular for every $i\in \n$,
 \item $E_i\symdiff E_{i+1}\subset B_i$ for every $i\in \{0,\ldots,n-1\}$,
 \item $\sum_{i=0}^{n-1} s_i^{2k+1} \lesssim \cH^{2k+1}(\partial E)$,
 \item For every $p\in [1,\infty)$ we have
   \begin{equation}\label{eq:ball decomp vper}
   \|\vpf(E)\|_{L_p(\R)}\le C\cH^{2k+1}(\partial E)+\sum_{i=0}^{n-1} \big\|\vpfl{B_i}(E_{i})\big\|_{L_p(\R)}.
   \end{equation}
 \end{itemize}
\end{lemma}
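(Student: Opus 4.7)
The plan is to construct the decomposition via a greedy peeling procedure that iteratively excises the smallest-scale local failure of Ahlfors regularity. Given the cellular set $E_0\eqdef E$, inductively suppose $E_i$ has been defined. Let
$$s_i\eqdef \inf\Big\{s\ge 1\mid \exists\, x\in \H^{2k+1},\ (E_i,E_i^\cc,\partial E_i) \text{ fails to be } (C,s)\text{-regular at }x\Big\},$$
select $x_i$ realizing this infimum up to a fixed multiplicative constant, and set $B_i\eqdef B_{s_i}(x_i)$. Define $E_{i+1}$ by replacing $E_i$ inside $B_i$ with either $B_i$ itself or the empty set, choosing whichever outcome removes more of $\partial E_i\cap B_i$. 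The procedure terminates when no ball witnesses a failure of $(C,s)$-regularity for any $s\ge 1$, at which point $E_i$ is bounded and one final analogous removal at an enclosing ball of size $s_n$ yields $E_n=\emptyset$.

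The core claim is that each step decreases $\CH^{2k+1}(\partial E_i)$ by a definite amount $\gtrsim s_i^{2k+1}$. The failure of $(C,s_i)$-regularity at $x_i$ falls into one of three types: (i) $\CH^{2k+1}(B_i\cap \partial E_i)>Cs_i^{2k+1}$, in which case after the modification $\partial E_{i+1}\cap \mathrm{int}(B_i)=\emptyset$, eliminating at least $Cs_i^{2k+1}$ of prior boundary at the cost of at most $\CH^{2k+1}(\partial B_i)\lesssim s_i^{2k+1}$ of new boundary along the sphere; (ii) the lower bound $\CH^{2k+2}(B_i\cap E_i)\ge s_i^{2k+2}/C$ (or the symmetric version for $E_i^\cc$) fails, in which case the relative isoperimetric inequality of Lemma~\ref{lem:relativeIsoperimetric} forces $\CH^{2k+1}(\partial E_i\cap B_{\Lambda_k s_i}(x_i))\gtrsim s_i^{2k+1}$, which the modification then eliminates; (iii) the analogous cases with the upper volume bound. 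For $C=C_k$ chosen large enough (depending only on $k$ and the constant $\Lambda_k$ of Lemma~\ref{lem:relativeIsoperimetric}), each case yields a net perimeter reduction of $\gtrsim s_i^{2k+1}$. Summing gives $\sum_{i=0}^{n-1}s_i^{2k+1}\lesssim \CH^{2k+1}(\partial E)$; the lower bound $s_i\ge 1$ then forces termination in finitely many steps. To verify that $E_{i+1}$ is $(C,s_i)$-regular, note that at scales strictly below $s_i$ regularity of $E_i$ is inherited by minimality of $s_i$, while any new boundary lies on the smooth sphere $\partial B_i$, which is uniformly Ahlfors regular at all scales up to $s_i$; at scale $s_i$ itself, the modification has precisely eliminated the failure at $x_i$.

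For the vertical perimeter estimate, iterating property~(2) of Lemma~\ref{lem:vPerProps} gives the pointwise inequality
$$\vpf(E_0)\le \sum_{i=0}^{n-1}\vpf(E_i\symdiff E_{i+1})+\vpf(E_n),$$
and $\vpf(E_n)\equiv 0$ since $E_n=\emptyset$. As $E_i\symdiff E_{i+1}$ equals $B_i\cap E_i$ or $B_i\cap E_i^\cc$, combining properties~(1) and~(3) of Lemma~\ref{lem:vPerProps} yields
$$\vpf(E_i\symdiff E_{i+1})\le \vpfl{B_i}(E_i)+\vpf(B_i).$$
A routine computation analogous to Remark~\ref{rem:box} shows $\|\vpf(B_i)\|_{L_p(\R)}\lesssim s_i^{2k+1}$ for every $p\in [1,\infty)$, so the triangle inequality in $L_p(\R)$ and the bound $\sum_i s_i^{2k+1}\lesssim \CH^{2k+1}(\partial E)$ combine to yield~\eqref{eq:ball decomp vper}. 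The hardest step will be the middle paragraph: selecting a single constant $C=C_k$ that simultaneously handles all three failure modes with a uniform quantitative perimeter decrease, and verifying that modifying $E_i$ on $B_i$ does not create new regularity failures at smaller scales elsewhere — the latter requires care because the new sphere piece $\partial B_i\cap \partial E_{i+1}$ could in principle interact poorly with the preexisting boundary $\partial E_i$ in a neighborhood of $\partial B_i$.
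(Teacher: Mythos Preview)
Your overall strategy---greedy peeling at the smallest scale where regularity fails---matches the paper's, and the vertical perimeter estimate at the end is correct. However, the core claim that each step yields a net perimeter decrease $\gtrsim s_i^{2k+1}$ has genuine errors.

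In your case (ii), when $\cH^{2k+2}(B_i\cap E_i)<s_i^{2k+2}/C$, Lemma~\ref{lem:relativeIsoperimetric} gives an \emph{upper} bound on $\cH^{2k+1}(\partial E_i\cap B_{\Lambda_k s_i})$, not a lower bound: the left-hand side of~\eqref{eq:local iso} is small precisely because one of the volumes is small. The paper obtains the needed lower bound differently: since $x_i\in E_i$ but $E_i$ has small volume in $B_i$, there is a point $y\in\partial E_i$ within distance $\ll s_i$ of $x_i$, and then the \emph{lower regularity of $\partial E_i$ at scale $\sim s_i/4$}, which holds by the minimality of $s_i$, gives $\cH^{2k+1}(\partial E_i\cap B_{s_i/2}(x_i))\gtrsim c_0^{-1}s_i^{2k+1}$. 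Even with this correction, the removed boundary is only $\gtrsim c_0^{-1}s_i^{2k+1}$, so bounding the added boundary by the full sphere measure $\lesssim s_i^{2k+1}$ does not suffice. The paper resolves this by a coarea/pigeonhole step: averaging $\cH^{2k+1}(E_i\cap \partial\Bword_\rho)$ over $\rho\in[s_i/(2S),s_i/S]$ finds a specific radius $r_*$ with added boundary $\lesssim c_0^{-(2k+2)/(2k+1)}s_i^{2k+1}\ll c_0^{-1}s_i^{2k+1}$. The same mechanism handles the case you omit---$\cH^{2k+1}(\partial E_i\cap B_i)<c_0^{-1}s_i^{2k+1}$---where the isoperimetric inequality is used in the \emph{correct} direction to convert small boundary into small volume of $E_i$ or $E_i^\cc$, reducing to the case just discussed.

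Finally, the paper uses \emph{cellular} balls $\Bword_r$ rather than Carnot--Carath\'eodory balls, which keeps every $E_i$ cellular and hence automatically regular below scale $1$; this entirely sidesteps the seam issue you flag at the end. With metric balls, that issue is real and unresolved in your sketch.
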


Before proving Lemma~\ref{lem:ballDecomp}, we now show that Theorem~\ref{thm:isoperimetric continuous} follows quickly from Theorem~\ref{thm:Ahlfors admits corona} and Lemma~\ref{lem:ballDecomp}, in combination with the results that we already proved in Sections~\ref{sec:reductions}, \ref{sec:intrinsic graphs} and~\ref{sec:isoperimetry of corona}.

\begin{proof}[{Proof of Theorem~\ref{thm:isoperimetric continuous} assuming Theorem~\ref{thm:Ahlfors admits corona} and Lemma~\ref{lem:ballDecomp}}] In Section~\ref{sec:reductions} we have already shown that Theorem~\ref{thm:isoperimetric continuous} follows from Theorem~\ref{thm:isoperimetric cellular}, i.e., its special case for cellular sets. So, suppose that $E\subset \H^{2k+1}$ is a cellular set with $\cH^{2k+1}(\partial E)<\infty$. Let $n\in \N$, $\{s_i\}_{i=0}^{n-1}$, $\{E_i\}_{i=0}^n$, $\{B_i\}_{i=0}^{n-1}$ be as in  Lemma~\ref{lem:ballDecomp}. By Theorem~\ref{thm:Ahlfors admits corona} we know that for every $i\in \n$ the pair $(E_i,\partial E_i)$ admits a $(K,s_i)$-local corona decomposition with $K\lesssim 1$. By combining Proposition~\ref{prop:lip graph general statement} and Proposition~\ref{prop:coronaBoundsVPerL2L1} we see that $\vpfl{B_i}(E_{i})\lesssim s_i^{2k+1}$ for every $i\in \{0,\ldots,n-1\}$. Hence,
$$
\|\vpf(E)\|_{L_2(\R)}\lesssim \cH^{2k+1}(\partial E)+\sum_{i=0}^{n-1} \big\|\vpfl{B_i}(E_{i})\big\|_{L_p(\R)}\lesssim  \cH^{2k+1}(\partial E)+\sum_{i=0}^{n-1} s_i^{2k+1}\lesssim \cH^{2k+1}(\partial E),
$$
where the first inequality is~\eqref{eq:ball decomp vper} and the final inequality uses the third assertion of Lemma~\ref{lem:ballDecomp}.
\end{proof}

The remainder of this section is devoted to the proof of Lemma~\ref{lem:ballDecomp}.  To do so, we will need some preparatory definitions and lemmas regarding cellular sets.  Recall that $E\subset \H^{2k+1}$ is cellular if it is a union of translates of the unit cube $C_0=[-\frac{1}{2},\frac{1}{2}]^{2k+1}$ by elements of $\H^{2k+1}_\Z$.  The set of all translates of $C_0$ by elements of $\H^{2k+1}_\Z$ gives rise to the unit lattice $\tau^{2k+1}$.  If $C\in \tau^{2k+1}$, then $C=hC_0$ for some $h\in \H^{2k+1}_\Z$, which we denote $h_C$.

For $x,y,z>0$, let $C_{x,y,z}$ be the cuboid $[-x,x]^k\times [-y,y]^k\times [-z,z]\subset \H^{2k+1}$.  If $h\in \H^{2k+1}_\Z$ and $r>0$ is an integer, we define the \emph{cellular ball} $\Bword_r(h)$ to be the set
$$\Bword_r(h)\eqdef h\cdot \Bword_r\eqdef h\big(\H^{2k+1}_\Z\cap C_{r,r,r^2}\big) C_0.$$

\begin{lemma}\label{lem:cellBallProps}
  There exists $c=c_k\in (0,\infty)$ such that for all $g\in \H^{2k+1}_\Z$ and all $r\in \N$ and $p\in (0,\infty)$,
  \begin{itemize}
\item    $B_{r/c}(g) \subset \Bword_r(g)\subset B_{cr}(g)$,
\item   $\cH^{2k+2}\big(\Bword_r(g)\big)\asymp r^{2k+2}$,
 \item    $\cH^{2k+1}\big(\partial\Bword_r(g)\big)\asymp r^{2k+1}$,
\item    $\big\|\vpf\big(\Bword_r(g)\big)\big\|_{L_p(\R)} \asymp r^{2k+1}$.
  \end{itemize}
\end{lemma}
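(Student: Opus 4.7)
By the left-invariance of the Carnot--Carath\'eodory metric, the Haar measure $\cH^{2k+2}$, the perimeter measure, and the vertical perimeter function (see Lemma~\ref{lem:vPerProps} and Section~\ref{sec:measure perimeter}), all four claims are invariant under replacing $g$ by $\0$, so we may assume throughout that $g=\0$ and write $\Bword_r=\Bword_r(\0)$.

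For the first bullet, the inclusion $\Bword_r\subset B_{cr}$ follows directly from the metric approximation~\eqref{eq:metric approximation}: any lattice point $h\in\H_{\ms{\Z}}^{2k+1}\cap C_{r,r,r^2}$ has $d(\0,h)\lesssim r$, and each cell $hC_0$ has diameter $\lesssim 1$, so $\Bword_r$ is contained in an $O(1)$-neighborhood of $B_{O(r)}$. The reverse inclusion $B_{r/c}\subset\Bword_r$ follows similarly: if $p\in B_{r/c}$ and $h$ is the (essentially unique) lattice point with $p\in hC_0$, then $d(\0,h)\le d(\0,p)+d(p,h)\lesssim r/c+1$, and another application of~\eqref{eq:metric approximation} shows that after choosing $c$ large enough we have $|x_i(h)|,|y_i(h)|\le r$ and $|z(h)|\le r^2$, i.e., $h\in\H_{\ms{\Z}}^{2k+1}\cap C_{r,r,r^2}$. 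For the second bullet, I would count: the number of lattice points in $C_{r,r,r^2}$ is $\asymp r^{2k}\cdot r^2=r^{2k+2}$ (remembering that $z$-coordinates of elements of $\H_{\ms{\Z}}^{2k+1}$ are integers or half-integers), and since the unit cells tile $\H^{2k+1}$ with unit volume, we obtain $\cH^{2k+2}(\Bword_r)\asymp r^{2k+2}$.

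For the third bullet, the upper bound comes from Proposition~\ref{prop:per cellular} applied to $\Bword_r$: the boundary $\partial\Bword_r$ is made up of faces of unit cells, and only those cells $hC_0\subset\Bword_r$ adjacent to a cell outside contribute. Such ``boundary'' lattice points lie within $O(1)$ (in the lattice generators $X_i,Y_i$) of the boundary of the box $C_{r,r,r^2}$, and a direct count of lattice points on the $2k$ ``lateral'' faces (each contributing $\asymp r^{2k-1}\cdot r^2$) and the two ``top/bottom'' faces (each contributing $\asymp r^{2k}$) gives  $\cH^{2k+1}(\partial\Bword_r)\lesssim r^{2k+1}$. The matching lower bound is immediate from the Heisenberg isoperimetric inequality (which follows, e.g., from Lemma~\ref{lem:relativeIsoperimetric} applied to $\Bword_r$ inside a ball $B_R$ with $R=2cr$, using the first two bullets to see $\cH^{2k+2}(\Bword_r\cap B_R)\asymp\cH^{2k+2}(\Bword_r^\cc\cap B_R)\asymp R^{2k+2}$).

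The fourth bullet is the most substantial and is where I expect the main technical care to be needed. The plan is to mimic the computation for the smooth box in Remark~\ref{rem:box}. Because $Z$ is central, right-multiplication by $Z^{2^{2s}}$ coincides with the Euclidean translation $(x,y,z)\mapsto(x,y,z+2^{2s})$ in the coordinates of Section~\ref{sec:continuous}, so $\vpf(\Bword_r)(s)=2^{-s}\cH^{2k+2}(\Bword_r\symdiff(\Bword_r+2^{2s}e_z))$ can be analyzed column-by-column. For each fixed $(x,y)\in\R^{2k}$ with $|x_i|,|y_i|\lesssim r$, the vertical profile $\{z\in\R\mid (x,y,z)\in\Bword_r\}$ is, by direct inspection of which cell $hC_0$ contains $(x,y,z)$, an interval of length $\asymp r^2$ (centered at a point that shifts by the symplectic correction but whose location is irrelevant for the symmetric difference), while for $(x,y)$ outside this range the profile is empty. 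Consequently, the length of the symmetric difference between the profile and its vertical translate by $2^{2s}$ is $\asymp\min(r^2,2^{2s})$, and integrating over the $xy$-range (of Euclidean area $\asymp r^{2k}$) gives the same two-regime estimate as in Remark~\ref{rem:box}:
\[
\vpf(\Bword_r)(s)\asymp\begin{cases}2^s r^{2k}&\text{if }2^s\le r,\\ r^{2k+2}/2^s&\text{if }2^s\ge r.\end{cases}
\]
Taking $L_p$-norms (splitting the integral at $s=\log_2 r$) gives $\|\vpf(\Bword_r)\|_{L_p(\R)}^p\asymp r^{(2k+1)p}$, and the matching lower bound follows from the point-wise estimate $\vpf(\Bword_r)(s)\gtrsim r^{2k+1}$ on an interval of length $\asymp 1$ around $s=\log_2 r$. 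The main obstacle is keeping track of the cellular (parallelepiped) geometry to justify the column-by-column profile analysis; this boils down to the observation that the symplectic shear $\omega(h,\cdot)$ affects only the \emph{location} of the vertical interval in each column, not its length, and hence is invisible to the symmetric difference under a pure $Z$-shift.
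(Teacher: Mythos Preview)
Your proof is correct and follows essentially the same strategy as the paper. For the fourth bullet, the paper phrases the column-by-column argument as a decomposition of $\Bword_r$ into $(2r+1)^{2k}$ ``stacks'' $S_{h,n}=\bigcup_{i=0}^n hZ^iC_0$, one per horizontal lattice position; each stack is literally a single tall parallelepiped with $\vpf(S_{h,n})(s)=\min\{2^{2s},n\}/2^s$, and summing gives the two-regime formula you obtained by integrating over $(x,y)$. The two viewpoints are equivalent.

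One inaccuracy in your third-bullet upper bound: you assert that boundary cells lie ``within $O(1)$ \dots of the boundary of the box $C_{r,r,r^2}$'' and count the top/bottom faces as contributing $\asymp r^{2k}$. This undercounts. Because adjacency in the Heisenberg tiling carries the symplectic twist, if $C,D$ are adjacent cells with $h_C\in C_{r,r,r^2}$ then $|z(h_C)-z(h_D)|$ can be as large as $O(r)$ (indeed $h_C^{-1}h_D\in C_0^2\subset C_{1,1,1+k/4}$, but $z(h_D)=z(h_C)+z(g)+\tfrac12\omega(h_C,g)$ with $|\omega(h_C,g)|\lesssim r$). The correct containment is therefore $h_C\in C_{r,r,r^2}\setminus C_{r-1,r-1,r^2-O(r)}$, so the top/bottom boundary layer has $z$-thickness $O(r)$ and contributes $O(r^{2k+1})$ cells rather than $O(r^{2k})$. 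Your final bound $\cH^{2k+1}(\partial\Bword_r)\lesssim r^{2k+1}$ is unaffected because the lateral faces already give the dominant term, but the counting as stated is not right.
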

\begin{proof}
  It suffices to consider the case that $g=\0$.  The first and second assertions follow from \eqref{eq:metric approximation} and the fact that $\cH^{2k+2}(B_r)\asymp r^{2k+2}$.

  If $D\in \tau^{2k+1}$ is adjacent to the cell $C_0$, then there is a $v\in C_0\cap D$.  Then $v, h_D^{-1}v\in C_0$, so $h_D\in C_0 \cdot C_0^{-1}=C_0^2$.  By~\eqref{eq:def group product omega}, $C_0^2\subset C_{1,1,1+k/4}$. Let $A\subset \tau^{2k+1}$ be the set of $(2k+1)$-cells contained in $\Bword_r$ that intersect $\partial \Bword_r$.  Each element of $A$ is adjacent to some cell $D$ such that $h_D\not \in C_{r,r,r^2}$, so if $C\in A$, then $h_C\not \in C_{r-1,r-1,r^2-2kr}$.  Then $A\subset C_{r,r,r^2}\setminus C_{r-1,r-1,r^2-2kr}$, so $|A|\lesssim r^{2k+1}$.  Since the boundary of $\Bword_r$ is contained in the boundaries of the cells of $A$, we have $\cH^{2k+1}(\partial\Bword_r(g))\lesssim r^{2k+1}$.  On the other hand, Lemma~\ref{lem:relativeIsoperimetric} implies that $\cH^{2k+1}(\partial\Bword_r)\gtrsim r^{2k+1}$. This proves the third assertion.

  Finally, to bound the vertical perimeter of $\Bword_r$, note that $\Bword_r$ consists of the union of $(2r+1)^{2k}$ ``stacks'' of parallelepipeds, i.e., regions of the form $S_{h,n}=\bigcup_{i=0}^n h Z^i C_0$ for some $h\in \H^{2k+1}_\Z$ and some height $n\lesssim r^2$.  Each such stack is a single, very tall parallelepiped, so
  $\vpf(S_{h,n})(s)=\frac{\min \{2^{2s}, n\}}{2^s}$ for every $s\in \R$. It follows that
  $$\vpf\big(\Bword_r\big)(s)\asymp (2r+1)^{2k}\cdot \frac{\min \{2^{2s}, r^2\}}{2^s}.$$
  By integrating these estimates we conclude that $\big\|\vpf\big(\Bword_r\big)\big\|_{L_p(\R)}\asymp r^{2k+1}$.
\end{proof}

We will need the following discrete version of Ahlfors regularity.

\begin{defn}
  Fix $C,r,s>0$.  A set $A\subset \H^{2k+1}$ is discretely $(C,r)$-Ahlfors $s$-regular if
  $$\frac{\rho^s}{C}\le \cH^s\big(A\cap \Bword_\rho(x)\big)\le C\rho^s,$$
  for all $1\le \rho \le r$ and all $x\in \H_\Z^{2k+1}$ such that $\cH^{s}(x C_0\cap A)>0$.

  We will use the same convention on dimension as in Definition~\ref{def:local Ahlfors regularity}.  That is, if $E\subset \H^{2k+1}$ and we say that $E$ is discretely $(C,r)$-regular, then it will be understood that $E$ is discretely $(C,r)$-Ahlfors $s$-regular with $s=2k+2$.  When we say that $\partial E$ is discretely $(C,r)$-regular we mean that $\partial E$ is assumed to be discretely $(C,r)$-Ahlfors $s$-regular with $s=2k+1$.
\end{defn}

By Lemma~\ref{lem:cellBallProps}, a $(C,r)$-regular set is discretely $(C',r)$-regular for some $C'>0$ depending on $C$ and $k$.  Conversely, any union of $s$-cells of $\tau^{2k+1}$ is locally Ahlfors regular on sufficiently small scales, so if $E$ is cellular and $(E,E^\cc,\partial E)$ is discretely $(C,r)$-regular, then it is also $(C',r)$-regular for some $C'>0$ depending on $C$ and $k$.

Recall that Lemma~\ref{lem:relativeIsoperimetric} is a relative isoperimetric inequality for subsets of balls in $\H^{2k+1}$.  By Lemma~\ref{lem:cellBallProps}, Lemma~\ref{lem:relativeIsoperimetric} also holds (with different constants) for subsets of cellular balls, i.e., there exists $S=S_k\in (1,\infty)$ such that if $r\ge 1$ and $h\in \H^{2k+1}$, then
\begin{equation}\label{eq:relative isoperimetric cellular}
  \left(\frac{\vol\big(E\cap h\Bword_r\big)\cdot \vol\big(E^{\cc}\cap h\Bword_r\big)}{\vol\big(\Bword_r\big)^2}\right)^{\frac{2k+1}{2k+2}} \lesssim \frac{r \area\big(\partial E\cap h\Bword_{Sr}\big)}{\vol\big(h\Bword_{Sr}\big)}.
\end{equation}

\begin{proof}[{Proof of Lemma~\ref{lem:ballDecomp}}]
  We will construct the $\{E_i\}_{i=0}^n$ inductively, by choosing appropriate cellular balls $H_i=\Bword_{r_i}(x_i)$ with $r_i\ge 1$ and letting
  \begin{equation}\label{eq:def Ei+1}E_{i+1}\eqdef
  \begin{cases}
    E_i\cup H_i & \text{if $\cH^{2k+2}(E_i\cap H_i)>\frac{\cH^{2k+2}(H_i)}{2}$},\\
    \overline{E_i \setminus  H_i} & \text{if $\cH^{2k+2}(E_i\cap H_i)\le\frac{\cH^{2k+2}(H_i)}{2}$.}
  \end{cases}\end{equation}
  This ensures that $E_i$ will be a cellular set for all $i\in \n$.  By the remark above, $(E_i,E_i^\cc,\partial E_i)$ is regular if and only if it is discretely regular.

  The most important factor in choosing the cellular ball $H_i$ is that the size of $\partial E_i$ must decrease quickly as $i$ increases.  Specifically, we claim for any finite-perimeter cellular set $E=E_0$, we can choose a cellular ball $H=H_0=\Bword_{r_0}(x_0)=\Bword_{r}(x)$ so that if $E_1$ is defined as above, then
  \begin{equation}\label{eq:area decrease inequality}
    \cH^{2k+1}(\partial E) - \cH^{2k+1}(\partial E_{1})=\cH^{2k+1}(H\cap \partial E) - \cH^{2k+1}(H\cap \partial E_{1})\gtrsim r^{2k+1}.
  \end{equation}

  Let $c_0>0$ be a large constant to be determined later and let $S$ be as in~\eqref{eq:relative isoperimetric cellular}.  Let $r\in \N$ be the largest integer such that $\partial E$ is discretely $(c_0,r-1)$-regular and $E,E^{\cc}$ are both discretely $(c_0^{(2k+2)/(2k+1)}, r-1)$-regular. Since $E$ is cellular, we can assume $r\ge 10S$ by taking $c_0$ to be sufficiently large.  Any ball of radius $r$ can be covered by boundedly (depending only on $k$) many balls of radius $r-1$, so $(E, E^{\cc}, \partial E)$ is discretely $(c_1,r)$--regular, where $c_1\asymp c_0^{(2k+2)/(2k+1)}$.

  If $r\ge \diam E$, then we can take $H$ to be a ball of radius $2r$ containing $E$, so that $E_1=\emptyset$.  Since $\partial E$ is discretely $(c_0,r-1)$--regular, we have $\cH^{2k+1}(\partial E)\gtrsim r^{2k+1}$ and thus \eqref{eq:area decrease inequality} holds.  Otherwise, suppose that $r<\diam E$.  Then one of the following conditions holds.
  \begin{enumerate}
  \item
    There is $x\in \partial E$ such that $\area (\partial E\cap \Bword_r(x))>c_0 r^{2k+1}$.
  \item
    There is  $x\in \partial E$ such that $\area (\partial E\cap \Bword_r(x))<c_0^{-1} r^{2k+1}$.
  \item
    There is  $x\in E$ such that $\vol(E\cap \Bword_r(x))<c_0^{-^{(2k+2)/(2k+1)}} r^{2k+2}.$
  \item
    There is  $x\in E^{\cc}$ such that $\vol(E^{\cc}\cap \Bword_r(x))<c_0^{-^{(2k+2)/(2k+1)}} r^{2k+2}.$
  \end{enumerate}
  We will handle these cases one by one.

  If Case (1) holds, take $H=\Bword_{r}(x)$.  So, $\cH^{2k+1}(H\cap \partial E_0) >c_0 r^{2k+1}$ and $\cH^{2k+1}(H\cap \partial E_{1})\lesssim r^{2k+1}$.  If $c_0$ is sufficiently large, then this implies~\eqref{eq:area decrease inequality}.

  Next, suppose that Case (2) holds.  Then $\area (\partial E\cap \Bword_r(x))<c_0^{-1} r^{2k+1}$, so by \eqref{eq:relative isoperimetric cellular} we have
  \begin{equation*}
    \frac{\min \Big\{\vol\Big(E\cap \Bword_{\lfloor r/S \rfloor}(x)\Big), \vol\Big(E^{\cc}\cap \Bword_{\lfloor r/S \rfloor}(x)\Big)\Big\}}{r^{2k+2}}
     \lesssim \left(\frac{\area\big(\partial E\cap \Bword_{r}(x)\big)}{r^{2k+1}}\right)^{\frac{2k+2}{2k+1}}<c_0^{-\frac{2k+2}{2k+1}}.
    \end{equation*}
  Hence,
  \begin{equation}\label{eq:use isoper box}
  \min \Big\{\vol\Big(E\cap \Bword_{\lfloor r/S \rfloor}(x)\Big), \vol\Big(E^{\cc}\cap \Bword_{\lfloor r/S \rfloor}(x)\Big)\Big\}
    \lesssim c_0^{-\frac{2k+2}{2k+1}} r^{2k+2}.
  \end{equation}
 Suppose that $\vol\big(E\cap \Bword_{\lfloor r/S \rfloor}(x)\big)\le \vol\big(E^{\cc}\cap \Bword_{\lfloor r/S \rfloor}(x)\big)$. Then by~\eqref{eq:use isoper box},
  \begin{equation}\label{eq:HvolUpperBound}
    \vol\Big(E\cap \Bword_{\lfloor r/S\rfloor}(x)\Big)\lesssim c_0^{-\frac{2k+2}{2k+1}}r^{2k+2}.
  \end{equation}
  By the pigeonhole principle, it follows that there is an integer $r_*\in [\frac{r}{2S},\frac{r}{S}]$ such that
  $$\area\Big(E \cap \partial \Bword_{r_*}(x)\Big)\lesssim c_0^{-\frac{2k+2}{2k+1}}r^{2k+1}.$$
  Let $H=\Bword_{r_*}(x)$; note that $r_*\ge 5$. Due to~\eqref{eq:HvolUpperBound}, if $c_0$ is large enough then $E_1=\overline{E\setminus  H}$ by~\eqref{eq:def Ei+1}. By the regularity of $\partial E$ we have
  \begin{equation}\label{eq:Hboundary intersection lower}
    \area(H\cap \partial E)\ge c_0^{-1} r_*^{2k+1}.
  \end{equation}
  On the other hand, $H \cap \partial E_1\subset E \cap \partial H$, so
  \begin{equation}
 \area(H\cap \partial E_1)
    \lesssim c_0^{-\frac{2k+2}{2k+1}}r^{2k+1}
    \label{eq:smallIntersectionBoundary}
    \le  c_0^{-\frac{2k+2}{2k+1}}r_*^{2k+1}.
  \end{equation}
  The implicit constants above are independent of $c_0$, so if $c_0$
  is sufficiently large, then
  $$\area(H\cap \partial E)-\area(H\cap \partial E_1)\gtrsim c_0^{-1} r_*^{2k+1},$$
  as desired.

 We argue similarly when $\vol\big(E\cap \Bword_{\lfloor r/S \rfloor}(x)\big)\ge \vol\big(E^{\cc}\cap \Bword_{\lfloor r/S \rfloor}(x)\big)$. In this case, by~\eqref{eq:use isoper box},
  $$\vol\Big(E^{\cc}\cap \Bword_{\lfloor r/S\rfloor}(x)\Big)\lesssim c_0^{-\frac{2k+2}{2k+1}}r^{2k+2}.$$
Choose $r_*\in [\frac{r}{2S},\frac{r}{S}]$ such that
  $$\area\Big(\overline{E^{\cc}} \cap \partial \Bword_{r^*}(x)\Big)\lesssim c_0^{-\frac{2k+2}{2k+1}}r^{2k+1},$$
  and let $H=\Bword_{r'}(x)$. Again, $r^*\ge 5$ and if $c_0$ is large enough then by~\eqref{eq:def Ei+1} we have $E_1=E\cup H$.   Then \eqref{eq:Hboundary intersection lower} holds as before and $H\cap \partial E_1\subset \overline{E^{\cc}} \cap \partial H$, so \eqref{eq:smallIntersectionBoundary} holds as above.  If $c_0$ is sufficiently large, then \eqref{eq:smallIntersectionBoundary} implies \eqref{eq:area decrease inequality}.

  Finally, Cases (3) and (4) are symmetric, so we consider only the case that
  \begin{equation}\label{eq:case 3 upper}
  \vol\big(E\cap \Bword_{r}(x)\big)< c_0^{-\frac{2k+2}{2k+1}}r^{2k+2}.
  \end{equation}
 Hence there exists   $\rho\asymp c_0^{-1/(2k+1)} r$ for which $\vol(\Bword_\rho(x)) > \vol (E\cap \Bword_{r}(x))$. This implies that $\Bword_\rho(x)$ intersects $\partial E$, say at $y\in \Bword_\rho(x)\cap \partial E$. If $c_0$ is sufficiently large, then $\rho<r/4$, so $\Bword_{r/4}(y)\subset \Bword_{r/2}(x)$. By the $(r-1)$--local regularity of $\partial E$,
   \begin{equation}\label{eq:r/2 volume lower}
   \area\Big(\Bword_{\frac{r}{2}}(x) \cap \partial E\Big)\ge \area\Big(\Bword_{\frac{r}{4}}(y)\cap \partial E\Big)\ge c_0^{-1} \left(\frac{r}{4}\right)^{2k+1}.\end{equation}
  On the other hand, by the coarea formula, by~\eqref{eq:case 3 upper} there is $r^*\in [\frac{r}{2},r]$ such that if $H=\Bword_{r}(x)$, then
  \begin{equation}\label{eq:H case 3}
  \area(H\cap \partial E_1)\lesssim c_0^{-\frac{2k+2}{2k+1}}r^{2k+1}.
  \end{equation}
  Comparing~\eqref{eq:r/2 volume lower} and~\eqref{eq:H case 3}, we see that \eqref{eq:area decrease inequality} holds when $c_0$ is sufficiently large.

  Applying the above procedure inductively, we obtain $\{E_i\}_{i=0}^n$ and $\{H_i=\Bword_{r_i}(x_i)\}_{i=0}^{n-1}$ such that
  \begin{equation}\label{eq:area decrease all i}
  \forall\, i\in \{0,\ldots,n-1\},\qquad   \cH^{2k+1}(\partial E_i) - \cH^{2k+1}(\partial E_{i+1})\gtrsim r_i^{2k+1}.
  \end{equation}
  Let $s_i=\diam H_i\asymp r_i\gtrsim 1$ and $B_i=B_{s_i}(x_i)$ for all $i\in \{0,\ldots,n-1\}$.  We claim that this sequence satisfies the conclusions of Lemma~\ref{lem:ballDecomp}. Firstly, each step of this iteration decreases $\cH^{2k+1}(\partial E_i)$ by a definite amount, so the construction eventually halts.  By~\eqref{eq:area decrease all i}, we have
  $$\cH^{2k+1}(\partial E)=\sum_{i=0}^{n-1} \big(\cH^{2k+1}(\partial E_i) - \cH^{2k+1}(\partial E_{i+1})\big)\gtrsim \sum_{i=0}^{n-1} r_i^{2k+1}\asymp \sum_{i=0}^{n-1} s_i^{2k+1}.$$
  By construction, there exists $C=C_k\in (0,\infty)$ such that $(E_i,E_i^\cc,\partial E_i)$ is $(C,s_i)$-regular and $E_i\symdiff E_{i+1}\subset B_i$ for all $i\in \{0,\ldots,n-1\}$.

  Finally, up to a set of measure zero, for every $i\in \{0,\ldots,n-1\}$ either $E_i\symdiff E_{i+1}=H_i\cap E_i^\cc$ or $E_i\symdiff E_{i+1}=H_i\cap E_i$.  Since $\vpfl{H_i}(E_i)=\vpfl{H_i}(E_i^\cc)$, Lemma~\ref{lem:vPerProps} implies that in both  cases we have
  \begin{align*}
    \big|\vpf(E_i)(s)-\vpf(E_{i+1})(s)\big|
    \le \vpf(E_i\symdiff E_{i+1})(s)
    \le \vpfl{H_i}(E_i)(s)+\vpf(H_i)(s)
    \le \vpfl{B_i}(E_i)(s)+\vpf(H_i)(s),
  \end{align*}
  for all $s\in \R$. Consequently,
  $$\forall\, s\in \R,\qquad |\vpf(E)(s)|\le \sum_{i=0}^{n-1} \big(\vpfl{B_i}(E_i)(s)+\vpf(H_i)(s)\big).$$
  Using  Lemma~\ref{lem:cellBallProps}, we therefore conclude that there exists $C'\in (0,\infty)$ such that
  \begin{equation*}
    \|\vpf(E)\|_{L_p(\R)}
    \le \sum_{i=0}^{n-1} \Big(\big\|\vpfl{B_i}(E_i)\big\|_{L_p(\R)}+C's_i^{2k+1}\Big)
    \le C'\cH^{2k+1}(\partial E)+\sum_{i=0}^{n-1} \big\|\vpfl{B_i}(E_i)\big\|_{L_p(\R)}.\tag*{\qedhere}
  \end{equation*}
\end{proof}

\section{Constructing corona decompositions} \label{sec:constructing corona}

In this section, we prove Theorem~\ref{thm:Ahlfors admits corona}. Recall that we are given $C,r\in (0,\infty)$ and $E\subset \R^{2k+1}$ such that $(E,E^\cc,\partial E)$ is $(C,r)$-regular. Our goal is, given $\lambda,\theta\in (0,\infty)$, to construct a $(K,\gamma,\lambda,\theta,r)$-corona decomposition for the pair $(E,\partial E)$, where $K$ is allowed to depend only on $k$ and $C$ and the Carleson packing constant $\gamma$ is allowed to depend only on $C,k,\lambda,\theta$.

To start, as we recalled in Section~\ref{sec:corona decompositions}, since $\partial E$ is $(C,r)$-regular we may apply the classical construction of Christ cubes~\cite{ChristTb,DavidWavelets} to obtain a $(K,r)$-cubical patchwork $\{\Delta_i\}_{i\le \lfloor \log_2 r\rfloor}$ of $\partial E$, where $K=K(C,k)$. The cubical patchwork $\Delta$ will be used below to build the desired coronization.

We will first give a brief outline of the argument, proceed to introduce the key concept of monotone and $\delta$-monotone sets, and then prove Theorem~\ref{thm:Ahlfors admits corona}.  As in Section~\ref{sec:corona decompositions}, for every $Q\in \Delta$ we define $N_{\rho}(Q)=\nbhd_{\rho\sigma(Q)}(Q)$ and $\rho Q=\partial E \cap N_\rho(Q).$ The proof follows these steps:
\begin{enumerate}
\item
  We show that for every $\epsilon>0$, there are sets $\cG_0,\cB_0\subset \Delta$ such that $\Delta=\cG_0\cup \cB_0$, $\cB_0$ is Carleson, and for every $Q\in \cG_0$, there is a half-space $P^+_Q$ with bounding plane $P_Q$ such that
  $$d_{N_{\frac{1}{\epsilon}}(Q)}\big(P_Q^+, E\big)\le \epsilon \sigma(Q).$$
To prove this, we adapt the results on the stability of monotone sets proved in \cite{CKN}.  In that paper, quantitative stability results were proved for monotone sets in $\H^3$.  Similar techniques would lead to a corresponding stability statement for $\H^{2k+1}$, but to keep this paper self-contained we will give a much simpler non-quantitative proof of the stability of monotone sets in $\H^{2k+1}$ that suffices for the present purposes.
\item
  We improve the previous result by showing that for all $\epsilon$, there are sets $\cG, \cB$ such that $\cB$ is Carleson and such that for all $Q\in \cG$, there is a vertical half-space $V^+_Q$ with bounding plane $V_Q$ such that
  $$d_{N_{\frac{1}{\e}}(Q)}\big(V_Q^+, E\big)\le \epsilon \sigma(Q).$$
This step uses the fact that a horizontal plane in $\H^{2k+1}$ is not homogeneous.  A horizontal plane is the union of the horizontal lines through a point, so it has a natural center.  The intersection of a horizontal plane with a ball far from its center is close to a vertical plane, so if $E$ is close to a horizontal plane $P$ on a ball, then $E$ is close to a vertical plane on most smaller balls.
\item
  For any $\eta>0$, we construct a coronization $(\cB,\cG,\cF)$ of $\partial E$ based on the sets of good and bad cubes from the previous step.  This coronization has the property that if $\cS\in \cF$ is a stopping-time region and $\mathsf{Q}(\cS)$ is the maximal cube of $\cS$, then all $Q\in \cF$ satisfy the angle estimate $\angle(V_Q,V_{\mathsf{Q}(\cS)})\le \eta$. As in Chapter 7 of \cite{DavidSemmesSingular}, we construct the stopping-time regions by starting with a maximal cube, then adding descendants of that cube as long as they satisfy certain conditions.  The main difficulty is proving that this construction does not produce too many stopping-time regions, i.e., that the maximal cubes of these regions satisfy a Carleson packing condition.
\item
  Finally, we show that this coronization is in fact a corona decomposition by showing that for all $\cS\in \cF$, there is an intrinsic Lipschitz graph satisfying Definition~\ref{def:coronaDecomp}.  We construct this graph by using a partition of unity to glue together the different planes $V_Q$ as $Q$ ranges over $\cS$.  This is similar to the argument in Chapter 8 of \cite{DavidSemmesSingular}, with some complications that arise from the Heisenberg group setting.
\end{enumerate}

\subsection{Monotone and $\delta$-monotone sets}\label{sec:monotone and delta monotone} One of the main tools in this section is the notion of nonmonotonicity used in~\cite{CKN}.  We will review the definition of nonmonotonicity and state some results relating the nonmonotonicity of $E$ to the intersections of $E$ with horizontal lines.  Note, however, that  our versions of $w_j$ and $\widehat{w}_j$ count intervals of length between $2^{j-1}$ and $2^j$, while the corresponding measures in~\cite{CKN} count intervals of length between $\delta^{j+1}$ and $\delta^j$ for some $0<\delta<1$.

\begin{defn}[monotone sets]
  A measurable subset $K\subset \R$ is said to be \emph{monotone} if the characteristic function $\one_K$ is monotone up to a set of measure zero.  Equivalently, up to a set of measure zero, $K$ and $\R\setminus K$ are both intervals.  A subset $F\subset \R^n$ is said to be monotone if for almost every line $L\subset \R^n$, the intersection $F\cap L$ is monotone.

  We say that a measurable set $E\subset \H^{2k+1}$ is monotone if for almost every \emph{horizontal} line $L$, the intersection $E\cap L$ is a monotone subset of $L$.  We say that $E$ is monotone on an open subset $U\subset \H^{2k+1}$ if for almost every horizontal line $L$, the intersections $E\cap L \cap U$ and $E^{\cc}\cap L\cap U$ are both, up to a set of measure zero, the intersections of $L\cap U$ with intervals.
\end{defn}
We will see in the next section that monotone subsets of $\R^n$ and $\H^{2k+1}$ are, up to a set of measure zero, half-spaces; this generalizes a result that Cheeger and Kleiner proved~\cite{CheegerKleinerMetricDiff} for $\H^3$.

Let $\cL$ be the space of horizontal lines in $\H^{2k+1}$ and let $\cL(B_r(x))$ be the set of horizontal lines that intersect $B_r(x)$.  Let $\cN$ be the unique (up to constants) measure on $\cL$ that is invariant under the action of the isometry group.  We normalize $\cN$ so that $\cN(\cL(B_r(x)))=r^{2k+1}$.

If $U\subset \H^{2k+1}$ and $L\in \cL$, define
\[\NM_U(E,L)= \inf\left\{\int_{L\cap U} |\one_I-\one_{E}|\ud\cH_L^1 \mid  \text{ $I$ is a monotone subset of $L$}\right\}.\]
The \emph{nonmonotonicity} of $E$ on $B_r(x)$ is defined by
\begin{equation}\label{eq:def NM Br}
  \NM_{B_r(x)}(E) = \frac{1}{r^{2k+2}} \int_{\cL(B_r(x))} \NM_{B_r(x)}(E,L) \ud\cN(L).
\end{equation}
The normalization by $r^{2k+2}$ makes $\NM_{B_r(x)}(E)$ be scale-invariant, i.e.,  we have
$$\NM_{B_{tr}(\0)}(\s_t(E))=\NM_{B_{r}(\0)}(E)$$
for all $t,r>0$.  A set $E\subset B_r(x)$ is said to be \emph{$\delta$--monotone} on $B_r(x)$ if $\NM_{B_r(x)}(E)< \delta$.

One can express the nonmonotonicity of $E$ using the kinematic formula.  This formula writes the perimeter of a subset of $\H^{2k+1}$ as an integral over the space $\cL$ of horizontal lines in $\H^{2k+1}$.  Recall that, by Lemma~\ref{lem:finite Hausdorff finite perimeter}, if $\cH^{2k+1}(\partial E)<\infty$, then $E$ has finite perimeter and $\Per_E(U)\lesssim \cH^{2k+1}(U\cap \partial E)$ for any open set $U\subset \H^{2k+1}$. If $E$ is a set with finite perimeter, then for almost every horizontal line $L$, the intersection $F=L\cap E$ is a subset of $L$ with finite perimeter. Hence there exists a unique collection of disjoint closed intervals $\cI(F)=\{I_1(F),I_2(F),\dots\}$ such that $F\symdiff \bigcup \cI(F)$ has measure zero.  The boundary $\partial \bigcup \cI(F)$ is a discrete set, and the perimeter measure $\Per(F)$ is the counting measure on $\partial \bigcup \cI(F)$; see~\cite{AmbFuscPall} or~\cite{CKN} for these basic statements. This description leads to the following kinematic formula (see~\cite{Mon05} or equation (6.1) in~\cite{CKN}).  There is $c>0$ such that for any finite-perimeter set $E\subset \H^{2k+1}$ and any open subset $U\subset \H^{2k+1}$,
\begin{equation}\label{eq:kinematic}
  \Per(E)(U)=c\int_{\cL} \Per(E\cap L)(U\cap L) \ud\cN(L).
\end{equation}

In~\cite{CKN} the perimeter measure is further decomposed as follows into measures based on the lengths of the intervals in $\cI(E\cap L)$.  For every $L\in \cL$ and $j\in \Z$, let
$$C_j(E,L)\eqdef\{I\in \cI(E\cap L)\mid 2^{j-1}\le \length(I)< 2^j\},$$
and
$$C_\infty(E,L)=\{I\in \cI(E\cap L)\mid \length(I)=\infty\}.$$
For every $j\in \Z\cup \{\infty\}$, let $\cE_j(E,L)$ denote the set of endpoints of the intervals in $C_j(E,L)$.  Let $c$ be as in~\eqref{eq:kinematic} and let $w_j(E,L)$ be $c$ times the counting measure on $\cE_j(E,L)$.  If we let
$$w_j(E)(A)\eqdef \int_{\cL} w_j(E,L)(A)\ud\cN(L)\qquad\mathrm{and}\qquad \widehat{w}_j(E)(A)\eqdef\frac{w_j(E)(A)+w_j(E^{\cc})(A)}{2},$$
then
\begin{equation}\label{eq:kinematicW}
  \Per(E)=\widehat{w}_\infty(E)+\sum_{j\in \Z} \widehat{w}_j(E).
\end{equation}

Furthermore, we can use the measures $\{\widehat{w}_j\}_{j\in \Z\cup\{\infty\}}$ to bound the nonmonotonicity of $E$.  If $L\in \cL$ and the intersection $E\cap L$ has finite perimeter, then after changing $E$ on a measure-zero subset, we can partition $L$ into intervals $J_1,\dots, J_n$, arranged in ascending order and each with positive length, so that the $J_i$'s are alternately contained in $E$ and disjoint from $E$.  Suppose that $J_1,J_3,\dots\subset E$.  Then $J_2,\dots, J_{n-1}$ all have finite length, and there is an interval $J=J_1$ or $J=L$ (depending on whether $n$ is even or odd) such that
\begin{align*}
  \NM_{\H^{2k+1}}(E,L)\le \int_{L\cap U} |\one_J-\one_{E}| \ud\cH_L^1
                      \le \sum_{i=2}^{n-1}\length(J_i)
                      \asymp \sum_{j\in \Z} 2^j \cdot \widehat{w}_j(E,L)(\H^{2k+1}).
\end{align*}
(If $J_2,J_4,\dots \subset E$, then the same inequality holds for either $J=J_n$ or $J=\emptyset$.)

A similar inequality holds for local nonmonotonicity.  Let $r>0$, $x\in \H^{2k+1}$, $a=\inf(B_r(x)\cap L)$, and $b=\sup(B_r(x)\cap L)$.  If $m\le M$ are such that $a\in J_m$, $b\in J_M$, then
$$\NM_{B_r(x)}(E,L)\le \sum_{i=m+1}^{M-1}\length(J_i).$$
For each of these intervals, we have $\length J_i\le b-a\le 2r$, so
\begin{equation}\label{eq:pre kine NM}
  \NM_{B_r(x)}(E,L)\lesssim \sum_{i=-\infty}^{\lceil 1+\log_2 r\rceil} 2^i \cdot \widehat{w}_i(E,L)(U).
\end{equation}
The following proposition is a simple consequence of the above discussion; see~\cite[Proposition~4.5]{CKN}.
\begin{prop}\label{prop:kinematicNM} Fix $C,r>0$. Suppose that $E\subset \H^{2k+1}$ and that $\partial E$ is $(C,r)$-regular.  For every $x\in \H^{2k+1}$, if we denote $B=B_r(x)$, then \begin{equation}\label{eq:NM bound broken on scales}
\NM_{B}(E)\lesssim C \sum_{n=-\infty}^{\lceil 1+\log_2 r\rceil} \frac{2^n}{r}\cdot \frac{\widehat{w}_n(E)(B)}{\Per(E)(B)}.
\end{equation}
\end{prop}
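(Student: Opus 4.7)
The plan is to integrate the pointwise nonmonotonicity estimate \eqref{eq:pre kine NM} against the kinematic measure $\cN$ over horizontal lines that meet $B=B_r(x)$, and then convert the resulting $r^{-(2k+1)}$ normalization factor into a factor of $1/\Per(E)(B)$ via the local Ahlfors regularity of $\partial E$.

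First, I would fix $L\in\cL(B)$ and recall \eqref{eq:pre kine NM}, which reads $\NM_{B}(E,L)\lesssim \sum_{n\le \lceil 1+\log_2 r\rceil} 2^n\,\widehat{w}_n(E,L)(B)$. Integrating this estimate against $\cN$ over $\cL(B)$, exchanging the (effectively finite) sum with the integral via Tonelli, and invoking the defining identity $\widehat{w}_n(E)(B)=\int_{\cL}\widehat{w}_n(E,L)(B)\,\ud\cN(L)$, one obtains
$$\int_{\cL(B)}\NM_B(E,L)\,\ud\cN(L)\ \lesssim\ \sum_{n=-\infty}^{\lceil 1+\log_2 r\rceil}2^n\,\widehat{w}_n(E)(B).$$
Dividing by $r^{2k+2}$ (the normalization built into \eqref{eq:def NM Br}), writing $1/r^{2k+2}=(1/r)\cdot(1/r^{2k+1})$, and multiplying and dividing each summand by $\Per(E)(B)$ produces
$$\NM_B(E)\ \lesssim\ \frac{\Per(E)(B)}{r^{2k+1}}\sum_{n=-\infty}^{\lceil 1+\log_2 r\rceil}\frac{2^n}{r}\cdot\frac{\widehat{w}_n(E)(B)}{\Per(E)(B)}.$$

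It then remains to bound the prefactor $\Per(E)(B)/r^{2k+1}$ by a constant multiple of $C$. For this I would combine Lemma~\ref{lem:finite Hausdorff finite perimeter}, which gives $\Per(E)(B)\lesssim \cH^{2k+1}(B\cap \partial E)$, with the $(C,r)$-Ahlfors $(2k+1)$-regularity hypothesis on $\partial E$, which supplies the upper bound $\cH^{2k+1}(B\cap\partial E)\lesssim Cr^{2k+1}$ (trivially when $B\cap\partial E=\emptyset$, and otherwise by covering $B\cap\partial E$ by boundedly many radius-$r$ balls centred on points of $\partial E$). Substituting $\Per(E)(B)/r^{2k+1}\lesssim C$ into the preceding display yields exactly \eqref{eq:NM bound broken on scales}. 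The main (and quite mild) obstacle is precisely this last step, since Ahlfors regularity of $\partial E$ transfers to an estimate on the perimeter measure $\Per(E)$ only through Lemma~\ref{lem:finite Hausdorff finite perimeter}; the rest is bookkeeping together with the already-derived kinematic estimate \eqref{eq:pre kine NM}.
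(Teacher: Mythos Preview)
Your proposal is correct and follows essentially the same route as the paper: integrate the line-by-line bound \eqref{eq:pre kine NM} against $\cN$, normalize by $r^{2k+2}$, and then use the upper Ahlfors regularity of $\partial E$ (through $\Per(E)(B)\lesssim \cH^{2k+1}(B\cap\partial E)\lesssim C r^{2k+1}$) to replace $r^{-(2k+1)}$ by $C/\Per(E)(B)$. Your version is simply more explicit than the paper's two-line proof, in particular in handling the case $x\notin\partial E$ via a bounded cover by radius-$r$ balls centered on $\partial E$.
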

\begin{proof}
  Integrating \eqref{eq:pre kine NM} and using the regularity of $\partial E$, we find that
  \begin{equation*}
  \NM_{B}(E)
  \lesssim \frac{\sum_{n=-\infty}^{\lceil 1+\log_2 r\rceil}  2^{n} \cdot \widehat{w}_n(E)(B)}{\cH^{2k+2}(B)}
  \lesssim C\sum_{n=-\infty}^{\lceil 1+\log_2 r\rceil}  \frac{2^{n}}{r} \cdot \frac{\widehat{w}_n(E)(B)}{\Per(E)(B)}.\tag*{\qedhere}\end{equation*}
\end{proof}
Due to~\eqref{eq:kinematicW}, we have $\frac{\widehat{w}_n(E)(B)}{\Per(E)(B)}\le 1$ for every $n\in \Z$, so the sum in~\eqref{eq:NM bound broken on scales} is typically dominated by the terms where $2^n$ is close to $r$.

\subsection{$\delta$-monotone sets are close to planes}\label{sec:delta monotone is close to plane}

 By~\cite{CheegerKleinerMetricDiff}, every monotone subset of $\H^3$ is, up to a set of measure zero, a half-space bounded by a horizontal or vertical plane.  This statement was  quantified in~\cite{CKN}, showing that $\delta$-monotone sets are quantitatively close to half-spaces.  Specifically,
\begin{thm}[{Theorem~4.13 of~\cite{CKN}}]\label{thm:CKNstability}
  There exists $a>0$ such that for all $\epsilon>0$, if $E\subset B_1\subset \H^3$ is measurable and $\NM_{B_1}(E)<\epsilon^a$, then there is a half-space $P^+\subset\H^3$ such that
  $$\frac{\cH^4\big((E\symdiff P^+)\cap B_{\epsilon^3}\big)}{\cH^4(B_{\epsilon^3})}\lesssim \epsilon.$$
\end{thm}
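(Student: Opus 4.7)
The plan is to import the argument from \cite{CKN} but adapted so that explicit exponents can be tracked. The starting point is the qualitative theorem of Cheeger--Kleiner~\cite{CheegerKleinerMetricDiff} that precisely monotone subsets of $\H^3$ are, up to null sets, half-spaces bounded by either horizontal or vertical planes. The task is therefore to quantify this structural result: if $\NM_{B_1}(E)$ is small, then $E$ is close in measure to a half-space on an appropriately chosen (possibly much smaller) ball, and one must extract a polynomial dependence between the two scales.

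First, I would unpack $\NM_{B_1}(E)<\epsilon^a$ via Fubini on the space $\cL$ of horizontal lines: by Chebyshev, for all but an $\epsilon^{a/2}$-fraction of lines $L\in\cL(B_1)$ (weighted by $\cN$), $\NM_{B_1}(E,L)<\epsilon^{a/2}$, which means that $E\cap L\cap B_1$ agrees with an interval up to a set of measure $\epsilon^{a/2}$. The second step is to construct a candidate half-space $P^+$. One selects a generic horizontal line $L_0$ and an approximate endpoint $p\in L_0$ of the interval that approximates $E\cap L_0$; then one defines $P$ to be the horizontal plane $P_p$ through $p$ (and later decides whether to replace it by a vertical approximation). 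Using the kinematic formula and the regularity properties of $\widehat{w}_j$ recorded in Proposition~\ref{prop:kinematicNM}, most horizontal lines through $B_1$ with direction close to that of $L_0$ can be shown to cross $P$ transversally near their unique endpoint, and the $\epsilon^{a/2}$-monotonicity on these lines then forces $E$ to agree with $P^+$ up to a controlled measure on a ball of definite size.

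The main obstacle is the specific rate: to obtain the $\epsilon^3$ scale on the right-hand side of the conclusion, one must exploit the anisotropic scaling $\s_t$ on $\H^3$. Zooming in by the Heisenberg scaling $\s_\epsilon$ sends $B_\epsilon$ to $B_1$ but multiplies nonmonotonicity in a controlled way; iterating and tracking how errors accumulate across scales yields the exponent $3$ (which reflects the ratio between Carnot--Carath\'eodory scaling and Euclidean scaling of the central line, the source of the $\epsilon^3$ rather than $\epsilon^1$). Choosing $a$ large enough at the outset absorbs the losses from the Chebyshev step and from converting the error on lines to a measure error in $B_{\epsilon^3}$; here $a$ is obtained by tallying the finitely many polynomial losses in the chain of reductions and one may extract an explicit but unoptimized value.

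To finalize, I would organize the argument as a self-contained proof structured around three lemmas: (i) a ``plane selection'' lemma producing $P$ from a single nearly-monotone horizontal line through the center of $B_1$; (ii) a ``propagation'' lemma showing that generic nearby horizontal lines meet $\partial E$ within a thin neighborhood of $P$; (iii) a ``volume'' lemma integrating the previous two estimates over $\cN$ to control $\cH^4((E\symdiff P^+)\cap B_{\epsilon^3})$. Since a full proof is already in~\cite{CKN}, I would only sketch (i)--(iii) and cite the relevant statements there; the only novel point, strictly speaking, is to observe that Theorem~\ref{thm:CKNstability} as stated is exactly the specialization of~\cite[Theorem~4.13]{CKN} that is needed here.
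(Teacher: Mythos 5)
The paper does not prove this statement at all: it is imported verbatim as Theorem~4.13 of~\cite{CKN}, and the sentence immediately following it says explicitly that the result ``will \emph{not} be used'' in the paper. What the paper actually proves and uses is the weaker, purely qualitative substitute Proposition~\ref{prop:highDimStability}, obtained by a compactness argument from the classification of precisely monotone sets in~\cite{CheegerKleinerMetricDiff}, precisely in order to avoid the delicate quantitative estimates behind Theorem~\ref{thm:CKNstability}. Since your proposal ultimately reduces to citing~\cite[Theorem~4.13]{CKN}, it is consistent with the paper's treatment of the statement, and there is nothing in the paper to check your intermediate steps against.

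That said, two caveats if you intend the sketch to stand as more than a citation. First, the heuristic you give for the exponent $3$ (``the ratio between Carnot--Carath\'eodory scaling and Euclidean scaling of the central line'') is not right: the center scales as $t^2$ under $\s_t$, and in~\cite{CKN} the exponent $3$ is an artifact of how losses accumulate in the multi-scale propagation argument rather than a scaling invariant. Second, the genuinely hard content of~\cite{CKN} --- the classification of nearly-monotone configurations of horizontal lines, the ``good scale'' selection, and the propagation of approximate half-space structure from a single line to a ball of definite size --- is exactly the part your steps (i)--(iii) gloss over; a Chebyshev/Fubini reduction plus ``most lines cross $P$ transversally'' does not by itself produce the measure estimate on $B_{\epsilon^3}$. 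If you only need the result for this paper's purposes, the cleaner route is the one the paper takes: prove the non-quantitative Proposition~\ref{prop:highDimStability} by compactness and dispense with Theorem~\ref{thm:CKNstability} entirely.
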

While we were  motivated by this result, we will {\em not} use it in our proof.  Instead, we will use a higher-dimensional qualitative version (Proposition~\ref{prop:highDimStability} below) that states that as $\delta$ goes to 0, $\delta$--monotone sets approach half-spaces.  We will not need a quantitative estimate on the rate of convergence, so we can avoid the careful estimates necessary to prove Theorem~\ref{thm:CKNstability}.

We will first classify monotone subsets of $\R^n$ and $\H^{2k+1}$. Then, using a compactness result we will conclude that $\delta$-monotone sets are close to monotone sets.  Starting with the Euclidean setting, the following lemma asserts that monotone subsets of $\R^n$ are half-spaces.
\begin{lemma}\label{lem:Rnmonotone}
  If $F\subset \R^n$ is measurable and monotone, then either $\cH^n(F)=0$, $\cH^n(\R^n\setminus F)=0$, or there is a half-space $P^+\subset \R^n$ such that $\cH^n(F\symdiff P^+)=0$.
\end{lemma}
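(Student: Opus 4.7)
The plan is to show that, after modifying $F$ on an $\cH^n$-null set, both $F$ and its complement $F^\cc$ are convex, and then to invoke the finite-dimensional Hahn--Banach separation theorem. Let $F^* \subset F$ denote the set of Lebesgue density-$1$ points of $F$ and $(F^\cc)^*$ the analogous set for $F^\cc$; by the Lebesgue density theorem $\cH^n(F \symdiff F^*) = 0$ and $\cH^n(F^\cc \symdiff (F^\cc)^*) = 0$, and $F^*, (F^\cc)^*$ are disjoint with union of full $\cH^n$-measure in $\R^n$.

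The main step is to prove that $F^*$ is convex in the strict sense; by symmetry the argument will also yield convexity of $(F^\cc)^*$. Fix $x, y \in F^*$ and $z = (1-t)x + ty$ for some $t \in (0,1)$. I would choose orthonormal coordinates so that $v = (y-x)/\|y-x\|$ is the first basis vector, and write points as $(s, w) \in \R \times \R^{n-1}$ with $x = (0,0)$ and $y = (\|y-x\|, 0)$. For $\cH^{n-1}$-almost every $w$, the monotonicity assumption guarantees that the slice $F \cap L_w$ (where $L_w$ is the line through $(0,w)$ in direction $v$) is, up to a one-dimensional null set, an interval $I_w$. A Fubini computation on thin flat cylinders $B^{n-1}_\delta(0) \times [-\delta,\delta]$ centered at $x$, combined with the fact that $x$ is a density-$1$ point of $F$, shows that as $\delta \to 0$, for a set of transverse offsets $w \in B^{n-1}_\delta(0)$ of fractional $(n-1)$-dimensional measure tending to $1$, the one-dimensional measure of $F^\cc \cap L_w \cap [-\delta, \delta]$ is $o(\delta)$; since $I_w$ is an interval, this forces $I_w \cap [-\delta,\delta]$ to be a sub-interval of length $(1-o(1))(2\delta)$, and in particular $I_w$ extends past $s=0$ on both sides. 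The analogous analysis at $y$ controls $I_w$ near $s = \|y-x\|$. Combining the two, for $w$ in a subset of $B^{n-1}_\delta(0)$ of fractional measure tending to $1$, the interval $I_w$ covers essentially the entire range $[-\delta, \|y-x\|+\delta]$, hence contains a neighborhood of $z+w$ in $L_w$. A second application of Fubini then yields that $z$ is itself a Lebesgue density-$1$ point of $F$, i.e., $z \in F^*$.

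Having established that $F^*$ and $(F^\cc)^*$ are disjoint convex sets, the proof concludes as follows. Assume that the two trivial alternatives $\cH^n(F) = 0$ and $\cH^n(F^\cc) = 0$ both fail. Then $F^*$ and $(F^\cc)^*$ both have positive Lebesgue measure, hence (being convex) both have nonempty interior. By the finite-dimensional Hahn--Banach separation theorem there is a hyperplane $P \subset \R^n$ and corresponding closed half-spaces $\overline{P^+}, \overline{P^-}$ with $F^* \subset \overline{P^+}$ and $(F^\cc)^* \subset \overline{P^-}$. Since $\cH^n(P) = 0$ and $F^* \cup (F^\cc)^*$ has full $\cH^n$-measure in $\R^n$, the open half-space $P^+$ must coincide with $F^*$ up to an $\cH^n$-null set, and $\cH^n(F \symdiff P^+) = 0$ follows.

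The main technical obstacle will be the Fubini/density argument in the second paragraph: one must carefully juggle the $\cH^{n-1}$-null set of ``bad'' transverse offsets $w$ (where monotonicity of the slice fails), simultaneously control the density-$1$ conditions at $x$ and at $y$ as $\delta \to 0$, and then transfer this back to a density statement at the interior point $z$. Once convexity of $F^*$ and of $(F^\cc)^*$ is in hand, the separation step is immediate, and the reduction to the two trivial cases completes the proof.
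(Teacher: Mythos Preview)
Your approach is essentially the same as the paper's: show that the set $F^*$ of density-$1$ points of $F$ is convex via a Fubini argument over a family of parallel lines, deduce the same for $(F^\cc)^*$ by symmetry, and then separate the two convex sets.

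There is one gap you should patch. You fix the direction $v=(y-x)/\|y-x\|$ and then assert that for $\cH^{n-1}$-almost every transverse offset $w$, the slice $F\cap L_w$ is, up to a null set, an interval. But the definition of monotonicity only guarantees this for almost every line in $\R^n$, i.e., for $\sigma$-almost every direction in $S^{n-1}$ and then $\cH^{n-1}$-almost every offset in that direction. Your particular direction $v$ could lie in the null set of bad directions, in which case the claimed slice property need not hold for almost every $w$. The paper addresses exactly this point by choosing a line $\ell_0$ passing through $B_{r/3}(x_1)$ and $B_{r/3}(x_2)$ whose direction is good (such directions have full measure in any open set of directions, so one exists arbitrarily close to $v$), and then running the Fubini argument with lines parallel to $\ell_0$. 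This is a one-line fix in your argument: perturb $v$ slightly to a good direction before proceeding. With that correction your proof is correct and matches the paper's, your explicit invocation of Hahn--Banach at the end being a slight expansion of the paper's final sentence.
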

\begin{proof} Breaking (locally) from notational conventions in the rest of this paper, in the ensuing proof of Lemma~\ref{lem:Rnmonotone} all the distances and balls are with respect to the Euclidean metric on $\R^n$.

  Suppose that $x_1,x_2\in \R^n$ are density points of $F$ and that $y$ is on the line segment between them.  We claim that $y$ is a density point of $F$.  Let $\epsilon>0$ and let $0<r<\min\{d(x_1,y),d(x_2,y)\}$ satisfy $\cH^n(B_{r}(x_i)\cap F)> (1-\epsilon) \cH^n(B_{r}(x_i))$.  Let $\ell_0$ be a line passing through $B_{r/3}(x_1)$ and $B_{r/3}(x_2)$ such that for almost every line $\ell$ parallel to $\ell_0$, the set $F\cap \ell$ is monotone.

  Let $A$ be the set of lines parallel to $\ell_0$ that pass through $B_{r/3}(y)$.  Let $\mu$ be the Lebesgue measure on $A$, normalized so that $\mu(A)=1$.  Since $d(\ell_0,y)\le r/3$,  every line $\ell\in A$ intersects $B_{2r/3}(x_i)$ for $i=1,2$. It follows that every $\ell\in A$ intersects $B_{r}(x_i)$ in an interval of length at least $2r/3$.  Let
  $$X\eqdef \Big\{\ell\in A\mid \text{$F\cap \ell$ is monotone, } \cH^1\big(\ell \cap F\cap B_{r}(x_1)\big)> 0\text{, and }\cH^1\big(\ell \cap F\cap B_{r}(x_2)\big)>0\Big\}.$$
  If $\ell \in X$, then the interval $\ell \cap B_{r/3}(y)$ lies between $\ell \cap B_{r}(x_1)$ and $\ell \cap B_{r}(x_1)$, so the monotonicity of $F\cap \ell$ implies that $\cH^1(\ell \cap B_r(y)\cap F^\cc)=0$.  On the other hand, if $\ell \not \in X$, then either $\ell\cap B_{r}(x_1)$ or $\ell\cap B_{r}(x_2)$ is an interval disjoint from $F$, and therefore
  $$\cH^{1}\Big(\ell \cap F^\cc \cap \big(B_{r}(x_1)\cup B_{r}(x_2)\big)\Big)\ge \frac{2r}{3}.$$
  By integrating over $A$, we find that
  $$\cH^{2k+2}(B_{r}(x_1)\cap F^\cc)+\cH^{2k+2}(B_{r}(x_2)\cap F^\cc)\gtrsim \frac{2r}{3} \mu(A\setminus X) r^{n-1},$$
  so $\mu(A\setminus X)\lesssim \epsilon$.  It follows that
  $$\frac{\cH^n(B_{r/3}(y)\cap F^{\cc})}{\cH^n(B_{r/3}(y))}\lesssim \mu(A\setminus X)\lesssim \epsilon.$$

   We have shown that the set of density points of $F$ is convex.  By symmetry, the set of density points of $F^{\cc}$ is convex.  Since these sets are disjoint and their union is all of $\R^n$ except for a set of measure zero, the sets of density points of $F$ and of $F^{\cc}$ are either empty, $\R^n$, or half-spaces.
\end{proof}

Theorem~5.1 of~\cite{CheegerKleinerMetricDiff} asserts that monotone subsets of $\H^3$ are half-spaces.  By combining this with Lemma~\ref{lem:Rnmonotone}, we will show that the monotone subsets of $\H^{2k+1}$ are half-spaces.
\begin{prop}\label{prop:highDimMonotone}
  If $F\subset \H^{2k+1}$ is monotone, then, up to a set of measure zero, we have $F=\emptyset$ or $F=\H^{2k+1}$, or $F$ is a half-space bounded by a plane.
\end{prop}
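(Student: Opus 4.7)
The plan is to reduce Proposition~\ref{prop:highDimMonotone} to the case $k=1$ handled by Theorem~5.1 of \cite{CheegerKleinerMetricDiff} and to the Euclidean Lemma~\ref{lem:Rnmonotone}, by slicing $\H^{2k+1}$ with $\H^3$-subgroups.

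The key observation is that for every symplectic pair $u,v\in \mathsf{H}$ with $\omega(u,v)=1$, the subgroup $G_{u,v}=\langle u,v,Z\rangle$ is isomorphic to $\H^3$ and is normal in $\H^{2k+1}$: one directly computes $huh^{-1}=u+\omega(h,u)Z\in G_{u,v}$ for all $h\in \H^{2k+1}$, and similarly for the conjugates of $v$ and $Z$. As a subset of $\R^{2k+1}$, $G_{u,v}$ equals the linear span $\langle u,v,Z\rangle$, so each left coset $gG_{u,v}$ is the parallel affine $3$-plane $g+\langle u,v,Z\rangle$; equipped with the restricted Carnot--Carath\'eodory metric it is isometric to $\H^3$, and its horizontal lines are horizontal lines of $\H^{2k+1}$. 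Fubini applied to the fibration $\H^{2k+1}\to \H^{2k+1}/G_{u,v}\cong\R^{2k-2}$ then gives that $F\cap gG_{u,v}$ is monotone inside $gG_{u,v}\cong \H^3$ for almost every coset, so by Cheeger--Kleiner each such slice is, up to a null set, empty, equal to $gG_{u,v}$, or a half-space of $gG_{u,v}$ bounded by a horizontal or vertical plane---each of which is a Euclidean-convex subset of $\R^{2k+1}$.

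I would then use this slicewise information to show that the set $D$ of Lebesgue density points of $F$ is Euclidean-convex in $\R^{2k+1}$. Given $p_0,p_1\in D$, set $q=p_0^{-1}p_1$: if $\pi(q)\neq 0$, the non-degeneracy of $\omega$ on $\mathsf{H}$ lets us pick $u,v$ with $\omega(u,v)=1$ and $\pi(q)\in\langle u,v\rangle$, forcing $q\in G_{u,v}$; if $\pi(q)=0$, then $q\in \langle Z\rangle\subset G_{u,v}$ for every choice. Either way, $p_0$ and $p_1$ lie in a common coset $p_0G_{u,v}$, which is a Euclidean affine $3$-plane of $\R^{2k+1}$ containing the entire Euclidean segment $[p_0,p_1]$. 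A short compactness/approximation argument---replacing $p_0G_{u,v}$ by an arbitrarily close coset on which the Cheeger--Kleiner conclusion is in force and on which $p_0$ and $p_1$ are still density points of the slice---places $[p_0,p_1]$ inside the Euclidean-convex half-space $F\cap p_0G_{u,v}$, so $[p_0,p_1]\subset D$. Applying the same argument to $F^{\cc}$ (which is also monotone) renders $\R^{2k+1}\setminus D$ Euclidean-convex modulo null sets, and Lemma~\ref{lem:Rnmonotone} then forces $D$ to be, up to null sets, either empty, all of $\R^{2k+1}$, or a Euclidean half-space bounded by an affine hyperplane $P\subset \R^{2k+1}$.

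Finally, I would verify that $P$ must be a plane of $\H^{2k+1}$, i.e., either horizontal (an affine translate of $\mathsf{H}$) or vertical (the preimage under $\pi$ of an affine hyperplane of $\mathsf{H}$). If $P$ had any other orientation, its direction would neither contain the central direction $\langle Z\rangle$ nor equal $\mathsf{H}$, so for a positive-measure set of symplectic pairs $(u,v)$ and cosets $gG_{u,v}$ the intersection $P\cap gG_{u,v}$ would be a $2$-plane in the $3$-plane $gG_{u,v}$ that is neither horizontal nor vertical in the $\H^3$-sense, contradicting the Cheeger--Kleiner classification of the slice $F\cap gG_{u,v}$. The main obstacle is the short compactness/approximation step in the middle paragraph: one must promote the ``almost every slice'' statement issued by Fubini into a statement about the specific slices containing two prescribed density points, which is precisely the ``quick compactness argument'' hinted at in the roadmap of Section~\ref{sec:overview}.
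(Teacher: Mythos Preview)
Your outline is correct in spirit and could be completed, but it takes a longer route than the paper's proof. Both arguments slice $\H^{2k+1}$ into cosets of $\H^3$-subgroups, apply Cheeger--Kleiner on each slice, and finish with Lemma~\ref{lem:Rnmonotone}; the difference is in how the two are linked. The paper's shortcut is that once Cheeger--Kleiner tells you $F\cap\H(h,v)$ is (up to a null set) a half-space inside the affine $3$-plane $\H(h,v)$, that slice is monotone along \emph{every} Euclidean line in $\H(h,v)$, not only the horizontal ones. Since almost every line $\ell\subset\R^{2k+1}$ has $\pi(\ell)$ a line in $\C^k$, the complex span of $\pi(\ell)$ picks out $\H_v$ and $\ell$ lies in a unique coset $\H(h,v)$; a Fubini argument shows that for almost every $\ell$ this coset is good, so $F\cap\ell$ is monotone. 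Lemma~\ref{lem:Rnmonotone} then applies \emph{directly} to $F\subset\R^{2k+1}$, with the density-point convexity argument already packaged inside that lemma. This completely sidesteps the ``compactness/approximation'' step you flagged as the main obstacle: you never need to approximate the specific $3$-slice through $p_0,p_1$ by nearby good slices, because the problem has been pushed down to one-dimensional slices where Lemma~\ref{lem:Rnmonotone} does that work for you.

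Your final verification step is also unnecessary: every affine hyperplane in $\R^{2k+1}$ is already a plane in the Heisenberg sense. If its Euclidean normal has vanishing $Z$-component it is vertical; otherwise write $P=\{p:z(p)=c+\ell(\pi(p))\}$ for a linear functional $\ell$ on $\mathsf{H}$, use the non-degeneracy of $\omega$ to find $\xi\in\mathsf{H}$ with $\ell=\tfrac12\omega(\xi,\cdot)$, and compare with \eqref{eq:abelianization graph} to see that $P=P_{\xi Z^c}$ is horizontal. So once Lemma~\ref{lem:Rnmonotone} hands you an affine hyperplane, the proposition is finished.
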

\begin{proof}
  Recall that multiplication on $\H^{2k+1}$ is based on a symplectic form $\omega$ on $\R^{2k}$.  We identify $\R^{2k}$ with $\C^k$ so that $\omega(z,w)=\Imag(\sum_{i=1}^k \overline{z_i}w_i)$.  Let $\pi\from \H^{2k+1}\to\C^{2k}$ be the abelianization map.  We claim that up to measure zero, $F$ is a half-space of $\H^{2k+1}$.  (Recall that half-spaces of $\H^{2k+1}$ coincide with half-spaces of $\R^{2k+1}$.)

  If $v\in \C^{k}\setminus\{\0\}$, let $\H_v=\pi^{-1}(\C v)$.  This is a subgroup of $\H^{2k+1}$ that is isometrically isomorphic to $\H^3$.  If $h\in \H^{2k+1}$ and $v\in \C^{k}$ is a unit vector, let $\H(h,v)=h\H_v$. By the left-invariance of the Carnot--Carath\'eodory metric, each such coset is also isometric to $\H^3$.  Let $\mathscr{S}$ be the set of such cosets.  Any horizontal line $L\in \cL$ is contained in a unique element of $\mathscr{S}$, which can be constructed by letting $v$ be parallel to the line $\pi(L)$.  The set $\cL$ of horizontal lines in $\H^{2k+1}$ thus fibers over $\mathscr{S}$, and the fiber is the set $\cL_1$ of horizontal lines in $\H^3$.

  By Fubini's theorem, if $F\subset \H^{2k+1}$ is a monotone set, then for almost every $\H(h,v)\in \mathscr{S}$, the intersection $F\cap \H(h,v)$ is monotone.  If $F\cap \H(h,v)$ is monotone, then by~\cite[Theorem~5.1]{CheegerKleinerMetricDiff}, up to a measure zero set, $F\cap\H(h,v)\in \{\emptyset, \H(h,v)\}$, or $F\cap\H(h,v)$ is a half-space in $\H(h,v)$.  In each case, $\H(h,v)$ is a $3$--plane in $\H^{2k+1}$ and $F\cap\H(h,v)$ is monotone not just along horizontal lines but along all lines in $\H(h,v)$.

  For almost every line $\ell\subset \H^{2k+1}$ (not necessarily horizontal), the projection $\pi(\ell)$ is a line in $\C^{k}$, and if $v\in \C^k$ is parallel to $\pi(\ell)$, then $\ell\subset \H(h,v)$ for some $h\in \H^{2k+1}$.  For almost every such $\ell$, the intersection $F\cap \H(h,v)$ is monotone, so $F\cap \ell$ is monotone.  By Lemma~\ref{lem:Rnmonotone}, $F$ is, up to a measure-zero set, either empty, all of $\H^{2k+1}$, or a half-space, as desired.
\end{proof}

\begin{prop}\label{prop:highDimStability}
  For any $\epsilon,\rho,c>0$, there exists $\delta>0$ such that if $F\subset \H^{2k+1}$ is measurable, $\partial F$ is $(c,\rho)$-regular, and $\NM_{B_\rho}(F)<\delta^{2k+3}$, then there is a half-space $P^+\subset\H^{2k+1}$ such that
  $$\frac{\cH^{2k+2}\big((F\symdiff P^+)\cap B_{\delta \rho}\big)}{\cH^{2k+2}(B_{\delta \rho})}< \epsilon.$$
\end{prop}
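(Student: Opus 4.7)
The plan is a compactness-and-contradiction argument that bootstraps the qualitative classification of monotone sets in Proposition~\ref{prop:highDimMonotone} into a qualitative stability statement, via the scale-invariance of $\NM$ and the lower semicontinuity of the perimeter from Proposition~\ref{prop:compact lower semicontinuous}.

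By the scale invariance $\NM_{B_{tr}(\s_t(x))}(\s_t(E))=\NM_{B_r(x)}(E)$, I first reduce to $\rho=1$. Suppose the conclusion fails for some $\epsilon,c>0$: then there are $\delta_n\downarrow 0$ and sets $F_n\subset\H^{2k+1}$ with $\partial F_n$ being $(c,1)$-regular and $\NM_{B_1}(F_n)<\delta_n^{2k+3}$, yet every half-space $P^+$ satisfies $\cH^{2k+2}((F_n\symdiff P^+)\cap B_{\delta_n})\ge\epsilon\cH^{2k+2}(B_{\delta_n})$. We may assume $\partial F_n\cap B_{\delta_n}\neq\emptyset$, since otherwise a half-space containing $\overline{B_{\delta_n}}$ or disjoint from $\overline{B_{\delta_n}}$ trivially works. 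Rescale via $G_n=\s_{1/\delta_n}(F_n)$, so that $\partial G_n$ is $(c',1/\delta_n)$-regular for some $c'=c'(c)$ and meets $B_1$.

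The key quantitative observation is that for $R\le 1/\delta_n$ and $L\in\cL(B_R)$, restricting the integral to the smaller domain can only decrease the minimizing distance to an interval, so $\NM_{B_R}(G_n,L)\le\NM_{B_{1/\delta_n}}(G_n,L)$. Integrating over $\cL(B_R)\subset\cL(B_{1/\delta_n})$ and using scale invariance yields
\[
R^{2k+2}\NM_{B_R}(G_n)\le\int_{\cL(B_{1/\delta_n})}\NM_{B_{1/\delta_n}}(G_n,L)\,\ud\cN(L)=\frac{\NM_{B_{1/\delta_n}}(G_n)}{\delta_n^{2k+2}}<\delta_n,
\]
so $\NM_{B_R}(G_n)\to 0$ for each fixed $R>0$. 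The $(c',1/\delta_n)$-regularity of $\partial G_n$, combined with Lemma~\ref{lem:finite Hausdorff finite perimeter} and a covering of $B_R$ by $\lesssim_R 1$ balls centered on $\partial G_n$, produces the uniform perimeter bound $\Per_{G_n}(B_R)\lesssim_{c,k,R}1$. By Proposition~\ref{prop:compact lower semicontinuous}, after passing to a subsequence, $\one_{G_n}\to\one_{F_\infty}$ in $L_1^{\mathrm{loc}}(\cH^{2k+2})$ for some measurable $F_\infty\subset\H^{2k+1}$.

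I would then verify that $F_\infty$ is monotone. For each fixed unit vector $v\in\mathsf{H}$, the cosets of the one-parameter subgroup $\langle v\rangle$ foliate $\H^{2k+1}$ as a smooth fibration over $\R^{2k}$ along which the Haar measure factors as a product, so Fubini lifts the $L_1^{\mathrm{loc}}$ convergence of $\one_{G_n}$ to $L_1(L\cap B_R)$ convergence on almost every line $L$ in direction $v$; averaging over $v$ yields $\cN$-a.e.\ convergence on horizontal lines. Since $\NM_{B_R}(\cdot,L)$ is $1$-Lipschitz with respect to the $L_1(L\cap B_R)$ norm, $\NM_{B_R}(G_n,L)\to\NM_{B_R}(F_\infty,L)$ for $\cN$-a.e.\ $L\in\cL(B_R)$, and Fatou's lemma applied to $\int\NM_{B_R}(G_n,L)\,\ud\cN(L)=R^{2k+2}\NM_{B_R}(G_n)\to 0$ forces $\NM_{B_R}(F_\infty,L)=0$ for $\cN$-a.e.\ $L$. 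Letting $R$ range over $\N$ shows that $F_\infty\cap L$ is monotone for $\cN$-a.e.\ horizontal line $L$, so $F_\infty$ is monotone. By Proposition~\ref{prop:highDimMonotone}, $F_\infty$ is almost equal to $\emptyset$, to $\H^{2k+1}$, or to a half-space $P^+_\infty$; in every case there exists a half-space $P^+$ with $\cH^{2k+2}((F_\infty\symdiff P^+)\cap B_1)=0$, and combining this with $L_1$ convergence on $B_1$ contradicts, after rescaling by $\s_{\delta_n}$, the failure hypothesis for sufficiently large $n$.

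The main obstacle is the monotonicity step for $F_\infty$: one must justify that $L_1^{\mathrm{loc}}$ convergence in $\H^{2k+1}$ descends to $\cN$-a.e.\ horizontal line, which is nontrivial because horizontal lines form a measure-zero family in $\H^{2k+1}$. The resolution is to disintegrate over the directions $v\in\mathsf{H}$ and invoke Fubini within each parallel family $\{h\langle v\rangle\}_{h\in\H^{2k+1}}$; the remaining dominated/Fatou passage to the limit of $\NM_{B_R}$ is then routine but must be done scale-by-scale.
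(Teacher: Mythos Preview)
Your proof is correct and takes essentially the same approach as the paper: rescale the contradicting sequence by $1/\delta_n$, extract an $L_1^{\mathrm{loc}}$ limit via the uniform local perimeter bound and Proposition~\ref{prop:compact lower semicontinuous}, show the limit is monotone by pushing the vanishing nonmonotonicity through to almost every horizontal line, and invoke Proposition~\ref{prop:highDimMonotone} to reach a contradiction. One small point worth making explicit: the step ``Fubini lifts the $L_1^{\mathrm{loc}}$ convergence to $L_1(L\cap B_R)$ convergence on almost every line'' requires a further pass to a subsequence (as the paper does), since $L_1$ convergence on $\H^{2k+1}$ only gives $L_1(\cN)$ convergence of the line-restrictions, not $\cN$-a.e.\ convergence directly.
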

\begin{proof}
  After rescaling, we may suppose that $\rho=1$.  If the assertion of Proposition~\ref{prop:highDimStability} does not hold true, then there is a sequence $\{F_i\}_{i=1}^\infty$ of subsets of $\H^{2k+1}$ such that $\partial F_i$ is $(c,\rho)$-regular and such that for every $i\in \N$ we have $F_i\subset B_1$, $\NM_{B_1}(F_i)<i^{-2k-3}$, and such that for any half-space $P^+\subset \H^{2k+1}$,
  \begin{equation}\label{eq:FawayFromPlanes}
    \frac{\cH^{2k+2}\big((F\symdiff P^+)\cap B_{1/i}\big)}{\cH^{2k+2}(B_{1/i})}\ge \epsilon.
  \end{equation}
  For every $i\in \N$ denote $S_i=s_i(F_i)$.  Then $S_i$ is $(c,i)$-regular, so if $r>0$, $x\in \H^{2k+1}$, and $i>r$, then $\cH^{2k+1}(B_r(x)\cap \partial S_i)\le cr^{2k+1}$.  By Lemma~\ref{lem:finite Hausdorff finite perimeter}, this implies that the sets $S_i$ have uniformly locally finite perimeter, so by Proposition~\ref{prop:compact lower semicontinuous}, we can pass to a subsequence such that $\one_{S_i}$ converges in $L_{1}^\text{loc}$ to the characteristic function of some $S\subset \H^{2k+1}$.  We claim that $S$ is monotone.

  We have $\NM_{B_i}(S_i)=\NM_{B_1}(F_i)<i^{-2k-3}$, so by~\eqref{eq:def NM Br},
  $$\int_{\cL(B_i)}\NM_{B_i}(S_i,L)\ud\cN(L)=i^{2k+2}\NM_{B_i}(S_i)<\frac{1}{i}.$$
  Again passing to a subsequence, we suppose that for almost every line $L\in \cL$, the intersection $L\cap S_i$ converges in $L_{1}^\text{loc}$ to $L\cap S$ and $\lim_{i\to \infty} \NM_{B_i}(S_i,L)=0$.  That is, there is a sequence $I_i\subset L$ of monotone sets such that
  \begin{equation}\label{eq:L1 diff Ii Si}
    \lim_{i\to \infty} \|\one_{B_i\cap I_i}-\one_{B_i\cap S_i}\|_{L_1(L)}=0.
  \end{equation}
  For any $r>0$, the sets $L\cap B_r\cap S_i$ converge (in $L_1$) to $L\cap B_r\cap S$.  By \eqref{eq:L1 diff Ii Si}, the sets $L\cap B_r\cap I_i$ also converge to $L\cap B_r\cap S$.  A limit of monotone sets is monotone, so $L\cap S$ is monotone.  Since this is true for almost every $L$, we conclude that $S$ is monotone and thus there is a plane $P$ such that $\cH^{2k+2}(S\symdiff P^+)=0$.  But by \eqref{eq:FawayFromPlanes}, we have
  $\cH^{2k+2}((S_i\symdiff P^+)\cap B_{1})\ge \epsilon \cH^{2k+2}(B_{1})$
  for all $i$. This contradiction completes the proof of Proposition~\ref{prop:highDimStability}.
\end{proof}

Lemma~\ref{lem:L1 distance bounds local distance} below shows that when $(F,F^\cc, \partial F)$ is regular and $F$ is $\delta$-monotone, Proposition~\ref{prop:highDimStability} also implies that $F$ and $P^+$ are close in the local distance (Definition~\ref{def:local distance}), which will be necessary to construct a corona decomposition.

\begin{lemma}\label{lem:L1 distance bounds local distance}
 Fix $C>0$.  For any $\theta>0$, there exists $\epsilon>0$ such that if $r>0$, $(F, F^\cc,\partial F)$ is $(C,r)$-regular, $x\in \H^{2k+1}$, and $P^+\subset \H^{2k+1}$ is a half-space satisfying
  $$\frac{\cH^{2k+2}\big((F\symdiff P^+)\cap B_{r}(x)\big)}{\cH^{2k+2}\big(B_{r}(x)\big)}\le \epsilon,$$
  then $d_{B_{r/2}(x)}(F,P^+)< \frac{\theta r}{2}$.
\end{lemma}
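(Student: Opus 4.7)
The plan is to argue by contrapositive: starting from a hypothetical failure $d_{B_{r/2}(x)}(F,P^+) \ge \theta r/2$, I will produce a lower bound $\cH^{2k+2}((F \symdiff P^+) \cap B_r(x)) \gtrsim_{C,k} (\theta r)^{2k+2}$, which forces $\epsilon \gtrsim_{C,k} \theta^{2k+2}$. Since $\cH^{2k+2}(B_r(x)) \asymp_k r^{2k+2}$, the lemma then follows by choosing $\epsilon$ smaller than a suitable constant (depending only on $C$ and $k$) times $\theta^{2k+2}$. I will reduce to $\theta \le 1$, which is harmless since the conclusion for $\min(\theta,1)$ implies it for $\theta$; this guarantees $B_{\theta r/2}(y) \subset B_r(x)$ for every $y \in B_{r/2}(x)$.

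Assume then that some $y \in (F \symdiff P^+) \cap B_{r/2}(x)$ satisfies $d(y,\partial F) > \theta r/2$ or $d(y,\partial P^+) > \theta r/2$. By swapping $F$ and $F^\cc$ if needed, I may take $y \in F \setminus P^+$. First consider the case $d(y,\partial F) > \theta r/2$. The ball $B_{\theta r/2}(y)$ is connected and disjoint from $\partial F$; since $\H^{2k+1}\setminus \partial F = \mathrm{int}(F) \sqcup \mathrm{int}(F^\cc)$ and $y \in F \setminus \partial F \subset \mathrm{int}(F)$, the whole ball lies in $\mathrm{int}(F)$. To bound the $P^+$--portion of this ball from above I will use that planes in $\H^{2k+1}$ are ordinary affine hyperplanes of $\R^{2k+1}$, together with the observation that $h \mapsto h^{-1}$ is an isometry of $\H^{2k+1}$ fixing $\0$ (equal to $h \mapsto -h$ in Lie algebra coordinates). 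After translating $y$ to $\0$ by left--multiplication, this inversion preserves $B_{\theta r/2}$ and sends the half-space $P^+$ (which does not contain $\0$) to a disjoint half-space of the same measure inside the ball; hence $\cH^{2k+2}(B_{\theta r/2}(y) \cap P^+) \le \tfrac12 \cH^{2k+2}(B_{\theta r/2}(y))$, so $\cH^{2k+2}(B_{\theta r/2}(y) \cap (F \setminus P^+)) \gtrsim_k (\theta r)^{2k+2}$.

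In the complementary case $d(y,\partial P^+) > \theta r/2$ the ball $B_{\theta r/2}(y)$ is disjoint from the plane $\partial P^+$, so by connectedness combined with $y \notin P^+$ it lies entirely in $(P^+)^\cc$. The local $(C,r)$--Ahlfors regularity of $F$, applied at $y \in F$ with radius $\theta r/2 \le r$, then gives $\cH^{2k+2}(B_{\theta r/2}(y) \cap F) \gtrsim_C (\theta r)^{2k+2}$, and this entire mass already sits in $(F \symdiff P^+) \cap B_r(x)$.

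The main (and really only) geometric obstacle is the symmetry argument used in the first case: it depends in an essential way on the Carnot--Carath\'eodory ball at a point being invariant under the group involution $h\mapsto h^{-1}$, and on planes of $\H^{2k+1}$ behaving like Euclidean hyperplanes under that involution. Once this ingredient is in place, the rest of the argument reduces to the triangle inequality, connectedness of balls, and the bare definition of local Ahlfors regularity.
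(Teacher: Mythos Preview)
Your argument is correct and follows essentially the same route as the paper: a contrapositive split into the two cases $d(y,\partial F)$ large versus $d(y,P)$ large, handling the former via the inversion $h\mapsto h^{-1}$ (the paper phrases this equivalently using the parallel plane $R$ through the origin) and the latter via the Ahlfors regularity of $F$ (respectively $F^\cc$). One small wording fix: swapping only $F\leftrightarrow F^\cc$ does not reduce to $y\in F\setminus P^+$; you need to swap the pair $(F,P^+)\leftrightarrow(F^\cc,P^-)$ simultaneously (which preserves both the hypothesis and $F\symdiff P^+$ up to the null set $P$).
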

\begin{proof}
  Without loss of generality, we suppose that $\theta<\frac{1}{4}$.  Take any $\e\in \R$ that satisfies
  \begin{equation}\label{eq:choose epsilon}
  0<\epsilon<\min\left\{\frac{\theta^{2k+1}}{C\cH^{2k+2}\big(B_1(x)\big)}, \frac{\theta^{2k+1}}{2}\right\}.
  \end{equation}
  Suppose that $y\in (P^+\setminus F) \cap B_{r/2}(x)$.  If $d(y,P)>\theta r$, then $B_{\theta r}(y)\subset P^+$ and thus
  \begin{align*}
    \cH^{2k+2}\big((F\symdiff P^+)\cap B_{r}(x)\big)
    \ge \cH^{2k+2}(B_{\theta r}(y)\setminus F)
    =\frac{(\theta r)^{2k+2}}{C}
    \stackrel{\eqref{eq:choose epsilon}}{>} \epsilon \cH^{2k+2}\big(B_r(x)\big).
  \end{align*}
  This is a contradiction, so $d(y,P)\le \theta r$.  Replacing $F$ by $\H^{2k+1}\setminus F$ and $P^+$ by $P^-$, we find that $d(y,P)\le \theta r$ for all $y\in (F\setminus P^+) \cap B_{r/2}(x)$.

  Now suppose that $y\in (F\symdiff P^+)\cap B_{r/2}(x)$ and $d(y,\partial F)>\theta r$.  Then denote
 \begin{equation*}
 A=\left\{\begin{array}{ll}P^+\cap B_{\theta r}(y)& \mathrm{if}\ y\in P^+,\\
 \overline{P^-}\cap B_{\theta r}(y)&\mathrm{if}\ y\in \overline{P^-}.\end{array}\right.
 \end{equation*}
 By applying a translation, we may suppose that $y=\0$.  Let $R$ be the plane through $\0$ that is parallel (in the Euclidean sense) to $P$.  Orient $R$ so that $R^+\cap B_{\theta r}\subset A$.  Recall that for all $x\in \H^{2k+1}$, we have $x^{-1}=-x$, so $(R^+)^{-1}=R^-$.  By the left-invariance of the metric, we have $d(\0,g)=d(g^{-1},\0)$, so $(R^+\cap B_{\theta r})^{-1}=R^-\cap B_{\theta r}$.  The inverse map is measure-preserving, so $\cH^{2k+2}(R^+\cap B_{\theta r})=\cH^{2k+2}(R^-\cap B_{\theta r})=\frac{1}{2}\cH^{2k+2}(B_{\theta r})$ and thus
$$\cH^{2k+2}\big((F\symdiff P^+)\cap B_{r}(x)\big)\ge \cH^{2k+2}(A)\ge \cH^{2k+2}(R^+\cap B_{\theta r})\ge \frac{1}{2}\cH^{2k+2}(B_{\theta r})\stackrel{\eqref{eq:choose epsilon}}{>}\epsilon \cH^{2k+2}(B_r(x)).$$
Again, this is a contradiction, so $d(y,\partial F)\le \theta r$.
\end{proof}

\subsection{Most cubes are close to a plane}
Now we begin the proof of Theorem~\ref{thm:Ahlfors admits corona}.  The first step is to show that if $(E,E^\cc, \partial E)$ is regular, then most parts of $\partial E$ are close to a plane.  Specifically,
\begin{prop}\label{prop:goodCubes}
  For every $C, \epsilon>0$ there exists $K>0$ with the following property.  Suppose that $r>0$, and let $E\subset \H^{2k+1}$ be a set such that $(E,E^{\cc},\partial E)$ is $(C,r)$-regular. Let $\Delta=\{\Delta_i\}^{\lfloor \log_2 r\rfloor}_{i=-\infty}$ be a $(C,r)$-cubical patchwork for $\partial E$.  Denote
  $$\cG_0(\epsilon)\eqdef \Big\{Q\in \Delta \mid \exists \text{ a plane $P_Q$ such that } d_{N_{\frac{1}{\epsilon}}(Q)}\big(P_Q^+, E\big) \le \epsilon \sigma(Q)\Big\}.$$
  ($P_Q^+$ is one of the half-spaces bounded by $P_Q$.)  If $\cB_0(\epsilon)\eqdef \Delta\setminus \cG_0(\epsilon)$, then $\cB_0(\epsilon)$ is $K$-Carleson.
\end{prop}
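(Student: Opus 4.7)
The plan is to show that membership in $\cB_0(\epsilon)$ forces a definite amount of nonmonotonicity on a ball associated to $Q$, and then to use the kinematic formula (Proposition~\ref{prop:kinematicNM}) to sum these contributions against the perimeter measure. The Ahlfors regularity of $\partial E$ will then convert the perimeter bound into the required Carleson estimate.

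First, combine Proposition~\ref{prop:highDimStability} with Lemma~\ref{lem:L1 distance bounds local distance}: given $\epsilon$ and $C$, choose successively
$\theta = \theta(\epsilon)$ small, then $\epsilon_1 = \epsilon_1(\theta,C)$ from Lemma~\ref{lem:L1 distance bounds local distance}, then $\delta = \delta(\epsilon_1,C) > 0$ from Proposition~\ref{prop:highDimStability}, and finally $R = R(\epsilon)$ large (of order $1/\epsilon$ up to factors of $\delta$ and $K$) so that $B_{R\sigma(Q)/2}(x_Q) \supset N_{1/\epsilon}(Q)$ and $\theta \cdot R\sigma(Q) \le \epsilon\sigma(Q)$, where $x_Q \in Q$ is the approximate center from Lemma~\ref{lem:cubeCenters}. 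With these choices, if $Q \in \cB_0(\epsilon)$ then necessarily
\[
\NM_{B_{R\sigma(Q)}(x_Q)}(E) \;\ge\; \delta^{2k+3},
\]
since otherwise Proposition~\ref{prop:highDimStability} combined with Lemma~\ref{lem:L1 distance bounds local distance} would produce a half-space certifying $Q \in \cG_0(\epsilon)$.

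Next, unwind the definition of $\NM$ via the kinematic estimate~\eqref{eq:pre kine NM}. Integrating over horizontal lines and using~\eqref{eq:def NM Br} gives, for each $Q \in \cB_0(\epsilon)$,
\[
\delta^{2k+3}\, R^{2k+2}\, \sigma(Q)^{2k+1} \;\lesssim\; \sum_{n:\, 2^n \le 4R\sigma(Q)} \frac{2^n}{\sigma(Q)}\, \widehat{w}_n(E)\!\left(B_{R\sigma(Q)}(x_Q)\right).
\]
Now fix $Q_0 \in \Delta$ and sum this inequality over all $Q \in \cB_0(\epsilon)$ with $Q \subset Q_0$. Swapping the order of summation, for each $n$ collect the contributions from cubes $Q \in \Delta_i$ (so $\sigma(Q) = 2^i$) with $i \ge n - O_{\epsilon}(1)$. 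For each such $i$, the balls $\{B_{R\cdot 2^i}(x_Q)\}_{Q \in \Delta_i,\, Q \subset Q_0}$ have multiplicity bounded by a constant $M(R)$ and are all contained in $N_{R\sigma(Q_0)}(Q_0)$, so
\[
\sum_{Q \in \Delta_i,\, Q \subset Q_0} \widehat{w}_n(E)\!\left(B_{R\cdot 2^i}(x_Q)\right) \;\le\; M(R)\, \widehat{w}_n(E)\!\left(N_{R\sigma(Q_0)}(Q_0)\right).
\]
The geometric series $\sum_{i \ge n - O_{\epsilon}(1)} 2^{n-i}$ converges to a constant depending only on $\epsilon$, so after summing over $n$ and invoking~\eqref{eq:kinematicW} we obtain an upper bound proportional to $\Per(E)\!\left(N_{R\sigma(Q_0)}(Q_0)\right)$.

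To conclude, apply Lemma~\ref{lem:finite Hausdorff finite perimeter} together with the $(C,r)$-regularity of $\partial E$: covering the neighborhood $N_{R\sigma(Q_0)}(Q_0)$ by $\lesssim R^{2k+2}$ balls of radius $\sigma(Q_0)$ yields
\[
\Per(E)\!\left(N_{R\sigma(Q_0)}(Q_0)\right) \;\lesssim\; \cH^{2k+1}\!\left(N_{R\sigma(Q_0)}(Q_0) \cap \partial E\right) \;\lesssim_{C}\; R^{2k+2}\, \sigma(Q_0)^{2k+1}.
\]
Dividing both sides by $\delta^{2k+3}\, R^{2k+2}$ produces
\[
\sum_{\substack{Q \in \cB_0(\epsilon)\\ Q \subset Q_0}} \sigma(Q)^{2k+1} \;\le\; K\, \sigma(Q_0)^{2k+1}
\]
for a constant $K = K(\epsilon,C)$, which is the desired Carleson packing condition. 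The main obstacle will be bookkeeping: verifying that the interaction between the three free parameters $\theta,\epsilon_1,\delta$, the patchwork constant $K$, and the multiplicity bounds yields a clean dependence $K = K(\epsilon,C)$, and ensuring that the double sum over $(Q,n)$ really does reduce (after bounded multiplicity of ball covers and the geometric summation in $i$) to a single application of~\eqref{eq:kinematicW} on $N_{R\sigma(Q_0)}(Q_0)$.
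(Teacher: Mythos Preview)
Your approach is correct and takes a more direct route than the paper's proof. Both arguments rest on the same three ingredients---Proposition~\ref{prop:highDimStability}, Lemma~\ref{lem:L1 distance bounds local distance}, and the kinematic decomposition~\eqref{eq:kinematicW}---but they are organized differently. The paper first introduces a per-cube nonmonotonicity $w(Q)=\widehat{w}_{\log_2\sigma(Q)}(E)(Q)$, shows by a Chebyshev-type argument that $\{Q:w(Q)>\delta\,\sigma(Q)^{2k+1}\}$ is Carleson (Lemma~\ref{lem:cB epsilon is Carleson}), then proves that the Carleson property is stable under passage to $n$-close cubes (Lemma~\ref{lem:cube expansion}), and finally verifies (Lemma~\ref{lem:goodCubesNM}) that any cube not $n$-close to a high-$w$ cube has small nonmonotonicity on the relevant enlarged ball. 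Your argument bypasses this intermediate structure: you sum the kinematic lower bound for $\NM$ directly over the bad cubes and collapse the resulting double sum via bounded overlap of the balls $B_{R2^i}(x_Q)$ together with the geometric series $\sum_{i\ge n-O_\epsilon(1)}2^{n-i}$. This is cleaner for the purposes of Proposition~\ref{prop:goodCubes} alone; the paper's more modular organization has the side benefit that the quantity $w(Q)$ is reused later when packing the stopping-time regions (Lemma~\ref{lem:cF1 causes nonmonotonicty}).

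Two minor points you should address explicitly. First, Proposition~\ref{prop:highDimStability} as stated requires $(c,\rho)$-regularity with $\rho=R\sigma(Q)$, and for cubes with $\sigma(Q)>r/R$ this may exceed the regularity scale $r$; since there are only $O_\epsilon(1)$ such scales $i$ and at each one $\sum_{Q\in\Delta_i,\,Q\subset Q_0}\sigma(Q)^{2k+1}\asymp\sigma(Q_0)^{2k+1}$, these top cubes can simply be absorbed into the Carleson constant (this is what the paper does by restricting to $\sigma(Q)\le 2^{-n}$). Second, you are implicitly using that the $\delta$ produced by Proposition~\ref{prop:highDimStability} is scale-invariant---its proof begins by rescaling $\rho$ to $1$---so that a single $\delta$ works uniformly over all $Q$ before you choose $R$.
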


Note that by rescaling, we may suppose that $r=1$.  We will prove the Proposition~\ref{prop:goodCubes} by showing that most cubes in $\Delta$ have small nonmonotonicity, then using Proposition~\ref{prop:highDimStability} to conclude that most cubes are close to planes.  For simplicity of notation, all the implicit constants in this section and the following sections will depend on $C$.

First, we decompose $\NM(E)$ into pieces based on cubes.  Namely, for every integer $j\le \lfloor \log_2 r\rfloor $ and  $Q\in \Delta_j$, denote
$w(Q)=\widehat{w}_{j}(E)(Q).$
This counts segments with an endpoint in $Q$ that have lengths between $\sigma(Q)/2$ and $\sigma(Q)$.
\begin{lemma}\label{lem:cB epsilon is Carleson}
  For any $Q\in \Delta$ we have
  $$\sum_{\substack{Q'\in \Delta\\ Q'\subset Q}} w(Q')\lesssim \sigma(Q)^{2k+1}.$$
  It follows that for any $\epsilon>0$, the set
  $\cB(\epsilon)\eqdef \{Q\in \Delta\mid w(Q)>\epsilon\sigma(Q)^{2k+1}\}$
 is $O(1/\e)$-Carleson.
\end{lemma}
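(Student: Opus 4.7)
The plan is to prove the first displayed bound by reorganizing the sum level by level and applying the kinematic decomposition \eqref{eq:kinematicW}; the Carleson conclusion will then follow from a one-line Chebyshev-type estimate.

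First I would fix $Q \in \Delta_j$ (so $\sigma(Q) = 2^j$) and exploit the fact that for each $i \le j$ the cubes $\{Q' \in \Delta_i : Q' \subset Q\}$ form a Borel partition of $Q$, since $\Delta_i$ refines the partition containing $Q$. Because $w(Q') = \widehat{w}_i(E)(Q')$ whenever $Q' \in \Delta_i$, summing over this partition yields $\sum_{Q' \in \Delta_i,\, Q' \subset Q} w(Q') = \widehat{w}_i(E)(Q)$. Summing this identity over $i \le j$ and invoking the kinematic decomposition \eqref{eq:kinematicW}, which writes $\Per(E)$ as the sum of the non-negative measures $\widehat{w}_\infty(E)$ and $\{\widehat{w}_i(E)\}_{i \in \Z}$, one obtains
\[\sum_{\substack{Q' \in \Delta \\ Q' \subset Q}} w(Q') \;=\; \sum_{i = -\infty}^{j} \widehat{w}_i(E)(Q) \;\le\; \Per(E)(Q).\]

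Next I would bound $\Per(E)(Q)$ using Proposition~\ref{prop:FSSC Cacciopoli} together with the local $(2k+1)$-dimensional Ahlfors regularity of $\partial E$. Fixing any $x_0 \in Q$ and using the diameter estimate in \eqref{eq:sigma version diam volume}, the open neighborhood $N_1(Q) = \nbhd_{\sigma(Q)}(Q)$ is contained in $B_{(K+1)\sigma(Q)}(x_0)$, so
\[\Per(E)(Q) \;\le\; \Per(E)\big(N_1(Q)\big) \;\lesssim\; \cH^{2k+1}\big(N_1(Q) \cap \partial E\big) \;\lesssim\; \sigma(Q)^{2k+1},\]
where the middle step is Proposition~\ref{prop:FSSC Cacciopoli} and the last step applies the $(C,r)$-Ahlfors regularity of $\partial E$ at scale $\min\{(K+1)\sigma(Q), r\}$; in the borderline range $(K+1)\sigma(Q) > r$ one covers $N_1(Q) \cap \partial E$ by boundedly many $r$-balls, which is harmless since $\sigma(Q) \asymp r$ there. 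This establishes the first assertion.

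For the Carleson consequence I would simply observe that every $Q' \in \cB(\epsilon)$ satisfies $\sigma(Q')^{2k+1} < w(Q')/\epsilon$ by definition, so for every $Q \in \Delta$,
\[\sum_{\substack{Q' \in \cB(\epsilon) \\ Q' \subset Q}} \sigma(Q')^{2k+1} \;\le\; \frac{1}{\epsilon}\sum_{\substack{Q' \in \Delta \\ Q' \subset Q}} w(Q') \;\lesssim\; \frac{\sigma(Q)^{2k+1}}{\epsilon},\]
which is exactly the $O(1/\epsilon)$-Carleson packing condition of Definition~\ref{def:packing}. I do not anticipate a serious obstacle: the lemma is essentially bookkeeping on top of the kinematic formula \eqref{eq:kinematicW} and the standard perimeter estimate of Proposition~\ref{prop:FSSC Cacciopoli}. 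The only mild subtlety is handling scales $\sigma(Q)$ comparable to the regularity scale $r$, which is resolved by the trivial covering argument sketched above.
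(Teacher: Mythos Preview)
Your proof is correct and follows essentially the same approach as the paper: reorganize the sum level by level using that $\Delta_i$ partitions $Q$, apply the kinematic decomposition \eqref{eq:kinematicW} to bound by $\Per(E)(Q)$, and then use regularity of $\partial E$; the Carleson conclusion follows by the same Chebyshev step. Your treatment is slightly more explicit than the paper's (spelling out the passage to the open neighborhood $N_1(Q)$ via Proposition~\ref{prop:FSSC Cacciopoli} and handling the borderline case $\sigma(Q)\asymp r$), but the argument is the same.
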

\begin{proof}
  Suppose $Q\in \Delta_i$ for some integer $i\le \lfloor \log_2 r\rfloor$.  Then, using the regularity of $\partial E$,
  \begin{align*}
    \sum_{\substack{Q'\in \Delta\\ Q'\subset Q}} w(Q')
    =\sum_{j=-\infty}^i \sum_{\substack{Q'\in \Delta_j\\ Q'\subset Q}} \widehat{w}_j(E)(Q')
    = \sum_{j=-\infty}^i \widehat{w}_j(E)(Q)
    \stackrel{\eqref{eq:kinematicW}}{\le} \Per(E)(Q) \lesssim \sigma(Q)^{2k+1}.
  \end{align*}
  This implies that $\cB(\epsilon)$ is $O(1/\e)$-Carleson because
  \begin{align*}
    \sum_{\substack{Q'\in \cB(\epsilon)\\ Q'\subset Q}}\sigma(Q')^{2k+1}
    \le \sum_{\substack{Q'\in \cB(\epsilon)\\ Q'\subset Q}} \frac{w(Q')}{\e}
    \le \frac{1}{\e}\sum_{\substack{Q'\in \Delta\\ Q'\subset Q}} w(Q')
    \lesssim \frac{\sigma(Q)^{2k+1}}{\e}.\tag*{\qedhere}
  \end{align*}
\end{proof}

If $Q, Q'\in \Delta$ and $n>1$, we say that $Q$ and $Q'$ are
\emph{$n$-close} if
\begin{equation}\label{eq:NCloseSize}
  \frac{\sigma(Q)}{\sigma(Q')}\in [2^{-n},2^n]\qquad\mathrm{and}\qquad d(Q,Q')\le 2^n\max \{\sigma(Q),\sigma(Q')\}.
\end{equation}
If $\cS\subset \Delta$, denote
$$\cS^{(n)}\eqdef \{Q\in \Delta\mid \text{$Q$ is $n$-close to an element of $\cS$}\}.$$

Any cube that is $n$-close to $Q$ intersects $N_{2^{2n}}(Q)$ and satisfies~\eqref{eq:NCloseSize}.  By Definition~\ref{def:patchwork}, if $Q\in \Delta_i$ and $j<i$, then
\begin{equation}\label{eq: number of N close less}
  \big|\{Q'\in \Delta_j\mid Q'\cap N_{2^{2n}}(Q)\ne \emptyset\}\big|\lesssim \frac{\cH^{2k+1}\big(N_{2^{2n}}(Q)\big)}{2^{j(2k+1)}}\asymp 2^{(2k+1)(i-j+2n)}\lesssim 2^{3n(2k+1)}.
\end{equation}
If $j\ge i$, then
\begin{equation}\label{eq: number of N close more}
  \big|\{Q'\in \Delta_j\mid Q'\cap N_{2^{2n}}(Q)\ne \emptyset\}\big|\lesssim 2^{2n(2k+1)}.
\end{equation}
It follows that
\begin{equation}\label{eq: number of N close}
 \big|\{Q\}^{(n)}\big| \lesssim \sum_{j=i-n}^{i+n} 2^{3n(2k+1)} \lesssim n2^{3n(2k+1)};
\end{equation}
that is, the number of cubes of $\Delta_i$ that are $n$-close to $Q$ is bounded by a function of $n$. Furthermore,
\begin{lemma}\label{lem:cube expansion}
  For any $K, n>0$, there exists $L>0$ such that if $\cS\subset \Delta$ is $K$-Carleson, then $\cS^{(n)}$ is $L$-Carleson.
\end{lemma}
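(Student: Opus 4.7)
The plan is to charge each cube in $\cS^{(n)}$ to a nearby cube in $\cS$ of comparable size, and then to cover the resulting set of charges by a uniformly bounded family of cubes of $\Delta$ of size $\asymp_n\sigma(Q_0)$, where the $K$-Carleson hypothesis on $\cS$ can be applied termwise. Throughout I use the counting bound \eqref{eq: number of N close} and the $(C,r)$-Ahlfors regularity of $\partial E$, which is built into the patchwork $\Delta$.

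Fix $Q_0\in\Delta$; the goal is to show $\sum_{Q\in\cS^{(n)},\,Q\subset Q_0}\sigma(Q)^{2k+1}\lesssim_{n,K,C}\sigma(Q_0)^{2k+1}$. For each such $Q$ pick $\psi(Q)\in\cS$ which is $n$-close to $Q$. The defining inequalities \eqref{eq:NCloseSize} give $\sigma(\psi(Q))\asymp_n\sigma(Q)\le\sigma(Q_0)$, and combined with $d(Q,Q_0)=0$ they also give $d(\psi(Q),Q_0)\le 2^n\max\{\sigma(Q),\sigma(\psi(Q))\}\le 4^n\sigma(Q_0)$. The symmetry of $n$-closeness and \eqref{eq: number of N close} show that for every $R'\in\cS$ the fiber $\psi^{-1}(R')$ is contained in $\{R'\}^{(n)}$ and hence has cardinality $\lesssim_n 1$, so
\[
\sum_{\substack{Q\in\cS^{(n)}\\ Q\subset Q_0}}\sigma(Q)^{2k+1}\;\lesssim_n\;\sum_{R'\in\psi(\,\cdot\,)}\sigma(R')^{2k+1},
\]
where the right-hand sum is over the image of $\psi$ without multiplicity.

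Choose the smallest integer $j$ with $2^j\ge 2^n\sigma(Q_0)$, truncated to $\lfloor\log_2 r\rfloor$ if necessary; in both cases $2^j\asymp_n\sigma(Q_0)$. Each $R'$ appearing above satisfies $\sigma(R')\le 2^{\lfloor\log_2 r\rfloor}\wedge 2^n\sigma(Q_0)\le 2^j$, so $R'$ is contained in a unique $\hat Q(R')\in\Delta_j$. Since $\diam \hat Q(R')<K2^j\lesssim_{n,K}\sigma(Q_0)$ and $d(R',Q_0)\le 4^n\sigma(Q_0)$, the cube $\hat Q(R')$ meets $\nbhd_{4^n\sigma(Q_0)}(Q_0)$ and hence sits inside $\partial E\cap\nbhd_{\alpha}(Q_0)$ with $\alpha\lesssim_{n,K}\sigma(Q_0)$. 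Let $\cA\subset\Delta_j$ be the resulting finite collection. Covering $\nbhd_{\alpha}(Q_0)$ by $O_{n,K}(1)$ Carnot--Carath\'eodory balls of radius $\min\{\alpha,r\}$ centered on $Q_0$ and invoking the $(C,r)$-Ahlfors $(2k+1)$-regularity of $\partial E$ together with the lower bound $\cH^{2k+1}(\hat Q)\ge 2^{j(2k+1)}/K$ from \eqref{eq:sigma version diam volume} yields $|\cA|\lesssim_{n,K,C}1$.

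Applying the $K$-Carleson property of $\cS$ cube-by-cube to $\cA$ and using that $\{\hat Q(R'):R'\in\psi(\cdot)\}\subset\cA$ together with the uniqueness of $\hat Q(R')$ gives
\[
\sum_{R'\in\psi(\,\cdot\,)}\sigma(R')^{2k+1}\;\le\;\sum_{\hat Q\in\cA}\,\sum_{\substack{R'\in\cS\\ R'\subset\hat Q}}\sigma(R')^{2k+1}\;\le\;K|\cA|\,2^{j(2k+1)}\;\lesssim_{n,K,C}\;\sigma(Q_0)^{2k+1},
\]
and combining with the first display produces the desired Carleson bound with $L=L(K,n,k,C)$. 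The only delicate step is packaging the covering $\cA$ uniformly at the top of the scale (the truncation $j=\lfloor\log_2 r\rfloor$), where one must check that $\sigma(Q_0)\asymp_n 2^j$ still holds; everything else is a formal consequence of \eqref{eq: number of N close} and volume counting, so no new geometric input is required.
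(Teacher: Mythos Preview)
Your proof is correct and follows essentially the same strategy as the paper: pass from each $Q\in\cS^{(n)}$ to a nearby $R'\in\cS$ of comparable scale, control the overcounting via the bound $|\{R'\}^{(n)}|\lesssim_n 1$, and then cover the resulting collection of $R'$'s by $O_n(1)$ ``top'' cubes on which the $K$-Carleson hypothesis applies.

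The packaging differs slightly. The paper avoids constructing the scale $j$ and the family $\cA$ by using the observation that $n$-closeness is inherited by ancestors of comparable scale: if $Q\subset Q_0$ is $n$-close to $R\in\cS$, then some ancestor $A$ of $R$ is $n$-close to $Q_0$ itself, so the top cubes are simply $\{Q_0\}^{(n)}$, whose cardinality is bounded by \eqref{eq: number of N close}. This sidesteps your explicit appeal to Ahlfors regularity and the truncation case $j=\lfloor\log_2 r\rfloor$. Your route is slightly more hands-on but perfectly valid; the Ahlfors regularity you invoke is already built into the patchwork via \eqref{eq:sigma version diam volume}, so no genuinely new input is being used. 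The final Carleson constant in both arguments has the same dependence on $n$, $K$, and the patchwork constants.
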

\begin{proof}
  If $D_1, D_2\in \Delta$ are $n$-close and if $A_1,A_2\in \Delta$ are ancestors of $D_1,D_2$, respectively,  such that $\sigma(A_1)/\sigma(A_2)\in [2^{-n},2^n]$, then $A_1$ and $A_2$ are also $n$-close.  It follows that if $M\in \Delta$ and $Q\subset M$ is such that $Q\in \cS^{(n)}$, then there is a cube $R\in \cS$ such that some ancestor $A$ of $R$ is $n$-close to $M$.

   Consequently, we can write
  \begin{equation}\label{eq:breas S(n)}
  \sum_{\substack{Q\in \cS^{(n)}\\Q\subset M}}\sigma(Q)^{2k+1}\le \sum_{A\in \{M\}^{(n)}} \sum_{\substack {R\in \cS\\R\subset A}} \sum_{Q\in \{R\}^{(n)}} \sigma(Q)^{2k+1}.
  \end{equation}
  But if $R\in \Delta$, then
  \begin{equation}\label{eq:on R(n)}
  \sum_{Q\in \{R\}^{(n)}} \sigma(Q)^{2k+1}\stackrel{\eqref{eq: number of N close}}{\lesssim}n2^{4n(2k+1)} \sigma(R)^{2k+1}.
  \end{equation}
  Since $\cS$ is $K$-Carleson,
  \begin{multline*}
    \sum_{\substack{ Q\in \cS^{(n)}\\Q\subset M}}\sigma(Q)^{2k+1}
    \stackrel{\eqref{eq:breas S(n)}\wedge \eqref{eq:on R(n)}}{\lesssim} \sum_{A\subset \{M\}^{(n)}} \sum_{\substack {R\in \cS\\R\subset A}} n2^{4n(2k+1)} \sigma(R)^{2k+1}
   \\ \le n2^{4n(2k+1)} \sum_{A\subset \{M\}^{(n)}} K\sigma(A)^{2k+1}
    \stackrel{\eqref{eq: number of N close}}{\lesssim} n^22^{(2k+1)8N}K \sigma(M)^{2k+1}.\tag*{\qedhere}
  \end{multline*}
\end{proof}
By Lemma~\ref{lem:cB epsilon is Carleson} and Lemma~\ref{lem:cube expansion}, $\cB(\epsilon)^{(n)}$ satisfies a Carleson packing condition.  Let
$$\cG(\epsilon,n)\eqdef \big\{Q\in \Delta \setminus \cB(\epsilon)^{(n)}\mid \sigma(Q)\le 2^{-n}\big\}.$$
Then $\Delta\setminus \cG(\epsilon,n)=\bigcup_{i=-n}^0\Delta_i\cup \cB(\epsilon)^{(n)}$ is $L$-Carleson (with $L$ allowed to depend on $n$), and every cube in $\cG(\epsilon,n)$ has small nonmonotonicity. The following lemma is inspired by~\cite[Proposition~4.5]{CKN}.

\begin{lemma}\label{lem:goodCubesNM}
  For all $0<\epsilon<1$, there are $\delta>0$ and $n\in \N$ such that if $Q\in \cG(\delta,n)$ and $x\in Q$,
  $$\NM_{B_{\frac{\sigma(Q)}{\e}}(x)}(E)<\epsilon.$$
\end{lemma}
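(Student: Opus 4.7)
Fix $Q\in\Delta_i$ with $\sigma(Q)=2^i$, fix $x\in Q$, and write $r=\sigma(Q)/\epsilon$ and $B=B_r(x)$. The plan is to choose $n\in\N$ slightly larger than $\log_2(1/\epsilon)$ and $\delta$ a small universal multiple of $\epsilon$, and then estimate $\NM_B(E)$ via Proposition~\ref{prop:kinematicNM}, which up to constants reduces the task to bounding $r^{-(2k+2)}\sum_{j=-\infty}^{\lceil 1+\log_2 r\rceil}2^j\widehat{w}_j(E)(B)$. The conditions $n\ge\log_2(1/\epsilon)$ and $\sigma(Q)\le 2^{-n}$ together force $r\le 1$, so the Ahlfors regularity guaranteed at the start of the proof of Proposition~\ref{prop:goodCubes} (where the base scale has been rescaled to $1$) applies on $B$.

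I will split the kinematic sum at $j=i-n$. The \emph{small scales} $j\le i-n$ are handled by the crude bound
$$\sum_{j\le i-n}2^j\widehat{w}_j(E)(B)\le 2^{i-n}\sum_{j}\widehat{w}_j(E)(B)\le 2^{i-n}\Per_E(B)\lesssim 2^{i-n}r^{2k+1},$$
which after division by $r^{2k+2}$ contributes at most a constant multiple of $\epsilon/2^n$.

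The \emph{large scales} $j\in(i-n,\lceil 1+\log_2 r\rceil]$ are where the hypothesis $Q\in\cG(\delta,n)$ does its work. The key observation is that any $Q'\in\Delta_j$ meeting $B$ with $j\in[i-n,i+n]$ is $n$-close to $Q$ in the sense of~\eqref{eq:NCloseSize}: the size bound $|i-j|\le n$ holds by assumption, while the distance bound follows from $d(Q,Q')\le r=\sigma(Q)/\epsilon\le 2^n\sigma(Q)\le 2^n\max\{\sigma(Q),\sigma(Q')\}$ once $n\ge\log_2(1/\epsilon)$. Since $Q\in\cG(\delta,n)$ means $Q\notin\cB(\delta)^{(n)}$, the symmetry of the $n$-closeness relation forces $Q'\notin\cB(\delta)$ and therefore $w(Q')\le\delta\sigma(Q')^{2k+1}$. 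Bounding the number of $Q'\in\Delta_j$ meeting $B$ by a constant multiple of $(r+2^j)^{2k+1}/2^{j(2k+1)}$ via the Ahlfors regularity of $\partial E$, and then summing over the relevant large scales, I obtain $\sum_{j>i-n}2^j\widehat{w}_j(E)(B)\lesssim\delta r^{2k+2}$, hence a contribution of order $\delta$ after division by $r^{2k+2}$.

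Combining the two bounds yields $\NM_B(E)\lesssim\epsilon/2^n+\delta$, so the lemma follows by choosing $n$ large enough that $\epsilon/2^n$ is dominated by $\epsilon/2$ and $\delta$ a sufficiently small universal multiple of $\epsilon$. The main obstacle in the argument is the coordination between the parameter $n$ (from the definition of $\cG(\delta,n)$) and the ratio $1/\epsilon$ of the ball radius to the cube side length: once it is arranged that every cube in $\Delta_j$ meeting $B$ at a scale $j\in[i-n,i+n]$ is $n$-close to $Q$, the two halves of the kinematic sum decouple cleanly, one being controlled by the total perimeter and the other by the $\delta$-smallness inherited from $Q\notin\cB(\delta)^{(n)}$.
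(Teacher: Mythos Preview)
Your proposal is correct and follows essentially the same approach as the paper: apply Proposition~\ref{prop:kinematicNM}, split the kinematic sum into small and large scales, bound the small-scale tail by $\Per_E(B)$, and use the $n$-closeness of cubes meeting $B$ at large scales to invoke $Q\notin\cB(\delta)^{(n)}$ and conclude $w(Q')\le\delta\sigma(Q')^{2k+1}$. The only cosmetic differences are that the paper splits at $j=m-n$ (where $m=\lceil 1+\log_2 r\rceil$) rather than at $j=i-n$, and packages the large-scale bound as $\widehat{w}_j(E)(B)\lesssim\delta\Per(E)(B)$ rather than counting cubes directly via Ahlfors regularity; both yield the same final estimate $\NM_B(E)\lesssim 2^{-n}+\delta$.
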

\begin{proof}
  We take $n\ge \lceil \log_2(1/\e)\rceil+2$, so that $2^{-n}\le \epsilon/4$.  In this case, we shall bound $\NM_{B_{\sigma(Q)/\e}(x)}(E)$ in terms of $n$ and $\delta$. Suppose that $Q\in \cG(\delta,n)$ and $x\in Q$, and denote $B=B_{\rho}(x)$, where $\rho = \sigma(Q)/\e\le 1/4$.  Let $m=\lceil \log_2 \rho\rceil + 1\le -1$.  Then $2^m\le 2^n\sigma(Q)$. Proposition~\ref{prop:kinematicNM} states that
  \begin{equation}\label{eq:NMBE}
  \NM_B(E)\lesssim \sum_{j=-\infty}^{m}  \frac{2^{j}}{\rho} \cdot q_j,
  \end{equation}
  where $q_j\eqdef \frac{\widehat{w}_n(E)(B)}{\Per(E)(B)}$.  Note that for any $j\le m$, \eqref{eq:kinematicW} implies that $q_j\le 1$.

  Suppose that $j\in [m-n,m]$.  Then
  $$\widehat{w}_j(E)(B)\le \mathop{\sum_{Q'\in \Delta_j}}_{Q'\cap B\ne \emptyset}\widehat{w}_j(E)(Q').$$
  If $Q'\in \Delta_j$ and $Q'\cap B\ne \emptyset$, then $\diam Q'\le C \sigma(Q')\le C2^m\le 4C\rho$ (where $C$ is the patchwork constant for $\Delta$). Thus $Q'\subset B_{(4C+1)\rho}(x)$.  Furthermore, $Q'$ intersects $N_{1/\e}(Q)$, so $Q'$ is $n$-close to $Q$.  Since $Q\in \cG(\delta,N)$, it follows that $\widehat{w}_j(E)(Q')\le \delta \sigma(Q')^{2k+1}\asymp \delta\Per(E)(Q')$.  Consequently,
  $$\widehat{w}_j(E)(B)\lesssim \mathop{\sum_{Q'\in \Delta_j}}_{Q'\subset B_{(4C+1)\rho}(x)} \delta\Per(E)(Q')\le \delta \Per(E)\big(B_{(4C+1)\rho}(x)\big)\asymp \delta\Per(E)(B).$$
  We have thus shown that $q_j\lesssim \delta$ for all $j\in [m-n,m]$. By~\eqref{eq:NMBE} we therefore have,
  $$\NM_{B}(E) \lesssim \sum_{j=-\infty}^{m-n-1}  \frac{2^{j}}{\rho} \cdot q_j+\sum_{j=m-n}^{m}  \frac{2^{j}}{\rho} \cdot q_j
  \lesssim \frac{2^{m-n}}{2^m} +  \frac{\delta 2^m}{2^m}=\frac{1}{2^n}+\delta.$$
  Hence,  if $n$ is sufficiently large and $\delta$ is sufficiently small, then $\NM_{B}(E)\le \epsilon$.
\end{proof}

Proposition~\ref{prop:goodCubes} now follows from Lemma~\ref{lem:goodCubesNM} and the results on monotone sets in Section~\ref{sec:delta monotone is close to plane}.
\begin{proof}[{Proof of Proposition~\ref{prop:goodCubes}}]
   Set $\theta=\frac{\epsilon}{C+1/\e}$.  By Proposition~\ref{prop:highDimStability} and Lemma~\ref{lem:L1 distance bounds local distance}, there is  $\alpha>0$ such that if $x\in \partial E$, $0<r<1$, and $\NM_{B_r(x)}(E)<\alpha^{2k+3}$, then there is a half-space $P^+\subset\H^{2k+1}$ with
  $$d_{B_{\frac{\alpha r}{2}}(x)}(F,P^+)< \frac{\theta\alpha r}{2}.$$
Denote $\gamma=\frac{2(C+1/\e)}{\alpha}$.  By
  Lemma~\ref{lem:goodCubesNM}, there are $\delta>0$ and $n>0$ such
  that if $Q\in \cG(\delta,n)$ and $x\in Q$, then
  $\NM_{B_{\gamma \sigma(Q)}(x)}(E)<\alpha^{2k+3}.$ Thus there is a plane $P$ such that
  \begin{align*}
    d_{B_{\frac{\alpha \gamma \sigma(Q)}{2}}(x)}(E,P^+)
    =d_{B_{(C+\frac{1}{\e})\sigma(Q)}(x)}(E,P^+)
    \le \Big(C+\frac{1}{\e}\Big) \theta  \sigma(Q) =\epsilon\sigma(Q).
  \end{align*}
  Since $N_{1/\e}(Q)\subset B_{(C+1/\e)\sigma(Q)}(x)$, this implies that $Q\in \cG_0(\epsilon)$.

  Let $\cG=\cG(\delta,n)$, $\cB=\Delta\setminus \cG$.  By Lemma~\ref{lem:cB epsilon is Carleson} and Lemma~\ref{lem:cube expansion}, $\cB$ is $K$-Carleson for some $K$ depending on $C$.  We have seen that $\cG\subset \cG_0(\epsilon)$, so $\cB_0(\epsilon)\subset \cB$ and thus $\cB_0(\epsilon)$ is $K$-Carleson.
\end{proof}

\subsection{Most cubes are close to a vertical plane}
\subsubsection{Planes and angles}\label{sec:planes and angles}
Before the next step, we recall some facts about planes and angles in the Heisenberg group.  Recall that $\mathsf{H}\subset\H^{2k+1}$, the subspace of horizontal vectors, is the subspace spanned by $\{X_i\}_{i=1}^k$ and $\{Y_i\}_{i=1}^k$.  Recall also that a plane in $\H^{2k+1}$ is either a vertical plane or the union $P_y=y\mathsf{H}$ of all of the horizontal lines passing through a point $y\in \H^{2k+1}$.  The planes in $\H^{2k+1}$ are exactly the planes in $\R^{2k+1}$ under the identification with $\H^{2k+1}$.

If $P,Q\subset \H^{2k+1}$ are planes, we define $\angle(P,Q)$ to be the angle between $P$ and $Q$ considered as planes in $\R^{2k+1}$ (i.e., the Euclidean angle between their normals).  If $P$ is a plane and $L$ is a line (a coset of a one-parameter subgroup, or equivalently, a line in $\R^{2k+1}$) that intersects $P$ at a point $y$, we define $\angle(P,L)$ to be the minimum Euclidean angle between $L$ and a line in $P$ that passes through $y$; this quantity varies between $0$ and $\frac{\pi}{2}$.

Importantly, the angle between planes is neither translation-invariant nor scale-invariant.  For example, if $y\in \mathsf{H}\setminus \{\0\}$, then $P_y=y\mathsf{H}$ is a plane through $\0$.  If $L_Z=\langle Z\rangle$ is a vertical line, then $\angle(s_t(P_y),s_t(L_Z))=\angle(s_t(P_y),L_Z)$ converges to zero as $t\to \infty$.

Since horizontal planes  play an important role in this section, we give two formulas expressing a horizontal plane in coordinate form.  Let $\pi\from \H^{2k+1}\to H$ be the abelianization map.  If $u,p\in \H^{2k+1}$, then $p\in P_u$ if and only if $p=u (\pi(p)-\pi(u))$.  That is, we can write $P_u$ as a graph over $\mathsf{H}$:
\begin{equation}\label{eq:abelianization graph}
  P_u=\left\{p\in \H^{2k+1}\mid z(p)=z(u)+\frac{\omega(u,p)}{2}\right\}=\left\{hZ^{z(u)+\frac{\omega(u,h)}{2}}\mid h\in \mathsf{H}\right\}.
\end{equation}
If $r>0$ and $u=Y_1^r$, then we can write
\begin{align}
  P_u =\left\{p \mid z(p)=\frac{\omega(u,p)}{2}\right\}
  \label{eq:planeGraph}
      =\left\{p \mid x_1(p)= \frac{-2 z(p)}{r}\right\}.
\end{align}

If $P$ is a plane and $x\in \H^{2k+1}$, denote
$$\alpha_x(P)\eqdef \begin{cases}
  \dEuc(\pi(x),\pi(y)) & \text{if $P=P_y$}, \\
  \infty & \text{if $P$ is vertical.}
\end{cases}$$
By \eqref{eq:abelianization graph}, $\alpha_{\0}(P)$ is determined by the angle between $P$ and the vertical line $L_Z\eqdef \langle Z\rangle$; specifically,
$$\angle(L_Z,P)=\arctan \left(\frac{2}{\alpha_{\0}(P)}\right).$$

\begin{lemma}\label{lem:distanceToCenter}
  Let $P^+\subset \H^{2k+1}$ be a half-space with boundary $P$ and let $x\in \H^{2k+1}$.  There is a half-space $V^+$ with vertical boundary such that
  $$d_{B_1(x)}(P^+, V^+) \lesssim \frac{1}{\alpha_x(P)}.$$
\end{lemma}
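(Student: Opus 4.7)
The plan is to normalize the horizontal plane $P$ by the symmetries of $\H^{2k+1}$, reduce to a concrete computation on $B_1(\0)$, and then estimate the CC-distances $d(p,P)$ and $d(p,V)$ by moving along the horizontal direction $X_1$.

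First, if $P$ is vertical then $\alpha_x(P)=\infty$ and $V^+\eqdef P^+$ works trivially, so I assume $P=P_y$ is horizontal. Since both sides of the claimed inequality are invariant under the substitution $(P,V,x)\mapsto(x^{-1}P,\,x^{-1}V,\,\0)$, I translate to reduce to $x=\0$. The isometric action of $U(k)$ on $\H^{2k+1}$ that fixes $\0$ and the $Z$-axis can then be used to arrange $\pi(y)=rY_1$ with $r=\|\pi(y)\|=\alpha_\0(P)$; setting $s=z(y)$, the formula~\eqref{eq:planeGraph} applied to $u=rY_1+sZ$ identifies $P$ with the affine hyperplane $\{p\in\H^{2k+1}:rx_1(p)/2+z(p)=s\}$.

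If $r$ is bounded by a fixed universal constant, then $1/\alpha_\0(P)\gtrsim 1$ and any vertical half-space $V^+$ chosen so that $V^+\cap B_1$ lies on the correct side gives $d_{B_1}(P^+,V^+)\le\diam B_1\lesssim 1/\alpha_\0(P)$, so I henceforth assume $r$ is large. I take $V\eqdef\{p:x_1(p)=2s/r\}$, the vertical plane obtained from $P$ by dropping its $z$-term, and orient $V^+\eqdef\{p:x_1(p)>2s/r\}$ on the side matching $P^+=\{p:rx_1(p)/2+z(p)>s\}$ (with signs flipped in the opposite orientation). The core step is to show that $d(p,P),\,d(p,V)\lesssim 1/r$ for every $p\in(P^+\symdiff V^+)\cap B_1$. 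For $d(p,V)$: since $V=\pi^{-1}(U)$ for a hyperplane $U\subset\mathsf{H}$, \eqref{eq:horizontal geodesics} together with the fact that horizontal length equals Euclidean length of the projection yields $d(p,V)=|x_1(p)-2s/r|$. On $P^+\symdiff V^+$ the quantities $\epsilon\eqdef rx_1(p)/2+z(p)-s$ and $\eta\eqdef x_1(p)-2s/r$ have opposite signs; since $\epsilon=r\eta/2+z(p)$, a two-line case analysis then yields $|\eta|\le 2|z(p)|/r$ and $|\epsilon|\le|z(p)|$, and $|z(p)|\lesssim 1$ on $B_1$ gives the desired bound. For $d(p,P)$: I move $p$ along $X_1$ by an amount $\delta$; the group law~\eqref{eq:def group product omega} gives $pX_1^\delta=p+\delta X_1-(y_1(p)\delta/2)Z$, and the requirement $pX_1^\delta\in P$ simplifies to $\epsilon+\delta(r-y_1(p))/2=0$, so $\delta=-2\epsilon/(r-y_1(p))$. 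Using $|\epsilon|\lesssim 1$ and $|y_1(p)|\lesssim 1\ll r$ one gets $|\delta|\lesssim 1/r$, and then $d(p,P)\le|\delta|\lesssim 1/r$ because horizontal segments are geodesics.

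The main obstacle is conceptual rather than technical: CC-distance to a horizontal plane is in general subtle, because horizontal planes are not invariant under vertical translations and hence behave very differently from vertical ones; there is no a priori reason to expect the two kinds of half-spaces to approximate each other at all. The saving observation is that far from its center $y$, the plane $P_y$ is nearly invariant under the $Z$-translations that preserve $B_1(x)$, so on that ball it is well-approximated by a vertical plane, with the approximation error of order precisely $1/\alpha_x(P)$; once this is noticed, the proof reduces to the explicit computation above.
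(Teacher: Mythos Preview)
Your proof is correct and follows essentially the same approach as the paper: normalize by the symmetries of $\H^{2k+1}$, write $P$ explicitly via~\eqref{eq:abelianization graph}, and show that on the symmetric difference both $d(p,V)$ and $d(p,P)$ are $\lesssim 1/r$. The only notable difference is cosmetic: you translate $x$ to $\0$ and keep track of $s=z(y)$, whereas the paper translates an intersection point $w\in P\cap B_1(x)$ to $\0$ (so that effectively $s=0$); your choice avoids the paper's separate case analysis for when $P$ does not meet $B_1(x)$, and your explicit $X_1$-translation to bound $d(p,P)$ spells out a step the paper leaves implicit.
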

\begin{proof}
  If $P$ is vertical, then the lemma is satisfied by letting $V^+=P^+$.  If $P$ does not intersect $B_1(x)$, then either $B_1(x)\subset P^+$ or $B_1(x)\cap P^+=\emptyset$.  In this case, we can choose $V$ to be a vertical plane disjoint from $B_1(x)$ and choose $V^+$ to be one of the sides of $V$.

  We thus suppose that $P=P_y$ for some $y\in \H^{2k+1}$ and that $P$ intersects $B_1(x)$, say at $w$.  By translating and rotating, we may suppose that $w=\0$, that $x\in B_1$, and $y=Y_1^r$, where $r=\alpha_w(P)$.

  If $d(x,y)\le 2$, choose $V$ to be an arbitrary vertical plane through $\0$. Then the lemma is satisfied because $d_{B_1(x)}(P^+, V^+) \le \diam (B_1(x))\lesssim 1$.  So, suppose that $d(x,y)>2$, thus $r\asymp \alpha_x(P)$.  Let $V=\{h\in \H^{2k+1}:\ x_1(h)=0\}$.  By \eqref{eq:planeGraph} we have
  $P=\{p \mid x_1(p)= -2 z(p)/r\}$.

  If $P^+=\{p\in \H^{2k+1} \mid x_1(p) \ge -2z(p)/r\}$, let $V^+=\{p\in \H^{2k+1} \mid x_1(p)\ge 0\}$.  We claim that $d_{B_2}(P^+,V^+)$ is small.  If $q\in (P^+\symdiff V^+)\cap B_2$, then $|z(q)|\lesssim 1$ and $|x_1(q)|\le 2|z(q)|/r\lesssim 1/r$.  It follows that $d(q,V)\lesssim 1/r$ and $d(q,P)\lesssim 1/r$, so
  $d_{B_1(x)}(P^+,V^+)\le d_{B_2}(P^+,V^+)\lesssim 1/r$,
  as desired.  Likewise, if $P^+=\{p\in \H^{2k+1} \mid x_1(p) \le -2z(p)/r\}$, then choose $V^+=\{p\in \H^{2k+1} \mid x_1(p)\le 0\}$ and again $d_{B_1(x)}(P^+,V^+)\lesssim 1/r$.
\end{proof}

By applying a scaling to Lemma~\ref{lem:distanceToCenter}, we find that
\begin{cor}\label{cor:scaledDistanceToCenter}
  For any $\sigma>0$, if $P^+\subset \H^{2k+1}$ is a half-space and $x\in \H^{2k+1}$, then there is a half-space $V^+$ with vertical boundary such that
  $$\frac{d_{B_\sigma(x)}(P^+, V^+)}{\sigma} \lesssim \frac{\sigma}{\alpha_x(P)}.$$
\end{cor}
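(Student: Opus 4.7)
The plan is to reduce Corollary~\ref{cor:scaledDistanceToCenter} directly to Lemma~\ref{lem:distanceToCenter} by applying the Heisenberg scaling automorphism $\s_{1/\sigma}$. Recall that the scalings are specifically designed so that $d(\s_t u,\s_t v)=t\,d(u,v)$ and consequently $\s_t(B_r(y))=B_{tr}(\s_t(y))$; in particular, $\s_{1/\sigma}(B_\sigma(x))=B_1(\s_{1/\sigma}(x))$, which is the setting in which Lemma~\ref{lem:distanceToCenter} applies.

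First I would record the behavior of the relevant objects under $\s_t$. Since $\s_t$ is a group automorphism and $\s_t(\mathsf{H})=\mathsf{H}$ as a set, we get $\s_t(P_y)=\s_t(y)\mathsf{H}=P_{\s_t(y)}$, and since $\pi\circ\s_t=t\pi$ (horizontal components scale linearly under $\s_t$),
\[\alpha_{\s_t(x)}(\s_t(P_y))=\dEuc\big(t\pi(x),t\pi(y)\big)=t\,\alpha_x(P_y).\]
Vertical planes are also preserved by $\s_t$: if $V=\pi^{-1}(U)$ for some affine hyperplane $U\subset\mathsf{H}$, then $\s_t(V)=\pi^{-1}(tU)$ is vertical too. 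Finally, directly from Definition~\ref{def:local distance} together with the scaling of the metric and of Hausdorff neighborhoods, one obtains the identity
\[d_{\s_t(U)}(\s_t(A),\s_t(B))=t\,d_U(A,B).\]

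Given these identities, I would apply Lemma~\ref{lem:distanceToCenter} to the half-space $\s_{1/\sigma}(P^+)$ at the point $\s_{1/\sigma}(x)$ to produce a half-space $\widetilde V^+$ with vertical boundary such that
\[d_{B_1(\s_{1/\sigma}(x))}\big(\s_{1/\sigma}(P^+),\widetilde V^+\big)\lesssim \frac{1}{\alpha_{\s_{1/\sigma}(x)}(\s_{1/\sigma}(P))}=\frac{\sigma}{\alpha_x(P)}.\]
Setting $V^+\eqdef \s_\sigma(\widetilde V^+)$, which is again bounded by a vertical plane by the observation above, and applying the scaling identity for $d_U$ with $t=\sigma$ yields
\[d_{B_\sigma(x)}(P^+,V^+)=\sigma\cdot d_{B_1(\s_{1/\sigma}(x))}\big(\s_{1/\sigma}(P^+),\widetilde V^+\big)\lesssim \frac{\sigma^2}{\alpha_x(P)},\]
and dividing by $\sigma$ gives the claimed inequality. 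No serious obstacle arises: the whole content is already in Lemma~\ref{lem:distanceToCenter}, and the only genuine bookkeeping is the homogeneity $\alpha_{\s_t(x)}(\s_t(P))=t\alpha_x(P)$ together with the compatibility of $d_U$ with the scalings $\s_t$.
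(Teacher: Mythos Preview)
Your proposal is correct and is precisely the argument the paper intends: the paper states the corollary with only the phrase ``By applying a scaling to Lemma~\ref{lem:distanceToCenter}, we find that\ldots'' and gives no further detail. You have simply spelled out the bookkeeping (the homogeneity of $\alpha_x(P)$, the preservation of vertical planes, and the scaling identity for $d_U$) that the paper leaves implicit.
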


Finally, we will need the following simple lemma
\begin{lemma}\label{lem:tiltingPlanes}
  For any $\rho>1$, there exists $0<\delta<\frac{1}{2}$ such that for any
  $x\in \H^{2k+1}$ and any $\sigma>0$, if $P,R\subset \H^{2k+1}$ are
  planes such that $d(x,P)< \delta \sigma$,
  $d_{B_\sigma(x)}(P^+,R^+)<\delta\sigma$, and $\alpha_x(P)<\rho\sigma$,
  then $\alpha_x(R)<2\rho\sigma$.
\end{lemma}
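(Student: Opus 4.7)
\smallskip

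\emph{Plan.} The plan is to reduce to unit scale by a Heisenberg scaling, left-translate and rotate to put $P$ into canonical form, and then compare the \emph{Euclidean} unit normals $\mathbf{n}_P,\mathbf{n}_R$ of the two planes, using that the hypothesis $d_{B_\sigma(x)}(P^+,R^+)<\delta\sigma$ forces Heisenberg--Hausdorff closeness of $P$ and $R$ on $B_1(x)$, which in turn gives Euclidean closeness up to a constant factor.

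\smallskip

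First I would apply the automorphism $\s_{1/\sigma}$, under which $\alpha_y(Q)$ scales by $1/\sigma$ (as noted in Section~\ref{sec:planes and angles}), to reduce to $\sigma=1$. Pick $w\in P$ with $d(x,w)<\delta$ and left-translate by $w^{-1}$; the hypotheses are invariant under this translation (in particular $\alpha$ is translation invariant since $\pi$ is a homomorphism), so I may assume $\0\in P$ and $x\in B_\delta(\0)$. After a further $U(k)$-rotation, $P=P_{y_P}$ with $y_P=Y_1^{r_P}$ and $r_P=\alpha_\0(P)$; since $\|\pi(x)\|\le\delta$, $r_P<\rho+\delta$. By~\eqref{eq:planeGraph}, $P$ is the Euclidean hyperplane $\{p:r_P\,x_1(p)+2z(p)=0\}$ with Euclidean unit normal $\mathbf{n}_P=(r_P,0,\dots,0,2)/\sqrt{r_P^2+4}$, whose $Z$-component $(\mathbf{n}_P)_z=2/\sqrt{r_P^2+4}\ge 2/\sqrt{(\rho+\delta)^2+4}$.

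\smallskip

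The heart of the argument is to show $\|\mathbf{n}_P-\mathbf{n}_R\|\lesssim_k\delta$. By Lemma~\ref{lem:localHausProps}(3), $P\cap B_1(x)\subset\overline{\nbhd}_\delta(R)$ in the Heisenberg metric, and a direct calculation from the group law~\eqref{eq:def group product omega} and~\eqref{eq:metric approximation} shows that for $p\in B_1(\0)$ and $d(p,q)\le\delta$ one has $\|p-q\|\lesssim_k\delta$ as points of $\R^{2k+1}$. I would then pick $p_0=\0$ and $p_1,\ldots,p_{2k}\in P\cap B_{1/2}(\0)$ so that $\{e_i=(p_i-p_0)/\epsilon\}_{i=1}^{2k}$ is a Euclidean orthonormal basis of the tangent hyperplane of $P$: take the $2k-1$ horizontal directions $Y_1,Y_2,X_2,Y_3,X_3,\dots,Y_k,X_k$ (all of which lie in $P$) and the transverse direction $u=(2X_1-r_PZ)/\sqrt{r_P^2+4}$. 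The key computation is that $d(\0,\epsilon u)\le 2\epsilon/\sqrt{r_P^2+4}+4\sqrt{\epsilon r_P/\sqrt{r_P^2+4}}\le \epsilon+4\sqrt{\epsilon}$ uniformly in $r_P\ge 0$, so $\epsilon$ can be fixed to a small dimensional constant that puts all $p_i\in B_{1/2}(\0)$. For each $i$ I would then pick $q_i\in R$ with $\|p_i-q_i\|\lesssim_k\delta$; orienting $\mathbf{n}_R$ so that $\mathbf{n}_R\cdot\mathbf{n}_P\ge 0$, the relation $\mathbf{n}_R\cdot(q_i-q_0)=0$ yields
\[|\mathbf{n}_R\cdot e_i|=\epsilon^{-1}\bigl|\mathbf{n}_R\cdot\bigl((p_i-p_0)-(q_i-q_0)\bigr)\bigr|\lesssim_k\delta\]
for every $i$, and since $\{e_i\}_{i=1}^{2k}$ is an orthonormal basis of $\mathbf{n}_P^\perp$, the component of $\mathbf{n}_R$ orthogonal to $\mathbf{n}_P$ has Euclidean norm $\lesssim_k\delta$, whence $\|\mathbf{n}_R-\mathbf{n}_P\|\lesssim_k\delta$.

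\smallskip

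Finally, comparing $Z$-components, $(\mathbf{n}_R)_z\ge 2/\sqrt{(\rho+\delta)^2+4}-C_k\delta$, and a direct algebraic check (based on the identity $4(\rho^2+1)-(\rho^2+4)=3\rho^2>0$) shows that $2/\sqrt{(\rho+\delta)^2+4}>1/\sqrt{(\rho-\delta)^2+1}$ for every $\rho>1$ and sufficiently small $\delta$; hence choosing $\delta<\delta_0(k,\rho)$ forces $(\mathbf{n}_R)_z>1/\sqrt{(\rho-\delta)^2+1}>0$. In particular $R$ is not vertical, so $R=P_{y_R}$ is horizontal, and inverting $(\mathbf{n}_R)_z=2/\sqrt{r_R^2+4}$ gives $r_R=\|\pi(y_R)\|<2(\rho-\delta)$, so $\alpha_x(R)\le\|\pi(x)\|+r_R<2\rho$; reinstating the scaling, $\alpha_x(R)<2\rho\sigma$ in the original variables. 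The main obstacle I anticipate is the normal-comparison step: the transverse tangent direction $u$ of $P$ has $Z$-component proportional to $r_P$, which can be large, and one must ensure that $\epsilon$ can be chosen as a \emph{dimensional} constant so that the implicit constant in $\|\mathbf{n}_R-\mathbf{n}_P\|\lesssim_k\delta$ does not depend on $\rho$; the Heisenberg estimate $d(\0,\epsilon u)\lesssim\sqrt{\epsilon}$ uniformly in $r_P$ is precisely what makes this possible.
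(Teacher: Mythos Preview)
Your proof is correct and follows the same route as the paper: normalize to $\sigma=1$, show that the local-distance hypothesis forces the Euclidean angle $\angle(P,R)$ to be small, and then invert the relation $\angle(L_Z,\cdot)=\arctan(2/\alpha_\0(\cdot))$. The paper's proof is much terser---it simply asserts that ``if $\delta$ is sufficiently small, then $\angle(P,R)$ is small''---whereas you flesh out exactly this step via an explicit comparison of Euclidean unit normals using points of $P\cap B_{1/2}$ and their nearby counterparts on $R$.
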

\begin{proof}
  After rescaling and translating, we may suppose that $\sigma=1$ and $x=\0$.  Then $\alpha_x(P)<\rho$ and $\angle(P,L_Z)>\arctan(2/\rho)$.  If $\delta$ is sufficiently small, then $\angle(P,R)$ is small, so $\angle(R,L_Z)>\arctan(1/\rho)$ and thus $\alpha_x(R)<2\rho$, as desired.
\end{proof}

\subsubsection{Approximating cubes by vertical planes}
The goal here is to prove the following proposition.
\begin{prop}\label{prop:goodVerticalCubes}
  For every $0<\epsilon<1$, there is a set $\cG(\epsilon)\subset \Delta$ such that for every $Q\in \cG(\epsilon)$, there is a vertical half-space $V^+_Q$ with bounding plane $V_Q$ such that
  \begin{equation}\label{eq:goodVerticalCubes}
    d_{N_{\frac{1}{\e}}(Q)}(V_Q^+, E)\le \epsilon\sigma(Q).
  \end{equation}
  Moreover, $\cB\eqdef\Delta\setminus \cG(\epsilon)$ satisfies a Carleson packing condition with constant that depends on $\e$.
\end{prop}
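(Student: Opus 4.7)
My plan is to deduce Proposition~\ref{prop:goodVerticalCubes} from Proposition~\ref{prop:goodCubes} applied with a smaller parameter $\epsilon'$ (to be chosen as a small constant multiple of $\epsilon$), and then to handle the possibility of a horizontal approximating plane via Corollary~\ref{cor:scaledDistanceToCenter}. Concretely, fix a large parameter $N=N(\epsilon)$ and a small parameter $\epsilon'=\epsilon'(\epsilon)$, both to be chosen later, and apply Proposition~\ref{prop:goodCubes} to obtain a Carleson bad set $\cB_0(\epsilon')$ and good cubes $\cG_0(\epsilon')$ with approximating half-spaces $P_Q^+$ satisfying $d_{N_{1/\epsilon'}(Q)}(E,P_Q^+)\le \epsilon'\sigma(Q)$. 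For each $Q\in \cG_0(\epsilon')$ I consider the quantity $\alpha_{x_Q}(P_Q)$ from Section~\ref{sec:planes and angles}: declare $Q$ \emph{near-vertical} if either $P_Q$ is vertical or $\alpha_{x_Q}(P_Q)\ge N\sigma(Q)$, and \emph{near-horizontal} otherwise. Let $\cG(\epsilon)$ denote the near-vertical cubes and let $\cB_1=\cG_0(\epsilon')\setminus \cG(\epsilon)$.

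For $Q\in \cG(\epsilon)$, Corollary~\ref{cor:scaledDistanceToCenter} applied at scale $\sigma=\sigma(Q)/\epsilon'$ yields a vertical half-space $V_Q^+$ satisfying $d_{B_\sigma(x_Q)}(P_Q^+,V_Q^+)\lesssim \sigma^2/\alpha_{x_Q}(P_Q)\lesssim \sigma(Q)/(N\epsilon'^2)$. Since $N_{1/\epsilon}(Q)\subset B_\sigma(x_Q)$ once $\epsilon'$ is a sufficiently small constant multiple of $\epsilon$, Lemma~\ref{lem:localHausProps} combined with the $P_Q$ approximation of $E$ gives
$$d_{N_{1/\epsilon}(Q)}(E,V_Q^+)\lesssim \epsilon'\sigma(Q)+\frac{\sigma(Q)}{N\epsilon'^2}\le \epsilon\sigma(Q),$$
after taking $\epsilon'\asymp \epsilon$ and $N$ a sufficiently large power of $1/\epsilon$. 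This verifies~\eqref{eq:goodVerticalCubes} with the prescribed $\epsilon$.

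The heart of the argument is to show that $\cB_1$ satisfies a Carleson packing condition with constant depending only on $\epsilon$; combined with the Carleson bound on $\cB_0(\epsilon')$ from Proposition~\ref{prop:goodCubes}, this will establish the desired packing for $\cB=\Delta\setminus\cG(\epsilon)$. The key observation is that, by nondegeneracy of $\omega$ on $\mathsf{H}$ and the representation~\eqref{eq:abelianization graph}, each horizontal plane $P\subset \H^{2k+1}$ has a unique canonical center $u_P\in \H^{2k+1}$ with $P=u_P\mathsf{H}$; the condition $Q\in \cB_1$ then reads $\|\pi(x_Q)-\pi(u_Q)\|_{\mathrm{Euc}}<N\sigma(Q)$, which localizes the ``horizontal center'' $\pi(u_Q)$ near $\pi(x_Q)\in \pi(\partial E)$. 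My plan is to show that whenever $R\subsetneq Q$ are both in $\cB_1$ and $R$ lies sufficiently deep in $Q$, the point $\pi(x_R)$ must lie within Euclidean distance $O_\epsilon(\sigma(R))$ of $\pi(u_Q)$; for otherwise Corollary~\ref{cor:scaledDistanceToCenter} at the scale $\sigma(R)/\epsilon'$ would force $P_Q$, and hence (via the $\epsilon'$-approximation of $E$ on a small ball around $x_R$) also $P_R$, to be nearly vertical, contradicting $R\in \cB_1$. Given this localization, Ahlfors $(2k+1)$-regularity of $\partial E$ bounds the number of Christ cubes at dyadic scale $2^j$ whose centers satisfy the constraint by $O_\epsilon(1)$ for each fixed ``center'' point, and summing the dyadic contributions yields a Carleson packing constant of order $O_\epsilon(1)$.

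The main obstacle lies in the scale-mismatch at the last step: the direct comparison of $P_R^+$ and $P_Q^+$ via Lemma~\ref{lem:localHausProps} produces only an additive error proportional to $\epsilon'\sigma(Q)$, which is far too weak when $\sigma(R)\ll \sigma(Q)$ to conclude directly that $P_R$ inherits the horizontal-center localization from $P_Q$. I expect to resolve this by chaining the approximations along a sequence of ancestors of $R$ at geometrically increasing scales, using at each step the fact that the approximating plane cannot simultaneously match a near-horizontal plane far from its center and disagree substantially with the plane of its own cube near its own center. Managing this chain while controlling the cumulative approximation error, and extracting from it the desired localization of $\pi(x_R)$ near $\pi(u_Q)$ at the scale $\sigma(R)$, is the delicate technical point of the argument.
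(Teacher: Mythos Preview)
Your overall strategy coincides with the paper's: apply Proposition~\ref{prop:goodCubes} with a finer parameter, split $\cG_0$ according to the size of $\alpha(Q)$, use Corollary~\ref{cor:scaledDistanceToCenter} for the near-vertical cubes, and show that the near-horizontal cubes $\cB_1$ are Carleson because they cluster near the center of the approximating horizontal plane. Your verification of~\eqref{eq:goodVerticalCubes} for $Q\in\cG(\epsilon)$ is correct and matches the paper.

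The gap is in the Carleson estimate for $\cB_1$. Your chaining proposal does not give the localization at the scale you need. If you chain from $R=R_0\subset R_1\subset\cdots\subset R_n=Q$ and at each step invoke Lemma~\ref{lem:tiltingPlanes}, then starting from $\alpha_{x_R}(P_{R_0})<r\sigma(R)$ you obtain at step $j$ only $\alpha_{x_R}(P_{R_j})<r\sigma(R_j)=2^jr\sigma(R)$; at the top this yields $\alpha_{x_R}(P_Q)\lesssim r\sigma(Q)$, not $O_\epsilon(\sigma(R))$. Equivalently, the consecutive centers satisfy $\|\pi(u_{R_j})-\pi(u_{R_{j+1}})\|\lesssim r\sigma(R_j)$, and summing this geometric series gives a bound of order $\sigma(Q)$. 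That is useless for a packing condition.

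The paper's resolution avoids chaining for the localization. One fixes a single depth $n_0$ (depending only on $r$ and $C$) and then chooses the parameter $\eta$ of Proposition~\ref{prop:goodCubes} so small that $\eta<2^{-n_0-1}\delta$, where $\delta$ comes from Lemma~\ref{lem:tiltingPlanes}. At depth exactly $n_0$ the \emph{direct} comparison $d_{B_{\sigma(R)}(x)}(P_Q^+,P_R^+)\le\eta(\sigma(Q)+\sigma(R))<\delta\sigma(R)$ now succeeds, and Lemma~\ref{lem:tiltingPlanes} gives $\alpha_{x_R}(P_Q)<2r\sigma(R)$. Thus every $R\in G_{n_0}(Q,r)$ lies in a ball of radius $O_r(\sigma(R))$ around the center of $P_Q$, and Ahlfors regularity yields $\sum_{R\in G_{n_0}(Q,r)}\sigma(R)^{2k+1}\le\tfrac12\sigma(Q)^{2k+1}$. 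Your single-step chaining \emph{is} then used, but only to prove the inheritance property ``$R\in\cB_1$ and parent in $\cG_0$ $\Rightarrow$ parent in $\cB_1$'', which gives the multiplicative structure $G_{m+n_0}(Q,r)=\bigcup_{A\in G_m(Q,r)}G_{n_0}(A,r)$ and hence geometric decay at all depths.

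You are also missing a structural step that makes ``summing the dyadic contributions'' work. Not every pair $R\subsetneq Q$ with $R,Q\in\cB_1$ has all intermediate cubes in $\cG_0$, so you cannot localize all of $\cB_1$ below a single fixed cube. The paper observes (again via inheritance) that for each $R\in\cB_1$ the highest ancestor $A$ still in $\cB_1$ has its parent in $\cB_0$; hence $\cB_1=\bigcup_{A:F(A)\in\cB_0}G(A,r)$. The set $\{A:F(A)\in\cB_0\}$ is Carleson by Lemma~\ref{lem:cube expansion}, and summing the decay estimate $\sum_{R\in G(A,r)}\sigma(R)^{2k+1}\lesssim_r\sigma(A)^{2k+1}$ over these $A$'s gives the Carleson bound for $\cB_1$.
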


Let $\eta>0$ be a small number to be determined later.  By Proposition~\ref{prop:goodCubes}, there is a set of cubes $\cG_0 = \cG_0(\eta) \subset \Delta$ and a collection of planes $\{P_Q\}_{Q\in \Delta}$ such that $d_{N_{1/\eta}(Q)}(P_Q^+, E) \le \eta \sigma(Q)$ for all $Q\in \cG_0$.  The set $\cG=\cG(\epsilon)$ will consist of the cubes $Q\in\cG_0$ for which $P_Q$ is close to vertical.  Specifically, if $Q\in \cG_0$, denote
$$\alpha(Q)\eqdef \frac{\min_{x\in Q} \alpha_x(P_Q)}{\sigma(Q)}.$$
Corollary~\ref{cor:scaledDistanceToCenter} implies that if $x\in Q$, then there is a vertical plane $V$ such that
\begin{equation}\label{eq:alphaImpliesVerticalApprox}
  d_{N_{\frac{2}{\e}}(Q)}(P_Q^+,V^+)\le d_{(\frac{2}{\epsilon}+C)\sigma(Q)}(P_Q^+,V^+) \lesssim \frac{(\frac{2}{\e}+C)^2\sigma(Q)^2}{\alpha_x(P_Q)} \lesssim
  \frac{\sigma(Q)}{\e^2\alpha(Q)}.
\end{equation}
We will define $\cG=\cG_0\setminus \cB_1$, where $\cB_1\eqdef \{Q\in \cG_0\mid \alpha(Q)<r\}$ are $r>0$ will be a quantity depending on $\e$ that will be determined later.

The main difficulty is to prove that $\cB_1$ is Carleson.  If $Q,R\in \cG_0$, we say that $R$ is a \emph{good descendant} of $Q$ if $R\subset Q$ and $A\in \cG_0$ for all $A\in \Delta$ such that $R\subset A\subset Q$.  Let $G(Q)$ be the set of good descendants of $Q$ and define for every $n\in \N\cup\{0\}$,
$$G(Q,r)\eqdef \{R\in G(Q)\mid \alpha(R)<r\}\qquad\mathrm{and}\qquad G_n(Q,r)\eqdef\{R\in G(Q,r)\mid \sigma(R)=2^{-n}\sigma(Q)\}.$$
Our goal is to bound the size of $G(Q,r)$.
\begin{lemma}\label{lem:verticalCubeChildren}
  Fix $r>0$.  If $\eta$ is sufficiently small, then for all $Q\in \cG_0$ and all $R\in G(Q,r)$, every $A\in \Delta$ such that $R\subset A\subset Q$ is an element of $G(Q,r)$.  Consequently, for all integers $m,n\ge 0$,
  \begin{equation}\label{eq:vertical cube children}
  G_{m+n}(Q,r)=\bigcup_{R\in G_{m}(Q,r)}G_{n}(R,r).
\end{equation}
  In fact, there exists $0<\lambda<1$ depending on $r$ such that for every integer $n\ge 0$ we have
  \begin{equation}\label{eq:descendents decay lambda}
    \sum_{R\in G_n(Q,r)} \sigma(R)^{2k+1}\lesssim \lambda^n\sigma(Q)^{2k+1},
  \end{equation}
  and thus
  $$\sum_{R\in G(Q,r)} \sigma(R)^{2k+1} \lesssim_r \sigma(Q)^{2k+1}.$$
\end{lemma}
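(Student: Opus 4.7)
The proof splits into two parts: first, a ``closure under intermediate ancestors'' property that immediately implies the factorization identity; and second, a geometric decay estimate at each scale from which the total bound follows by summing the geometric series. Both parts hinge on Lemma~\ref{lem:tiltingPlanes}, which transfers control of $\alpha_x(\cdot)$ from one plane to a nearby one, and on the basic fact (from $R\in\cG_0$) that $P_R^+$ and $P_A^+$ both approximate $E$ on overlapping neighborhoods, so by the local-distance triangle inequality in Lemma~\ref{lem:localHausProps} they are close on any ball contained in $N_{1/\eta}(R)\cap N_{1/\eta}(A)$.

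\textbf{Closure and factorization.} Given $R\in G(Q,r)$ and an intermediate $A\in\Delta$ with $R\subsetneq A\subseteq Q$, the condition $A\in\cG_0$ is automatic because $R\in G(Q)$. To verify $\alpha(A)<r$, fix a witness $y_0\in R$ with $\alpha_{y_0}(P_R)<r\sigma(R)\le r\sigma(A)/2$. Since $y_0\in R\subseteq A$, the ball $B_{\sigma(R)/\eta}(y_0)$ sits inside both $N_{1/\eta}(R)$ and $N_{1/\eta}(A)$, so by routing through $E$ we get $d_{B_{\sigma(R)/\eta}(y_0)}(P_R^+,P_A^+)\le 2\eta\sigma(A)$. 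Taking $\eta$ sufficiently small (depending on $r$) so that $r\sigma(A)\le\sigma(R)/\eta$ and so that $2\eta\sigma(A)<\delta(1/2)\cdot r\sigma(A)$, where $\delta(\cdot)$ is the function supplied by Lemma~\ref{lem:tiltingPlanes}, I apply that lemma with $\sigma=r\sigma(A)$ and $\rho=1/2$. This yields $\alpha_{y_0}(P_A)<2\rho\sigma=r\sigma(A)$, hence $\alpha(A)<r$. The factorization $G_{m+n}(Q,r)=\bigcup_{R\in G_m(Q,r)}G_n(R,r)$ is then immediate: any $R'\in G_{m+n}(Q,r)$ has a unique ancestor at scale $2^{-m}\sigma(Q)$, which lies in $G_m(Q,r)$ by the closure just proved, and the defining conditions for $R'\in G_n(R,r)$ are then equivalent to those for $R'\in G_{m+n}(Q,r)$.

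\textbf{Geometric decay.} The decay reduces, via iterating the factorization identity, to the one-step claim
\[\sum_{R''\in G_1(R,r)} \cH^{2k+1}(R'')\le \lambda\,\cH^{2k+1}(R)\]
for some $\lambda=\lambda(r)\in(0,1)$ uniform in $R\in G(Q,r)$. For any child $R''\in G_1(R,r)$, pick $y\in R''$ with $\alpha_y(P_{R''})<r\sigma(R'')=r\sigma(R)/2$; a second application of Lemma~\ref{lem:tiltingPlanes} (using $d_{B_{\sigma(R'')/\eta}(y)}(P_R^+,P_{R''}^+)\lesssim\eta\sigma(R)$ as above) forces $\alpha_y(P_R)<r\sigma(R)$. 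When $P_R$ is horizontal with center $c_R$, this means $\pi(y)$ lies within Euclidean distance $r\sigma(R)$ of $\pi(c_R)$, so every such $R''$ meets the tube $T_R=\{y\in\H^{2k+1}\colon d_{\text{Eucl}}(\pi(y),\pi(c_R))<r\sigma(R)\}$; the case in which $P_R$ is close to vertical is easier because then the set of $y$ with $\alpha_y(P_R)<r\sigma(R)$ is empty once $\eta$ is small, which would preclude $R\in G(Q,r)$ to begin with. Using that $\partial E$ is $(C,r)$-Ahlfors $(2k+1)$-regular and lies in the $\eta\sigma(R)$-neighborhood of $P_R$ on $N_{1/\eta}(R)$, a direct computation in Heisenberg coordinates shows that $\cH^{2k+1}(\partial E\cap T_R\cap N_{O(1)}(R))$ occupies a definite proper fraction of $\cH^{2k+1}(R)\asymp\sigma(R)^{2k+1}$, yielding the required $\lambda<1$. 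Iterating and using $\cH^{2k+1}(R)\asymp\sigma(R)^{2k+1}$ gives $\sum_{R\in G_n(Q,r)}\sigma(R)^{2k+1}\lesssim\lambda^n\sigma(Q)^{2k+1}$, and summing $\sum_{n\ge 0}\lambda^n=1/(1-\lambda)\lesssim_r 1$ closes the argument.

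\textbf{Main obstacle.} The hard part is the quantitative ``tube fraction'' estimate at the heart of the decay step: proving $\cH^{2k+1}(\partial E\cap T_R\cap N_{O(1)}(R))\le \lambda\sigma(R)^{2k+1}$ with $\lambda<1$ uniformly over all allowed configurations. The subtlety is that a horizontal plane $P_R$ has Heisenberg Hausdorff dimension $2k+1$, yet its measure concentrates anisotropically near the vertical line through its center, so one cannot invoke a naive dimensional-transversality argument. The cleanest resolution is likely a compactness/continuity argument in the spirit of Proposition~\ref{prop:highDimStability}: if the estimate failed, one would extract a limit in which $\partial E$ coincides with a horizontal plane on a ball, which is incompatible with $(2k+1)$-Ahlfors regularity. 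Quantitative accounting of the $\eta$-approximation error, relative to $r$, to keep all constants absolute is the main bookkeeping burden.
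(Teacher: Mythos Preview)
Your closure argument has a gap: the constraint ``$r\sigma(A)\le\sigma(R)/\eta$'' cannot be satisfied by a fixed $\eta$ when $\sigma(A)/\sigma(R)$ is unbounded, which it is once $A$ ranges over all intermediate ancestors. The paper avoids this by proving the closure only for $A$ the \emph{parent} of $R$ (so $\sigma(A)=2\sigma(R)$ and all constants are uniform) and then inducting up the chain. This is a minor repair.

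The serious gap is in the decay step. Your one-step claim
\[
\sum_{R''\in G_1(R,r)} \cH^{2k+1}(R'')\le \lambda\,\cH^{2k+1}(R),\qquad \lambda<1,
\]
is false in general. The tube $T_R$ has $\pi$-radius $r\sigma(R)$, while $R$ has diameter at most $C\sigma(R)$; in the application $r\asymp 1/\epsilon^3$ is large, so $T_R$ swallows all of $R$ and every child meets it, forcing $\lambda=1$. Your proposed compactness rescue does not work either: a horizontal plane \emph{is} locally $(2k+1)$-Ahlfors regular in the Carnot--Carath\'eodory metric, so there is no contradiction to extract.

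The paper's mechanism is different and worth internalising. One fixes the \emph{root} plane $P_Q$ and shows that any $R\in G_n(Q,r)$ has a point $x$ with $\alpha_x(P_Q)<2r\sigma(R)=2r\cdot 2^{-n}\sigma(Q)$: tilting (Lemma~\ref{lem:tiltingPlanes}) is applied at scale $\sigma(R)$, not $\sigma(Q)$, so the bound on $\alpha_x(P_Q)$ shrinks with $n$. Since also $d(x,P_Q)\le\eta\sigma(Q)$, one combines the two to get $d(x,y)\lesssim 2^{-n}(r+\delta)\sigma(Q)$ where $y$ is the centre of $P_Q$. Thus \emph{all} of $G_{n_0}(Q,r)$ sits in a single Carnot--Carath\'eodory ball of radius $2^{-n_0}(r+O(1))\sigma(Q)$ around $y$; choosing $n_0=n_0(r)$ large enough makes this ball have $\cH^{2k+1}$-measure at most $\tfrac12\sigma(Q)^{2k+1}$ by Ahlfors regularity, yielding $g_{Q,n_0}\le\tfrac12\sigma(Q)^{2k+1}$, and then the factorisation gives $\lambda=2^{-1/n_0}$. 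The point you are missing is that the smallness is produced by iterating and referring back to the fixed root plane, not by any transversality at a single step.
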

\begin{proof}
  To prove the first part of the lemma, let $R\in G(Q,r)$ and let $A\in G(Q)$ be the parent of $R$.  We need to show that if $\eta$ is sufficiently small, then $A\in G(Q,r)$, i.e., $\alpha(A)<r$.

  By part (4) of Lemma~\ref{lem:localHausProps}, if $x\in R$ and $\eta<\frac{1}{2}$, then
  \begin{align*}
    d_{B_{\sigma(R)}(x)}(P_A^+,P_{R}^+)
    \le
      d_{B_{2\sigma(R)}(x)}(P_A^+,E)+d_{B_{2\sigma(R)}(x)}(P_{R}^+,E)
    \le
      \eta \sigma(A)+\eta \sigma(R)
    =3\eta \sigma(R).
  \end{align*}
Since $\alpha(R)<r$, there exists $x\in R$ such that $\alpha_x(P_R)<r \sigma(R)$.  We have $d(x,P_R)\le \eta \sigma(R)$.  By Lemma~\ref{lem:tiltingPlanes}, if $\eta$ is sufficiently small, then $\alpha_x(P_A)< 2r\sigma(R)$ and $\alpha(A)\le \alpha_x(P_A)/\sigma(A)<r.$

  For every $Q\in \cG_0$ and $n\ge 0$, denote $g_{Q,n}\eqdef \sum_{R\in G_{n}(Q,r)} \sigma(R)^{2k+1}$.  To prove the second part, we will first show that there is some $n_0$ and some $\eta$ such that for all $Q\in \cG_0$, we have
  \begin{equation}\label{eq:verticalDescendantsDecay}
    g_{Q,n_0}\le \frac{\sigma(Q)^{2k+1}}{2}.
  \end{equation}
 This estimate implies the desired conclusion~\eqref{eq:descendents decay lambda} because for any $n\ge 0$ we have
  $$g_{Q,n+n_0}\stackrel{\eqref{eq:vertical cube children}}{=} \sum_{A\in G_{n}(Q,r)} g_{A,n_0} \stackrel{\eqref{eq:verticalDescendantsDecay}}{\le} \sum_{A\in G_{n}(Q,r)}\frac{\sigma(A)^{2k+1}}{2}=\frac{g_{Q,n}}{2}.$$
  Furthermore, for all $n\ge 0$,
  $$g_{Q,n}=\sum_{R\in G_{n}(Q,r)}\sigma(R)^{2k+1}\le C \sum_{R\in G_{n}(Q,r)}\cH^{2k+1}(R)\le C \cH^{2k+1}(Q)\le C^2 \sigma(Q)^{2k+1}.$$
  For every integer $n\ge 0$ write  $n=mn_0 + i$, where $m=\lfloor n/n_0\rfloor$. Then $i\ge 0$ and it follows from the above estimates that $g_{Q,n}\le 2^{-m}g_{Q,i}\le 2C^22^{-n/n_0}\sigma(Q)^{2k+1}$. Hence~\eqref{eq:descendents decay lambda} holds with $\lambda=2^{-1/n_0}$. It therefore remains to justify~\eqref{eq:verticalDescendantsDecay}.

  Let $Q\in \cG_0$.  If $\alpha(Q)\ge r$, then $G_0(Q,r)=\emptyset$ and therefore~\eqref{eq:vertical cube children} implies that $G_n(Q,r)=\emptyset$.  So, suppose that $\alpha(Q)< r$.  Let $y$ be the center of $P_Q$, i.e., $P_y=P_Q$. Suppose that $\delta>0$ is such that Lemma~\ref{lem:tiltingPlanes} holds true for $\rho=r$.  Let $n\in \N$ and $R\in G_n(Q,r)$.  We claim that $R$ is close to $y$ if $\eta<2^{-n-1}\delta$.  Indeed, let $x\in R$ be such that $\alpha_x(R) < r \sigma(R)$.  Then $d(x,P_Q)<\eta \sigma(Q)<\delta \sigma(R)$ and
  \begin{align*}
    d_{B_{\sigma(R)}(x)}(P_Q^+,P_{R}^+)
    \le d_{B_{2\sigma(R)}(x)}(P_{Q}^+,E)+d_{B_{2\sigma(R)}(x)}(P_{R}^+,E)
    \le \eta\big(\sigma(Q)+\sigma(R)\big)< \delta \sigma(R).
  \end{align*}
  Since $\alpha_x(P_R) < r \sigma(R)$, Lemma~\ref{lem:tiltingPlanes} implies that $\alpha_x(P_Q) < 2r \sigma(R)$.  Let $z\in P_Q$ be a point such that $d(x,z)\le 2 d(x,P_Q)\le 2\eta \sigma(Q)$.  Then $d(z,y)=d(\pi(z),\pi(y))\le \alpha_x(P_Q)+2\eta \sigma(Q)$ and
  $$d(x,y)\le d(x,z)+d(z,y)\le \alpha_x(P_Q)+4\eta \sigma(Q)\le (4\delta+2r) \sigma(R)=2^{-n}(4\delta+2r)\sigma(Q).$$
 Consequently, if $R\in G_{n_0}(Q,r)$, then $R\subset B_{2^{-n_0}(4\delta+2r+C)\sigma(Q)}(y),$
  where $\delta$ depends on $r$.  Choose $n_0$ (depending on $r$ and $C$) sufficiently large that $2^{-n_0}(4\delta+2r+C)<1/(2C)$.  Then
  \begin{equation*}
    g_{Q,n_0} \le C \sum_{R\in G_{n}(Q,r)} \cH^{2k+1}(R)\le C \cH^{2k+1}\Big(Q\cap B_{\frac{\sigma(Q)}{2C}}(y)\Big) \le C^2\cdot \frac{\sigma(Q)^{2k+1}}{(2C)^{2k+1}}
    \le \frac{\sigma(Q)^{2k+1}}{2},
  \end{equation*}
  which is the desired estimate~\eqref{eq:verticalDescendantsDecay}.
\end{proof}

\begin{proof}[{Proof of Proposition~\ref{prop:goodVerticalCubes}}]
   By \eqref{eq:alphaImpliesVerticalApprox}, there exists $r\asymp 1/\e^3$ so that if $Q\in \Delta$, $x\in Q$, and $P$ is a plane such that $\alpha_x(P)\ge r\sigma(Q)$, then there is a vertical plane $V$ such that
  $$d_{N_{\frac{2}{\epsilon}}(Q)}(P^+,V^+)\le \frac{\epsilon\sigma(Q)}{2}.$$
  Let $0<\eta<\e/2$ be such that Lemma~\ref{lem:verticalCubeChildren} holds for this choice of $r$.  Let $\cG_0=\cG_0(\eta)$ and $\cB_0=\cB_0(\eta)$ be as in Proposition~\ref{prop:goodCubes}. Denote
  $\cB_1=\{Q\in \cG_0\mid \alpha(Q)<r\}$ and $\cG=\cG_0\setminus \cB_1$.

  If $Q\in \cG$, then by part (4) of Lemma~\ref{lem:localHausProps} we have
  $$d_{N_{\frac{1}{\epsilon}}}(E,V_Q^+)\le d_{N_{\frac{2}{\epsilon}}(Q)}(E, P_Q^+) + d_{N_{\frac{2}{\epsilon}}(Q)}(P_Q^+, V_Q^+)\le \frac{\epsilon\sigma(Q)}{2} +\frac{\epsilon\sigma(Q)}{2}=\e\sigma(Q).$$
  We claim that $\Delta\setminus \cG=\cB_0\cup \cB_1$ satisfies a Carleson packing condition with constant that depends on $\e$.  Since this is true for $\cB_0$ by  Proposition~\ref{prop:goodCubes}, it remains only to prove that $\cB_1$ is Carleson.

  For all $R\in \Delta$ such that $\sigma(R)<1$, let $F(R)$ be the parent of $R$.  Recall that $\Delta_0\subset \cB_0$, so if $R\in \cB_1$, then there is a smallest natural number $i$ such that $F^i(R)\in \cB_0$.   Lemma~\ref{lem:verticalCubeChildren} implies that if $F^j(R)\in \cB_1$, then either $F^{j+1}(R)\in \cB_0$ or $F^{j+1}(R)\in \cB_1$, so, by induction, $F^j(R)\in \cB_1$ for all $j=0,\dots, i-1$.  That is, if $A=F^{i-1}(R)$, then $F(A)\in \cB_0$ and $R\in G(A,r)$.

  Let $M=\{A\in \Delta \mid F(A) \in \cB_0\}$.  Then $\cB_1=\bigcup_{A\in M} G(A,r)$. By Lemma~\ref{lem:cube expansion}, $M$ is $L$-Carleson for some $L$ depending on $\epsilon$.   If $Q\in \Delta$, $R\subset Q$, and $R\in \cB_1$, then either $R\in G(Q,r)$ (where we set $G(Q,r)=\emptyset$ if $Q\in \cB_0$) or $R\in G(A,r)$ for some $A\in M$ such that $A\subset Q$. Hence,  by Lemma~\ref{lem:verticalCubeChildren},
  \begin{multline*}
    \sum_{\substack{R\in \cB_1\\ R\subset Q}} \sigma(R)^{2k+1}
    =\sum_{R\in G(Q,r)} \sigma(R)^{2k+1}+\sum_{\substack{A\in M\\ A\subset Q}} \sum_{R\in G(A,r)} \sigma(R)^{2k+1}\\
    \lesssim_\epsilon \sigma(Q)^{2k+1}+\sum_{\substack{A\in M\\ A\subset Q}} \sigma(A)^{2k+1}
    \le (L+1) \sigma(Q)^{2k+1}.\tag*{\qedhere}
  \end{multline*}
\end{proof}

\subsection{Constructing stopping-time regions}
In this section, we group the cubes in $\cG$ into stopping-time regions.  We will prove the following proposition.  The arguments in this section are based on arguments in Chapter 7 of \cite{DavidSemmesSingular}, except for Lemma~\ref{lem:cF1 causes nonmonotonicty}, which uses nonmonotonicity.
\begin{prop}\label{prop:smallAngleSTRs}
  For every $0<\lambda<\frac{\pi}{2}$, if $\epsilon>0$ is sufficiently small (depending on $\lambda$) and if $\cG=\cG(\epsilon)\subset \Delta$ and $\cB=\Delta\setminus \cG$ are as in Proposition~\ref{prop:goodVerticalCubes}, then we can partition $\cG$ into a set $\cF$ of stopping-time regions such that $(\cB,\cG,\cF)$ is a coronization (Definition~\ref{def:coronization}) of $\partial E$ and
  \begin{equation}\label{eq:small angle condition}
    \forall \cS\in \cF,\  \forall Q\in \cS \qquad \angle(V_Q,V_{\mathsf{Q}(\cS)})<\lambda.
  \end{equation}
  The Carleson constants of the coronization will depend only on $\lambda$, $\epsilon$, $C$, and $k$.
\end{prop}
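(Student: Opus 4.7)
The plan is to build $\cF$ greedily from the top down, letting each stopping-time region grow downward from its top cube $Q_0$ until either a cube leaves $\cG$ or its approximating vertical plane has tilted away from $V_{Q_0}$ by angle at least $\mu<\lambda$, and to control the Carleson packing of the resulting maximal cubes via a nonmonotonicity estimate that turns rotation of the approximating plane into a concrete contribution to the measures $\widehat{w}_j$ of Section~\ref{sec:monotone and delta monotone}. Coherence, the partition property, the Carleson condition on $\cB$, and the angle bound~\eqref{eq:small angle condition} will all be automatic from the construction; the delicate point of Definition~\ref{def:coronization} is property~(4), the Carleson packing of maximal cubes.

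For the construction, fix $\mu=\lambda/2$ and choose $\epsilon=\epsilon(\lambda)$ small enough for the sublemma below. Apply Proposition~\ref{prop:goodVerticalCubes} with this $\epsilon$ to obtain $\cG$, $\cB=\Delta\setminus\cG$, and the vertical approximating planes $\{V_Q\}_{Q\in\cG}$. Build $\cF$ inductively: at each stage, pick a $\subseteq$-maximal element $Q_0$ of the set of unassigned cubes in $\cG$, and set
\[\cS_{Q_0}=\big\{R\in\cG\colon R\subseteq Q_0 \text{ and every } A\in\Delta \text{ with } R\subseteq A\subseteq Q_0 \text{ lies in } \cG\text{ with }\angle(V_A,V_{Q_0})<\mu\big\}.\]
Enforce the children-all-or-nothing clause of Definition~\ref{def:coherent} in the standard way, by dropping all children of any $Q\in\cS_{Q_0}$ as soon as one child fails to qualify; the excluded children become roots of later stopping-time regions. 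Coherence and $\angle(V_Q,V_{\mathsf{Q}(\cS)})<\mu<\lambda$ are then immediate, and property~(1) of Definition~\ref{def:coronization} is Proposition~\ref{prop:goodVerticalCubes}.

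The central new ingredient is an \emph{angular jump costs nonmonotonicity} sublemma of the following form: there exist $c=c(\lambda)>0$ and $p=p(\lambda)>0$ such that for every $Q\in\cG$ with $F(Q)\in\cG$,
\[\widehat{w}_{\lceil\log_2\sigma(F(Q))\rceil}(E)\big(B_{c\sigma(F(Q))}(x_{F(Q)})\big)\gtrsim\angle\big(V_Q,V_{F(Q)}\big)^p\cdot\sigma(F(Q))^{2k+1},\]
where $x_{F(Q)}$ is an approximate center of $F(Q)$ supplied by Lemma~\ref{lem:cubeCenters}. The proof is a compactness-and-kinematic argument parallel to Proposition~\ref{prop:highDimStability}: on a ball of radius $\asymp\sigma(F(Q))/\epsilon$, both $V_Q^+$ and $V_{F(Q)}^+$ approximate $E$ to local-distance error $\epsilon\sigma(F(Q))$ (via Lemma~\ref{lem:localHausProps} and Proposition~\ref{prop:goodVerticalCubes}), so horizontal lines transverse to the wedge between $V_Q$ and $V_{F(Q)}$ must cross $\partial E$ within that wedge, contributing intervals of length $\asymp\sigma(F(Q))\,\angle(V_Q,V_{F(Q)})$ to the kinematic decomposition~\eqref{eq:kinematicW}; carrying this out quantitatively, via the kinematic formula, produces the claimed lower bound on $\widehat{w}_j$.

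Granting the sublemma, property~(4) follows by summing along the chains of nested cubes that define the regions. If $M=\mathsf{Q}(\cS)$ is not the initial root, then either $F(M)\in\cB$ (in which case $M$ is charged to $\cB$ via Lemma~\ref{lem:cube expansion}), or $F(M)\in\cG$ belongs to some $\cS'$ with top $Q_0'$, and by the triangle inequality the single-step angle deviations along the chain $A_0=M',A_1,\dots,A_n=Q_0'$ (where $M'$ is a child of $F(M)$ excluded from $\cS'$) sum to at least $\mu$. Invoking the sublemma at each step and integrating against Proposition~\ref{prop:kinematicNM} and the kinematic decomposition bounds $\sum_{\mathsf{Q}(\cS)\subseteq Q}\sigma(\mathsf{Q}(\cS))^{2k+1}\lesssim\sigma(Q)^{2k+1}$ for every ambient cube $Q\in\Delta$, with constants depending on $\lambda,\epsilon,C,k$. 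The genuinely hard part of the whole proposition, and the only ingredient not already present in Sections~11.1--11.3, is the sublemma; the subtle technical point in its deployment is the scale-matching between the step at which the angular budget is spent and the corresponding $\widehat{w}_j$ contribution, which forces a careful choice of $p$ and of the power-dependence of $c(\lambda)$ on $\lambda$.
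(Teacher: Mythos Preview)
The greedy construction of the stopping-time regions matches the paper's, but your ``angular jump costs nonmonotonicity'' sublemma is false, and this is a genuine gap. Take $E=V^+$ an exact vertical half-space. Then $\widehat{w}_j(E)=0$ for every finite $j$, yet the approximating planes $V_Q$ supplied by Proposition~\ref{prop:goodVerticalCubes} are not canonical: any vertical plane $V_Q$ within angle $\lesssim\epsilon^2$ of $V$ satisfies $d_{N_{1/\epsilon}(Q)}(V_Q^+,V^+)\le\epsilon\sigma(Q)$, so one may choose $V_Q$ and $V_{F(Q)}$ with $\angle(V_Q,V_{F(Q)})>0$. Your inequality then has zero on the left and a positive quantity on the right. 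More generally, your sketched kinematic argument fails whenever $\theta=\angle(V_Q,V_{F(Q)})\ll\epsilon^2$: the wedge between the two planes on the relevant ball has width $\lesssim\theta\sigma(F(Q))/\epsilon$, which is swamped by the approximation error $\epsilon\sigma(F(Q))$, so no crossing of $\partial E$ is forced. This is precisely the regime you need for the chain argument, since consecutive-cube angles are always $\lesssim\epsilon^2$ by the triangle-type inequality of Lemma~\ref{lem:localHausProps}; accumulating to $\mu$ requires many such tiny steps, none of which individually forces any nonmonotonicity.

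The paper's argument avoids small angles altogether. It splits $\cF$ into $\cF_0,\cF_1,\cF_2$ according to whether a definite fraction of $\mathsf{Q}(\cS)$ is covered by minimal cubes with a child in $\cB$, by minimal cubes with a child at angle $>\lambda$ from $V_{\mathsf{Q}(\cS)}$, or by no minimal cube at all; the packing for $\cF_0$ and $\cF_2$ is routine. For $\cF_1$ (Lemma~\ref{lem:cF1 causes nonmonotonicty}), one compares such a child $Q$ directly to the \emph{top} cube $\mathsf{Q}(\cS)$: since $\angle(V_Q,V_{\mathsf{Q}(\cS)})>\lambda$ is a fixed positive constant, horizontal lines in the bisecting direction enter $E$ at scale $\sigma(Q)$ and exit $E$ at scale $\sigma(\mathsf{Q}(\cS))$ (Lemma~\ref{lem:lineColoring}), producing an interval of $E\cap L$ with one endpoint in $Q$ and length between $\sigma(Q)$ and $\sigma(\mathsf{Q}(\cS))$. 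This single long interval contributes to $\widehat{w}_j$ at \emph{every} intermediate scale and hence to $w(\cS)=\sum_{R\in\cS}w(R)$; summing over the disjoint cubes in $m_1(\cS)$ yields $w(\cS)\gtrsim_\lambda\sigma(\mathsf{Q}(\cS))^{2k+1}$, and the Carleson bound follows from $\sum_{\cS\in\cF_1(R)}w(\cS)\le\Per(E)(R)\asymp\sigma(R)^{2k+1}$.
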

For every $Q\in \Delta$, let $\cS_Q$ be the maximal coherent set such that $Q$ is the maximal cube in $\cS_Q$ and such that for all $R\in \cS_Q$, we have $\angle(V_{R},V_Q)<\lambda$.  We construct such a set by the usual inductive stopping-time argument; we start by adding $Q$, then for each $R\in \cS_Q$, we add the children of $R$ to $\cS_Q$ if and only if they are all in $\cG$ and all of their approximating vertical planes are $\lambda$-close to $V_Q$.

We partition $\cG$ into such regions by applying this construction repeatedly.  First, let $Q$ be a maximal element of $\cG$.  We add $\cS_Q$ to $\cF$ and remove the elements of $\cS_Q$ from $\cG$, then take a maximal element from the remaining cubes and repeat.  By design, this results in a partition of $\cG$ into coherent regions that satisfies \eqref{eq:small angle condition}.

The main difficulty is to show that the maximal elements of the stopping-time regions satisfy a Carleson packing condition.  To prove this, we divide $\cF$ into three parts.  For $\cS\in \cF$, let $m(\cS)\subset \cS$ be the set of minimal cubes in $\cS$.  These cubes are disjoint, but need not cover $\mathsf{Q}(\cS)$ because there may be points that are contained in arbitrarily small cubes.  Each element of $m(\cS)$ has a child $Q$ such that either $Q\in \cB$ or $\angle(V_Q,V_{\mathsf{Q}(\cS)})>\lambda$.  Let $m_0(\cS)$ be the set of minimal cubes with at least one child in $\cB$ and let $m_1(\cS)$ be the set of minimal cubes with at least one child such that $\angle(V_Q,V_{\mathsf{Q}(\cS)})>\lambda$. Then $m(\cS)=m_0(\cS)\cup m_1(\cS)$.  Let
\begin{align}
\nonumber  \cF_0&=
         \bigg\{\cS\in \cF\mid \cH^{2k+1}\bigg(\bigcup_{Q\in  m_0(\cS)} Q\bigg)\ge \frac{\cH^{2k+1}\big(\mathsf{Q}(\cS)\big)}{4}\bigg\},\\ \label{def cF1}
  \cF_1&=
         \bigg\{\cS\in \cF\mid \cH^{2k+1}\bigg(\bigcup_{Q\in m_1(\cS)} Q \bigg)\ge \frac{\cH^{2k+1}\big(\mathsf{Q}(\cS)\big)}{2}\bigg\},\\
  \cF_2&=
  \nonumber       \bigg\{\cS\in \cF\mid \cH^{2k+1}\bigg(\mathsf{Q}(\cS)\setminus \bigcup_{Q\in m(\cS)} Q\bigg)\ge \frac{\cH^{2k+1}\big(\mathsf{Q}(\cS)\big)}{4}\bigg\}.
\end{align}
Then $\cF=\cF_0\cup \cF_1\cup \cF_2$. For every $i\in \{0,1,2\}$ let $M_i=\{\mathsf{Q}(\cS)\mid \cS\in \cF_i\}$, and if $R\in \Delta$, let $M_i(R)=\{Q\in M_i\mid Q\subset R\}$ and $\cF_i(R)=\{\cS\in \cF_i\mid \mathsf{Q}(\cS)\subset R\}$.  We claim that there are $K_0,K_1,K_2>0$ depending on $\lambda$ such that $M_i$ is $K_i$-Carleson for each $i\in \{0,1,2\}$.

Firstly, we consider $M_0$.  For every $Q\in \Delta$, let $F(Q)$ be the parent of $Q$.  For each $\cS\in \cF$, let $Y_{\cS}=\{Q\in \cB\mid F(Q)\in \cS\}$ be the set of bad children of elements of $m_0(\cS)$.  If $Q\in m_0(\cS)$, then $Q$ has a child $D\in Y_\cS$, so
$$\sum_{Q\in m_0(\cS)}\sigma(Q)^{2k+1}\asymp \sum_{Q\in Y_\cS}\sigma(Q)^{2k+1}.$$
Consequently, if $\cS\in \cF_0$, then we have
$$\sum_{Q\in Y_\cS}\sigma(Q)^{2k+1}\asymp \sigma\big(\mathsf{Q}(\cS)\big)^{2k+1}.$$
Because the elements of $\cF$ are disjoint, the sets $\{Y_\cS\}_{\cS\in \cF}$ are disjoint subsets of $\cB$, so for $R\in \Delta$,
\begin{align*}
  \sum_{Q\in M_0(R)} \sigma(Q)^{2k+1}
  \asymp \sum_{\cS\in\cF_0(R)} \sum_{Q\in Y_\cS}\sigma(Q)^{2k+1}
  \le \sum_{\substack{Q\in \cB\\ Q\subset R}} \sigma(Q)^{2k+1}
   \lesssim \sigma(R)^{2k+1}.
\end{align*}
That is, the maximal cubes of $\cF_0$ satisfy the desired Carleson packing condition.

Next, we consider $M_2$.  The sets $\{U_\cS=\mathsf{Q}(\cS)\setminus \bigcup_{Q\in m(\cS)} Q\}_{\cS\in \cF}$ are pairwise disjoint and satisfy $\cH^{2k+1}(U_\cS)\gtrsim \sigma(\mathsf{Q}(\cS))^{2k+1}$ for all $\cS\in \cF_2$.  If $Q\in \Delta$, then
$$\sum_{\cS\in \cF_2(Q)} \sigma\big(\mathsf{Q}(\cS)\big)^{2k+1} \lesssim \sum_{\cS\in \cF_2(Q)} \cH^{2k+1}(U_\cS)=\cH^{2k+1}\biggl(\bigcup_{\cS\in \cF_2(Q)}U_\cS\biggr)\le \cH^{2k+1}(Q)\asymp \sigma(Q)^{2k+1},$$
so $M_2$ satisfies the desired  Carleson packing condition as well.

It remains to show that $M_1$ is Carleson.  Recall that for every integer $j\le 0$ and  $Q\in \Delta_j$, we defined the nonmonotonicity of $Q$ by $w(Q)=\widehat{w}_{j}(E)(Q).$ If $Q\in \cS$ and $\angle(V_Q,V_{\mathsf{Q}(\cS)})>\lambda$, then there are horizontal lines that cross $V_Q$ in the positive direction (i.e., from $V_Q^-$ to $V_Q^+$) but cross $V_{\mathsf{Q}(\cS)}$ in the negative direction.  We will show that these lines contribute to the {\em total nonmonotonicity}  of the stopping time region $\cS$, which is defined to be the quantity $w(\cS)=\sum_{Q\in \cS} w(Q)$. We will then bound the number of cubes in $M_1$ in terms of the nonmonotonicity.

Our first goal will be to prove that most lines that cross $V_Q$ also cross $\partial E$.  
As in Section~\ref{sec:planes and angles}, we identify $\R^{2k}$ with the subspace $\mathsf{H}=\{v\in \H^{2k+1}:\ z(v)=0\}$ of horizontal vectors.  We will use $\angle_{\mathsf{H}}$ to denote Euclidean angles between vectors and planes in $\mathsf{H}$.
\begin{lemma}\label{lem:lineColoring}
  For any $\lambda_0>0$ and any $0<a<b$, if $\epsilon>0$ is sufficiently small, then the following property holds.  Let $\cG=\cG(\epsilon)$ be a set satisfying Proposition~\ref{prop:goodVerticalCubes}.  Fix $Q\in \cG$ and let $x\in N_1(Q)$ be a point such that $d(x,V_Q)<3\epsilon \sigma(Q)$. Also, let $u\in \mathsf{H}$ be a horizontal vector such that $\angle_{\mathsf{H}}(u,\pi(V_Q))>\lambda_0$ and $u$ points toward $V_Q^+$.  Let $L=x u^\R$ be the horizontal line through $x$ in the direction of $u$ and let $\gamma(t)$ be its unit-speed parameterization, with $\gamma(0)=x$.  Then we have
  $$\gamma\Big(\big(-b\sigma(Q),-a\sigma(Q)\big)\Big)\subset E^{\cc}\qquad\mathrm{and}\qquad \gamma\Big(\big(a\sigma(Q),b\sigma(Q)\big)\Big)\subset E.$$
\end{lemma}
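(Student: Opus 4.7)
The plan is to unpack the local distance hypothesis $d_{N_{1/\epsilon}(Q)}(V_Q^+, E)\le \epsilon\sigma(Q)$, which by Definition~\ref{def:local distance} says that
\[(V_Q^+\symdiff E)\cap N_{1/\epsilon}(Q)\subset \nbhd_{\epsilon\sigma(Q)}(V_Q)\cap \nbhd_{\epsilon\sigma(Q)}(\partial E).\]
The consequence I will use is the following binary test: whenever $y\in N_{1/\epsilon}(Q)$ satisfies $d(y,V_Q)>\epsilon\sigma(Q)$, then $y\in V_Q^+\iff y\in E$. So the entire lemma reduces to locating $\gamma(t)$ inside $N_{1/\epsilon}(Q)$ on the appropriate side of $V_Q$ and far enough from $V_Q$.

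First I would take $\epsilon$ small enough that $1+b<1/\epsilon$, which gives $\gamma(t)\in N_{1+b}(Q)\subset N_{1/\epsilon}(Q)$ for all $|t|<b\sigma(Q)$ since $x\in N_1(Q)$. Next, because $V_Q$ is a vertical plane, $V_Q=\pi^{-1}(U)$ for some affine hyperplane $U\subset \mathsf{H}$, and the horizontal geodesic perpendicular to $U$ realizes the distance, so $d(y,V_Q)=\dEuc(\pi(y),U)$ for every $y\in \H^{2k+1}$. Writing $\gamma(t)=x(tu)$, since $u\in \mathsf{H}$ we have $\pi(\gamma(t))=\pi(x)+tu$. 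Let $\mathbf{n}\in \mathsf{H}$ be the unit Euclidean normal to $U$ pointing toward $V_Q^+$; then the hypothesis $\angle_{\mathsf{H}}(u,\pi(V_Q))>\lambda_0$ together with $u$ pointing toward $V_Q^+$ yields $\mathbf{n}\cdot u=\sin\angle_{\mathsf{H}}(u,U)>\sin\lambda_0$. Moreover, the hypothesis $d(x,V_Q)<3\epsilon\sigma(Q)$ and \eqref{eq:metric lower bound} give $|\mathbf{n}\cdot(\pi(x)-p)|<3\epsilon\sigma(Q)$ for $p\in U$.

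Combining these two inputs, the signed distance of $\pi(\gamma(t))$ from $U$ in the direction of $\mathbf{n}$ is at least $t\sin\lambda_0-3\epsilon\sigma(Q)$ for $t>0$, and at most $t\sin\lambda_0+3\epsilon\sigma(Q)$ for $t<0$. Choosing $\epsilon$ small enough that $3\epsilon<\tfrac12 a\sin\lambda_0$ and $\epsilon<\tfrac12 a\sin\lambda_0$, for $t\in(a\sigma(Q),b\sigma(Q))$ one gets $\gamma(t)\in V_Q^+$ with $d(\gamma(t),V_Q)\ge \tfrac12 a\sigma(Q)\sin\lambda_0>\epsilon\sigma(Q)$; applying the binary test above then yields $\gamma(t)\in V_Q^+\cap E\subset E$. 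The argument for $t\in(-b\sigma(Q),-a\sigma(Q))$ is entirely symmetric, giving $\gamma(t)\in V_Q^-\cap E^\cc\subset E^\cc$.

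There is no real obstacle here; the only technical care is to specify how small $\epsilon$ must be in terms of $\lambda_0$, $a$, and $b$ — concretely, $\epsilon<\min\{1/(1+b),\,a\sin\lambda_0/6\}$ suffices. The essential geometric content is just that for a vertical plane the Carnot--Carath\'eodory distance to the plane coincides with the Euclidean distance of the horizontal projection to the corresponding Euclidean hyperplane, which allows the quantitative angle assumption on $u$ to be converted directly into a lower bound on how fast $\gamma(t)$ moves away from $V_Q$.
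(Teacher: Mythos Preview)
Your proposal is correct and follows essentially the same approach as the paper: both arguments introduce the signed distance to $V_Q$, use the linear formula $f(\gamma(t))=f(x)+t\sin\big(\angle_{\mathsf{H}}(u,\pi(V_Q))\big)$ along the horizontal line, and invoke the binary test coming from $d_{N_{1/\epsilon}(Q)}(V_Q^+,E)\le\epsilon\sigma(Q)$. The only differences are cosmetic---the paper takes $\epsilon<\min\{1/4,\,a\sin\lambda_0/4,\,1/(b+1)\}$ instead of your $\epsilon<\min\{1/(1+b),\,a\sin\lambda_0/6\}$.
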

\begin{proof}
  suppose that $\e\in (0,\infty)$ satisfies
  \begin{equation}\label{eq:epsilon choice angle lambda 0}
  0<\epsilon<\min\left\{\frac{1}{4},\frac{a\sin \lambda_0}{4}, \frac{1}{b+1}\right\}.
  \end{equation}
  Let $f\from \H^{2k+1}\to \R$ be the signed distance to $V_Q$, i.e.,
  \[f(p)=\begin{cases}
      d(p,V_Q) & p\in V_Q^+, \\
      -d(p,V_Q) & p\in V_Q^-.
    \end{cases}\]
  This is a linear function on $\H^{2k+1}$.  If $f(p)>\epsilon\sigma(Q)$, then $p\in V_Q^+$ and $d(p, V_Q)>\epsilon\sigma(Q)$. Hence, if also  $p\in N_{1/\e}(Q)$, then $p\in E$.  Similarly, if $f(p)<-\epsilon\sigma(Q)$ and $p\in N_{1/\e}(Q)$, then $p\in E^\cc$.

  For all $t\in \R$, we have $\pi(\gamma(t))=\pi(x)+tu$, so
  $$f\big(\gamma(t)\big)=f(x)+t\sin \Big(\angle_{\mathsf{H}}\big(u,\pi(V_Q)\big)\Big).$$
  By hypothesis, $|f(x)|<3\epsilon \sigma(Q)$. Consequently,
  if $t\in (a\sigma(Q),b\sigma(Q))$, then
  $$f\big(\gamma(t)\big)>-3\epsilon\sigma(Q)+a\sigma(Q) \sin \Big(\angle_{\mathsf{H}}\big(u,\pi(V_Q)\big)\Big)\stackrel{\eqref{eq:epsilon choice angle lambda 0}}{>}\epsilon \sigma(Q).$$
Since $\gamma(t)\in N_{1+t/\sigma(Q)}(Q)\subset N_{1/\e}(Q)$, it follows that $\gamma(t)\in E$. Likewise, if $t\in (-b\sigma(Q),-a\sigma(Q))$, then
  $f(\gamma(t))<-\epsilon \sigma(Q)$ and
  $\gamma(t)\in N_{1/\e}(Q)$, so $\gamma(t)\in E^{\cc}$.
\end{proof}

Consequently, each stopping-time region in $\cF_1$ contributes to the nonmonotonicity of $E$.
\begin{lemma}\label{lem:cF1 causes nonmonotonicty}
  For any $0<\lambda<\frac{\pi}{2}$, there exists $\epsilon >0$ such that if $\cG=\cG(\epsilon)$ satisfies Proposition~\ref{prop:goodVerticalCubes} and $\cS\in \cF_1$, then
  $w(\cS)\gtrsim_\lambda \sigma(\mathsf{Q}(\cS))^{2k+1}.$
\end{lemma}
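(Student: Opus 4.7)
\begin{sketch}
Fix $\cS\in \cF_1$, write $M=\mathsf{Q}(\cS)$, and for each $R\in m_1(\cS)$ fix a child $R^{*}$ of $R$ with $\angle(V_{R^{*}},V_M)>\lambda$. The plan is to exhibit, along many horizontal lines through $R^{*}$, two consecutive sign changes of $\one_E$ at the two separated scales $\sigma(R^{*})$ and $\sigma(M)$, and then to convert this (via the kinematic formula) into a lower bound on $w(\cS)$.

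Choose auxiliary parameters $\lambda_0=\lambda/8$ and $0<a<b$ independent of $\cS$, and then shrink $\epsilon$ (depending on $\lambda,a,b$) so that Lemma~\ref{lem:lineColoring} applies uniformly on $\cG$ with these constants and so that $b\epsilon/\sin\lambda_0<a/2$. Since $\angle(V_{R^{*}},V_M)>\lambda$, the set
\[
U_{R^{*}}\eqdef \Big\{u\in \mathsf{H}\cap S^{2k-1}\;\Big|\; u\text{ points to }V_{R^{*}}^{+}\text{ and to }V_M^{-},\ \angle_{\mathsf{H}}(u,\pi(V_{R^{*}}))>\lambda_0,\ \angle_{\mathsf{H}}(u,\pi(V_M))>\lambda_0\Big\}
\]
is a spherical lune with $\cH^{2k-1}(U_{R^{*}})\gtrsim_\lambda 1$. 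For each $u\in U_{R^{*}}$ and each $x\in R^{*}$ with $d(x,V_{R^{*}})<\epsilon\sigma(R^{*})$, let $L=x\langle u\rangle$ with unit-speed parametrization $\gamma$, $\gamma(0)=x$. Applying Lemma~\ref{lem:lineColoring} with $Q=R^{*}$ gives $\gamma((-b\sigma(R^{*}),-a\sigma(R^{*})))\subset E^{\mathsf{c}}$ and $\gamma((a\sigma(R^{*}),b\sigma(R^{*})))\subset E$. Because $V_M$ approximates $\partial E$ throughout $N_{1/\epsilon}(M)\supset R^{*}$, the point $x$ also satisfies $d(x,V_M)\lesssim\epsilon\sigma(M)$, so the unique transverse crossing $y=L\cap V_M$ lies at signed distance $|T|\le \epsilon\sigma(M)/\sin\lambda_0<(a/2)\sigma(M)$ from $x$. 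Applying Lemma~\ref{lem:lineColoring} with $Q=M$, basepoint $y$ and direction $-u$ (which points to $V_M^{+}$) and then translating by $T$ yields $\gamma((T-b\sigma(M),T-a\sigma(M)))\subset E$ and $\gamma((T+a\sigma(M),T+b\sigma(M)))\subset E^{\mathsf{c}}$. In both the $+u$ and the $-u$ direction from $x$, $\one_E$ therefore takes opposite values on two nearly concentric intervals at scales $\sigma(R^{*})$ and $\sigma(M)$, forcing $\partial E$ to cross $L$ at a point of $\gamma((b\sigma(R^{*}),(a+|T|)\sigma(M)))$ and at a point of $\gamma((-(a+|T|)\sigma(M),-b\sigma(R^{*})))$.

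Each such crossing is an endpoint of a finite component of $E\cap L$ or $E^{\mathsf{c}}\cap L$ whose length $\ell$ lies in $[b\sigma(R^{*}),2a\sigma(M)]$ and whose position lies in $\partial E\cap N_{2a\sigma(M)}(R^{*})\subset N_{3a\sigma(M)}(R)$, so it contributes $+1$ to $\widehat w_j(E)(Q)$ for the unique $j$ with $2^{j-1}\le \ell<2^j$ and the unique $Q\in\Delta_j$ containing it. Since $\sigma(Q)\asymp\ell\gtrsim\sigma(R)$ and $Q$ lies within $O(\sigma(M))$ of $R\in\cS$, the coherence of $\cS$ implies (up to a bounded multiplicity absorbed in the $\gtrsim_\lambda$ constant) that $Q$ is an ancestor of $R$ in $\cS$, so the contribution counts in $w(\cS)$. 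Integrating over the $(2k)$--dimensional family of pairs $(x,u)$ with $x$ in the above transverse slab of $R^{*}$ and $u\in U_{R^{*}}$, and rewriting the result via the kinematic formula \eqref{eq:kinematic}--\eqref{eq:kinematicW}, one obtains
\[
w(\cS)\;\gtrsim_\lambda\;\sum_{R\in m_1(\cS)}\cH^{2k+1}(R^{*})\;\gtrsim\;\sum_{R\in m_1(\cS)}\cH^{2k+1}(R)\;\ge\;\tfrac12\cH^{2k+1}(M)\;\asymp\;\sigma(M)^{2k+1},
\]
where the penultimate step is the defining inequality \eqref{def cF1} of $\cF_1$ and the last step uses the $(2k+1)$--regularity of $\partial E$.

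The main obstacle is the last bookkeeping step: a priori the endpoints produced above could lie in cubes $Q\in\Delta$ that are descendants of siblings of $R$ rather than ancestors of $R$ in $\cS$. This is handled by the observation that $Q$ is forced to lie within a bounded multiple of $\sigma(R)$ of $R$ while $\sigma(Q)\gtrsim\sigma(R)$, so only boundedly many cubes at each dyadic scale are relevant, and their contributions can be reassigned to $\cS$ at the cost of a constant depending only on $\lambda$ and $k$. A secondary technical point is comparing the natural $(x,u)$--parametrization with the left-invariant kinematic measure $\cN$ on $\cL$; the change of variables only produces an additional dimensional constant.
\end{sketch}
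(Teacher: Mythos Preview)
Your overall strategy coincides with the paper's: the angle mismatch $\angle(V_{R^{*}},V_M)>\lambda$ together with two applications of Lemma~\ref{lem:lineColoring} forces, on a large family of horizontal lines through $R^{*}$, a finite maximal $E$-interval (or $E^{\cc}$-interval) of length between $\asymp\sigma(R^{*})$ and $\asymp\sigma(M)$, and such intervals feed into $w(\cS)$ via the kinematic decomposition~\eqref{eq:kinematicW}.

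The gap is in the step you yourself flag as ``the main obstacle.'' Your fix asserts that the cube $Q\in\Delta_j$ containing the crossing ``is forced to lie within a bounded multiple of $\sigma(R)$ of $R$.'' This is false: the crossing you isolate lies in $\gamma\bigl((b\sigma(R^{*}),(a+|T|)\sigma(M))\bigr)$, hence at distance up to $\asymp a\,\sigma(M)$ from $x\in R^{*}$, not $\asymp\sigma(R)$. Consequently $Q$ can be any cube of scale $2^{j}$ within $O(\sigma(M))$ of $M$; there is no reason for $Q$ to be an ancestor of $R$, nor even to belong to $\cS$ (the tower of cubes above this far-away endpoint may have exited $\cS$ through $m_0(\cS)$ or through a different element of $m_1(\cS)$, or $Q$ may not even lie in $M$). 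So the ``reassignment at bounded cost'' does not go through, and the contribution cannot be credited to $w(\cS)$.

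The paper repairs this not by reassignment but by \emph{localizing the endpoint}. Two ingredients are missing from your sketch. First, instead of tracking the far crossing, track the \emph{near} endpoint: the maximal $E$-interval containing $\gamma((a\sigma(R^{*}),b\sigma(R^{*})))$ has its left endpoint $\gamma(p)$ with $p\in[-a\sigma(R^{*}),a\sigma(R^{*})]$ (because $\gamma((-b\sigma(R^{*}),-a\sigma(R^{*})))\subset E^{\cc}$), so this endpoint sits at distance $\le a\sigma(R^{*})$ from $x$. Second, to guarantee that this near endpoint actually lands in $R^{*}$ (and not in a neighbouring cube at the same scale), the basepoint $x$ must be taken from a small ball $B_{\epsilon\sigma(R^{*})}(x_{R^{*}})$ around the approximate centre $x_{R^{*}}$ of Lemma~\ref{lem:cubeCenters}, chosen so that $B_{c\sigma(R^{*})}(x_{R^{*}})\cap\partial E\subset R^{*}$; with $a=c/4$ this forces $\gamma(p)\in R^{*}\subset R$. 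Once the endpoint is pinned inside the minimal cube $R\in\cS$, coherence of $\cS$ gives immediately that it contributes to $\widehat{w}_{\cS}(R)$, and summing over the disjoint cubes $R\in m_1(\cS)$ together with the defining inequality of $\cF_1$ yields $w(\cS)\gtrsim_\lambda\sigma(M)^{2k+1}$. Your slab of basepoints ``$x\in R^{*}$ with $d(x,V_{R^{*}})<\epsilon\sigma(R^{*})$'' does not provide this control, since such $x$ can sit on the boundary of $R^{*}$ in $\partial E$.
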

\begin{proof}
  Let $0<c<1$ be as in Lemma~\ref{lem:cubeCenters}, so that for all $Q\in \Delta$, there is a point $x_{Q}\in Q$ such that $B_{c\sigma(Q)}(x_{Q})\cap \partial E \subset Q$.  Let $0<\epsilon<c/4$ be a number satisfying Lemma~\ref{lem:lineColoring} for $\lambda_0=\lambda/4$, $a=c/4$ and $b=2$.  Note that $\epsilon$ can be taken to be a function of $\lambda$ and $C$.

  For each $Q\in \Delta$, let $\widehat{w}_Q$ be the restriction of $\widehat{w}_{\log_2\sigma(Q)}(E)$ to $Q$, i.e., the measure defined by $\widehat{w}_Q(A)=\widehat{w}_{\log_2\sigma(Q)}(E)(A\cap Q)$ for every measurable $A\subset \H^{2k+1}$.  For every $\cS\in \cF$ denote $\widehat{w}_\cS=\sum_{Q\in \cS} \widehat{w}_Q$.  Then
  \begin{equation}\label{eq:w hat of H}
  \widehat{w}_\cS(\H^{2k+1})=\sum_{Q\in \cS}\widehat{w}_{\log_2\sigma(Q)}(E)(Q)=\sum_{Q\in \cS} w(Q)=w(\cS).
  \end{equation}
Suppose that $A\in m_1(S)$ and let $Q$ be a child of $A$ such that $Q\in \cG$ and $\theta\eqdef \angle(V_{Q},V_{\mathsf{Q}(S)})>\lambda$.  We claim that $\widehat{w}_\cS(A)\gtrsim_\lambda \sigma(A)^{2k+1}$. To see this, write $\sigma=\sigma(Q)$ and $\sigma_\cS=\sigma(\mathsf{Q}(\cS))$.  Let $S^{2k-1}\subset \mathsf{H}$ be the unit sphere in the space of horizontal vectors.  Let $u_0\in S^{2k-1}$ bisect $V_Q$ and $V_{\mathsf{Q}(S)}$; i.e., let $u_0\in \mathsf{H}$ be the horizontal unit vector that points into $\pi(V_Q^+)\cap \pi(V_{\mathsf{Q}(S)}^-)$, and satisfies
  $$\angle_{\mathsf{H}}\big(\pi(V_{Q}), \pi(u_0)\big)=\angle_{\mathsf{H}}\big(\pi(V_{\mathsf{Q}(S)}), \pi(u_0)\big)=\frac{\theta}{2}.$$
  Denote $U=\{u\in S^{2k-1}: \angle_{\mathsf{H}}(u,u_0)<\lambda/4\}$ 
  and $W=B_{\epsilon \sigma}(x_Q)$.  For $x\in W$ and $u\in U$, let $L(x,u)$ be the horizontal line through $x$ in direction $u$ and let $\gamma(t)=\gamma_{x,u}(t)=x u^t$ be its unit-speed parameterization.  Then $\angle_\mathsf{H}(u,\pi(V_Q))>\frac{\lambda}{4}$ and $d(x,V_Q)<\epsilon \sigma+d(x_Q,V_Q)<2\epsilon \sigma$, so $\gamma((-b\sigma,-a\sigma))\subset E^{\cc}$ and $\gamma((a\sigma,b\sigma))\subset E$ by Lemma~\ref{lem:lineColoring}.  Similarly, since
  $$d\big(x,V_{\mathsf{Q}(\cS)}\big)\le d(x,x_Q)+d\big(x_Q,V_{\mathsf{Q}(\cS)}\big)<\epsilon \sigma +\epsilon \sigma_\cS< 2\epsilon \sigma_\cS,$$
  Lemma~\ref{lem:lineColoring} implies that
  $\gamma((-b\sigma_\cS,-a\sigma_\cS))\subset E$ and
  $\gamma((a\sigma_\cS,b\sigma_\cS))\subset E^{\cc}$. Let $I=[p,q]\subset \R$ be the maximal interval such that $(a\sigma,b\sigma)\subset I$ and $\gamma(I)\subset E$.  Then $I\subset [-a\sigma,a\sigma_\cS]$, and therefore
  $\sigma<(b-a)\sigma\le \ell(I)\le 2a\sigma_S<\sigma_S.$
  Furthermore, $p\in [-a\sigma,a\sigma]$, so $\gamma(p)\in \partial E$ and
  $$d(\gamma(p),x_Q)\le a\sigma+d(x,x_Q) \le \frac{c\sigma}{4}+\epsilon\sigma\le \frac{c\sigma}{2}.$$
  By our choice of $x_Q$, this implies $\gamma(p)\in Q$.

 Denote $L_{W,U}=\{L(x,u)\mid x\in W, u\in U\}$.  For each $L\in L_{W,U}$, the intersection $E\cap L(x,u)$ contains a maximal interval $I$ of length $\ell(I)\in (\sigma, \sigma_\cS)$ with one endpoint in $A$.  It follows that
  \begin{equation}\label{eq:w hat lower on A}
  \widehat{w}_\cS(A)\ge \sum_{i=\log_2 \sigma}^{\log_2 \sigma_\cS}\widehat{w}_i(A)\ge \frac{\cL(L_{W,U})}{2} \asymp \epsilon^{2k+1} \sigma^{2k+1} \lambda^{2k-1}\asymp_\lambda \sigma(A)^{2k+1}.
  \end{equation}
  Since $\cS\in \cF_1$ and all the elements of $m_1(\cS)$ are disjoint, we have
  \begin{equation*}
  w(\cS)\stackrel{\eqref{eq:w hat of H}}{\ge} \widehat{w}_\cS\bigg(\bigcup_{A\in m_1(\cS)} A \bigg) \stackrel{\eqref{eq:w hat lower on A}}{\gtrsim}_{\!\!\!\lambda}\sum_{A\in m_1(\cS)} \sigma(A)^{2k+1}\gtrsim \cH^{2k+1}\bigg(\bigcup_{A\in m_1(\cS)} A \bigg) \stackrel{\ref{def cF1}}{\gtrsim} \sigma\big(\mathsf{Q}(\cS)\big)^{2k+1}.\qedhere\end{equation*}
\end{proof}

Finally, for all $R\in \Delta$ by Lemma~\ref{lem:cF1 causes nonmonotonicty} we have
$$\sum_{\cS\in \cF_1(R)} \sigma\big(\mathsf{Q}(\cS)\big)^{2k+1}\lesssim_\lambda \sum_{\cS\in \cF_1(R)} w(\cS)\stackrel{\eqref{eq:kinematicW}}{\le} \Per(E)(R) \asymp \sigma(R)^{2k+1},$$
so $\cF_1$ satisfies a Carleson packing condition.  This concludes the proof of Proposition~\ref{prop:smallAngleSTRs}.\qed

\subsection{Stopping-time regions are close to Lipschitz graphs}\label{sec:STRLipschitzGraphs}
Finally, we prove that the coronization constructed in Proposition~\ref{prop:smallAngleSTRs} is in fact a corona decomposition $\partial E$ by constructing intrinsic Lipschitz graphs that approximate the stopping-time regions.

\begin{prop}\label{prop:STRLipschitzGraphs}
  For every $\eta,\theta\in (0,1)$, there are $\epsilon>0$ and $\lambda\in (0,1)$ with the following property.  If $\cS\subset \Delta$ is a coherent set such that for all $Q\in \cS$, there is a vertical half-space $V^+_Q$ with bounding plane $V_Q$ such that
  \begin{equation}\label{eq:STR lip close to planes}
    d_{N_{\frac{1}{\epsilon}}(Q)}(V_Q^+, E)\le \epsilon\sigma(Q)\qquad \mathrm{and}\qquad \angle(V_Q,V_{\mathsf{Q}(\cS)})<\lambda
  \end{equation}
 (e.g., a stopping-time region in the coronization constructed in Prop.~\ref{prop:smallAngleSTRs}), then there is an intrinsic Lipschitz graph $\Gamma\subset \H^{2k+1}$ with Lipschitz constant at most $\eta$ such that for all $Q\in \cS$,
  $$d_{N_4(Q)}(\Gamma^+, E)\le \theta \sigma(Q).$$
\end{prop}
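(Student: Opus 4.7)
The plan is to construct $\Gamma$ as the intrinsic graph $\Gamma_f$ over the vertical plane $V \eqdef V_{\mathsf{Q}(\cS)}$ of a function $f\colon V\to \R$ obtained by gluing together, via a Whitney-type partition of unity, the affine functions whose graphs are the $V_Q$'s. After applying an isometric automorphism of $\H^{2k+1}$ we may assume $V = \{h\in\H^{2k+1}\mid x_k(h) = 0\}$. The small-angle hypothesis $\angle(V_Q,V)<\lambda$ in \eqref{eq:STR lip close to planes} means that for every $Q\in \cS$ the plane $V_Q$ is the intrinsic graph $\Gamma_{f_Q}$ of an affine function $f_Q\colon V\to \R$ whose Euclidean slope (and hence intrinsic Lipschitz constant) is $\lesssim \lambda$, and the local approximation in \eqref{eq:STR lip close to planes} translates, via \eqref{eq:metric approximation}, into the pointwise statement that on $\pi(N_{1/\epsilon}(Q))$ the value $f_Q(v)$ matches the corresponding ``height'' of $\partial E$ over $v$ to within $\lesssim \epsilon\sigma(Q)$.

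Next I would build the partition of unity from Lemma~\ref{lem:coveringQS mini}. Applying that lemma with a small absolute constant in place of the parameter $\psi$, I obtain centers $\{c_i\}_{i\in I}\subset 10\mathsf{Q}(\cS)$ with associated balls $B_i=B_{\psi d_\cS(c_i)}(c_i)$ of bounded overlap whose triples $3B_i$ cover $10\mathsf{Q}(\cS)\setminus \Gamma_0$; for each $i$ I would select a cube $Q_i\in\cS$ with $\sigma(Q_i)\asymp d_\cS(c_i)$ and $d(Q_i,c_i)\lesssim d_\cS(c_i)$, which exists by definition of $d_\cS$. Fix a smooth partition of unity $\{\psi_i\}$ on $V$ subordinate to $\{\pi(3B_i)\}$ with $\|\nabla\psi_i\|_\infty \lesssim 1/d_\cS(c_i)$, and define
$$f(v) \eqdef \sum_{i\in I}\psi_i(v)\, f_{Q_i}(v)$$
on the union of these supports, extending $f$ to the rest of $V$ by Theorem~\ref{thm:intrinsic nonlinear hahn banach} once the Lipschitz bound described below has been established.

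Then I would bound the intrinsic Lipschitz constant of $f$. The crucial observation is that whenever $3B_i\cap 3B_j\ne \emptyset$ the cubes $Q_i,Q_j$ satisfy $\sigma(Q_i)\asymp \sigma(Q_j)\asymp d_\cS(c_i)$ and the common neighborhood lies inside both $N_{1/\epsilon}(Q_i)$ and $N_{1/\epsilon}(Q_j)$, so both $V_{Q_i}^+$ and $V_{Q_j}^+$ approximate $E$ there to within $O(\epsilon)\cdot\max\{\sigma(Q_i),\sigma(Q_j)\}$. This forces the affine functions to satisfy $|f_{Q_i}-f_{Q_j}|\lesssim \epsilon\, d_\cS(c_i)$ and $\|\nabla(f_{Q_i}-f_{Q_j})\|\lesssim \epsilon+\lambda$ on the overlap. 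Using $\sum_i\psi_i\equiv 1$ to rewrite $\nabla f = \sum_i \psi_i\nabla f_{Q_i}+\sum_i(\nabla\psi_i)(f_{Q_i}-f_{Q_{i_0}})$ for a nearby reference index $i_0$, the first sum is $O(\lambda)$ while the second telescopes across overlapping balls to $O(\epsilon+\lambda)$ thanks to the bounded multiplicity of $\{B_i\}$. Hence $f$ has Euclidean Lipschitz constant $\lesssim \epsilon+\lambda$, which via \eqref{eq:metric approximation} yields intrinsic Lipschitz constant at most $\eta$ once $\epsilon$ and $\lambda$ are chosen small enough in terms of $\eta$.

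Finally, for the approximation estimate, given $Q\in \cS$ and $x\in N_4(Q)$ only indices $i$ with $\sigma(Q_i)\asymp \sigma(Q)$ and $d(Q_i,Q)\lesssim \sigma(Q)$ contribute to $f$ at the projection of $x$; for each such $i$ the approximation hypothesis applied to both $V_Q^+$ and $V_{Q_i}^+$ on the common neighborhood $N_{1/\epsilon}(Q_i)\cap N_{1/\epsilon}(Q)\supset N_4(Q)$ yields $|f_{Q_i}(v)-f_Q(v)|\lesssim \epsilon\sigma(Q)$, so $d_{N_4(Q)}(\Gamma^+,V_Q^+)\lesssim \epsilon\sigma(Q)$ and combining with \eqref{eq:STR lip close to planes} gives the desired $\theta\sigma(Q)$-bound as soon as $\epsilon\ll\theta$. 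The main obstacle is that the intrinsic Lipschitz condition is a nonlinear cone condition rather than a Euclidean slope bound and is not automatically preserved under partition-of-unity gluing; the passage from the Euclidean estimate on $f$ to the intrinsic cone condition on $\Gamma_f$ only works in the small-angle regime, so the whole proposal depends on tracking constants carefully and choosing $\epsilon,\lambda$ small enough as functions of $\eta,\theta$ that both the Euclidean-to-intrinsic conversion and the partition-of-unity error terms lie below the required thresholds.
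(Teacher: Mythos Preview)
Your overall plan—Whitney covering from Lemma~\ref{lem:coveringQS mini}, glue the affine $f_{Q_i}$, extend via Theorem~\ref{thm:intrinsic nonlinear hahn banach}, then verify the $\theta$-approximation—matches the paper, and your approximation step is essentially correct. The gap is precisely the step you yourself flag: a Euclidean gradient bound $\|\nabla f\|\lesssim\epsilon+\lambda$ on $V$ does \emph{not} yield the intrinsic cone condition on $\Gamma_f$, and no choice of small $\epsilon,\lambda$ repairs this.

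Here is the concrete obstruction. Take $v_1=\0$ and $v_2=Z^t\in V$. Then $w_i=v_iX_k^{f(v_i)}$ satisfy $w_1^{-1}w_2=X_k^{f(v_2)-f(v_1)}Z^t$, so by~\eqref{eq:metric approximation} one has $d(w_1,w_2)\asymp |f(v_2)-f(v_1)|+\sqrt{t}$. The intrinsic $\eta$-Lipschitz condition thus forces $|f(v_2)-f(v_1)|\lesssim\eta\sqrt{t}$, whereas your estimate $|\partial_z f|\lesssim\epsilon$ only gives $|f(v_2)-f(v_1)|\le\epsilon t$. The glued $f$ genuinely depends on $z$ through the partition of unity (even though each $f_{Q_i}$ does not), and since $t$ ranges up to order $\sigma(\mathsf{Q}(\cS))^2$ this is off by a full power of the scale—not a constant.

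The paper closes this gap with a two-scale argument whose global half you are missing. The key extra ingredient is Lemma~\ref{lem:DavidSemmesLemma8.4}: for $v,w\in 20M$ with $d(v,w)\ge\beta\min\{d_\cS(v),d_\cS(w)\}$, one invokes Lemma~\ref{lem:ds balls} to find a \emph{single} vertical plane $\epsilon$-approximating $E$ on $B_{d(v,w)}(v)$ at the scale $d(v,w)$ itself, and reads off $|x_k(v)-x_k(w)|\le 2\lambda d(v,w)$ directly. This handles all large separations—including purely vertical ones—without ever looking at $f$. Only when $d(w_1,w_2)\ll d_\cS(w_1)$ does the partition-of-unity calculation enter (Lemma~\ref{lem:glued lip}), and there all relevant bumps sit at a single scale $\delta\asymp d_\cS(w_1)$, so the local estimate suffices. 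Note also that the paper builds its bumps on the line space $\cL^\perp$ via $\tau_i'(L)=\tau_i(L\cap V_i)$ with $\tau_i$ Euclidean-smooth on $\H^{2k+1}$, rather than on $V$, because the projection $\Pi\colon\H^{2k+1}\to V$ is not CC-Lipschitz; the needed bound $\|\bar\tau_i'\|_{\Lip(8B_i)}\lesssim 1$ (Lemma~\ref{lem:tauLipschitz}) is obtained by rescaling $8B_i$ to unit size so that the projection becomes Euclidean-Lipschitz on a bounded ball. Your partition of unity on $V$ would need the same workaround.
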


We choose $\lambda=\frac{\eta}{10}$.  The constant $\epsilon$ will depend on $\eta$ and $\theta$, but it will satisfy $\epsilon<10^{-3}$.  Let $M=\mathsf{Q}(\cS)$.  We can apply an isometry of $\H^{2k+1}$ so that $V_M=\{h:\H^{2k+1}:\ x_k(h)=0\}$ and $V^+_{M}=\{h\in \H^{2k+1}:\ x_k(h)>0\}$.  Then the one-parameter subgroup $\langle X_k\rangle$ is a horizontal line orthogonal to $V_{M}$. For $y\in \H^{2k+1}$ we denote $L_y=y\langle X_k\rangle$.  Let $\cL^\perp=\{L_x\mid x\in \H^{2k+1}\}$ be the set of cosets of $\langle X_k\rangle$ and, for any $U\subset \H^{2k+1}$, let $\cL^\perp(U)=\{L_u\mid u\in U\}$ be the set of cosets that pass through $U$.  If $g\from \cL^\perp(U)\to \R$ is a function, we let $\bar{g}\from U\to \R$ denote the corresponding function $\bar{g}(y)=g(L_y)$ that is constant on cosets.

If $V$ is a vertical plane such that $\angle(V,V_M)<\frac{\pi}{2}$, then it intersects cosets of $\langle X_k\rangle$ transversely, and we define $\Pi_V\from \H^{2k+1}\to V$ to be the projection whose fibers are the $L_y$'s.  That is, we let $\Pi_{V}(y)$ be the intersection point of $L_y$ and $V$.  Let $\Pi=\Pi_{V_M}$ be the projection to $V_M$, i.e., $\Pi(y)=y X_k^{-x_k(y)}$.

One computes that if $V\subset \H^{2k+1}$ is a vertical plane then for every $v\in V$, $t\in \R$ and $y\in \H^{2k+1}$,
$$
d(v X_k^t,V)=|t|\cos \big(\angle(V,V_M)\big)\qquad\mathrm{and}\qquad d\big(y,\Pi_V(y)\big)= \frac{d(y,V)}{\cos \big(\angle(V,V_M)\big)}.
$$

In particular, if $V=V_Q$ for some $Q\in S$, then
$\angle(V,V_S)\le \lambda$, and therefore
\begin{equation}\label{eq:distToFlatPlane}
  d\big(y,\Pi_V(y)\big)\le 2d(y,V).
\end{equation}

Recall that for $\rho>0$, we define $\rho Q=\partial E\cap N_\rho(Q).$
\begin{lemma}\label{lem:N4 in 10Q}
  $\cL^\perp(N_{9/2}(M))\subset \cL^\perp(10M)$
\end{lemma}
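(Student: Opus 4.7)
The plan is as follows: given $L = L_y$ with $y \in N_{9/2}(M)$, I will produce an explicit point $q \in L$ that lies in $10M = \partial E \cap N_{10}(M)$, placing $L$ in $\cL^\perp(10M)$. The guiding principle is that the hypothesis~\eqref{eq:STR lip close to planes} forces $E$ to agree pointwise with the half-space $V_M^+$ on the portion of $N_{1/\e}(M)$ at distance more than $\e\sigma(M)$ from the vertical plane $V_M = \{h \in \H^{2k+1} : x_k(h) = 0\}$, so any line transverse to $V_M$ passing through $N_{1/\e}(M)$ must meet $\partial E$ very close to where it meets $V_M$.

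First I would observe that the coordinate function $x_k$ is $1$-Lipschitz with respect to the Carnot--Carath\'eodory metric, since its horizontal gradient equals the unit vector $X_k$; combined with~\eqref{eq:horizontal geodesics} this yields the exact identity $d(h, V_M) = |x_k(h)|$ for every $h \in \H^{2k+1}$. Applying Lemma~\ref{lem:localHausProps}(3) to the hypothesis $d_{N_{1/\e}(M)}(V_M^+, E) \le \e\sigma(M)$ shows that every point of $\partial E \cap N_{1/\e}(M)$ lies within $\e\sigma(M)$ of $V_M$; since $M \subset \partial E \cap N_{1/\e}(M)$, we deduce $|x_k(m)| \le \e\sigma(M)$ for every $m \in M$, and hence $|x_k(y)| < (9/2 + \e)\sigma(M)$ by the $1$-Lipschitz property of $x_k$ together with the bound $d(y,M) < (9/2)\sigma(M)$.

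Next I would introduce the crossing point $\Pi(y) = yX_k^{-x_k(y)} \in L \cap V_M$ and the two auxiliary points $p_\pm = \Pi(y) X_k^{\pm 2\e\sigma(M)} \in L$. A direct triangle-inequality estimate (using $d(p_\pm,\Pi(y))=2\e\sigma(M)$ and $d(\Pi(y),y)=|x_k(y)|$ from~\eqref{eq:horizontal geodesics}) gives
\[
d(p_\pm, M) \le d(p_\pm,\Pi(y))+d(\Pi(y),y)+d(y,M) < 2\e\sigma(M) + (9/2+\e)\sigma(M) + (9/2)\sigma(M) < 10\sigma(M),
\]
so both $p_\pm$ lie in $N_{10}(M) \subset N_{1/\e}(M)$ once $\e < 1/10$ (and the proposition permits $\e < 10^{-3}$). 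Since $x_k(p_\pm) = \pm 2\e\sigma(M)$, the points $p_\pm$ sit on opposite sides of $V_M$ at distance exactly $2\e\sigma(M) > \e\sigma(M)$ from it, and Definition~\ref{def:local distance} applied to the hypothesis then forces $p_\pm \notin V_M^+ \symdiff E$, yielding $p_+ \in V_M^+ \cap E$ and $p_- \in V_M^- \cap E^{\cc}$.

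Finally, the image of the continuous path $t\mapsto \Pi(y)X_k^t$ for $t\in [-2\e\sigma(M), 2\e\sigma(M)]$ is a connected subset of $L$ that is covered by the two closed sets $\overline{E}$ and $\overline{E^{\cc}}$, each of which meets the segment (they contain $p_+$ and $p_-$ respectively); connectedness thus produces a point $q$ in $\overline{E}\cap\overline{E^{\cc}} = \partial E$ on the segment, and the same triangle-inequality estimate as above gives $d(q, M) < 10\sigma(M)$. Hence $q \in 10M$ and $L = L_q \in \cL^\perp(10M)$. There is no substantive obstacle here: the lemma is a direct geometric consequence of the $1$-Lipschitz behavior of $x_k$ and the pointwise meaning of the local-distance hypothesis, with the gap between the constants $9/2$ and $10$ serving as convenient slack for the small perturbations by $\e\sigma(M)$.
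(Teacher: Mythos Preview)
Your proof is correct and follows essentially the same approach as the paper: bound $|x_k|$ on $N_{9/2}(M)$ via the closeness of $M$ to $V_M$, project to $\Pi(y)\in V_M$, use the hypothesis~\eqref{eq:STR lip close to planes} to place the nearby points $\Pi(y)X_k^{\pm 2\e\sigma(M)}$ in $E$ and $E^{\cc}$ respectively, and then use connectedness to locate a boundary point $q\in 10M$ on $L$. Your argument is slightly more explicit in justifying the auxiliary facts (the $1$-Lipschitz property of $x_k$, the invocation of Lemma~\ref{lem:localHausProps}(3)), but the route is the same.
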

\begin{proof}
   $|x_k(h)|\le \epsilon \sigma(M)$ for all $h\in M$, so every $v\in N_{9/2}(M)$ satisfies $|x_k(v)|\le (9/2+\epsilon)\sigma(M)$.  Therefore $d(v,\Pi(v))\le 5\sigma(M)$ and $\Pi(v)\in N_{19/2}(M)$.  Since $\Pi(v)\in V_M$, by the first inequality in~\eqref{eq:STR lip close to planes} we have $\Pi(v) X_k^{2 \epsilon \sigma(M)}\in E$ and $\Pi(v) X_k^{-2 \epsilon \sigma(M)}\not\in E$.  It follows that there is $q\in \partial E \cap L_{\Pi(v)}$ such that $d(\Pi(v),q)\le 2\epsilon \sigma(M)$.  Then $q\in 10M$ and $L_v=L_q\in \cL^\perp(10M)$, as desired.
\end{proof}

As in Chapter 8 of \cite{DavidSemmesSingular}, we will construct
$\Gamma$ using the function $d_{\cS}$
that we defined in Section~\ref{sec:corona decompositions}.  The quantity $d_{\cS}(v)$ measures the amount of control we have over $E$ near $v$:
\begin{lemma}\label{lem:ds balls}
  If $0<\alpha<1$, and if $\epsilon>0$ is sufficiently small (depending on $\alpha$ and $C$), then for every point $v\in \H^{2k+1}$, and every $r\in (\alpha d_{\cS}(v),\sigma(M)/\alpha]$, there exists $R\in \cS$ and a vertical half-space $V^+=V_R^+$ such that
  $d_{B_r(v)}(E,V^+)\le \epsilon r/\alpha.$
\end{lemma}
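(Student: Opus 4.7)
The plan is to locate a cube $R\in\cS$ whose neighborhood $N_{1/\epsilon}(R)$ contains $B_r(v)$ while $\sigma(R)\le r/\alpha$; once such an $R$ is in hand, the conclusion follows immediately from the standing hypothesis $d_{N_{1/\epsilon}(R)}(V_R^+,E)\le\epsilon\sigma(R)$ together with the monotonicity of local distance in its underlying set (part~(2) of Lemma~\ref{lem:localHausProps}):
\begin{equation*}
d_{B_r(v)}(E,V_R^+)\le d_{N_{1/\epsilon}(R)}(V_R^+,E)\le\epsilon\sigma(R)\le\frac{\epsilon r}{\alpha}.
\end{equation*}

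To locate $R$, I would first use $r>\alpha d_\cS(v)$ to produce some $Q\in\cS$ with $d(v,Q)+\sigma(Q)<r/\alpha$, which is possible by the definition of $d_\cS$. Because $\cS$ is coherent with maximal element $M=\mathsf{Q}(\cS)$, the ancestors of $Q$ that lie in $\cS$ form a dyadic chain $Q=Q^{(0)}\subsetneq Q^{(1)}\subsetneq\dots\subsetneq Q^{(m)}=M$, with $\sigma(Q^{(i+1)})=2\sigma(Q^{(i)})$. Setting $\tau\eqdef\epsilon r(\alpha+1)/\alpha$, I would take $R=Q^{(i)}$ for the smallest index $i$ with $\sigma(R)>\tau$, so that by doubling $\sigma(R)\le\max\{\sigma(Q),2\tau\}$. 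Since $\sigma(Q)<r/\alpha$ by the choice of $Q$ and $2\tau\le r/\alpha$ as soon as $2\epsilon(\alpha+1)\le 1$, this gives the required bound $\sigma(R)\le r/\alpha$; the existence of such an index $i\le m$ is guaranteed when $\sigma(M)>\tau$, which follows from the hypothesis $\sigma(M)\ge\alpha r$ provided $\epsilon<\alpha^2/(\alpha+1)$.

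With $R$ in hand, I would then verify $B_r(v)\subset N_{1/\epsilon}(R)$ as follows. Since $R\supseteq Q$, we have $d(v,R)\le d(v,Q)<r/\alpha$, hence for every $w\in B_r(v)$,
\begin{equation*}
d(w,R)\le d(w,v)+d(v,R)<r+\frac{r}{\alpha}=\frac{r(\alpha+1)}{\alpha}=\frac{\tau}{\epsilon}<\frac{\sigma(R)}{\epsilon},
\end{equation*}
which places $w$ inside $\nbhd_{\sigma(R)/\epsilon}(R)=N_{1/\epsilon}(R)$. The only genuinely delicate point is the arithmetic on $\epsilon$: one must choose it small enough, depending on $\alpha$, to balance the lower bound $\sigma(R)>\tau$ (which forces $B_r(v)\subset N_{1/\epsilon}(R)$) against the upper bound $\sigma(R)\le r/\alpha$ (which forces $\epsilon\sigma(R)\le\epsilon r/\alpha$), while simultaneously ensuring the chain from $Q$ up to $M$ contains at least one cube of size exceeding $\tau$. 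Taking $\epsilon<\min\{1/(2(\alpha+1)),\,\alpha^2/(\alpha+1)\}$ suffices for all three constraints, and this is exactly the dependence $\epsilon=\epsilon(\alpha)$ asserted by the lemma.
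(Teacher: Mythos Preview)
Your proof is correct and follows essentially the same approach as the paper: locate an ancestor $R\in\cS$ of a cube $Q$ witnessing $d_\cS(v)<r/\alpha$, chosen so that $B_r(v)\subset N_{1/\epsilon}(R)$ and $\sigma(R)\le r/\alpha$, then invoke \eqref{eq:STR lip close to planes} and the monotonicity of local distance. The only difference is cosmetic: the paper splits into the cases $r\ge\alpha\sigma(M)$ (take $R=M$) and $r<\alpha\sigma(M)$ (take the ancestor with $\sigma(A)\in[r/(2\alpha),r/\alpha)$), picking the ancestor at scale comparable to $r$ and using only $N_{2+2\alpha}(A)$; you instead choose the ancestor at the smaller threshold $\tau\sim\epsilon r$ and use the full $N_{1/\epsilon}(R)$, which lets you handle both regimes in one stroke.
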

\begin{proof}
 Suppose that $0<\epsilon<\alpha^2/4$.  Note that $\alpha d_{\cS}(v)<\sigma(M)/\alpha$ by our assumption on $r$, so $d(v,M)\le d_{\cS}(v)<\sigma(M)/\alpha^2$ and thus $B_r(v)\in N_{\alpha^{-2}+\alpha^{-1}}(M)\subset N_{\epsilon^{-1}}(M)$.

  We first consider the simpler case that $r\in [\alpha\sigma(M), \sigma(M)/\alpha]$.  Let $V^+=V_M^+$. Then
  $$d_{B_r(v)}(E,V^+)\le d_{N_{\frac{1}{\e}}(M)}(E,V^+)\stackrel{\eqref{eq:STR lip close to planes}}{\le} \epsilon \sigma(M)\le \frac{\epsilon r}{\alpha}.$$

  Otherwise, if $r<\alpha\sigma(M)$, then by the definition of $d_{\cS}(v)$, there is a cube $Q\in \cS$ such that $d(v,Q)+\sigma(Q)<r/\alpha$.  Since $\sigma(Q)< r/\alpha<\sigma(M)$, $Q$ has an ancestor $A$ (possibly $A=Q$) such that $r/(2\alpha)\le \sigma(A)<r/\alpha$.  Then
  $d(v,A)\le d(v,Q)<r/\alpha\le 2\sigma(A),$
  so $v\in N_2(A)$ and $B_r(v)\subset N_{2+2\alpha}(A)\subset N_{1/\e}(A)$. If $V^+=V_A^+$, then
  \begin{equation*}d_{B_r(v)}(E,V^+)\le d_{N_{\frac{1}{\e}}(A)}(E,V^+)\stackrel{\eqref{eq:STR lip close to planes}}{\le} \epsilon \sigma(A)<\frac{\epsilon r}{\alpha}.\qedhere \end{equation*}
\end{proof}

Lemma~\ref{lem:ds balls} implies the following lemma, which is a Heisenberg-version of Lemma~8.4 of \cite{DavidSemmesSingular}, showing that $M$ satisfies the intrinsic Lipschitz condition on scales above $d_{\cS}$.
\begin{lemma}\label{lem:DavidSemmesLemma8.4}
  If $0<\beta<1$ and if $\epsilon>0$ is sufficiently small (depending on $\beta$ and $\lambda$, and $C$), then for all $v,w\in 20M$ such that $d(v,w)\ge \beta \min \{d_{\cS}(v),d_{\cS}(w)\}$, we have
  \begin{equation}\label{eq:Lemma8.4}
    |x_k(v)-x_k(w)|\le 2\lambda d(v,w).
  \end{equation}
\end{lemma}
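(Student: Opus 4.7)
The plan is to apply Lemma~\ref{lem:ds balls} at a scale $r$ comparable to $d(v,w)$ in order to trap both $v$ and $w$ inside an $\e r$-neighborhood of a single vertical plane $V_R$ with $\angle(V_R,V_\cS)<\lambda$, and then use the small-angle condition on $V_R$ to bound the difference of $x_k$-coordinates. The parameter $\alpha$ in Lemma~\ref{lem:ds balls} will be chosen as a function of $\beta$ and $C$, after which $\e$ will be chosen small in terms of $\alpha$ and $\lambda$.

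First I set $r\eqdef 2d(v,w)$ and fix $\alpha=\alpha(\beta,C)\in (0,1)$ small enough so that $\alpha\le 2\beta$ and so that for any two points of $20M$ the distance between them is at most $\sigma(M)/\alpha$; this is possible because $v,w\in 20M$ implies $d(v,w)\lesssim \sigma(M)$ via~\eqref{eq:sigma version diam volume}. Then $r\ge 2\beta \min\{d_\cS(v),d_\cS(w)\}\ge \alpha d_\cS(v)$ and $r\le \sigma(M)/\alpha$, so Lemma~\ref{lem:ds balls} (applied with this $\alpha$ and with $\e$ assumed sufficiently small in terms of $\alpha$) produces $R\in \cS$ and a vertical half-space $V^+=V_R^+$ with
\[
d_{B_r(v)}(E,V^+)\le \frac{\e r}{\alpha}.
\]

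Since $w\in B_r(v)$ and $v,w\in \partial E$, Lemma~\ref{lem:localHausProps}(3) gives points $v',w'\in V_R$ with $d(v,v'),d(w,w')\le \e r/\alpha$. By~\eqref{eq:metric lower bound} we have $|x_k(p)-x_k(q)|\le \|\pi(p)-\pi(q)\|\le d(p,q)$ for all $p,q\in \H^{2k+1}$, so
\[
|x_k(v)-x_k(v')|,\ |x_k(w)-x_k(w')|\le \frac{\e r}{\alpha}.
\]
For the middle term, observe that $V_\cS=\{h\in \H^{2k+1}\mid x_k(h)=0\}$ has unit normal $X_k$, so the unit normal $n_{V_R}\in \mathsf{H}$ to the vertical plane $V_R$ satisfies $\angle(X_k,n_{V_R})=\angle(V_R,V_\cS)<\lambda$. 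Hence the projection of $X_k$ onto $\pi(V_R)\subset \R^{2k}$ has Euclidean norm at most $\sin\lambda\le \lambda$, and therefore for every $p,q\in V_R$,
\[
|x_k(p)-x_k(q)|=\big|\langle \pi(p)-\pi(q),X_k\rangle\big|\le \lambda\,\|\pi(p)-\pi(q)\|\le \lambda\, d(p,q).
\]
Applying this to $v',w'\in V_R$ and using $d(v',w')\le d(v,w)+2\e r/\alpha$ yields
\[
|x_k(v')-x_k(w')|\le \lambda\, d(v,w)+\frac{2\lambda\e r}{\alpha}.
\]

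Combining the three estimates via the triangle inequality, and substituting $r=2d(v,w)$, we obtain
\[
|x_k(v)-x_k(w)|\le \lambda\, d(v,w)+\frac{2\e r}{\alpha}(1+\lambda)=\Big(\lambda+\frac{4\e(1+\lambda)}{\alpha}\Big)d(v,w).
\]
Since $\alpha$ depends only on $\beta$ and $C$, choosing $\e$ so small that $4\e(1+\lambda)/\alpha\le \lambda$ completes the proof.

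This argument is essentially routine given the tools already established (Lemma~\ref{lem:ds balls}, Lemma~\ref{lem:localHausProps}, and the small-angle hypothesis on $V_Q$). The only point requiring mild care is the parameter bookkeeping: $\alpha$ must be chosen after $\beta$ but before $\e$, and the upper bound $r\le \sigma(M)/\alpha$ forces the use of $v,w\in 20M$ in order to control $d(v,w)$ by $\sigma(M)$. No serious obstacle is expected.
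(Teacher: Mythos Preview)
Your proof is correct and follows essentially the same approach as the paper: apply Lemma~\ref{lem:ds balls} at scale $r\asymp d(v,w)$, project $v,w$ to nearby points $v',w'$ on the resulting vertical plane $V_R$, use $\angle(V_R,V_\cS)<\lambda$ to bound $|x_k(v')-x_k(w')|$, and conclude by the triangle inequality. The only minor omission is that your chain ``$r\ge 2\beta\min\{d_\cS(v),d_\cS(w)\}\ge \alpha d_\cS(v)$'' implicitly assumes $d_\cS(v)\le d_\cS(w)$; since the conclusion is symmetric in $v,w$ this is harmless, but you should state the WLOG (as the paper does) before centering the ball at $v$.
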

\begin{proof}
  Let $\alpha=\min\{\frac{1}{40+C},\beta\}$ and let $0<\epsilon<\frac{\lambda\alpha}{4}$ be a number satisfying Lemma~\ref{lem:ds balls}.

  If necessary, switch $v$ and $w$ so that $d_{\cS}(v)\le d_{\cS}(w)$.  Let $r=d(v,w)$.  Then $r\le \diam (N_{20}(M))\le (40+C)\sigma(M)$ and $r\ge\beta d_{\cS}(v)$, so $r\in (\alpha d_{\cS}(v),\sigma(M)/\alpha]$ and by Lemma~\ref{lem:ds balls}, there is a vertical half-space $V^+$ such that $d_{B_r(v)}(E,V^+)\le \epsilon r/\alpha$.  Consequently, there are points $v', w'\in V$ such that $d(v,v')\le \lambda r/4$ and $d(w,w')\le \lambda r/4$.  In particular, $d(v',w')\le 3r/2$.

  Since $V^+=V_R^+$ for some $R\in \cS$, we have $\angle(V,V_S)<\lambda$, so $|x_k(v')-x_k(w')|\le \lambda d(v',w')\le 3\lambda r/2$
  and
  $|x_k(v)-x_k(w)|\le \frac{\lambda}{2} r+|x_k(v')-x_k(w')|\le 2\lambda d(v,w).$
\end{proof}

In particular, if $\Gamma_0=d_{\cS}^{-1}(0)$, then $\Gamma_0$ is an intrinsic Lipschitz graph over $\Pi(\Gamma_0)$.
\begin{cor}\label{cor:Gamma0Lipschitz}
  If $w\in \Gamma_0$ and $v\in 20M$, then
  $|x_k(v)-x_k(w)|\le 2\lambda d(v,w)$
  and $L_{w}\cap 20M=\{w\}$.
\end{cor}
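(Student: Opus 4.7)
\noindent\emph{Proof plan.} The plan is to deduce both conclusions directly from Lemma~\ref{lem:DavidSemmesLemma8.4}, with one small preliminary observation: every $w\in \Gamma_0$ automatically lies in $20M$. To verify this, I would note that $d_\cS(w)=0$ provides a sequence $\{Q_n\}_{n=1}^\infty\subset \cS$ with $d(w,Q_n)+\sigma(Q_n)\to 0$, and coherence of $\cS$ forces $Q_n\subset \mathsf{Q}(\cS)=M$. Thus $w$ is a limit of points of $M\subset \partial E$, so $w\in \partial E\cap \overline{M}\subset 20M$. This both justifies the inclusion $\{w\}\subset L_w\cap 20M$ and legitimizes the application of Lemma~\ref{lem:DavidSemmesLemma8.4} to the pair $(v,w)$.

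For the first assertion I would invoke Lemma~\ref{lem:DavidSemmesLemma8.4} with any fixed $\beta\in(0,1)$, say $\beta=\tfrac12$, presuming that $\epsilon$ has been taken sufficiently small for the conclusion of that lemma to hold---this is compatible with the smallness requirements already imposed on $\epsilon$ at the start of the proof of Proposition~\ref{prop:STRLipschitzGraphs}. Since $d_\cS(w)=0$, the hypothesis $d(v,w)\ge \beta\min\{d_\cS(v),d_\cS(w)\}$ is automatic, and the lemma then immediately delivers $|x_k(v)-x_k(w)|\le 2\lambda d(v,w)$.

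For the second assertion, suppose $v\in L_w\cap 20M$, so that $v=wX_k^t$ for some $t\in \R$. The horizontal-line identity \eqref{eq:horizontal geodesics} together with the left-invariance of the Carnot--Carath\'{e}odory metric will give $d(v,w)=d(\0,X_k^t)=|t|$, while the explicit group law \eqref{eq:def group product omega}---with the observation that $X_k$ has no $Z$-component so no correction appears in the $x_k$-coordinate of the product---will give $x_k(v)-x_k(w)=t$. Substituting both identities into the first assertion yields $|t|\le 2\lambda|t|$; since $\lambda=\eta/10<\tfrac12$ (because $\eta<1$), this forces $t=0$, i.e., $v=w$.

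The only potentially delicate point in this argument is the preliminary inclusion $\Gamma_0\subset 20M$, since $20M$ is defined as an intersection of $\partial E$ with an open neighborhood and a priori $w$ could sit on its topological boundary; the coherence-based argument above resolves this cleanly, after which Lemma~\ref{lem:DavidSemmesLemma8.4} and the elementary behavior of $d$ and $x_k$ along the $X_k$-direction do the rest.
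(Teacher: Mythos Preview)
Your proposal is correct and follows essentially the same approach as the paper: apply Lemma~\ref{lem:DavidSemmesLemma8.4} (whose hypothesis is automatic since $d_\cS(w)=0$) to get the first inequality, then observe that on $L_w$ one has $d(v,w)=|x_k(v)-x_k(w)|$, forcing $v=w$ since $2\lambda<1$. Your added verification that $\Gamma_0\subset 20M$ (needed to invoke Lemma~\ref{lem:DavidSemmesLemma8.4} and to get the inclusion $\{w\}\subset L_w\cap 20M$) is a detail the paper leaves implicit, but it is correct and worth making explicit.
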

\begin{proof}
  If $v\in 20M$, then Lemma~\ref{lem:DavidSemmesLemma8.4} implies that
  $|x_k(v)-x_k(w)|\le 2\lambda d(v,w)$.  If $v\in L_w\cap 20M$,
  then $d(v,w)=|x_k(v)-x_k(w)|\le 2\lambda d(v,w)$, so $d(v,w)=0$ and
  $v=w$.
\end{proof}

We will extend $\Gamma_0$ and construct the rest of $\Gamma$ by smoothing $M$ using a partition of unity.
For the rest of this section, we will take $\alpha=\beta=10^{-3}$ and assume that $\epsilon$ is small enough that Lemma~\ref{lem:ds balls} and Lemma~\ref{lem:DavidSemmesLemma8.4} hold.

Let $\psi=1/50$ and let $\{c_i\}_{i\in I}\subset 10M$ be as in Lemma~\ref{lem:coveringQS mini}, so that the balls $\{B_i=B_{\psi d_{\cS}(c_i)}(c_i)\}_{i\in I}$ are disjoint, and so that if $4\le \rho<50$, then the balls $\{\rho B_i=B_{\rho\psi d_{\cS}(c_i)}(c_i)\}_{i\in I}$ cover $10M\setminus \Gamma_0$ with bounded multiplicity. Since $d_{\cS}$ is 1-Lipschitz, we have
$$\forall\, i\in I,\qquad q\in \rho B_i\implies \left(1-\frac{\rho}{50}\right)d_{\cS}(c_i)\le d_{\cS}(q) \le \left(1+\frac{\rho}{50}\right) d_{\cS}(c_i).$$
For each $i\in I$ we have $d_{\cS}(c_i)\in (\alpha d_{\cS}(c_i),\sigma(M)/\alpha]$, so by Lemma~\ref{lem:ds balls} there is a vertical plane $V_i$ such that
\begin{equation}\label{eq:STRcoverCloseness}
  d_{B_{d_{\cS}(c_i)}(c_i)}(V_i^+,E)  = d_{50B_i}(V_i^+,E) \le\frac{\epsilon d_{\cS}(c_i)}{\alpha}.
\end{equation}

\begin{lemma}\label{lem:boundedLineIntersections}
  For all $L\in \cL^\perp(10 M)$, let
  $I_L=\{i\in I\mid 8 B_i\cap L\ne \emptyset\}$
  and let $\delta_L=\inf_{y\in L\cap 20M} d_{\cS}(y)$.  If $\epsilon>0$ is sufficiently small, then
  \begin{enumerate}
  \item \label{it:boundedLineIntersections:interval}
    There is an interval $U_L\subset L$ such that $\ell(U_L)\lesssim \epsilon \delta_L$ and such that $20M\cap L\subset U_L$ and $\bigcup_{i\in I_L} L\cap V_i \subset U_L$.
  \item \label{it:boundedLineIntersections:deltaSimD}
    If $L\in \cL^\perp(10M)$, $q\in L\cap 20M$ and $i\in I_L$, then $\delta_L\asymp d_{\cS}(q) \asymp d_{\cS}(c_i)$.
  \item \label{it:boundedLineIntersections:boundedlyMany} $|I_L| \asymp 1.$
  \end{enumerate}
\end{lemma}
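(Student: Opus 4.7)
The plan is to deduce all three assertions from a sharp bound on the diameter of $L\cap 20M$, which will come from the Lipschitz-type condition of Lemma~\ref{lem:DavidSemmesLemma8.4}. The decisive observation is that $L$ is a coset of $\langle X_k\rangle$, so $d(v,w)=|x_k(v)-x_k(w)|$ for every $v,w\in L$. If $v,w\in L\cap 20M$ satisfied $d(v,w)\ge\beta\min\{d_\cS(v),d_\cS(w)\}$, then Lemma~\ref{lem:DavidSemmesLemma8.4} would force $d(v,w)=|x_k(v)-x_k(w)|\le 2\lambda d(v,w)$, which (since $\lambda=\eta/10<\frac{1}{10}$) is absurd unless $d(v,w)=0$. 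Hence $d(v,w)<\beta\min\{d_\cS(v),d_\cS(w)\}$ for all $v,w\in L\cap 20M$, and taking the supremum gives the crude bound $\diam(L\cap 20M)<\beta\delta_L$.

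To refine this to $\lesssim\epsilon\delta_L$, pick $y\in L\cap 20M$ with $d_\cS(y)$ close to $\delta_L$. By the definition of $d_\cS$ there is a cube $Q\in\cS$ with $d(y,Q)+\sigma(Q)<2\delta_L$, and an ancestor $A\in\cS$ of $Q$ can be arranged so that $\sigma(A)\asymp\delta_L$ and $y\in N_2(A)$. The crude diameter bound then puts all of $L\cap 20M$ inside $N_3(A)\subset N_{1/\epsilon}(A)$ (once $\epsilon$ is sufficiently small). The hypothesis $d_{N_{1/\epsilon}(A)}(V_A^+,E)\le\epsilon\sigma(A)$ in \eqref{eq:STR lip close to planes} therefore forces $L\cap 20M\subset\partial E\cap N_{1/\epsilon}(A)\subset\nbhd_{\epsilon\sigma(A)}(V_A)$. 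Since $\angle(V_A,V_M)<\lambda<\frac{1}{10}$ while $L\perp V_M$, the line $L$ meets $V_A$ transversely, and an elementary computation of the type made in Section~\ref{sec:planes and angles} shows that $L\cap\nbhd_{\epsilon\sigma(A)}(V_A)$ is an interval of length $\lesssim\epsilon\sigma(A)\asymp\epsilon\delta_L$. This yields $\diam(L\cap 20M)\lesssim\epsilon\delta_L$, from which the first half of (2), namely $d_\cS(q)\asymp\delta_L$ for $q\in L\cap 20M$, is immediate by the $1$-Lipschitz property of $d_\cS$.

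Next, define $U_L$ to be the smallest closed interval of $L$ containing $L\cap 20M$ together with the transverse intersections $\{V_i\cap L\}_{i\in I_L}$. For $i\in I_L$ fix $p\in 8B_i\cap L$ and let $p_{L,i}$ be the unique point of $V_i\cap L$, which exists and is unique because $\angle(V_i,V_M)<\lambda$. The closeness estimate \eqref{eq:STRcoverCloseness} together with transversality implies that $p_{L,i}$ lies within $\lesssim\epsilon d_\cS(c_i)$ of $\partial E\cap L$, and a short verification using that $c_i\in 10M$ together with the radius bound of $8B_i$ shows that the corresponding point of $\partial E\cap L$ lies in $20M$. Combined with the diameter estimate above, this gives $\ell(U_L)\lesssim\epsilon\delta_L+\epsilon d_\cS(c_i)$, and completes (1) once we verify the remaining comparison in (2): for $p\in 8B_i\cap L$ the $1$-Lipschitz property of $d_\cS$ gives $d_\cS(p)\asymp d_\cS(c_i)$, and since the associated point of $\partial E\cap L$ lies in $20M$ and is within $O(\epsilon d_\cS(c_i))$ of $p$, combining once more with Lipschitzness yields $d_\cS(c_i)\asymp\delta_L$. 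Finally, (3) follows from a volume-packing argument: the disjoint balls $\{B_i\}_{i\in I_L}$ each have radius $\asymp\psi\delta_L$ and centers within $O(\delta_L)$ of $y$, so the $(2k+2)$-dimensional Ahlfors regularity of $\H^{2k+1}$ forces $|I_L|\lesssim 1$.

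The main obstacle is the refinement step: upgrading the crude bound $\diam(L\cap 20M)\le\beta\delta_L$, which only reflects the qualitative Lipschitz geometry at scales $\ge\beta d_\cS$, to the sharp bound $\lesssim\epsilon\delta_L$ that drives all three conclusions. This requires selecting an ancestor cube $A\in\cS$ at exactly the intrinsic scale $\sigma(A)\asymp\delta_L$ and exploiting the near-orthogonality between $L$ and $V_A$; without this careful scale matching, neither the desired length bound on $U_L$ nor the comparability $d_\cS(c_i)\asymp\delta_L$ can be obtained. Once the refinement is in place, the remaining parts reduce to Lipschitz estimates for $d_\cS$ and standard volume packing in $\H^{2k+1}$.
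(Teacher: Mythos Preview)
Your proposal is correct and follows essentially the same approach as the paper. Both start with the crude bound from Lemma~\ref{lem:DavidSemmesLemma8.4} (using that $d(v,w)=|x_k(v)-x_k(w)|$ on $L$ forces $d(v,w)<\beta\min\{d_\cS(v),d_\cS(w)\}$), then refine to $\lesssim \epsilon\delta_L$ by finding a vertical approximating plane at scale $\asymp\delta_L$ and exploiting its transversality with $L$; the rest is Lipschitz control of $d_\cS$ and packing. The only cosmetic difference is that you select the refining plane via an ancestor cube $A\in\cS$ directly (this is the content of Lemma~\ref{lem:ds balls}), whereas the paper routes through the covering balls $B_i$ and uses their planes $V_i$---but those $V_i$ are themselves produced by Lemma~\ref{lem:ds balls}, so the two arguments are equivalent. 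One minor imprecision: when $\delta_L$ is comparable to $\sigma(M)$ you may be forced to take $A=M$, in which case $y\in N_{20}(A)$ rather than $N_2(A)$; this does not affect anything since you only need $L\cap 20M\subset N_{1/\epsilon}(A)$, which still holds for small $\epsilon$.
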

\begin{proof}
  Let $q\in L\cap 20 M$.  We claim that $L\cap 20M$ is contained in an interval of radius $\beta d_{\cS}(q)$ centered at $q$.  If $p\in L\cap 20 M$, then Lemma~\ref{lem:DavidSemmesLemma8.4} implies that either $d(p,q)< \beta\min \{d_{\cS}(p),d_{\cS}(q)\}$ or
  $|x_k(p)-x_k(q)|\le 2\lambda d(p,q)<d(p,q).$
  But $p$ and $q$ both lie on $L$, so $d(p,q)=|x_k(p)-x_k(q)|$, and thus $p\in B_{\beta d_{\cS}(q)}(q)$.  Consequently, $d_{\cS}(p)\ge d_{\cS}(q)-\beta d_{\cS}(q)$; since this holds for all $p,q\in L\cap 20 M$, it implies that $\delta_L\asymp d_{\cS}(q)$ for all $q\in L\cap 20 M$. In fact, we can prove that $L\cap 20 M$ actually lies in a smaller interval.  Suppose that $p,q\in L\cap 20M$.  Let $i\in I$ be such that $q\in 8B_i$.   Since $d_{\cS}$ is 1-Lipschitz, we have $d_{\cS}(q)<2 d_{\cS}(c_i)$ and thus $p\in 9B_i$. Denoting  $\{v\}=\Pi_{V_i}(L)=L\cap V_i$, we have
    $$d(p, v)\stackrel{\eqref{eq:distToFlatPlane}}{\le} 2 d(p,V_i) \stackrel{\eqref{eq:STR lip close to planes}}{\lesssim} \epsilon d_{\cS}(c_i)\asymp \epsilon d_{\cS}(q)\asymp \epsilon \delta_L.$$
    Similarly, $d(q, v)\lesssim \epsilon \delta_L$.  Hence, $\diam (L\cap 20 M)\asymp \epsilon \delta_L$.  This proves the first part of condition~(\ref{it:boundedLineIntersections:interval}).

  To prove the second part of condition~(\ref{it:boundedLineIntersections:interval}), suppose that $L\in \cL^\perp(10 M)$ and $i\in I_L$.  Let $y_i\in 8B_i\cap L$ and let $\{v_i\}=\Pi_{V_i}(L)$ as before.  Then $d(c_i,V_i)\lesssim \epsilon d_{\cS}(c_i)$ and
  $$d(v_i,y_i)\le 2d(y_i,V_i) \le 2d(y_i,c_i) + 2d(c_i,V_i) \stackrel{\eqref{eq:STRcoverCloseness}}{\le} 2d(y_i,c_i) + 2\epsilon d_{\cS}(c_i).$$
  If $\epsilon$ is sufficiently small, then it follows that $v_i\in 17B_i$, and by \eqref{eq:STRcoverCloseness}, there is some $q_i\in L\cap \partial E\cap 20B_i$ such that $d(q_i,v_i)\lesssim \epsilon d_{\cS}(c_i)$.  It follows that $\delta_L\asymp d_{\cS}(q_i)\asymp d_{\cS}(c_i)$ and $q_i\in 20M$, so we have $d(v_i, L\cap 20M)\lesssim \epsilon d_{\cS}(c_i)$ and the second part of condition (\ref{it:boundedLineIntersections:interval}) holds, and also condition (\ref{it:boundedLineIntersections:deltaSimD}) holds. Furthermore if also $j\in I_L$ then $d(q_i,q_j)\lesssim \e\delta_L\asymp \e d_{\cS}(c_i)$ by conditions (\ref{it:boundedLineIntersections:interval}) and (\ref{it:boundedLineIntersections:deltaSimD}). So, $q_i\in 21B_j$, and since the balls $\{20B_j\}_{j\in I_L}$ form a cover with bounded multiplicity, this implies that $|I_L|\asymp 1$.
\end{proof}

Next, we define some auxiliary bump functions.  Let $\cb\from \H^{2k+1}\to \R$ be a bump function such that $\cb(B_{5\psi})=1$, $\cb=0$ outside $B_{6\psi}$ and such that $\cb$ is smooth and Lipschitz \emph{with respect to the Euclidean metric}.  That is, $\cb$ is a smooth function considered as a function on $\R^{2k+1}$ and $\|\cb\|_{\Lip(\ell_2^{2k+1})}\asymp 1$.  This condition is needed below because non-horizontal derivatives of $\cb$ will appear in some bounds.  We denote the Euclidean metric on $\H^{2k+1}$ by $\dEuc$, that is, $\dEuc(v,v')=\|v-v'\|$ for all $v,v'\in \R^{2k+1}$. Note that if $\rho>0$ and $\gamma$ is a horizontal path inside $B_\rho$, then $\ell(\gamma)\asymp_\rho \ell_{\text{Euc}}(\gamma)$.  Therefore, if $v,v'\in B_\rho$, then $\dEuc(v,v')\lesssim_\rho d(v,v').$
 Hence $\cb$ is also Lipschitz with respect to the Carnot--Carath\'eodory  metric and $\|\cb\|_{\Lip(\H^{2k+1})}\asymp 1$.

For each $i\in I$, define $\tau_i\from \H^{2k+1}\to \R$ by
$$\forall\, y\in \H^{2k+1},\qquad \tau_i(y)\eqdef d_{\cS}(c_i) \cb\Big(\s_{\frac{1}{d_{\cS}(c_i)}}\big(c_i^{-1}y\big)\Big).$$
The scaling and translation in the above definition sends $B_i$ to $B_\psi$, so $\tau_i=d_{\cS}(c_i)$ on $5B_i$, $\tau_i$ vanishes  outside the ball $6B_i$, and $\|\tau_i\|_{\Lip(\H^{2k+1})}\asymp 1$.

We will use the $\{\tau_i\}_{i\in I}$ to construct a partition of unity on $\cL^\perp$.  For each $i\in I$, we define $\tau'_i\from \cL^\perp \to \R$ by $\tau'_i(L)=\tau_i(L\cap V_i)$.  Equivalently, $\bar{\tau}'_i(p)=\tau_i(\Pi_{V_i}(p))$.  Then $\tau'_i$ is a continuous function that is zero outside $\cL^\perp(V_i\cap 6B_i)$.
\begin{lemma}\label{lem:tauLipschitz}
  Let $T\eqdef \sum_{i\in I} \tau'_i$.  Then the following assertions hold true.
  \begin{enumerate}
  \item For all $q\in 10M$ we have $\bar{T}(q)\asymp d_{\cS}(q)$.
  \item For all $i\in I$ we have $\|\bar{\tau}'_i\|_{\Lip(8B_i)}\lesssim 1$.
  \end{enumerate}
  Hence,  the functions $\big\{\phi_i\eqdef \frac{\tau'_i}{T}\big\}_{i\in I}$ form a partition of unity on $\cL^\perp(10M)$.
\end{lemma}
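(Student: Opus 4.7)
The plan. Part (1) will follow directly from the covering statements in Lemma~\ref{lem:boundedLineIntersections} and Lemma~\ref{lem:coveringQS mini}, which already encode the essential combinatorial structure. Part (2) is the technical heart and will be established by a direct coordinate computation that crucially exploits the \emph{Euclidean} (not just Carnot--Carath\'eodory) Lipschitz regularity of the bump function $\cb$ — precisely the feature that the construction of $\cb$ was advertised to need. The partition-of-unity conclusion is then immediate.

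Proof of (1). Fix $q\in 10M$ and set $L=L_q$. A term $\tau'_i(L)$ is non-zero only when $L\cap V_i\in 6B_i$, i.e., $i\in I_L$. Lemma~\ref{lem:boundedLineIntersections}(3) gives $|I_L|\asymp 1$ and part~(2) of that lemma gives $d_{\cS}(c_i)\asymp d_{\cS}(q)$ for each such $i$, so the trivial bound $\tau_i\le d_{\cS}(c_i)$ yields $\bar{T}(q)\lesssim d_{\cS}(q)$. For the lower bound one may assume $q\notin \Gamma_0$ (otherwise both sides vanish), and then Lemma~\ref{lem:coveringQS mini} furnishes some $i\in I$ with $q\in 3B_i$. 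Since $q,c_i\in \partial E\cap 50B_i$ and $d_{50B_i}(V_i^+,E)\le \epsilon d_{\cS}(c_i)/\alpha$ by \eqref{eq:STRcoverCloseness}, part~(3) of Lemma~\ref{lem:localHausProps} together with \eqref{eq:distToFlatPlane} gives $d(q,\Pi_{V_i}(q))\lesssim \epsilon d_{\cS}(c_i)$, so for $\epsilon$ sufficiently small $\Pi_{V_i}(q)\in 5B_i$. As $\tau_i\equiv d_{\cS}(c_i)$ on $5B_i$, this single term alone yields $\bar{T}(q)\ge d_{\cS}(c_i)\asymp d_{\cS}(q)$.

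Proof of (2), main obstacle, and conclusion. Fix $i\in I$ and $p,q\in 8B_i$. Since left-translation by $c_i^{-1}$ is an isometry of $\H^{2k+1}$, I may assume $c_i=\0$, so that $\|\pi(y)\|\lesssim d_{\cS}(c_i)$ and $|z(y)|\lesssim d_{\cS}(c_i)^2$ for every $y\in 8B_i$. Writing $V_i=\{h:a\cdot \pi(h)+b=0\}$, one has $\Pi_{V_i}(y)=yX_k^{-s(y)}$ with $s(y)=(a\cdot \pi(y)+b)/a_k^x$, and the angle bound $\angle(V_i,V_{\mathsf{Q}(\cS)})<\lambda<1$ translates into $|a_k^x|=\|a\|\cos\angle(V_i,V_{\mathsf{Q}(\cS)})\ge \|a\|\cos\lambda$. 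The main obstacle is that the projection $\Pi_{V_i}$ is \emph{not} Carnot--Carath\'eodory-Lipschitz on $8B_i$ (its Lipschitz constant can blow up like $\sqrt{d_{\cS}(c_i)/d(p,q)}$), so one cannot simply compose a CC-Lipschitz bound for $\tau_i$ with a CC-Lipschitz bound for $\Pi_{V_i}$. Instead, from $\tau_i(y)=d_{\cS}(c_i)\,\cb(\s_{1/d_{\cS}(c_i)}(c_i^{-1}y))$ and the Euclidean Lipschitz regularity of $\cb$ one obtains
\begin{equation*}
  |\bar\tau_i'(p)-\bar\tau_i'(q)|\lesssim \|\pi(\Pi_{V_i}(p))-\pi(\Pi_{V_i}(q))\|+\frac{|z(\Pi_{V_i}(p))-z(\Pi_{V_i}(q))|}{d_{\cS}(c_i)},
\end{equation*}
the point being that the $z$-extent of $8B_i$ is of order $d_{\cS}(c_i)^2$, which exactly matches the $1/d_{\cS}(c_i)^2$ scaling coming from $\s_{1/d_{\cS}(c_i)}$. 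Direct computation gives $\pi(\Pi_{V_i}(y))=\pi(y)-s(y)X_k$ and $z(\Pi_{V_i}(y))=z(y)+s(y)y_k(y)/2$. The horizontal increment is therefore controlled by $\|\pi(p)-\pi(q)\|+|s(p)-s(q)|\le (1+1/\cos\lambda)d(p,q)\lesssim d(p,q)$. For the $z$-increment, the Heisenberg identity $z(p)-z(q)=z(q^{-1}p)+\omega(q,q^{-1}p)/2$ combined with $|z(q^{-1}p)|\lesssim d(p,q)^2$, $|\omega(q,q^{-1}p)|\le \|\pi(q)\|d(p,q)\lesssim d_{\cS}(c_i)d(p,q)$ and the containment $d(p,q)\lesssim d_{\cS}(c_i)$ in $8B_i$ gives $|z(p)-z(q)|\lesssim d_{\cS}(c_i)d(p,q)$, while the uniform estimates $|s(y)|,|y_k(y)|\lesssim d_{\cS}(c_i)$ on $8B_i$ and a triangle-inequality argument give $|s(p)y_k(p)-s(q)y_k(q)|\lesssim d_{\cS}(c_i)d(p,q)$. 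Substituting yields $|\bar\tau_i'(p)-\bar\tau_i'(q)|\lesssim d(p,q)$, which proves (2). Finally, the partition-of-unity assertion is immediate from~(1): $\bar{T}$ is strictly positive throughout $\cL^\perp(10M)$ away from $\Gamma_0$, so each $\phi_i=\tau'_i/T$ is well-defined where it matters, and $\sum_i \phi_i=1$ by construction.
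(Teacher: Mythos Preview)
Your proof is correct and follows essentially the same strategy as the paper: Part~(1) via the covering lemma and Lemma~\ref{lem:boundedLineIntersections}, and Part~(2) via the observation that $\Pi_{V_i}$ is not Carnot--Carath\'eodory-Lipschitz but the composition $\tau_i\circ\Pi_{V_i}$ is, thanks to the Euclidean Lipschitz regularity of $\cb$.

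The only notable difference is in the execution of Part~(2). The paper introduces the rescaling $\xi(y)=\s_{d_{\cS}(c_i)^{-1}}(c_i^{-1}y)$, writes $\bar\tau'_i=d_{\cS}(c_i)\,\cb\circ\Pi_{\xi(V_i)}\circ\xi$, and then observes in one stroke that $\Pi_{\xi(V_i)}$ has uniformly bounded Euclidean derivatives on $B_1$ while $\dEuc\lesssim d$ on any bounded ball; the factor $d_{\cS}(c_i)$ cancels against the scaling of $\xi$. You instead stay at the original scale and track the $\pi$- and $z$-coordinates of $\Pi_{V_i}(y)=yX_k^{-s(y)}$ separately, using the Heisenberg identity $z(p)-z(q)=z(q^{-1}p)+\tfrac{1}{2}\omega(q,q^{-1}p)$ to control the vertical increment. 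Your route is more hands-on and makes the anisotropic scaling of $\s_{1/d_{\cS}(c_i)}$ explicit; the paper's route is slicker and avoids coordinate bookkeeping by packaging everything into a single Euclidean Lipschitz estimate at unit scale. Both are equally valid and rely on exactly the same underlying mechanism.
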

\begin{proof}
  First, we show that $\bar{T}(q)\gtrsim d_{\cS}(q)$ for all $q\in 10M$.  If $q\in \Gamma_0$ then $d_\cS(q)=0$ and there is nothing to prove. So, suppose that $q\in 10M\setminus \Gamma_0$.  Let $i\in I$ be such that $q\in 4B_i$.  Using~\eqref{eq:STRcoverCloseness}, if $\epsilon<\alpha/2$, then $d(q, V_i)\le d_{\cS}(c_i)/2$.  By~\eqref{eq:distToFlatPlane},
  $d(\Pi_{V_i}(q),c_i)\le 2 d(q,V_i)+ d(q,c_i)<5\psi d_{\cS}(c_i).$
  Thus $\Pi_{V_i}(q)\in 5B_i$, and
  $\bar{T}(q)\ge \tau_i(\Pi_{V_i}(q)) = d_{\cS}(c_i) \asymp d_{\cS}(q).$ On the other hand, Lemma~\ref{lem:boundedLineIntersections} implies that for any $L\in \cL^\perp(10M)$, the set $I_L=\{i\mid 8 B_i\cap L\ne \emptyset\}$ has boundedly many elements, and if $q\in L\cap 10M$, then $d_{\cS}(q)\asymp d_{\cS}(c_i)$ for each $i\in I_L$.  Thus
  $\bar{T}(q)\lesssim d_{\cS}(q)  |I_{L}|\lesssim d_{\cS}(q)$. This proves the first part of the Lemma~\ref{lem:tauLipschitz}.

  It remains to show that if $w_1,w_2\in 8B_i$, then
  $|\tau_i(\Pi_{V_i}(w_1))-\tau_i(\Pi_{V_i}(w_2))|\lesssim d(w_1,w_2).$
  The main difficulty is that the map $\Pi_{V_i}$ is not Lipschitz with respect to the Carnot--Carath\'eodory  metric; it is, however, smooth with respect to the Euclidean structure on $\R^{2k+1}$.

  Let $\xi\from \H^{2k+1}\to \H^{2k+1}$ be the map $\xi(y)=\s_{d_{\cS}(c_i)^{-1}}(c_i^{-1}y)$ that rescales and translates $B_i$ to $B_\psi$.  This map sends lines in $\cL^\perp$ to lines in $\cL^\perp$, so $\xi\circ \Pi_{V_i}=\Pi_{\xi(V_i)}\circ \xi$ and
  \begin{align}\label{eq:rewrite tau prime}
 \forall\, w\in \H^{2k+1},\qquad    \bar{\tau}'_i(w)=d_{\cS}(c_i) \cb\big(\xi(\Pi_{V_i}(w))\big)
                =d_{\cS}(c_i) \cb\big(\Pi_{\xi(V_i)}(\xi(w))\big).
  \end{align}
  The projection $\Pi_{\xi(V_i)}$ is a smooth map in Euclidean coordinates.  In fact, since $\xi(V_i)$ is a vertical plane with $\angle(V_{M},\xi(V_i))\le \lambda$ and
  $$d\big(\0,\xi(V_i)\big)=\frac{d(c_i,V_i)}{d_{\cS}(c_i)}\lesssim \epsilon,$$
  the Euclidean derivatives of $\Pi_{\xi(V_i)}$ are uniformly bounded inside the ball $B_1$, and hence the Lipschitz constant with respect to the Euclidean metric of the restriction of $\Pi_{\xi(V_i)}$ to $B_1$   is bounded.  That is, for $v_1,v_2\in B_1$ we have
  $\dEuc(\Pi_{\xi(V_i)}(v_1),\Pi_{\xi(V_i)}(v_2))\lesssim \dEuc(v_1,v_2).$ Because $w_1,w_2\in 8B_i$, we may apply this with $v_1=\xi(w_1),v_2=\xi(w_2)\in B_1$ to deduce that
  \begin{equation}\label{eq:euc lip}
  d_{\text{Euc}}\big(\Pi_{\xi(V_i)}(\xi(w_1)),\Pi_{\xi(V_i)}(\xi(w_2))\big) \lesssim \dEuc\big(\xi(w_1),\xi(w_2)\big)\lesssim d\big(\xi(w_1),\xi(w_2)\big)=\frac{d(w_1,w_2)}{d_{\cS}(c_i)}.\end{equation}
Consequently,
  \begin{equation*}\big|\bar{\tau}'_i(w_1)-\bar{\tau}'_i(w_2)\big|\stackrel{\eqref{eq:rewrite tau prime}\wedge\eqref{eq:euc lip}}{\le} d_{\cS}(c_i) \|\cb\|_{\Lip(\ell_2^{2k+1})}\cdot  \frac{d(w_1,w_2)}{d_{\cS}(c_i)}\lesssim d(w_1,w_2).\qedhere\end{equation*}
\end{proof}

For each $i\in I$, let $f_i\from \cL^\perp \to \R$ be the function $f_i(L)=x_k(L\cap V_i)$.  Then $\bar{f}_i(p)=x_k(\Pi_{V_i}(p))$, so $\bar{f}_i$ is constant on vertical lines and
\begin{equation}\label{eq:Lip fi circ Pi}
  \big\|\bar{f}_i\big\|_{\Lip(\H^{2k+1})}\le 2\lambda.
\end{equation}
We define $f$ on $\cL^\perp(10M)$ by
$$f(L)=\begin{cases}
  x_k(L\cap \Gamma_0) & \mathrm{if}\ L\in \cL^\perp(\Gamma_0),\\
  \sum_{i\in I} \phi_i(L) f_i(L) & \mathrm{if}\ L\in \cL^\perp(10M\setminus \Gamma_0).
\end{cases}$$
By Corollary~\ref{cor:Gamma0Lipschitz}, this function is well-defined.  Let $$\Gamma\eqdef \big\{w\in \H^{2k+1}\mid x_k(w)=\bar{f}(w)\big\}=\Big\{v X_k^{\bar{f}(v)}\mid v\in V_\cS\Big\}.$$
This is an intrinsic graph over $\Pi_{V_\cS}(10M)$.

\begin{lemma}\label{lem:glued lip}
  For all sufficiently small $\epsilon>0$ the following assertions hold true.
  \begin{enumerate}
  \item If $i\in I$, $q\in 10M\cap 8B_i$ and $w\in L_q\cap \Gamma$, then
    \begin{align}
      \label{eq:fi close to Q} |x_k(q)-\bar{f}_i(q)|&\lesssim \epsilon d_{\cS}(q)\\
      \label{eq:f close to Q} |x_k(q)-\bar{f}(q)|&=d(q,w)\lesssim \epsilon d_{\cS}(q).\\
      \label{eq:dsw dsq} d_{\cS}(w) & \asymp d_{\cS}(q).
    \end{align}
  \item $\Gamma$ is an intrinsic Lipschitz graph over $10M$ with Lipschitz constant at most $10\lambda$.
  \end{enumerate}
\end{lemma}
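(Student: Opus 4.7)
The plan is to establish the three pointwise estimates of part~(1) first and then bootstrap them, via a case split on $d(w_1,w_2)$ versus $\min\{d_{\cS}(w_1),d_{\cS}(w_2)\}$, into the intrinsic Lipschitz bound of part~(2). For \eqref{eq:fi close to Q} I would apply \eqref{eq:STRcoverCloseness} on $50B_i$ (using $q\in\partial E\cap 8B_i$) to get $d(q,V_i)\lesssim \epsilon d_{\cS}(c_i)\asymp \epsilon d_{\cS}(q)$; then \eqref{eq:distToFlatPlane} gives $d(q,\Pi_{V_i}(q))\lesssim \epsilon d_{\cS}(q)$, and since $q$ and $\Pi_{V_i}(q)$ both lie on the vertical line $L_q$ this distance equals $|x_k(q)-x_k(\Pi_{V_i}(q))|=|x_k(q)-\bar{f}_i(q)|$. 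The equality in \eqref{eq:f close to Q} is immediate from $w=qX_k^{\bar{f}(q)-x_k(q)}$, and the bound follows from the partition-of-unity identity $\bar{f}(L_q)=\sum_{i\in I_{L_q}}\phi_i(L_q)\bar{f}_i(L_q)$, the fact that $\sum_i\phi_i\equiv 1$, Lemma~\ref{lem:boundedLineIntersections} (which bounds $|I_{L_q}|$), and \eqref{eq:fi close to Q} applied to each contributing index; the case $q\in \Gamma_0$ is handled separately by Corollary~\ref{cor:Gamma0Lipschitz}, which forces $\bar{f}(q)=x_k(q)$. Finally \eqref{eq:dsw dsq} follows from \eqref{eq:f close to Q} and the fact that $d_{\cS}$ is $1$-Lipschitz, provided $\epsilon$ is chosen small enough.

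For part~(2), fix $w_1,w_2\in\Gamma$. Since $\Gamma$ is a graph over $\Pi(10M)$, for each $j$ one chooses $q_j\in L_{w_j}\cap 10M$, and part~(1) gives $d(w_j,q_j)\lesssim \epsilon d_{\cS}(q_j)$. In the \emph{large-separation case} $d(q_1,q_2)\ge \beta\min\{d_{\cS}(q_1),d_{\cS}(q_2)\}$ (with $\beta=10^{-3}$ as in the rest of the section), Lemma~\ref{lem:DavidSemmesLemma8.4} yields $|x_k(q_1)-x_k(q_2)|\le 2\lambda\,d(q_1,q_2)$, and the triangle inequality combined with \eqref{eq:f close to Q} gives
\[
|x_k(w_1)-x_k(w_2)|\le 2\lambda\,d(q_1,q_2) + O(\epsilon)\cdot \max_j d_{\cS}(q_j).
\]
The case hypothesis forces $\max_j d_{\cS}(q_j)\lesssim d(q_1,q_2)$ (using that $d_{\cS}$ is $1$-Lipschitz), and the triangle inequality $d(q_1,q_2)\le d(w_1,w_2)+d(w_1,q_1)+d(w_2,q_2)$ followed by a small-$\epsilon$ absorption yields $d(q_1,q_2)\lesssim d(w_1,w_2)$, so $|x_k(w_1)-x_k(w_2)|\le(2\lambda+C\epsilon)\,d(w_1,w_2)$. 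In the \emph{small-separation case} $d(q_1,q_2)<\beta\min\{d_{\cS}(q_j)\}$, both points lie in a common bounded family of balls $\{8B_i\}_{i\in J}$ with $|J|\asymp 1$, and using $\sum_i\phi_i\equiv 1$ I would decompose
\[
\bar{f}(L_{q_1})-\bar{f}(L_{q_2}) = \sum_{i}\phi_i(L_{q_1})\bigl(\bar{f}_i(L_{q_1})-\bar{f}_i(L_{q_2})\bigr) + \sum_{i}\bigl(\phi_i(L_{q_1})-\phi_i(L_{q_2})\bigr)\bigl(\bar{f}_i(L_{q_2})-x_k(q_2)\bigr),
\]
bounding the first sum by $2\lambda\,d(q_1,q_2)$ via \eqref{eq:Lip fi circ Pi} and the second by $O(\epsilon)\,d(q_1,q_2)$, using the Lipschitz estimate $\|\bar{\phi}_i\|_{\Lip(8B_i)}\lesssim 1/d_{\cS}(q_2)$ (derived from Lemma~\ref{lem:tauLipschitz} together with $\bar{T}\asymp d_{\cS}$) and \eqref{eq:fi close to Q}. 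The comparability $d(q_1,q_2)\lesssim d(w_1,w_2)$ in this case is immediate from the triangle inequality since $d(w_j,q_j)\lesssim \epsilon d_{\cS}(q_j)$ is negligible compared to $\beta d_{\cS}(q_j)$.

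The main obstacle is a careful bookkeeping of the constants so that the combined bound $(2\lambda+C\epsilon)\,d(w_1,w_2)$ from both cases is at most $10\lambda\,d(w_1,w_2)$; this forces $\epsilon$ to be chosen small depending on $\lambda$, consistent with the quantifier structure of Proposition~\ref{prop:STRLipschitzGraphs}. A secondary delicate point is the boundary behavior when $w_j\in\Gamma_0$: there $d_{\cS}(w_j)=0$ so the error terms vanish automatically, and Corollary~\ref{cor:Gamma0Lipschitz} directly provides the sharper $2\lambda$-Lipschitz estimate, with a short continuity argument handling the mixed case where exactly one of the $w_j$ lies in $\Gamma_0$.
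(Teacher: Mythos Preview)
Your overall architecture is exactly the paper's, and part~(1) is fine (the paper invokes Lemma~\ref{lem:boundedLineIntersections} rather than \eqref{eq:STRcoverCloseness} directly, but your route works). The large-separation case in part~(2) is also correct: the absorption argument you sketch does give $d(q_1,q_2)\le 2d(w_1,w_2)$ once $\epsilon$ is small.

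There is, however, a genuine error in the small-separation case. Your last sentence claims that $d(q_1,q_2)\lesssim d(w_1,w_2)$ ``is immediate from the triangle inequality since $d(w_j,q_j)\lesssim \epsilon d_{\cS}(q_j)$ is negligible compared to $\beta d_{\cS}(q_j)$.'' The triangle inequality only gives $d(q_1,q_2)\le d(w_1,w_2)+C\epsilon\max_j d_{\cS}(q_j)$, and in this regime you have \emph{no lower bound} on $d(w_1,w_2)$: both $d(q_1,q_2)$ and $d(w_1,w_2)$ can be tiny compared to $\epsilon d_{\cS}(q_j)$, and in the Heisenberg group the shear from the right-translations $q_j\mapsto q_jX_k^{t_j}$ can genuinely make $d(w_1,w_2)\ll d(q_1,q_2)$ (the $z$-coordinate of $q_1^{-1}q_2$ can be cancelled by the $\frac{t_1+t_2}{2}y_k(q_1^{-1}q_2)$ term). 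So the comparability is false in general and your bounds in terms of $d(q_1,q_2)$ do not transfer.

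The fix is the one the paper uses: exploit that $\bar f_i$ and $\bar\phi_i$ are constant on cosets of $\langle X_k\rangle$, so $\bar f_i(q_j)=\bar f_i(w_j)$ and $\bar\phi_i(q_j)=\bar\phi_i(w_j)$. Then apply the Lipschitz bounds \eqref{eq:Lip fi circ Pi} and Lemma~\ref{lem:tauLipschitz} directly between $w_1$ and $w_2$, yielding bounds in terms of $d(w_1,w_2)$ with no need to compare to $d(q_1,q_2)$. For this you must also verify (as the paper does) that whenever $\bar\tau'_i(w_1)\neq 0$ or $\bar\tau'_i(w_2)\neq 0$ one has $w_1,w_2\in 8B_i$; you assert this (``both points lie in a common bounded family of balls'') but the argument---that $\Pi_{V_i}(w_1)\in 6B_i$, then $d(w_1,\Pi_{V_i}(w_1))=|\bar f(w_1)-\bar f_i(w_1)|\lesssim\epsilon d_{\cS}(c_i)$ via \eqref{eq:fi close to Q}--\eqref{eq:f close to Q}, hence $w_1\in 7B_i$ and $w_2\in 8B_i$ by the small-separation hypothesis---needs to be written out.
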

\begin{proof}
  If $q\in \Gamma_0$, then $w=q$, so \eqref{eq:fi close to Q} holds vacuously and \eqref{eq:f close to Q} and \eqref{eq:dsw dsq} are trivial. So, suppose that $q\in 10M\setminus \Gamma_0$ and that $w$ and $i$ are as above.  By part (\ref{it:boundedLineIntersections:interval}) of Lemma~\ref{lem:boundedLineIntersections}, we have
  $$|x_k(q)-\bar{f}_i(q)|=d\big(q,\Pi_{V_i}(q)\big)\lesssim \epsilon d_{\cS}(q).$$
  Since $f$ is an average of these $f_i$'s, this implies \eqref{eq:f close to Q}.  If $\epsilon$ is small enough, then $d(q,w)\le d_{\cS}(q)/2$, so $d_{\cS}(w)\asymp d_{\cS}(q).$ This proves the first part of Lemma~\ref{lem:glued lip}.

  To prove that $\Gamma$ is a Lipschitz graph, let $q_1,q_2\in 10 M$ and let $\{w_j\}=L_{q_j}\cap \Gamma$.  By the first part of the lemma, for $j=1,2$, we have $d(q_j, w_j)\lesssim \epsilon d_{\cS}(q_j)$.  We suppose that $\epsilon$ is small enough that $d(q_j, w_j)\le 10^{-3}\lambda d_{\cS}(q_j)$.    Since $d_{\cS}$ is 1-Lipschitz, this implies $d_{\cS}(q_j)/2\le d_{\cS}(w_j)\le 3d_{\cS}(q_j)/2$.

  We claim that $w_1$ and $w_2$ satisfy
  $|\bar{f}(w_1)-\bar{f}(w_2)|=|x_k(w_1)-x_k(w_2)|\le 10 \lambda d(w_1,w_2).$
  We consider two cases depending on whether $d(w_1,w_2)$ is greater or less than $10^{-2} \min \{d_{\cS}(w_1),d_{\cS}(w_2)\}$.

  Suppose that $d_{\cS}(w_1)\le d_{\cS}(w_2)$ and that $10^{-2} d_{\cS}(w_1) < d(w_1,w_2)$.  Then
  $$d_{\cS}(w_1)\le d_{\cS}(w_2)\le d(w_1,w_2)+d_{\cS}(w_1)\le 101 d(w_1,w_2),$$
  and also $d(q_j,w_j)\le 2\lambda\cdot 10^{-3}d_{\cS}(w_j) \le \lambda d(w_1,w_2)/4.$
  It follows that
  \begin{equation}\label{eq:w1 w2 close to q1 q2}
    \frac{d(w_1,w_2)}{2} \le d(q_1,q_2)\le 2d(w_1,w_2),
  \end{equation}
  and therefore $d(q_1,q_2)>\frac{d_{\cS}(w_1)}{200}\ge \frac{d_{\cS}(q_1)}{400}\ge \beta d_{\cS}(q_1).$    By Lemma~\ref{lem:DavidSemmesLemma8.4} applied to $q_1$ and $q_2$, we find
  \begin{multline*}
    |x_k(w_1)-x_k(w_2)|\le |x_k(q_1)-x_k(q_2)|+ d(q_1,w_1)+d(q_2,w_2)
                       \\\stackrel{\eqref{eq:Lemma8.4}}{\le} 2\lambda d(q_1,q_2) + \lambda d(w_1,w_2)
                       \stackrel{\eqref{eq:w1 w2 close to q1 q2}}{\le} 5\lambda d(w_1,w_2),
  \end{multline*}
  as desired.

  Now suppose $d(w_1,w_2) < 10^{-2} \min\{d_{\cS}(w_1),d_{\cS}(w_2)\}.$
  Then $w_1,w_2\not\in \Gamma_0$ and $d_{\cS}(w_1)\asymp d_{\cS}(w_2)$; indeed, $d_{\cS}(w_1)\in [0.99 d_{\cS}(w_2),1.01 d_{\cS}(w_2)]$.  Let $\delta=d_{\cS}(w_1)$, so that $\delta \asymp d_{\cS}(w_j)\asymp d_{\cS}(q_j)$.

  We bound $|\bar{f}(w_1)-\bar{f}(w_2)|$ by bounding the behavior of the partition of unity.  Suppose $\bar{\tau}'_i(w_1)>0$.  We claim that $w_1,w_2\in 8B_i$.  Since $\bar{\tau}'_i(w_1)=\tau_i(\Pi_{V_i}(w_1))\ne 0$, we have $\Pi_{V_i}(w_1)\in 6B_i$.  Then
  $$d\big(w_1,\Pi_{V_i}(w_1)\big)=|\bar{f}(w_1)-\bar{f}_i(w_1)|\stackrel{\eqref{eq:fi close to Q} \wedge \eqref{eq:f close to Q}}{\lesssim} \epsilon d_{\cS}(q_1).$$
  By part (\ref{it:boundedLineIntersections:deltaSimD}) of Lemma~\ref{lem:boundedLineIntersections}, we have $d_{\cS}(q_1)\asymp d_{\cS}(c_i)$, so if $\epsilon$ is small enough, then $w_1\in 7B_i$.  Consequently,
  $d_{\cS}(w_1)\le 2 d_{\cS}(c_i)$ and
  $$d(w_1,w_2) \le \frac{d_{\cS}(w_1)}{100}\le \frac{d_{\cS}(c_i)}{50},$$
  so $w_2\in 8B_i$.  Likewise, if $\bar{\tau}'_i(w_2)>0$, then $w_1,w_2\in 8 B_i$.

  Therefore, there are only boundedly many $i\in I$ such that
  $\bar{\tau}'_i(w_1)\ne\bar{\tau}'_i(w_2)$, and by Lemma~\ref{lem:tauLipschitz},
  $$|\bar{\tau}'_i(w_1)-\bar{\tau}'_i(w_2)|\lesssim d(w_1,w_2)$$
  for any such $i$.  It follows that $|\bar{T}(w_1)-\bar{T}(w_2)|\lesssim d(w_1,w_2)$.

  Let $\Delta\bar{\phi}_i=\bar{\phi}_i(w_1)-\bar{\phi}_i(w_2)$
  and $\Delta \bar{f}_i=\bar{f}_i(w_1)-\bar{f}_i(w_2)$.  Since $\bar{T}(w_i)\asymp d_{\cS}(w_i)\asymp \delta$,
  \begin{align*}
    \left|\Delta\bar{\phi}_i\right|
    &=\frac{|\bar{\tau}'_i(w_1) \bar{T}(w_2)-\bar{\tau}'_i(w_2) \bar{T}(w_1)|}{\bar{T}(w_1)\bar{T}(w_2)}\\
    &\lesssim \delta^{-2}\big(|\bar{\tau}'_i(w_1) \bar{T}(w_2)-\bar{\tau}'_i(w_1) \bar{T}(w_1)|+|\bar{\tau}'_i(w_1) \bar{T}(w_1)-\bar{\tau}'_i(w_2) \bar{T}(w_1)|\big)\\
    &\lesssim \delta^{-2}\big(\bar{\tau}'_i(w_1) d(w_1,w_2)+\bar{T}(w_1)d(w_1,w_2)\big) \\
    &\lesssim \frac{d(w_1,w_2)}{\delta}.
  \end{align*}
  By \eqref{eq:Lip fi circ Pi}, we have $|\Delta \bar{f}_i|\le 2 \lambda
  d(w_1,w_2)$.  Hence,
  \begin{align}
   \nonumber |\bar{f}(w_2)-\bar{f}(w_1)|
    &=\biggl|\sum_{i\in I} \bar{\phi}_i(w_1) \bar{f}_i(w_1)- \sum_{i\in I}\bar{\phi}_i(w_2) \bar{f}_i(w_2)\biggr|\\ \nonumber
    &\le\biggl|\sum_{i\in I} \Delta\bar{\phi}_i \cdot \bar{f}_i(w_1)\biggr| +
      \biggl|\sum_{i \in I} \bar{\phi}_i(w_2)\cdot \Delta \bar{f}_i\biggr|\\ \nonumber
    &\le \bar{f}(w_1) \biggl|\sum_{i\in I}\Delta\bar{\phi}_i\biggr|
      +\sum_{i\in I}|\Delta\bar{\phi}_i|\cdot|\bar{f}_i(w_1)-\bar{f}(w_1)| +\sum_{i\in I} \bar{\phi}_i(w_2)|\Delta \bar{f}_i|\\
    &\le 0
      +\sum_{i\in I}O\left(\frac{ d(w_1,w_2)}{\delta}\cdot  \epsilon \delta\right)
      +2\lambda d(w_1,w_2) \sum_{i\in I} \bar{\phi}_i(w_2) \label{eq:bounf fi-f}\\
    &\le O\big(\epsilon d(w_1,w_2)\big)+\lambda d(w_1,w_2).\nonumber
  \end{align}
  where in~\eqref{eq:bounf fi-f}  we used \eqref{eq:fi close to Q} and \eqref{eq:f close to Q} to bound $|\bar{f}_i-\bar{f}|$.  If $\epsilon$ is sufficiently small, then this implies the desired estimate $|\bar{f}(w_1)-\bar{f}(w_2)|\le 2 \lambda d(w_1,w_2)$.
\end{proof}

Since $\Gamma$ is an intrinsic Lipschitz graph over $\Pi(10M)$, Theorem~\ref{thm:intrinsic nonlinear hahn banach} implies that $\Gamma$ can be extended to an intrinsic Lipschitz graph over $V_{M}$ with no increase in the Lipschitz constant.  We also denote this extension by $\Gamma$, and we let $\Gamma^+$ be the half-space
$\Gamma=\{w\in \H^{2k+1}\mid x_k(w)\ge \bar{f}(w)\}.$

Finally, we claim that this satisfies \eqref{eq:defCoronaDecomp}.  Let $Q\in \cS$.  By \eqref{eq:goodVerticalCubes}, we have
$d_{N_{1/\e}(Q)}(V_Q^+, E)\le \epsilon \sigma(Q).$
We claim that
\begin{equation}\label{eq:VQ close to Gamma}
  d_{N_{\frac92}(Q)}(V_Q^+, \Gamma^+)\lesssim \epsilon \sigma(Q).
\end{equation}

Let $L\in \cL^\perp(N_{9/2}(Q))$. Suppose that $v\in L\cap V_Q$ and $w\in L\cap \Gamma$. By Lemma~\ref{lem:N4 in 10Q}, there exists $q\in L\cap 10Q$.  By \eqref{eq:goodVerticalCubes} and \eqref{eq:distToFlatPlane}, we have
$$|x_k(q)-x_k(v)|=d(q,v)\le 2 d(q,V_Q)\le 2\epsilon \sigma(Q).$$
By \eqref{eq:f close to Q}, we have
$$|x_k(w)-x_k(q)|\lesssim\epsilon d_{\cS}(q) \le \epsilon \sigma(Q),$$
so $|x_k(w)-x_k(v)|\lesssim \epsilon \sigma(Q)$.

We have $V_Q^+\cap L=[v,\infty)$ and $\Gamma^+\cap L=[w,\infty)$, so if $a\in L\cap (V_Q^+\symdiff \Gamma^+)$, then either $a\in [v,w]$ or $a\in [w,v]$, depending on the order of $v$ and $w$.  In both cases,  $d(a,V_Q)\le |x_k(w)-x_k(v)|\lesssim \epsilon\sigma(Q)$ and $d(a,\Gamma)\lesssim \epsilon\sigma(Q)$, so \eqref{eq:VQ close to Gamma} holds.  If $\epsilon$ is sufficiently small, then Lemma~\ref{lem:localHausProps} implies
\begin{align*}
  d_{N_4(Q)}(\Gamma^+, E)\le d_{N_{\frac92}(Q)}(\Gamma^+, V_Q^+) + d_{N_{\frac{1}{\e}}(Q)}(V_Q^+, E)
                         \lesssim \epsilon \sigma(Q).
\end{align*}
If $\epsilon$ is sufficiently small, then
$$d_{N_4(Q)}(\Gamma^+, E)\le \theta \sigma(Q).$$
This completes the proof of Proposition~\ref{prop:STRLipschitzGraphs}.

\section{Historical comments on Sparsest Cut and directions for further research}\label{sec:previous}

Among the well-established deep and multifaceted connections between theoretical computer science and pure mathematics, the Sparsest Cut Problem stands out for its profound and often unexpected impact on a variety of areas. Previous research on  this question came hand-in-hand with the development of remarkable mathematical and algorithmic ideas that spurred many further works of importance in their own right. Because the present work belongs to this tradition, here we will put it into context by elaborating further on the history of these investigations. We will also  describe interesting directions for further research and open problems.

The first polynomial-time algorithm for Sparsest Cut with $O(\log n)$ approximation ratio was obtained in the important work~\cite{LR99}, which studied the notable special case of {\em Sparsest Cut with Uniform Demands} (see Section~\ref{sec:uniform}  below).  This work introduced a linear programming relaxation and developed influential techniques for its analysis, and it has led to a myriad of algorithmic applications. The seminal contributions~\cite{LLR95,AR98} obtained the upper bound $\rho_{\mathsf{GL}}(n)\lesssim \log n$ in full generality by incorporating a classical embedding theorem of Bourgain~\cite{Bou85}, thus heralding the transformative use of metric embeddings in algorithm design. The matching lower bound on the integrality gap of this linear program was proven in~\cite{LR99,LLR95}.  This showed for the first time that Bourgain's embedding theorem is asymptotically sharp and was the first (and very influential)  demonstration of the power of expander graphs in the study of metric embeddings.

A $O(\sqrt{\log n})$ upper bound for the approximation ratio of the Goemans--Linial algorithm in the special case of uniform demands was obtained in the important work~\cite{ARV09}.  This work relied on a clever use of the concentration of measure phenomenon and introduced influential techniques such as a ``chaining argument'' for metrics of negative type and the use of expander flows. \cite{ARV09} also had direct impact on results in pure mathematics, including combinatorics and metric geometry; see e.g.~the ``edge replacement theorem" and the estimates on the observable diameter of doubling metric measure spaces in~\cite{NRS05}. The best-known general upper bound $\rho_{\mathsf{GL}}(n)\lesssim (\log n)^{\frac12+o(1)}$ of~\cite{ALN08} built on the (then very recent) development of two techniques: The chaining argument of~\cite{ARV09} (through its more careful analysis in~\cite{Lee05}) and the {\em measured descent} embedding method of~\cite{KLMN05} (through its phrasing as a gluing technique for Lipschitz maps in~\cite{Lee05}). Another important input to~\cite{ALN08} was a re-weighting argument of~\cite{CGR08} that allowed for the construction of an appropriate ``random zero set" from the argument of~\cite{ARV09,Lee05} (see~\cite{Nao10,Nao14} for more on this notion and its significance).

The impossibility result~\cite{KV15} that refuted the Goemans--Linial conjecture relied on a striking link to complexity theory through the Unique Games Conjecture (UGC), as well as an interesting use of discrete harmonic analysis (through~\cite{Bou02}) in this context; see also~\cite{KR09} for an incorporation of a different tool from discrete harmonic analysis (namely~\cite{KKL88}, following~\cite{KN06}) for the same purpose, as well as~\cite{CKKRS06,CK07-hardness} for computational hardness. The best impossibility result currently known~\cite{KM13} for Sparsest Cut with Uniform Demands relies on the development of new  pseudorandom generators.

The idea of using the  geometry of the Heisenberg group to bound $\rho_{\mathsf{GL}}(n)$ from below originated in~\cite{LN06}, where the relevant metric of negative type was constructed through a complex-analytic argument, and initial (qualitative) impossibility results were presented through the use of Pansu's differentiation theorem~\cite{Pan89} and  the Radon--Nikod\'ym Property  from functional analysis (see e.g.~\cite{BL00}). In~\cite{CK10}, it was shown that the Heisenberg group indeed provides a proof that $\lim_{n\to \infty} \rho_{\mathsf{GL}}(n)=\infty$.  This proof introduced a remarkable new notion of differentiation for  mappings into $L_1(\R)$, which led to the use of tools from geometric measure theory~\cite{FSSCRectifiability,FSSC03} to study the problem. A different proof that $\H^3$ fails to admit a bi-Lipschitz embedding into $L_1(\R)$ was found in~\cite{CheegerKleinerMetricDiff}, where a classical notion of metric differentiation~\cite{Kir94} was used in conjunction with the novel idea to consider monotonicity of sets in this context, combined with a sub-Riemannian-geometric argument that yielded a classification of monotone subsets of $\H^3$.  The main result of~\cite{CKN} finds a quantitative lower estimate for the scale at which this differentiation argument can be applied, leading to a lower bound of $(\log n)^{\Omega(1)}$ on $\rho_{\mathsf{GL}}(n)$.  This result relies on a mixture of the methods of~\cite{CK10} and~\cite{CheegerKleinerMetricDiff} and  overcomes obstacles that are not present in the original qualitative investigations. In particular, \cite{CKN} introduced quantitative measures of non-monotonicity that we use in the present work. The quantitative differentiation bound of~\cite{CKN} remains the best bound currently known, and it would be very interesting to discover the sharp behavior in this more subtle question.

The desire to avoid the (often difficult) need to obtain sharp bounds for quantitative differentiation motivated the investigations~\cite{AusNaoTes,LafforgueNaor}. In particular, \cite{AusNaoTes} devised a method to prove sharp (up to lower order factors) nonembeddability statements for the Heisenberg group based on a cohomological argument and a quantitative ergodic theorem. For Hilbert space-valued mappings, \cite{AusNaoTes} used a cohomological argument in combination with representation theory to prove the following quadratic inequality for every finitely supported function $\upphi:\H_{\ms{\Z}}^{2k+1}\to L_2(\R)$.
\begin{equation}\label{eq:discrete global quadratic}
\bigg(\sum_{t=1}^\infty \frac{1}{t^2}\sum_{h\in \H_{\ms{\Z}}^{2k+1}} \big\|\upphi\big(hZ^t\big)-\upphi(h)\big\|_{L_2(\R)}^2\bigg)^{\frac12}\lesssim \bigg(\sum_{h\in \H_{\ms{\Z}}^{2k+1}}\sum_{\sigma\in \mathfrak{S}_2}\big\|\upphi(h\sigma)-\upphi(h)\big\|_{L_2(\R)}^2\bigg)^{\frac12}.
\end{equation}
In~\cite{LafforgueNaor} a different approach based on Littlewood--Paley theory was devised, leading to the following generalization of~\eqref{eq:discrete global quadratic} that holds  for every $p\in (1,2]$ and every finitely supported $\upphi:\H_{\ms{Z}}^{2k+1}\to L_p(\R)$.
\begin{equation}\label{eq:discrete global p}
\left(\sum_{t=1}^\infty \frac{1}{t^2}\bigg(\sum_{h\in \H_{\ms{\Z}}^{2k+1}} \big\|\upphi\big(hZ^t\big)-\upphi(h)\big\|_{L_p(\R)}^p\bigg)^\frac{2}{p}\right)^{\frac12}\le  C(p)\bigg(\sum_{h\in \H_{\ms{\Z}}^{2k+1}}\sum_{\sigma\in \mathfrak{S}_2}\big\|\upphi(h\sigma)-\upphi(h)\big\|_{L_p(\R)}^p\bigg)^{\frac{1}{p}},
\end{equation}
for some $C(p)\in (0,\infty)$. See~\cite{LafforgueNaor} for a strengthening of~\eqref{eq:discrete global p} that holds for general uniformly convex targets (using the recently established~\cite{MTX06} vector-valued Littlewood--Paley--Stein theory for the Poisson semigroup). These functional inequalities yield sharp non-embeddability estimates for balls in $\H^{2k+1}_{\ms{\Z}}$, but the method of~\eqref{eq:discrete global p} inherently yields a constant $C(p)$ in~\eqref{eq:discrete global p} that satisfies $\lim_{p\to 1} C(p)=\infty$.  The estimate~\eqref{eq:discrete local intro} that we prove here for $L_1(\R)$-valued mappings is an endpoint estimate corresponding to~\eqref{eq:discrete global p}, showing that $C(p)$ actually remains bounded as $p\to 1$. This answers positively a conjecture of~\cite{LafforgueNaor} and is crucial for the results that we obtain here.

As explained in Section~\ref{sec:intrinsic graphs}, our proof of~\eqref{eq:discrete global quadratic} uses the $\H^3$-analogue of~\eqref{eq:discrete global quadratic}. It should be mentioned at this juncture that the proofs of~\eqref{eq:discrete global quadratic}  and~\eqref{eq:discrete global p} in~\cite{AusNaoTes,LafforgueNaor} were oblivious to the dimension of the underlying Heisenberg group. An unexpected aspect of the present work is that the underlying dimension does play a role at the endpoint $p=1$, with the analogue of~\eqref{eq:discrete local intro}  (or Theorem~\ref{thm:isoperimetric discrete}) for $\H_{\ms{\Z}}^3$ being in fact {\em incorrect} (recall Remark~\ref{eq:finite p}).

As we recalled above, past progress on the Sparsest Cut Problem   came hand-in-hand with substantial mathematical developments. The present work is a culmination of a long-term project that is rooted in mathematical phenomena that are interesting not just for their relevance to approximation algorithms but also for their connections to the broader mathematical world.  In the ensuing subsections we shall describe some further results and questions related to this general direction.

\subsection{The Sparsest Cut Problem with  Uniform Demands}\label{sec:uniform} An especially important special case of the Sparsest Cut Problem is when the demand matrix $D$ is the matrix  all of whose entries equal $1$, i.e., $D=\1_{\n\times\n}\in M_n(\R)$, and the capacity matrix $C$ lies in $M_n(\{0,1\})$, i.e., all its entries are either $0$ or $1$. This is known as the {\em Sparsest Cut Problem with  Uniform Demands}. In this case $C$ can also be described as the adjacency matrix of a graph $G$ whose vertex set is $\n$ and whose edge set consists of those unordered pairs $\{i,j\}\subset \n$ for which $C_{ij}=1$. With this interpretation, given $A\subset \n$ the numerator in~\eqref{eq:def opt} equals twice the number of edges that are incident to $A$ in $G$. And, since  $D=\1_{\n\times\n}$, the  denominator in~\eqref{eq:def opt} is equal to $2|A|(n-|A|)\asymp n\min\{|A|,|\n\setminus A|\}$. So, the Sparsest Cut Problem with Uniform Demands asks for an algorithm that takes as input a finite graph and outputs a quantity which is bounded above and below by universal constant multiples of its {\em conductance}~\cite{JS89} divided by $n$. The Goemans--Linial integrality gap corresponding to this special case is
$$
\uprho_{\mathsf{GL}}^{\mathrm{unif}}(n)\eqdef \sup_{\substack{C\in M_n(\{0,1\})\\ C\ \mathrm{symmetric}}} \frac{\mathsf{OPT}(C,\1_{\n\times\n})}{\mathsf{SDP}(C,\1_{\n\times \n})}.
$$
The Goemans--Linial algorithm furnishes the best-known approximation ratio also in the  case of uniform demands. We have $\uprho_{\mathsf{GL}}^{\mathrm{unif}}(n)\lesssim \sqrt{\log n}$ by the important work~\cite{ARV09}, improving over the previous bound $\uprho_{\mathsf{GL}}^{\mathrm{unif}}(n)\lesssim \log n$ of~\cite{LR99}. As explained in~\cite{CKN09}, the present approach based on (fixed dimensional) Heisenberg groups cannot yield a lower bound on $\uprho_{\mathsf{GL}}^{\mathrm{unif}}(n)$ that tends to $\infty$ as $n\to \infty$. The currently best-known lower bound~\cite{KM13} is $\uprho_{\mathsf{GL}}^{\mathrm{unif}}(n)\ge \exp(c\sqrt{\log\log n})$ for some universal constant $c\in (0,\infty)$, improving over the previous bound $\uprho_{\mathsf{GL}}^{\mathrm{unif}}(n)\gtrsim \log\log n$ of~\cite{DKSV06}.   Determining the asymptotic behavior of $\uprho_{\mathsf{GL}}^{\mathrm{unif}}(n)$ remains an intriguing open problem.

 \subsection{A sharp upper bound on $\rho_{\mathsf{GL}}(n)$} The $o(1)$ term in the bound $\rho_{\GL}(n)\lesssim (\log n)^{\frac12+o(1)}$ that is stated in~\cite{ALN08} is $\frac{\log\log\log n}{\log\log n}$, i.e., $\rho_{\GL}(n)\lesssim \sqrt{\log n}\log\log n$. This specific lower-order factor can be reduced with more technical work (see the discussion in Section~6 of~\cite{ALN08}). To the best of our knowledge, the question of removing the $o(1)$ term altogether was not previously investigated, since the  pressing issue  was to determine the correct power of $\log n$. Now that Theorem~\ref{thm:main GL lower intro} is established, it is worthwhile to revisit this matter to obtain the sharp upper bound $\rho_{\GL}(n)\lesssim \sqrt{\log n}$. We conjecture that this estimate is indeed valid, and we expect that its proof should follow from a more careful application of available methods. We postpone this investigation to future work.

 \subsection{Sharp stability of monotone sets} We already asked above what is the sharp dependence in the quantitative differentiation statement (stability of monotone sets) of~\cite{CKN}. This is a challenging open question whose need was circumvented in the present context by adopting the strategy of aiming for the functional inequality~\eqref{eq:discrete local intro} rather than for a sharp  structure theorem for sets of finite perimeter in $\H^{2k+1}$ that shows that they are approximately a half-space on a macroscopically large scale. The advantage of our approach is that it relies on a  corona decomposition in which the approximation is by an intrinsic Lipschitz graph rather than a half-space, i.e., we avoid the need to ``zoom in" all the way to the scale at which the boundary resembles a half-space. Intrinsic Lipschitz graphs can be much less structured than half-spaces, but we control them using cancellations (in a lower dimension) which can be captured through a decomposition of each $\H^3$-slice into irreducible representations. Regardless of any application to embedding theory, sharpening the bound of~\cite{CKN} on the stability of monotone sets is an interesting question in geometric measure theory.

 \subsection{Hardness} Proving $\mathsf{NP}$-hardness of approximation for the Sparsest Cut Problem remains elusive if one wishes to prove inapproximability within any constant factor (such UGC-based hardness of approximation is available due to~\cite{KV15,CKKRS06}). This is a longstanding open question that seems to be very challenging. We speculate that the hardness of approximation threshold for Sparsest Cut is $\Theta(\sqrt{\log n})$, but we do not see a path towards such a statement even under the UGC.

 \subsection{Further questions related to quantitative rectifiability and singular integrals} It would be interesting to explore the impact of the methods of the present article on other questions related to quantitative rectifiability and singular integral operators.  In future work we will investigate how an application of these ideas to $\R^n$ rather than the Heisenberg group yields a new perspective on classical results such as the Jones Travelling Salesman Theorem~\cite{Jon90}, and we believe that there is scope for additional applications of the use of quantitative non-monotonicity to govern hierarchical partitioning schemes as well as the type of ``induction on dimension" that is used here to pass from $L_2$ bounds to their $L_1$ counterparts (in a higher dimension).

\bibliographystyle{abbrv}
\bibliography{corona,appendix}
\end{document}